\DeclareMathOperator*\uplim{\overline{lim}}
\renewcommand{\limsup}{\uplim} 
    			\def\namedlabel#1#2{\begingroup#2%
    			\def\@currentlabel{#2}%
    			\phantomsection\label{#1}\endgroup
    		}
\theoremstyle{plain}
    \newtheorem{theorem}{Theorem}[section]
    \newtheorem{corollary}[theorem]{Corollary}
    \newtheorem{proposition}[theorem]{Proposition} 
    \newtheorem{lemma}[theorem]{Lemma}
\theoremstyle{definition}
    \newtheorem{definition}[theorem]{Definition}
    \newtheorem{example}[theorem]{Example}
    \newtheorem{remark}[theorem]{Remark}
    \newtheorem*{notation*}{Notation}
    \newtheorem{notation}[theorem]{Notation}
\numberwithin{equation}{section}
\newcommand{\assumption}{}
\newcommand{\cB}{\mathscr{B}}
\newcommand{\cC}{\mathscr{C}}
\newcommand{\cD}{\mathscr{D}}
\newcommand{\cL}{\mathscr{L}}
\newcommand{\cM}{\mathscr{M}}
\newcommand{\cN}{\mathscr{N}}
\newcommand{\cP}{\mathscr{P}}
\newcommand{\cK}{\mathscr{K}}
\newcommand{\usc}{up\-per se\-mi-con\-ti\-nuous}
\newcommand{\USCBb}{USC Ba\-nach bun\-dle}
\newcommand{\ib}{im\-prim\-i\-tiv\-ity bi\-mod\-u\-le}
\newcommand{\cG}{\mathcal{G}}
\newcommand{\cH}{\mathcal{H}}
\newcommand{\etale}{\'{e}tale}
\newcommand{\LCH}{lo\-cal\-ly com\-pact Haus\-dorff}
\newcommand{\X}{\mathcal{X}}
\newcommand{\Y}{\mathcal{Y}}
\newcommand{\Z}{\mathcal{Z}}
\newcommand{\hequiv}{hypo-equi\-va\-lence}
\newcommand{\Hequiv}{Hypo-Equi\-va\-lence}
\newcommand{\pe}{pre-equi\-va\-lence} 
\newcommand{\Pe}{Pre-Equi\-va\-lence} 
\newcommand{\peshort}{GP} 
\newcommand{\peadjective}{pre-equi\-va\-lent}
\newcommand{\Rel}{\mathbin{\mathcal{R}}} 
\newcommand{\FBRel}{\mathbin{\mathscr{R}}} 
\newcommand{\dbtilde}[1]{\tilde{\raisebox{0pt}[0.85\height]{$\tilde{#1}$}}} 
\newcommand{\sme}{\,\mathord{\mathop{\text{--}}\nolimits_{\relax}}\,}
\let\ipscriptstyle=\scriptscriptstyle
\def\lipsqueeze{{\mskip -2.0mu}}
\def\ripsqueeze{{\mskip -2.0mu}}
\def\ipcomma{\nobreak \mid\nobreak} 
\newbox\ipstrutbox
\newcommand{\norm}[1]{\left\| #1 \right\|}
\newcommand{\abs}[1]{\left| #1 \right|}
\newcommand{\inner}[2]{\langle #1 \ipcomma #2 \rangle}
\newcommand{\Inner}[2]{\llangle #1 \ipcomma #2 \rrangle}
\newcommand{\linner}[4][]{
    {
    \relax_{\ipscriptstyle
                #2
                }^{\ipscriptstyle
                #1}
                \lipsqueeze
                \langle #3 \ipcomma #4 \rangle
    }
}
    \def\lip#1<#2,#3>{\linner{#1}{#2}{#3}}
\newcommand{\lInner}[4][]{
    {
    \relax_{\ipscriptstyle
    #2
    }^{\ipscriptstyle
    #1}
    \lipsqueeze
    \llangle #3 \ipcomma #4 \rrangle
    }
}
\newcommand{\rinner}[4][]{
    {
        \langle #3
        \ipcomma
        #4 \rangle_{\ripsqueeze
        \ipscriptstyle
        #2
        }^{\ripsqueeze
        \ipscriptstyle
        #1}
    }
}
    \def\rip#1<#2,#3>{\rinner{#1}{#2}{#3}}
\newcommand{\rInner}[4][]{
    {
        \llangle #3
        \ipcomma
        #4 {\rrangle_{\ripsqueeze
        \ipscriptstyle
        #2
        }^{\ripsqueeze
        \ipscriptstyle
        #1}}
    }
}
\newcommand{\inv}{^{-1}}
\newcommand{\z}{^{(0)}}
\newcommand{\bfp}[2]{\lipsqueeze\tensor*[_{\ipscriptstyle #1}]{\ast}{_{\ipscriptstyle #2}}\ripsqueeze} 
\newcommand{\leoq}[4][]{\tensor*[_{\ipscriptstyle #2}^{\ipscriptstyle #1}]{\left\{ #3 \ipcomma #4 \right\}}{}} 
\newcommand{\reoq}[4][]{\tensor*[]{\left\{ #2 \ipcomma #3 \right\}}{_{\ipscriptstyle #4}^{\ipscriptstyle #1}}} 
\newcommand{\Leoq}[4][]{L_{\ipscriptstyle #2}^{\ipscriptstyle #1}\mathopen{}\left( #3 \ipcomma #4 \right)\mathclose{}} 
\newcommand{\Leoqempty}[2][]{L_{\ipscriptstyle #2}^{\ipscriptstyle #1}} 
\newcommand{\Reoq}[4][]{R_{\ipscriptstyle #4}^{\ipscriptstyle #1}\mathopen{}\left( #2 \ipcomma #3 \right)\mathclose{}} 
\newcommand{\Reoqempty}[2][]{R_{\ipscriptstyle #2}^{\ipscriptstyle #1}} 
\newcommand\Item[1][]{%
  \ifx\relax#1\relax  \item \else \item[#1] \fi
  \abovedisplayskip=0pt\abovedisplayshortskip=0pt~\vspace*{-\baselineskip}}
\title{Equivalence of Fell bundles is an equivalence relation} 
\author{Anna Duwenig}
\address{School of Mathematics and Applied Statistics, University of Wollongong, Wollongong, Australia}
\email{aduwenig@uow.edu.au}
\author{Boyu Li}
\address{Department of Pure Mathematics, University of Waterloo, Waterloo, ON., Canada}
\email{b32li@uwaterloo.ca}
\date{\today}
\subjclass[2010]{46L55, 46L05, 22A22}
\keywords{Fell bundle, groupoid equivalence, upper semi-continuous Banach bundle}
\begin{document}



\allowdisplaybreaks

\maketitle

\begin{abstract}
    We introduce the notion of {groupoid \pe s} and prove that they give rise to groupoid equivalences by taking certain quotients. Then, given an equivalence of Fell bundles $\cB$ and $\cC$ and another equivalence between $\cC$ and $\cD$ over \LCH\ \etale\ groupoids, we construct an equivalence between $\cB$ and $\cD$ out of the tensor product bundle. As a consequence, we obtain that Fell bundle equivalence is indeed an equivalence relation.
\end{abstract}

\section{Introduction}

The concept of groupoid equivalence was first introduced in \cite{MRW:Grpd} to connect with the notion of strong Morita equivalence of groupoid $\textrm{C}^*$-algebras. Roughly speaking, a groupoid equivalence $\X$ for two groupoids $\cG$ and $\cH$ is a left $\cG$- and right $\cH$-space that satisfies several tracable algebraic and topological conditions. These conditions allow us to build an \ib\ for $\textrm{C}^*(\cG)$ and $\textrm{C}^*(\cH)$, proving that they are strongly Morita equivalent. A related notion of Fell bundle equivalence was first outlined in unpublished work by Yamagami and later formalized by Muhly and Williams in~\cite{MW2008}. One of the main results in~\cite{MW2008}, Theorem 6.4, is that an equivalence of Fell bundles, similarly to an equivalence of groupoids, implies the strong Morita equivalence of their associated $\textrm{C}^*$-algebras.

The notion of equivalence for groupoids is easily seen to be a reflexive and symmetric relation: Any groupoid $\cG$ is an equivalence from itself to itself, and given a $(\cG,\cH)$-equivalence~$\X$, it is easy to construct an $(\cH,\cG)$-equivalence $\X^{\mathrm{op}}$ whose left $\cH$-action, for example, is defined out of the right $\cH$-action on~$\X$. Showing that equivalence of groupoids is transitive is slightly more involved. Given a second groupoid equivalence $\Y$ between $\cH$ and $\mathcal{K}$, the fibre product $\X\bfp{s}{r}\Y$ is generally not a $(\cG,\mathcal{K})$-equivalence; instead, one has to take a quotient of $\X\bfp{s}{r}\Y$ by balancing over the middle groupoid~$\cH$. The resulting balanced product $\X\bfp{}{\cH}\Y$ is a known $(\cG,\mathcal{K})$-equivalence \cite[p.~6]{MRW:Grpd}, which shows that groupoid equivalence is indeed an equivalence relation.

For equivalence of Fell bundles, it is similarly easy to show reflexivity and symmetry \cite[Example 6.6 and 6.7]{MW2008}. For (not necessarily saturated) Fell bundles over \LCH\ {\em groups}, this relation was furthermore shown to be transitive in \cite{AF:EquivFb}. However, for Fell bundles over groupoids, an analogue of the balanced product construction in the world of groupoids has so far been missing in the literature, and so, despite its name, it was unclear whether equivalence of Fell bundles over groupoids is a transitive relation. 
We point out that, while strong Morita equivalence of $\textrm{C}^*$-algebras is known to be an equivalence relation, it may not be true that two strongly Morita equivalent Fell bundle $\textrm{C}^*$-algebras have arisen from Fell bundles that are  equivalent in the sense of \cite{MW2008}. 

\smallskip

Thus, the main motivation behind this paper is to prove that
the notion of equivalences of  Fell bundles over \LCH\ \etale\ groupoids
is indeed a transitive relation, 
see Theorem~\ref{thm:cP-is-equiv}. To do so, we mimic the construction in the world of groupoids, by taking a quotient of the tensor product of two Fell bundle equivalences. Along the way, we first need to introduce two new relations that are better behaved under taking products than the known notions of equivalence; we call these new relations {\em groupoid \pe} and {\em Fell bundle \hequiv} (Definition~\ref{def:preequiv} resp.~\ref{def:FBword}). 
We show that the fibre product of two {\pe s} is another \pe\ (Lemma~\ref{lem:product-of-preequiv}), and that the tensor product of two 
\hequiv s is another \hequiv\ (Theorem~\ref{thm:cK-is-word}). We prove that any groupoid \pe\ gives rise to an equivalence by taking an appropriate quotient (Proposition~\ref{prop:from-preequiv-to-equivalence}); this generalizes the above-mentioned result that
$\X\bfp{s}{r}\Y$ 
gives rise to
the equivalence $\X\bfp{}{\cH}\Y$.
Ideally, we would like to have a similar result for an appropriate notion of `\peadjective\ Fell bundles', but while the authors conjecture that a slight strengthening of 
\hequiv\ may allow such a result, this article will not be concerned with that question. Instead, we will only focus on the case when the \hequiv\ arises as the tensor product $\cM\otimes_{\cC}\cN$ of two Fell bundle equivalences $\cM$ from $\cB$ to $\cC$ and $\cN$ from $\cC$ to $\cD$, in which case it indeed can be rigged to give an equivalence from $\cB$ to $\cD$, which is the content of Theorem~\ref{thm:cP-is-equiv}.

\section{Definitions and notation}

\begin{notation*}
    Suppose we have maps $s\colon M\to Z$, $r\colon N\to Z$. We define the {\em fibre product} of $M$ and $N$ as
    \[
        M\bfp{s}{r}N := \{(m,n)\in M\times N : s(m)=r(n)\}.
    \]
    If the spaces involved are topological and the maps continuous, we will always equip $M\bfp{s}{r}N$ with the subspace topology. If $M$ and $N$ are the total spaces of bundles $\cM$ resp.\ $\cN$, we will sometimes write $\cM\bfp{s}{r}\cN$ for $ M\bfp{s}{r}N$ instead. If the maps carry subscripts, we will refrain from writing those in the  fibre product; in other words, we will prefer writing $M\bfp{s}{r}N$  
    over  $M\bfp{s_{\cM}}{r_{\cN}}N$,  
    as the maps involved should always be clear from context.
    Given $U\subseteq M$ and $V\subseteq N$, we write $U\bfp{}{}V$ or $U\bfp{s}{r}V$ for $(U\times V)\cap M\bfp{s}{r}N$. 
\end{notation*}

\begin{definition}[{\cite[p.~5]{MRW:Grpd}}]\label{df.left.action} 
		A \LCH\ groupoid $\mathcal{G}$ {\em acts on the left} on a \LCH\ space  $X $ if there is a continuous, open surjection $r_X \colon X \to \mathcal{G}\z $ \textup(called \emph{momentum map}\textup) and a continuous map
		\(
			\mathcal{G}\bfp{s}{r} X  
			\to
			X ,
			(g , x) \mapsto g \cdot x,
		\)
		such that
		\begin{enumerate}[leftmargin=1.7cm, label={(GA\arabic*)}]
			\item\label{item:def-GA:range} $r_X (g . x) = r_\mathcal{G} (g )$ for all $(g , x)\in \mathcal{G}\bfp{s}{r} X $,
			\item\label{item:def-GA:assoc} if $(g , x)\in \mathcal{G}\bfp{s}{r} X $ and $(g ',g )\in \mathcal{G}^{(2)}$, then $(g 'g , x)\in \mathcal{G}\bfp{s}{r}X $ and $(g 'g )\cdot x = g '\cdot (g  \cdot x)$, and
			\item\label{item:def-GA:unit} $r_X (x)\cdot x = x$ for all $x\in X $.
		\end{enumerate}
		We call $X$ a {\em left $\cG$-space}. We further call
		$X $ {\em principal} if the action is
		\begin{enumerate}[leftmargin=1.7cm, resume, label={(GA\arabic*)}]
			\item\label{item:def-GA:free} {\em free}, meaning that $g  \cdot x=x$ implies $g  = r_X  (x)$; and
			\item\label{item:def-GA:proper} {\em proper}, meaning that, if $(g_\lambda\cdot x_\lambda, x_\lambda)\to (y,x)$ in $X\times X$, then the net $(g_\lambda)_\lambda$ in $\cG$ has a convergent subnet (see \cite[Proposition 2.7]{Wil2019}).
		\end{enumerate}
		If we want to exchange left by right, we need to swap range and source maps.
	\end{definition}
	
	We point out that there are other definitions of groupoid actions in the literature in which the momentum map $r_{X}$ is not necessarily assumed to be open. In this article, however,  
	openness of $r_X$ will be used frequently. 
	
	\begin{definition}[{\cite[Def.\ 2.1]{MRW:Grpd}}]\label{def:GE}
		Two \LCH\ groupoids $\cG$ and $\cH$ are {\em equivalent} if there is a topological space $\X$ such that
		\begin{enumerate}[leftmargin=1.7cm, label={(GE\arabic*)}]
			\item\label{item:def-GE:principal} $\X$ is a principal left $\cG$- and principal right $\cH$-space,
			\item\label{item:def-GE:commute} the left and right action commute, and
			\item\label{item:def-GE:homeos} $r_\X$ induces a homeomorphism between $\X\slash \mathcal{H}$ and $\cG\z $, and $s_\X$ induces a homeomorphism between $\cG\! \text{\rotatebox[origin=c]{35}{$\slash$}}\! \X$ and $\mathcal{H}\z $.
		\end{enumerate}
		We then call $\X$ a {\em $(\mathcal{G},\mathcal{H})$-equivalence}.
	\end{definition}

As observed in \cite{MRW:Grpd}, whenever $x,x'\in \X$ with $s_{X}(x)=s_{X}(x')$, there exists a unique $g\in \cG$ such that $x=g\cdot x'$. This element is denoted by $\leoq[X]{\cG}{x}{x'}$. Similarly, when $r_{X}(x)=_{X}r(x')$, then $h=\reoq[X]{\cdot}{\cdot}{\cH}$ is the unique element in $\cH$ such that $x\cdot h=x'$.

It is well known (see {\cite[p.~6]{MRW:Grpd}}
) that groupoid equivalence is indeed an equivalence relation. Let us recall why this relation is transitive: Suppose 
$\X  $ is a $(\cG ,\cH )$-equivalence and $ \Y $ is an $(\cH ,\mathcal{K} )$-equivalence. Then the quotient $\X  \bfp{}{\cH}  \Y$ of $\X  \bfp{s}{r}  \Y $ by forcing the equality
\[
	[x \cdot h, h\inv \cdot  y]_{\ipscriptstyle\cH} = [x , y ]_{\ipscriptstyle\cH}
	\quad
	\text{for all $h\in\cH$ for which $\cdot$ makes sense,}
\]
is a $(\cG ,\mathcal{K})$-equivalence with momentum maps
\[
	\begin{tikzcd}
			 \X  \ast_\cH  \Y  \ar[r, "{r}"] 	& \cG \z	&[-25pt]\text{and} &[-25pt]  \X  \ast_\cH  \Y  \ar[r, "{s}"] 	& \mathcal{K}\z \\[-20pt]
		\left[x , y \right]_{\ipscriptstyle\cH} \ar[r, mapsto] 			& r_\X   (x )	& & \left[x , y \right]_{\ipscriptstyle\cH} \ar[r, mapsto] 			& s_\Y  ( y )
	\end{tikzcd}
\]
	and the actions
\[
		g \cdot  \left[x ,  y \right]_{\ipscriptstyle\cH} := \left[g\cdot  x ,  y \right]_{\ipscriptstyle\cH}
		\qquad\text{resp.}\qquad
        \left[x ,  y \right]_{\ipscriptstyle\cH} \cdot  k := \left[x ,  y  \cdot  k\right]_{\ipscriptstyle\cH}.
\]

\begin{definition}[{\cite[Definition 2.1]{BE2012}}]\label{def.USCBdl}
Suppose $X$ and $M$ are topological spaces and $q_{\cM}\colon M\to X$ a continuous surjection.
We call $\cM=(q_{\cM}\colon M\to X)$ an {\em \usc\ \textup(USC\textup) Banach bundle} 
if its fibres $\cM_{x}=M(x)=q_{\cM}\inv(x)$ have the structure of complex Banach spaces and if the following hold:
\begin{enumerate}[leftmargin=2cm,label=(USC\arabic*)]
    \item\label{cond.B-USC} The map $M\to \mathbb{R}_{\geq 0}$, $m\mapsto\norm{m}$, is \usc.
    \item\label{cond.B-plus} The map  $M\bfp{q}{q} M \to M$, $(m,m')\mapsto m+m'$, is continuous.
    \item\label{cond.B-times} For each $\lambda\in\mathbb{C},$ the map $M\to M,$ $m\mapsto \lambda m$, is continuous.
    \item\label{cond.B-nets} If $(m_{i})_{i}$ is a net in $M$ such that $q_{\cM}(m_{i})$ converges to $x\in X$ and $\norm{m_{i}}\to 0$, then $(m_{i})_{i}$ converges to $0\in M(x)$ in $M$.
\end{enumerate}
\end{definition}
We will not always make a clear distinction between the total space $M$ and the bundle $\cM$ itself.

\begin{remark}\label{rmk:enough-sections}
    We point out that our USC bundles always contain {\em enough sections}, i.e., for any given $x\in X$ and $m\in \cM_x$,  there is a continuous section $\sigma$ such that $\sigma(x) = m$; see also \cite[Appendix A]{MW2008} and \cite[Corollary 2.10]{LAZAR2018448}. 
\end{remark}

\begin{notation}\label{notation:s-and-r}
    If $\cB=(p_{\cB}\colon B\to \cG)$ is a bundle over a groupoid $\cG$, we write
    \[
        s_{\cB}:= s_{\cG}\circ p_{\cB}\colon \cB\to \cG\z
        \quad\text{and}\quad
        r_{\cB}:= r_{\cG}\circ p_{\cB}\colon \cB\to \cG\z.
    \]
    Similarly, if $\cM=(q_{\cM}\colon M\to X)$ is a bundle over a left $\cG$-space $X$ with momentum map $r_{X}\colon X\to \cG\z$, we write
    \[
        r_{\cM}:= r_{X}\circ q_{\cM}\colon M\to \cG\z.
    \]
    If $X$ is a space with a right action instead, so that its momentum map is a source map, we analogously define $s_{\cM}.$ And 
    if there is no ambiguity, we may drop the subscripts.
\end{notation}

\begin{definition}\label{def:beefing-up-a-USC-bundle}
    Let $\cM=(q_{\cM}\colon M\to X)$ be a \USCBb\ and $\cG$ a groupoid. Suppose we are given a continuous map $t\colon X\to \cG\z$. Consider the continuous map
    \(
        f\colon \cG\bfp{s}{t}X
        \to X,
        (g,x)\mapsto x.
    \)
    The {\em pull-back bundle $f^* (\cM)$ of $\cM$ via $f$} is the bundle over $\cG\bfp{s}{t}X$  defined by
\begin{align*}
    f^* (\cM)
    &=
    \{
    (g,x,m)\in (\cG\bfp{s}{t}X)\times \cM
    \,\vert\,
    f(g,x)=q_{\cM}(m)
    \}
    \\
    &\cong
    \{
        (g,m)\in \cG\times \cM
    \,\vert\,
     s_{\cG}(g)=t(q_{\cM}(m))
    \}.
\end{align*}
We will denote this pull-back bundle by $\cG \bfp{s}{t}  \cM$ rather than $f^* (\cM)$, and we make an analogous definition for $\cM \bfp{t}{r}  \cG$.
\end{definition}
We want to give a description of how to topologize these pull-back bundles. To this end, we first need to know how to topologize arbitrary bundles:

\begin{remark}[{see \cite[Corollary 3.7]{Hofmann1977}}]\label{rmk:topology-in-USC-bundles}
    If we are given a bundle $\cM=(q_{\cM}\colon M\to X)$ of Banach spaces, where $M$ is just 
    an untopologized set,
    then there is a standard trick of inducing a topology on~$M$ via a space of sections. To be more precise,
    suppose that $\Gamma$ is a vector space of sections of~$\cM$ such that
    \begin{enumerate}[label=\textup{(\arabic*)}]
      \item\label{item:sections:usc} for each $\gamma\in \Gamma$, the map $X \to\mathbb{R}_{\geq 0}, x\mapsto \norm{\gamma(x)},$ is \usc, and
      \item\label{item:sections:dense} for each $x\in X $, the set $\Gamma(x):=\{\gamma(x)\,\mid\,\gamma\in\Gamma\}$ is dense in $\cM_x $.
    \end{enumerate}
    Then 
    there exists a unique topology on  $M$
    which makes $\cM$ a \USCBb\ such that all elements of $\Gamma$ are continuous sections; see \cite[13.18]{FellDoranVol1} for a proof for continuous Banach bundles and \cite[Theorem C.25]{Wil2007} for a proof for \usc\ $\textrm{C}^*$-bundles which can be readily adapted to {\USCBb s}. We give a description of net convergence with respect to this topology in  Lemma~\ref{lm.conv.equivalence.conditions}.
\end{remark}

\begin{lemma}\label{lem:beefing-up-a-USC-bundle}
    The pull-back bundle $\cG \bfp{s}{t}  \cM$ has a unique topology that makes it a \USCBb\ such that for any continuous cross section $\sigma$ of $\cM$, 
    the map
    \[
    \mathrm{id}\ast\sigma\colon\quad \cG \bfp{s}{t}  X \to \cG \bfp{s}{t}  \cM,\quad
    (g,x)\mapsto (g,\sigma(x)),
    \]
    is a continuous cross-section. Moreover, the sets  $U_{1} \bfp{}{} U_{2}$
    for  $U_{1}\subseteq\cG, U_{2}\subseteq \cM$ basic open sets, form a basis for this topology.
\end{lemma}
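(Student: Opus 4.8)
The plan is to produce the topology abstractly via the section-topologization principle of Remark~\ref{rmk:topology-in-USC-bundles}, and then to recognize it as the subspace topology inherited from $\cG\times\cM$, which makes the basis transparent and simultaneously discharges existence, uniqueness, and the section property.

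First I would take
\[
    \Gamma:=\{\,\mathrm{id}\ast\sigma \ \mid\ \sigma \text{ a continuous cross-section of }\cM\,\},
\]
a vector space of cross-sections of $\cG\bfp{s}{t}\cM$ (closure under the fibrewise operations follows from \ref{cond.B-plus} and \ref{cond.B-times} applied to $\cM$). I would then verify the two hypotheses of Remark~\ref{rmk:topology-in-USC-bundles}. For~\ref{item:sections:usc}, note that the fibre of $\cG\bfp{s}{t}\cM$ over $(g,x)$ is canonically $\cM_x$ with the same norm, so $\norm{(\mathrm{id}\ast\sigma)(g,x)}=\norm{\sigma(x)}$; since $x\mapsto\norm{\sigma(x)}$ is \usc\ on $X$ by \ref{cond.B-USC} and the projection $(g,x)\mapsto x$ is continuous, the composite is \usc\ on $\cG\bfp{s}{t}X$ (a \usc\ map precomposed with a continuous one is again \usc). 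For~\ref{item:sections:dense}, the fact that $\cM$ has enough sections (Remark~\ref{rmk:enough-sections}) shows that $\Gamma$ already evaluates onto the entire fibre $\cM_x$ at each point, so density is immediate. Remark~\ref{rmk:topology-in-USC-bundles} then yields a unique \USCBb\ topology $\tau$ on $\cG\bfp{s}{t}\cM$ in which every $\mathrm{id}\ast\sigma$ is continuous, settling the first two claims.

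For the basis I would show that $\tau$ coincides with the subspace topology $\tau_{\mathrm{sub}}$ that $\cG\bfp{s}{t}\cM$ inherits from $\cG\times\cM$ under the identification $(g,x,m)\leftrightarrow(g,m)$ of Definition~\ref{def:beefing-up-a-USC-bundle}. A basis of $\cG\times\cM$ is given by products $U_1\times U_2$ with $U_1\subseteq\cG$ and $U_2\subseteq\cM$ basic open, and intersecting with the fibre product exhibits $\{U_1\bfp{}{}U_2\}$ as a basis of $\tau_{\mathrm{sub}}$; hence it is enough to prove $\tau_{\mathrm{sub}}=\tau$. By the uniqueness already established, this reduces to checking that $\tau_{\mathrm{sub}}$ makes $\cG\bfp{s}{t}\cM$ a \USCBb\ in which each $\mathrm{id}\ast\sigma$ is continuous. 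Continuity of the sections is clear, since $(g,x)\mapsto(g,\sigma(x))$ has continuous coordinates into $\cG$ and into $\cM$. The axioms \ref{cond.B-USC}--\ref{cond.B-nets} I would verify by pushing everything through the (continuous) coordinate projections onto $\cG$ and $\cM$: the norm $(g,m)\mapsto\norm{m}$ is \usc\ by \ref{cond.B-USC} for $\cM$, and addition and scalar multiplication are continuous because they act trivially in the $\cG$-coordinate and via \ref{cond.B-plus}, \ref{cond.B-times} in the $\cM$-coordinate.

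I expect the only delicate point to be the net axiom \ref{cond.B-nets} for $\tau_{\mathrm{sub}}$: given a net $(g_i,m_i)$ whose base points converge to $(g_0,x_0)$ and with $\norm{m_i}\to0$, one obtains $g_i\to g_0$ directly from the convergence of the base points, while $q_{\cM}(m_i)\to x_0$ together with $\norm{m_i}\to0$ forces $m_i\to 0_{x_0}$ in $\cM$ by \ref{cond.B-nets} for $\cM$; combining the two coordinates gives convergence in $\tau_{\mathrm{sub}}$. Once all four axioms are in place, uniqueness forces $\tau_{\mathrm{sub}}=\tau$ and the basis assertion follows. Everything outside this net argument is routine bundle bookkeeping, so I anticipate no genuine obstruction beyond keeping the fibrewise identifications and the two coordinate projections straight.
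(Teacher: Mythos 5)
Your argument is correct, and its first half---topologizing the pull-back via Remark~\ref{rmk:topology-in-USC-bundles} applied to the sections $\mathrm{id}\ast\sigma$, with \ref{item:sections:usc} coming from upper semi-continuity of $x\mapsto\norm{\sigma(x)}$ composed with a continuous projection, and \ref{item:sections:dense} from the fact that $\cM$ has enough sections---coincides with the paper's proof. Where you genuinely diverge is the basis claim. The paper opens the black box of the construction: it quotes from the proof of \cite[Theorem C.25]{Wil2007} that the basic open sets of the resulting topology are the tubes $W(\mathrm{id}\ast\sigma,V,\epsilon)$, specializes $V$ to a basic set $U_{1}\bfp{}{}V'$, and computes directly that $W(\mathrm{id}\ast\sigma,U_{1}\bfp{}{}V',\epsilon)=U_{1}\bfp{}{}U_{2}$ with $U_{2}=W(\sigma,V',\epsilon)$. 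You instead never inspect the construction at all: you exhibit the subspace topology $\tau_{\mathrm{sub}}$ inherited from $\cG\times\cM$ as a competing topology, check that it satisfies \ref{cond.B-USC}--\ref{cond.B-nets} and makes every $\mathrm{id}\ast\sigma$ continuous (your coordinatewise treatment of the one delicate axiom, \ref{cond.B-nets}, is exactly right: the base-point convergence handles the $\cG$-coordinate, and \ref{cond.B-nets} for $\cM$ handles the $\cM$-coordinate), and then invoke the uniqueness clause of Remark~\ref{rmk:topology-in-USC-bundles} to force $\tau=\tau_{\mathrm{sub}}$, after which the basis statement is immediate. Both routes are sound. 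The paper's is shorter, at the price of relying on the internal description of the basic open sets in Williams' construction; yours re-proves four routine axioms but yields the cleaner and formally stronger conclusion that the pull-back topology is literally the relative topology from $\cG\times\cM$, which makes the basis claim---for any choice of bases of $\cG$ and $\cM$, not just the $W$-sets---and subsequent openness and net-convergence arguments for pull-back bundles transparent.
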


Note that, the bundle automatically has enough continuous cross-sections (see Remark~\ref{rmk:enough-sections}).

\begin{proof}
    Note that $\cG \bfp{s}{t}   X $ is \LCH, as it is a closed subspace of the \LCH\ space $\cG\times  X $.
    By construction, the projection map $\pi\colon \cG \bfp{s}{t}  \cM \to \cG\bfp{s}{t}X, (g,m)\mapsto (g,q_{\cM}(m)),$  is a surjection and each fibre
    \[
       \pi\inv(g,x)
       =
       \{
           (g,m) \,\mid\,q_{\cM}(m)=x
       \}
       \cong
       \cM_{x}
    \]
    is a Banach space.
    We now let $\Gamma$ denote the linear span of all elements $
       \mathrm{id}\ast\sigma$. If we can show 
        the conditions in Remark~\ref{rmk:topology-in-USC-bundles}, then it follows that there is such a unique topology.
    
    \smallskip
    
    For \ref{item:sections:usc}, first fix $\gamma =\mathrm{id}\ast\sigma.$ We have $\norm{\gamma (g,x)} = \norm{\sigma(x)}.$ As $\sigma$ is continuous and as $m\mapsto\norm{m}$ is \usc\ since $\cM$ is a \USCBb, $(g,x)\mapsto \norm{\gamma (g,x)}$ is indeed \usc. It follows that the same is true for any complex linear combination of such elements, i.e., any $\gamma \in\Gamma.$  
    
    For \ref{item:sections:dense}, we compute
    \begin{align*}
       \pi\inv (g,x)
       \supseteq
       \Gamma (g,x)
       &\supseteq
       \{(g,\sigma(x))\,\mid\,\sigma\in\Gamma_{0} ( X ;\cM)\}
       \\
       &\overset{(*)}{=}
       \{(g,m)\,\mid\,m\in\cM_x\}
       =
       \pi\inv (g,x),
    \end{align*}
    where $(*)$ follows since $\cM$ has enough continuous cross-sections.
    
    \smallskip
    
    It now remains to show that sets of the form $U_{1} \bfp{}{}  U_{2}$ form basic open subsets. From \cite[Proof of Theorem C.25]{Wil2007}, we know that basic open sets with respect to the newly constructed topology are of the form
    \begin{align*}  
       W(\mathrm{id}\ast\sigma,V,\epsilon):=\left\{
       ( g , m ) \in \cG \bfp{s}{t}  \cM \,\mid\,
       ( g ,q_{\cM}( m ))\in V, \norm{( g , m ) - ( \mathrm{id}\ast\sigma) ( g ,q_{\cM}( m ))} < \epsilon 
    \right\}
    \end{align*}
    for open $V\subseteq  \cG \bfp{s}{r }   X $, a fixed section $\sigma$ of $\cM$, and $\epsilon >0.$ As it suffices to take {\em basic} open sets in $\cG \bfp{s}{r }   X $, we may take $V:= U_{1} \bfp{}{} V'$ for some open $V'\subseteq  X $. We get
    \begin{align*}  
       W(\mathrm{id}\ast\sigma, U_{1} \bfp{}{}  V' ,\epsilon)
       =
       \left\{
           (h,\xi)\in \cG \bfp{s}{t}  \cM \,\mid\,
           h\in U_{1}, q_{\cM}(\xi)\in V', \norm{\xi - \sigma (q_{\cM}(\xi))} < \epsilon 
       \right\}
    \end{align*}
    But that set is exactly $U_{1} \bfp{}{}  U_{2}$ for $U_{2}:= W(\sigma, V' ,\epsilon)$, a basic open subset of~$\cM$. 
\end{proof}

\begin{definition}[{\cite[Definition 2.8]{BE2012}}]
An \usc\ Banach bundle $\cB=(B,p_{\cB})$ over a (\LCH\ \etale) groupoid $\cG$ is called {\em Fell bundle} 
if it comes with continuous maps
\[
    \cdot\;\colon
     \cB^{(2)}:=
     \left\{
        (a,b)\in B\times B  : (p_{\cB}(a),p_{\cB}(b))\in \cG^{(2)}
     \right\}
    \to
    B
    \quad\text{and}\quad
    {}^{\ast}\colon B \to B
\]
such that:
\begin{enumerate}[label=\textup{(F\arabic*)}]
\item\label{cond.F1} For each $(x,y)\in \cG^{(2)}$, $\cB_{x}\cdot \cB_y\subseteq\cB_{xy}$, i.e.\ $p_{\cB}(b\cdot c)=p_{\cB}(b) p_{\cB}(c)$ for all $(b,c)\in  \cB^{(2)}$.
\item\label{cond.F2} The multiplication is bilinear.
\item\label{cond.F3} The multiplication is associative, whenever it is defined.
\item\label{cond.F4} If $(b,c)\in  \cB^{(2)}$, then $\|b\cdot c\|\leq\|b\|\|c\|$, where the norm is the Banach norm of the respective fibre. 
\item\label{cond.F5} For any $x\in \cG$, $\cB_{x}^*\subseteq \cB_{x\inv }$.
\item\label{cond.F6} The involution map $b\mapsto b^*$ is conjugate linear.
\item\label{cond.F7} If $(b,c)\in  \cB^{(2)}$, then $(b\cdot c)^*=c^*\cdot  b^*$. 
\item\label{cond.F8} For any $b\in B$, $b^{**}=b$.
\item\label{cond.F9} For any $b\in B$, $\|b^*\cdot  b\|=\|b\|^2=\|b^*\|^2$. 
\item\label{cond.F10} For any $b\in B$, $b^*\cdot  b\geq 0$ in the fibre of $\cB$ over $s_{\cB}(b)$. 
\end{enumerate}
We call $\cB$ {\em saturated} if we have an equality of sets in Condition~\ref{cond.F1}. We will often write $bc$ for $b\cdot c$.
\end{definition}

\begin{definition}[{\cite{MW2008}}]\label{def:USCBb-action}
    Suppose that $\cB=(p_{\cB}\colon B\to \cG)$ is a Fell bundle over a (\LCH\ \etale) groupoid $\cG$, $X$ is a left $\cG$-space, and $\cM=(q_{\cM}\colon M \to X)$ is a \USCBb.  Then we say that {\em $\cB$
    acts on \textup(the left\textup) of $\cM$} if there is a continuous map 
    \(
      \cB\bfp{s}{r} \cM \to \cM,\,(b,m)\mapsto
    b\cdot m,\) such that
    \begin{enumerate}[leftmargin=2cm,label=\textup{(FA\arabic*)}]
    \item\label{item:FA:fibre} $q_{\cM}(b\cdot m)=p_{\cB}(b)\cdot q_{\cM}(m)$,
    \item\label{item:FA:assoc} $a\cdot (b\cdot m)=(ab)\cdot m$ for all appropriate $a\in B$, and
    \item\label{item:FA:norm} $\norm{b\cdot m}\leq\norm{b}\,\norm{m}$. 
    \end{enumerate}
\end{definition}
We point out that in \cite{MW2008}, Condition~\ref{item:FA:norm} had a typo: it should read $\;\leq\;$ instead of $\;=\;$.
An analogous definition can be made for a right action of a Fell bundle.

\begin{definition}[{\cite[Definition 6.1]{MW2008}}]\label{def:FBequivalence}
  Suppose that $\cG$, $\cH$ are \LCH\ \etale\ groupoids, that $\X$ is a $(\cG ,\cH )$-equivalence, that $\cB=(p_{\cG }\colon B\to \cG)$ and $\cC=(p_{\cH}\colon C\to \cH)$ are saturated Fell
  bundles, and that $\cM=(q_{\cM}\colon M\to \X)$ is a \USCBb.  We say that $\cM$ is a {\em $\cB\sme\cC$-equivalence}
  if the following conditions hold.
  \begin{enumerate}[leftmargin=2cm,label=\textup{(FE\arabic*)}]
  \item\label{item:FE:actions} There is a left $\cB$-action and a right $\cC$-action on $\cM$
    such that $b\cdot (m\cdot c)=(b\cdot m)\cdot c$ for all $b\in B $,
    $m\in M $, and $c\in C $, wherever it makes sense.
  \item\label{item:FE:ip} There are sesquilinear maps
  \[
  \begin{tikzcd}[row sep=tiny, column sep = small]
    \lip\cB<\cdot,\cdot>\colon&[-15pt] M \bfp{s}{s} M \ar[r]& B, && \rip\cC<\cdot,\cdot>\colon&[-15pt] M \bfp{r}{r} M \ar[r]& C
    \\
    &(m_{1},m_{2})\ar[r,mapsto]& \lip\cB<m_{1},m_{2}>, && &(m_{1},m_{2})\ar[r,mapsto]& \rip\cC<m_{1},m_{2}>
  \end{tikzcd}
  \]
  such that for all appropriately chosen $m_{i}\in M$, $b\in B$, and $c\in C$, we have
    \begin{enumerate}[leftmargin=1cm,label=\textup{(FE2.\alph*)}]
    \item\label{item:FE:ip:fibre}
      $p_{\cG }\bigl(\lip\cB<m_{1},m_{2}>\bigr)=\leoq{\cG}{q_{\cM}(m_{1})}{q_{\cM}(m_{2})}$\; and \;
      $p_{\cH}\bigl(\rip\cC<m_{1},m_{2}>\bigr)=\reoq{q_{\cM}(m_{1})}{q_{\cM}(m_{2})}{\cH}$,
    \item\label{item:FE:ip:adjoint} $\lip\cB<m_{1},m_{2}>^{*}=\lip\cB<m_{2},m_{1}>$\; and \;
      $\rip\cC<m_{1},m_{2}>^{*}=\rip\cC<m_{2},m_{1}>$,
    \item\label{item:FE:ip:C*linear} $\lip\cB<b\cdot m_{1},m_{2}>=b\lip\cB<m_{1},m_{2}>$\; and \;
      $\rip\cC<m_{1},m_{2}\cdot c>=\rip\cC<m_{1},m_{2}>c$, and
    \item\label{item:FE:ip:compatibility} $\lip\cB<m_{1},m_{2}>\cdot m_{3}=m_{1}\cdot\rip\cC<m_{2},m_{3}>$.
    \end{enumerate}
  \item\label{item:FE:SMEs} With the actions coming from \ref{item:FE:actions} and the inner products
    coming from \ref{item:FE:ip}, each $M(x)$ is a $B\bigl(r(x)\bigr)\sme
    C\bigl(s(x)\bigr)$-\ib.
  \end{enumerate}
\end{definition}

\begin{remark}[{\cite[Lemma 6.2]{MW2008}}]\label{rmk:MW-6.2}
    For $(g,x)\in \cG\bfp{s}{r} \X$, we have that $\cM_{g\cdot x}$ is isomorphic to $\cB_{g}\otimes_{\cB_{s(g)}} \cM_{x}$  as $\cB_{r(g)}-\cC_{s(x)}$-\ib s. We denote by $\cB_{g}\cdot \cM_{x}$ the dense subspace of $\cM_{g\cdot x}$ given by the image of $\cB_{g}\odot_{\cB_{s(g)}} \cM_{x}$ under that isomorphism.
    
    Similarly for $(x,h)\in \X\bfp{s}{r}\cH$,  $\cM_{x\cdot h}$ is isomorphic to $\cM_{x}\otimes_{\cC_{r(h)}} \cC_{h}$  as $\cB_{r(x)}-\cC_{s(h)}$-\ib s, and we let $\cM_{x}\cdot\cC_{h}$ be the corresponding dense subspace of $\cM_{x\cdot h}$.
\end{remark}

\section{Groupoid \Pe s}

In Section~\ref{sec:Product}, we will construct, out of Fell bundle equivalences, new \USCBb s over spaces that are not groupoid equivalences. We therefore need to be able to talk about spaces that are only `almost' equivalences, and so we start with the following definition.

\begin{definition}\label{def:preequiv}
   Two \LCH\ groupoids $\cG$ and $\cH$ are called {\em \peadjective} if there is a topological space $X$ 
  such that
\begin{enumerate}[leftmargin=1.7cm, label={(\peshort\arabic*)}]
	\item\label{item:def-preequiv:proper} $X$ is a proper left $\cG$- and proper right $\cH$-space,
	\item\label{item:def-preequiv:commute} the left and right action commute, and
	\item\label{item:def-preequiv:Leoq-Reoq} there exist  continuous surjective	maps
  \begin{align*}
      \Leoqempty{\cG}\colon X\bfp{s}{s}X \to \cG \quad\text{and}\quad  \Reoqempty{\cH}\colon X\bfp{r}{r}X\to \cH
  \end{align*}
  which satisfy the following for $x,x'\in X$, $g\in\cG, h\in\cH$ wherever it makes sense:
  \begin{enumerate}[label={(\peshort3.\alph*)}]
    \Item\label{item:def-preequiv:r-and-s}
         \begin{align*}
            r_{\cG}\left(\Leoq{\cG}{x}{x'} \right) = r_{X}(x)
            \quad\text{and}\quad
            s_{\cH}\left(\Reoq{x}{x'}{\cH} \right) = s_{X}(x'),
        \end{align*} 
    \Item\label{item:def-preequiv:inverse}
        \begin{align*}
            \Leoq{\cG}{x}{x'}\inv = \Leoq{\cG}{x'}{x}
            \quad\text{and}\quad
            \Reoq{x}{x'}{\cH}\inv = \Reoq{x'}{x}{\cH},
        \end{align*}
    \Item\label{item:def-preequiv:equivariant}
    \begin{align*}
        \Leoq{\cG}{g\cdot x}{x'} = 
        g\Leoq{\cG}{x}{x'}
        \quad\text{and}\quad 
        \Reoq{x}{x'\cdot h}{\cH} = 
        \Reoq{x}{x'}{\cH}h,
    \end{align*}
    \Item\label{item:def-preequiv:balancing}
    \begin{align*}
        \Leoq{\cG}{x\cdot h\inv}{x'}
        =
        \Leoq{\cG}{x}{x'\cdot h}
        \quad\text{and}\quad
        \Reoq{g\inv \cdot x}{x'}{\cH}
        =
        \Reoq{x}{g\cdot x'}{\cH},
    \end{align*}
    \Item\label{item:def-preequiv:transitive}
    \begin{align*}
        \Leoq{\cG}{x}{x'}\Leoq{\cG}{x'}{x''}
        &=
        \Leoq{\cG}{x}{x''}
        \quad\text{and}\quad
        \\
        \Reoq{x}{x'}{\cH}&\Reoq{x'}{x''}{\cH}
        =
        \Reoq{x}{x''}{\cH},
    \end{align*}
    \Item\label{item:def-preequiv:kernels}
    \begin{align*}
        \ker(\Leoq{\cG}{\cdot}{\cdot})
        &=
        \ker(\Reoq{\cdot}{\cdot}{\cH}).
    \end{align*}
    \end{enumerate}
	\end{enumerate}
		We then call $X$ a {\em $(\mathcal{G},\mathcal{H})$-\pe}. 
    Sometimes, we will equip the maps $\Leoqempty{\cG}$ and $\Reoqempty{\cH}$ with a superscript-$X$ to avoid ambiguity.
\end{definition}

One should think of $\Leoq{\cG}{\cdot}{\cdot}$ and $\Reoq{\cdot}{\cdot}{\cH}$ as weakened analogues of the maps $\leoq{\cG}{\cdot}{\cdot}$ and $\reoq{\cdot}{\cdot}{\cH}$, respectively, that a groupoid equivalence carries; however, we allow that
  \begin{align*}
      \Leoq{\cG}{x}{x'} \cdot x' \neq x
      \quad\text{ or }\quad 
      x\cdot \Reoq{x}{x'}{\cH} \neq x'.
  \end{align*}
 In fact, we will see in Corollary~\ref{cor:preequiv-almost-equivalence} that this is the only difference between a groupoid equivalence and a {groupoid \pe}.

\begin{remark}
    The axioms of a groupoid \pe\ have some easy consequences; here are the ones we will use frequently.
    \begin{description}
     \Item[\normalfont{
     \namedlabel{item:def-preequiv:s-and-r}
     {\ref*{item:def-preequiv:r-and-s}$'$}
     }]
        \begin{align*}s_{\cG}\left(\Leoq{\cG}{x}{x'} \right) = r_{X}(x')
            \quad\text{and}\quad
            r_{\cH}\left(\Reoq{x}{x'}{\cH} \right) = s_{X}(x),
        \end{align*}
    \Item[\normalfont{
     \namedlabel{item:def-preequiv:units}
     {\ref*{item:def-preequiv:inverse}$'$}
     }] 
    $$\Leoq{\cG}{x}{x}=r_{X}(x)\in \cG\z\quad\text{and}\quad\Reoq{x}{x}{\cH}=s_{X}(x)\in \cH\z.$$
    \end{description}
    In fact, \mbox{\ref{item:def-preequiv:s-and-r}} can be  proved using \mbox{\ref{item:def-preequiv:r-and-s}} and \mbox{\ref{item:def-preequiv:inverse}}, and it was implicitly used in \mbox{\ref{item:def-preequiv:transitive}}.
\end{remark}

\begin{example}\label{ex:equivalences-are-preequivs}
    Any $(\cG,\cH)$-equivalence $\X$ is a $(\cG,\cH)$-\pe\ if we let 
    \[
        \Leoq{\cG}{x}{x'}:=\leoq{\cG}{x}{x'}
        \quad\text{and}\quad 
        \Reoq{x}{x'}{\cH}:=\reoq{x}{x'}{\cH}.
    \]
\end{example}
\begin{proof}
	Conditons~\ref{item:def-preequiv:proper} and~\ref{item:def-preequiv:commute} hold by assumption (see Definition~\ref{def:GE}).
    We will only verify continuity and surjectivity of $\Leoqempty{\cG}$; all algebraic properties directly follow from the fact that the element $g=\leoq{\cG}{x}{x'}\in\cG$ {\em uniquely} satisfies $x=g\cdot x',$ and by symmetry, all properties then also follow for $\Reoqempty{\cG}$.
    
     For continuity of $\Leoqempty{\cG}$, suppose we are given a convergent net $(x_{\lambda},x'_{\lambda})\to (x,x')$ in $\X\bfp{s}{s}\X$. For each $\lambda$, $g_{\lambda}:=\leoq{\cG}{x_{\lambda}}{x'_{\lambda}}\in \cG$ satisfies $x_{\lambda}=g_{\lambda}\cdot x'_{\lambda},$ just as $g=\leoq{\cG}{x}{x'}\in\cG$ satisfies $x=g\cdot x'.$ Thus,
    \[
        (g_{\lambda}\cdot x'_{\lambda}, x'_{\lambda}) = (x_{\lambda},x'_{\lambda}) \to (x,x') = (g\cdot x', x').
    \]
    Since the actions on an equivalence are proper (see \ref{item:def-GA:proper} in Definition~\ref{df.left.action}), this implies that a subnet of
    \[
        \Leoq{\cG}{x_{\lambda}}{x_{\lambda}'}=\leoq{\cG}{x_{\lambda}}{x_{\lambda}'}= g_{\lambda}
        \text{ converges to }
        g = \leoq{\cG}{x}{x'}=\Leoq{\cG}{x}{x'}.
    \]
    This suffices to conclude that $\Leoqempty{\cG}$ is continuous.
\end{proof}

\begin{lemma}\label{lem:product-of-preequiv}
    Suppose $X$ is a $(\cG,\cH)$-\pe\ and $Y$ is an $(\cH,\mathcal{K})$-\pe.
    Then $ Z :=X\bfp{s}{r}Y$ is a $(\cG,\cH)$-\pe: its momentum maps are given by $r_{ Z }(x,y)=r_{X}(x)$ and $s_{ Z }(x,y)=s_{Y}(y)$; the actions are defined by $g\cdot(x,y)=(g\cdot x,y)$ and $(x,y)\cdot k = (x,y\cdot k)$; it carries the maps
    \[
        \Leoq[ Z ]{\cG}{(x,y)}{(x',y')}:=\Leoq[X]{\cG}{x}{x'\cdot \Leoq[Y]{\cH}{y'}{y}}
        \quad\text{and}\quad 
        \Reoq[ Z ]{(x,y)}{(x',y')}{\mathcal{K}}:=\Reoq[Y]{\Reoq[X]{x'}{x}{\cH}\cdot y}{y'}{\mathcal{K}}.
    \]
\end{lemma}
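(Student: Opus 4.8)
The plan is to verify that $Z = X \bfp{s}{r} Y$ satisfies all three conditions of Definition~\ref{def:preequiv}, with the two genuinely substantive tasks being the well-definedness (and topological properties) of the proposed maps $\Leoqempty[Z]{\cG}$ and $\Reoqempty[Z]{\mathcal{K}}$, and the verification of the algebraic axioms \ref{item:def-preequiv:r-and-s}--\ref{item:def-preequiv:kernels}. First I would dispatch the easy structural claims: the properness and commutativity of the actions on $Z$ (condition \ref{item:def-preequiv:proper} and \ref{item:def-preequiv:commute}) follow routinely from the corresponding properties of $X$ and $Y$ — the left $\cG$-action touches only the $X$-coordinate and the right $\mathcal{K}$-action only the $Y$-coordinate, so they commute, and properness is inherited since a net $(g_\lambda \cdot (x_\lambda, y_\lambda), (x_\lambda, y_\lambda)) = ((g_\lambda \cdot x_\lambda, y_\lambda),(x_\lambda,y_\lambda))$ converging forces $(g_\lambda \cdot x_\lambda, x_\lambda)$ to converge in $X \times X$, whence $(g_\lambda)$ has a convergent subnet by properness of the $\cG$-action on $X$. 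The momentum maps $r_Z(x,y) = r_X(x)$ and $s_Z(x,y) = s_Y(y)$ are continuous open surjections because they are composites of projections with the momentum maps of $X$ and $Y$.

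The first real check is that the formula for $\Leoq[Z]{\cG}{(x,y)}{(x',y')}$ makes sense. Since $(x,y),(x',y') \in Z$ lie in the same $s_Z$-fibre, we have $s_Y(y) = s_Y(y')$, so $\Reoq[Y]{\cdot}{\cdot}{\mathcal{K}}$ is not what we need here — rather we need $\Leoq[Y]{\cH}{y'}{y}$ to be defined, which requires $s_Y(y') = s_Y(y)$; this holds. Its source is $r_Y(y)$ and its range is $r_Y(y') = s_X(x')$ (using \ref{item:def-preequiv:r-and-s} and \ref{item:def-preequiv:s-and-r} together with the fibre-product constraint $s_X(x') = r_Y(y')$), so $x' \cdot \Leoq[Y]{\cH}{y'}{y}$ is a legitimate element of $X$ whose $s_X$-value equals $r_Y(y) = s_X(x)$; hence $\Leoq[X]{\cG}{x}{x' \cdot \Leoq[Y]{\cH}{y'}{y}}$ is defined, and lands in $\cG$. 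The analogous unwinding works for $\Reoqempty[Z]{\mathcal{K}}$. Continuity of both maps follows by composing continuity of $\Leoqempty[X]{\cG}$, $\Leoqempty[Y]{\cH}$, and the action maps; surjectivity onto $\cG$ (resp.\ $\mathcal{K}$) follows from surjectivity of $\Leoqempty[X]{\cG}$ by choosing the second argument appropriately.

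The bulk of the work is the algebraic verification of \ref{item:def-preequiv:r-and-s}--\ref{item:def-preequiv:kernels}; I would treat only the $\Leoqempty[Z]{\cG}$ side in detail and invoke symmetry for $\Reoqempty[Z]{\mathcal{K}}$. Conditions \ref{item:def-preequiv:r-and-s} and \ref{item:def-preequiv:equivariant} are direct: $r_{\cG}(\Leoq[Z]{\cG}{(x,y)}{(x',y')}) = r_X(x) = r_Z(x,y)$ by \ref{item:def-preequiv:r-and-s} for $X$, and equivariance in the left $\cG$-argument follows since $g \cdot (x,y) = (g\cdot x, y)$ only alters the first slot, on which $\Leoqempty[X]{\cG}$ is equivariant by \ref{item:def-preequiv:equivariant}. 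The inverse law \ref{item:def-preequiv:inverse} and the transitivity law \ref{item:def-preequiv:transitive} are where the interlocking of $X$- and $Y$-data becomes delicate: proving $\Leoq[Z]{\cG}{(x,y)}{(x',y')}\inv = \Leoq[Z]{\cG}{(x',y')}{(x,y)}$ and the cocycle identity will require repeated use of the balancing axiom \ref{item:def-preequiv:balancing} on $X$ to move the $\cH$-element $\Leoq[Y]{\cH}{\cdot}{\cdot}$ between arguments, together with \ref{item:def-preequiv:transitive} on both $X$ and $Y$ to collapse chains of bracket maps. \textbf{I expect the transitivity identity \ref{item:def-preequiv:transitive} to be the main obstacle}: one must show
\[
    \Leoq[X]{\cG}{x}{x'\cdot \Leoq[Y]{\cH}{y'}{y}} \cdot \Leoq[X]{\cG}{x'}{x''\cdot \Leoq[Y]{\cH}{y''}{y'}}
    = \Leoq[X]{\cG}{x}{x''\cdot \Leoq[Y]{\cH}{y''}{y}},
\]
and the natural route is to rewrite the middle factor using balancing \ref{item:def-preequiv:balancing} so that its first argument becomes $x' \cdot \Leoq[Y]{\cH}{y'}{y}$ — which requires inserting $\Leoq[Y]{\cH}{y'}{y}$ via the $X$-balancing law — and then apply transitivity \ref{item:def-preequiv:transitive} on $X$, finally simplifying $\Leoq[Y]{\cH}{y''}{y'}\Leoq[Y]{\cH}{y'}{y} = \Leoq[Y]{\cH}{y''}{y}$ by transitivity on $Y$. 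Finally, the kernel condition \ref{item:def-preequiv:kernels} must be checked by unwinding both kernels in terms of the $X$- and $Y$-kernels and using \ref{item:def-preequiv:kernels} for each factor; I expect this to reduce cleanly once the explicit descriptions of $\Leoqempty[Z]{\cG}$ and $\Reoqempty[Z]{\mathcal{K}}$ are in hand.
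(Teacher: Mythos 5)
Your proposal is correct and takes essentially the same approach as the paper: the paper's (very terse) proof likewise obtains continuity and surjectivity of $\Leoqempty[Z]{\cG}$ and $\Reoqempty[Z]{\mathcal{K}}$ by composing the corresponding properties of $\Leoqempty[X]{\cG}$, $\Leoqempty[Y]{\cH}$, and the actions, and then verifies the algebraic axioms directly from those of $X$ and $Y$. Your outline of \ref{item:def-preequiv:transitive} (use balancing \ref{item:def-preequiv:balancing} on $X$ to rewrite the middle factor, then transitivity on $X$, then collapse the $\cH$-brackets by transitivity on $Y$) and of \ref{item:def-preequiv:kernels} (reduce to the factorwise kernel conditions via equivariance) supplies exactly the computations the paper dismisses as ``easy to verify,'' and those computations do go through as you describe.
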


\begin{proof}
Continuity of $\Leoqempty[Z]{\cG}$ follows from continuity of $\Leoqempty[Y]{\cH}$, of the right $\cH$-action on $X$, and of $\Leoqempty[X]{\cG}.$ Surjectivity follows from surjectivity of both $\Leoqempty[Y]{\cH}$ and of $\Leoqempty[X]{\cG}.$ The algebraic properties are again easy to verify, using the corresponding properties of $\Leoqempty[Y]{\cH}$ and  $\Leoqempty[X]{\cG}.$ The same is true for $\Reoqempty[Y]{\mathcal{K}}.$
\end{proof}

\begin{remark}\label{rm.product.is.not.eq}
If $\X$ and $\Y$ are {\em equivalences} with $\Leoqempty[\X]{\cG}=\leoq[\X]{\cG}{\cdot}{\cdot}$ etc.\ as in Example~\ref{ex:equivalences-are-preequivs}, then $ Z = \X\bfp{s}{r}\Y$ is {\em not} automatically an equivalence: if we let  
$$g:=\Leoq[ Z ]{\cG}{(x,y)}{(x',y')} = \leoq[\X]{\cG}{x}{x'\cdot  \leoq[\Y]{\cH}{y'}{y}},$$ then the element 
$g\cdot (x',y')=(g\cdot x', y')$ does not necessarily equal $(x,y)$.
This is exactly our motivation for the definition of groupoid \pe.

However, we do have that the pair $((g\cdot x', y'),(x,y))$  belongs to the kernel of $\Leoqempty[Z]{\cG}$: since $\X$ is a groupoid equivalence, we have $\leoq[\X]{\cG}{x}{x'}\cdot x'=x$, so
\begin{alignat*}{2}
    g\cdot x' &= \leoq[\X]{\cG}{x}{x'\cdot  \leoq[\Y]{\cH}{y'}{y}} \cdot x' 
    \\
    &= \leoq[\X]{\cG}{x\cdot \leoq[\Y]{\cH}{y'}{y}\inv }{x'} \cdot x' 
    = x\cdot \leoq[\Y]{\cH}{y}{y'}.
\end{alignat*}
Thus, we see that 
\[
    \Leoq[ Z ]{\cG}{(g\cdot x',y')}{(x,y)} = 
    \leoq[\X]{\cG}{g\cdot x'}{x\cdot \leoq[\Y]{\cH}{y}{y'}}
    =
    \leoq[\X]{\cG}{g\cdot x'}{g\cdot x'}
\]
is an element of $\cG\z$ by Condition~\mbox{\ref{item:def-preequiv:units}}.
\end{remark}

The remark motivates the following:

\begin{lemma}\label{lem:preequiv:taking-the-quotient}
    If $X$ is a $(\cG,\cH)$-\pe, then we define the relation $\Rel$ on $X$ by
    \[
        x \Rel  x' \text{ if } (x,x')\in \ker(\Leoq{\cG}{\cdot}{\cdot}).
    \]
    This defines a closed equivalence relation; we let  $\mathcal{X}$ denote the quotient of $X$ by $\Rel$. The quotient map $\pi\colon X\to \mathcal{X}$ is open, and so $\mathcal{X}$ is \LCH. 
    For $x\in X$, we will sometimes write $\tilde{x}$ for $\pi(x).$
\end{lemma}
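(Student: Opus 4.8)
The plan is to first make the relation concrete: since $\ker(\Leoq{\cG}{\cdot}{\cdot})=(\Leoqempty{\cG})\inv(\cG\z)$, we have $x\Rel x'$ exactly when $(x,x')\in X\bfp{s}{s}X$ and $\Leoq{\cG}{x}{x'}\in\cG\z$. With this description the equivalence-relation axioms fall out of the pre-equivalence axioms. Reflexivity is \ref{item:def-preequiv:units}, which gives $\Leoq{\cG}{x}{x}=r_{X}(x)\in\cG\z$. Symmetry follows from \ref{item:def-preequiv:inverse}: if $\Leoq{\cG}{x}{x'}$ is a unit then so is $\Leoq{\cG}{x'}{x}=\Leoq{\cG}{x}{x'}\inv$. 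For transitivity, suppose $\Leoq{\cG}{x}{x'},\Leoq{\cG}{x'}{x''}\in\cG\z$; by \ref{item:def-preequiv:r-and-s} and \ref{item:def-preequiv:s-and-r} both units equal $r_{X}(x')$, so they are composable with product again the unit $r_{X}(x')$, and by \ref{item:def-preequiv:transitive} that product is $\Leoq{\cG}{x}{x''}$. Hence $\Rel$ is an equivalence relation. (I note in passing that $x\Rel x'$ forces $r_{X}(x)=r_{X}(x')$, since a unit equals its own range and source.)

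For closedness I would argue that $\cG\z$ is closed in $\cG$ because $\cG$ is Hausdorff — it is the equalizer $\{g:r_{\cG}(g)=g\}$ of two continuous maps — and that $X\bfp{s}{s}X$ is closed in $X\times X$ for the same reason. As $\Leoqempty{\cG}$ is continuous, $\Rel=(\Leoqempty{\cG})\inv(\cG\z)$ is then closed in $X\bfp{s}{s}X$, hence in $X\times X$.

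The heart of the lemma, and the step I expect to be the main obstacle, is openness of $\pi$. It suffices to show that for open $U\subseteq X$ the saturation $\pi\inv(\pi(U))=\{x':\exists x\in U,\ x\Rel x'\}$ is open, i.e.\ that the second-coordinate projection of $\Rel\cap(U\bfp{s}{s}X)$ is open. Two facts combine. First, the projection $p\colon X\bfp{s}{s}X\to X$, $(x,x')\mapsto x'$, is an open map: on a basic open set $V\bfp{s}{s}W$ its image is $W\cap s_{X}\inv(s_{X}(V))$, which is open precisely because the momentum map $s_{X}$ is open. Second — and this is where the \etale\ hypothesis on $\cG$ is used — the unit space $\cG\z$ is \emph{open} in $\cG$, so that $\Rel=(\Leoqempty{\cG})\inv(\cG\z)$ is in fact clopen in $X\bfp{s}{s}X$. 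Therefore $\Rel\cap(U\bfp{s}{s}X)$ is open, and applying the open map $p$ shows $\pi\inv(\pi(U))$ is open. The subtle point to get right is exactly this reliance on $\cG\z$ being open and not merely closed: a naive net-lifting argument only produces a net $\Leoq{\cG}{x_{\lambda}}{x'_{\lambda}}$ \emph{converging} to a unit, which does not by itself place the pairs in $\Rel$; openness of $\cG\z$ is what upgrades convergence into eventual membership.

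Finally, with $\pi$ an open continuous surjection, $\mathcal X$ inherits local compactness from $X$, since the $\pi$-image of the interior of a compact neighborhood of $x$ is an open set whose closure sits inside the compact set $\pi(K)$, giving a compact neighborhood of $\pi(x)$. Hausdorffness follows from the standard fact that the quotient of a space by an \emph{open} equivalence relation is Hausdorff if and only if the relation is closed in $X\times X$, which we have established. Thus $\mathcal X$ is \LCH.
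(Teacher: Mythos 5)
Your proof is correct and takes essentially the same route as the paper's: the equivalence-relation axioms are deduced from the same pre-equivalence conditions, closedness comes from continuity of $\Leoqempty{\cG}$ together with $\cG\z$ being closed, and openness of $\pi$ comes from openness of $\cG\z$ (since $\cG$ is \etale) combined with openness of the second-coordinate projection $X\bfp{s}{s}X\to X$, which is exactly the paper's Lemma~\ref{lemma:proj-open-from-bfp} reproved inline. The only difference is cosmetic: you spell out the final \LCH\ verification, which the paper leaves as immediate.
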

\begin{proof}~ 
    Reflexivity, symmetry, and transitivity of $\Rel$ follow easily from \mbox{\ref{item:def-preequiv:units}}, \mbox{\ref{item:def-preequiv:inverse}}, and \mbox{\ref{item:def-preequiv:transitive}} respectively. 
    To show that $\Rel$ is closed, notice first that the domain $X\bfp{s}{s}X$ of $\Leoq{\cG}{\cdot}{\cdot}$ is closed in $X\times X$, so it suffices to check that $\Rel $ is closed in $X\bfp{s}{s}X$. If $(x_{\lambda}, x'_{\lambda})\to (x,x')$ and $x_{\lambda}\Rel  x'_{\lambda}$, then by continuity of $\Leoq{\cG}{\cdot}{\cdot}$ and since $\cG\z$ is closed in $\cG$,
    \[
        \Leoq{\cG}{x}{x'} =  \lim_{\lambda} \Leoq{\cG}{x_{\lambda}}{x'_{\lambda}} \in \cG\z,
    \]
    showing that $x\Rel  x'$ also.
    
    To see that the quotient map $\pi$ is open, note that $\Leoqempty{\cG}{}\inv(\cG\z)$ is open in $X\bfp{s}{s}X$  since $\cG$ is \etale\ and since $\Leoqempty{\cG}$ is continuous. By definition of the subspace topology, there thus exists an open set $V\subseteq X\times X$ such that
    \(
        \Leoqempty{\cG}{}\inv(\cG\z) = V \cap (X\bfp{s}{s}X).
    \) Now, suppose  $U\subseteq X$ is open, so that $V\cap (U\times X)$ is open in $X\times X$ and hence $V\cap (U\bfp{s}{s} X) = (V\cap (U\times X)) \cap X\bfp{s}{s}X$ is open in $X\bfp{s}{s}X$. We compute
    \begin{align*}
        \mathrm{pr}_{2} (V\cap (U\bfp{s}{s} X))
        &=
        \{
            x\in X \,\mid\,
            \exists x'\in U\text{ such that } s_{X}(x')=s_{X}(x) \quad\text{and}\quad (x',x)\in V
        \}\\
        &=
        \{
            x\in X \,\mid\,
            \exists x'\in U\text{ such that } s_{X}(x')=s_{X}(x) \quad\text{and}\quad \Leoq{\cG}{x'}{x}\in \cG\z
        \}\\
        &=
        \{
            x\in X \,\mid\,
            \exists x'\in U\text{ such that } \pi(x')=\pi(x)
        \}
        =
        \pi\inv (\pi(U)).
    \end{align*}
    Since $s_{X}$ is open and continuous, the map $\mathrm{pr}_{2}\colon X\bfp{s}{s}X\to X$ is open by Lemma~\ref{lemma:proj-open-from-bfp}, so that $\mathrm{pr}_{2} (V\cap (U\bfp{s}{s} X)) = \pi\inv(\pi(U))$ is open as claimed.
    
    Since $X$ is \LCH\ and $\pi$ is open, it follows immediately that $\mathcal{X}$ is also \LCH.
\end{proof}

\begin{proposition}\label{prop:from-preequiv-to-equivalence}
    Suppose $X$ is a $(\cG,\cH)$-\pe\ and $\mathcal{X}$ the quotient constructed in Lemma~\ref{lem:preequiv:taking-the-quotient}. Then $\mathcal{X}$ is an equivalence of groupoids if we define
    \begin{align*}
        \tilde{r}\colon \mathcal{X}\to \cG\z, \; \tilde{r}( \tilde{x}):= r_X(x);&\qquad 
        g\cdot 
        \tilde{x}:= \pi(g\cdot x)
        \\
        \tilde{s}\colon \mathcal{X}\to \cH\z, \; \tilde{s}( \tilde{x}):= s_X(x); &\qquad
        \tilde{x} \cdot h:= \pi(x\cdot h).
    \end{align*}
    Furthermore, 
    for all $(x,x')\in X\bfp{s}{s}X$ and all $(y,y')\in X\bfp{r}{r}X$, we have
    \[
        \leoq[\mathcal{X}]{\cG}{\pi(x)}{\pi(x')} = \Leoq[X]{\cG}{x}{x'}
        \quad\text{and}\quad
        \reoq[\mathcal{X}]{\pi(y)}{\pi(y')}{\cH}=\Reoq[X]{y}{y'}{\cH}.
    \]
\end{proposition}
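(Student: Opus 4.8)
The plan is to check the three conditions \ref{item:def-GE:principal}--\ref{item:def-GE:homeos} of Definition~\ref{def:GE} for $\mathcal{X}$, after first recording that the proposed structure descends to the quotient. I would lean on three preliminary observations, each a short consequence of the axioms: (i) $\Rel$-related points share range and source, since membership in $\ker(\Leoqempty{\cG})\subseteq X\bfp{s}{s}X$ forces $s_X(x)=s_X(x')$ outright, while \ref{item:def-preequiv:r-and-s} and \ref{item:def-preequiv:units} give $r_X(x)=r_X(x')$; (ii) the left action preserves $s_X$ and the right action preserves $r_X$, obtained from \ref{item:def-preequiv:balancing} (e.g.\ $\Reoq{g\cdot x}{g\cdot x'}{\cH}=\Reoq{x}{x'}{\cH}$, then put $x'=x$); and (iii) $\Leoqempty{\cG}$ is constant on $\Rel$-classes in each variable, which follows by expanding $\Leoq{\cG}{x_2}{x_2'}=\Leoq{\cG}{x_2}{x_1}\,\Leoq{\cG}{x_1}{x_1'}\,\Leoq{\cG}{x_1'}{x_2'}$ with \ref{item:def-preequiv:transitive} and absorbing the two outer unit factors. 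Then well-definedness of $\tilde r,\tilde s$ is exactly (i), and well-definedness of the actions follows because \ref{item:def-preequiv:equivariant} and \ref{item:def-preequiv:inverse} give $\Leoq{\cG}{g\cdot x}{g\cdot x'}=g\,\Leoq{\cG}{x}{x'}\,g\inv$, which lies in $\cG\z$ whenever $\Leoq{\cG}{x}{x'}$ does; the right action is symmetric via \ref{item:def-preequiv:kernels}.

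Next I would establish that $\mathcal{X}$ is a left $\cG$- and right $\cH$-space. Continuity and openness of $\tilde r,\tilde s$ and the axioms \ref{item:def-GA:range}--\ref{item:def-GA:unit} descend through the open quotient map $\pi$ of Lemma~\ref{lem:preequiv:taking-the-quotient}; for the action map one checks that $\mathrm{id}\times\pi\colon\cG\bfp{s}{r}X\to\cG\bfp{s}{r}\mathcal{X}$ carries a basic open $U_1\bfp{}{}U_2$ onto $(U_1\times\pi(U_2))\cap(\cG\bfp{s}{r}\mathcal{X})$, so it is an open surjection and hence a quotient map. Commutativity \ref{item:def-GE:commute} is immediate from the corresponding identity on $X$. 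For freeness \ref{item:def-GA:free}: if $g\cdot\tilde x=\tilde x$ then $g\cdot x\Rel x$, so $\Leoq{\cG}{g\cdot x}{x}=g\,\Leoq{\cG}{x}{x}=g$ by \ref{item:def-preequiv:equivariant} and \ref{item:def-preequiv:units}, forcing $g\in\cG\z$ and thus $g=r_X(x)=\tilde r(\tilde x)$; the right action is symmetric.

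The main obstacle is properness \ref{item:def-GA:proper}, as $\Rel$ is not induced by a group action and so properness cannot simply be pushed forward. For the $\cG$-action I would argue as follows. Suppose $(g_\lambda\cdot\tilde x_\lambda,\tilde x_\lambda)\to(\tilde y,\tilde x)$. Fixing representatives $x,y$ of $\tilde x,\tilde y$ and using openness of $\pi$, I pass to a subnet and choose $x_\lambda\to x$ with $\pi(x_\lambda)=\tilde x_\lambda$; by (i), $(g_\lambda,x_\lambda)\in\cG\bfp{s}{r}X$ and $\pi(g_\lambda\cdot x_\lambda)\to\pi(y)$. Openness of $\pi$ again yields a further subnet and points $w_\lambda\to y$ with $w_\lambda\Rel(g_\lambda\cdot x_\lambda)$. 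The crucial identity is $g_\lambda=\Leoq{\cG}{g_\lambda\cdot x_\lambda}{x_\lambda}$ (from \ref{item:def-preequiv:equivariant} and \ref{item:def-preequiv:units}), which by (iii) equals $\Leoq{\cG}{w_\lambda}{x_\lambda}$; since $w_\lambda\to y$, $x_\lambda\to x$, and $s_X(y)=s_X(x)$ by (ii) and continuity of $s_X$, continuity of $\Leoqempty{\cG}$ gives $g_\lambda\to\Leoq{\cG}{y}{x}$. Hence $(g_\lambda)$ converges along a subnet, as required, and the $\cH$-action is handled symmetrically with $\Reoqempty{\cH}$.

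Finally, for \ref{item:def-GE:homeos}: by (ii), $\tilde r$ is constant on $\cH$-orbits and so descends to a continuous surjection $\mathcal{X}/\cH\to\cG\z$; injectivity amounts to showing that $r_X(x)=r_X(x')$ implies $\tilde x\cdot h=\tilde x'$ for $h:=\Reoq{x}{x'}{\cH}$, which I verify by computing $\Reoq{x\cdot h}{x'}{\cH}=h\inv\Reoq{x}{x'}{\cH}\in\cH\z$ via \ref{item:def-preequiv:equivariant} and \ref{item:def-preequiv:inverse}, then invoking \ref{item:def-preequiv:kernels}. This bijection is open because $\tilde r$ is open and the orbit map $q_\cH\colon\mathcal{X}\to\mathcal{X}/\cH$ is open: indeed $q_\cH\inv(q_\cH(U))$ is the image of the open set $\{(y,h):y\cdot h\in U\}$ under the first-coordinate projection $\mathcal{X}\bfp{s}{r}\cH\to\mathcal{X}$, which is open by Lemma~\ref{lemma:proj-open-from-bfp} since $\cH$ is \etale. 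The statement for $\tilde s$ is symmetric. The \emph{furthermore} then follows from freeness: $\leoq[\mathcal{X}]{\cG}{\tilde x}{\tilde x'}$ is the unique $g$ with $g\cdot\tilde x'=\tilde x$, and $g=\Leoq[X]{\cG}{x}{x'}$ works because $\Leoq{\cG}{\Leoq{\cG}{x}{x'}\cdot x'}{x}=\Leoq{\cG}{x}{x'}\,\Leoq{\cG}{x'}{x}=\Leoq{\cG}{x}{x}\in\cG\z$ (by \ref{item:def-preequiv:equivariant}, \ref{item:def-preequiv:transitive}, \ref{item:def-preequiv:units}), giving $\leoq[\mathcal{X}]{\cG}{\pi(x)}{\pi(x')}=\Leoq[X]{\cG}{x}{x'}$; the identity for $\Reoqempty{\cH}$ is symmetric.
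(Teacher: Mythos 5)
Your proof is correct, and outside of properness it follows the paper's own argument nearly step for step: the well-definedness computation via $\Leoq{\cG}{g\cdot x}{g\cdot x'}=g\,\Leoq{\cG}{x}{x'}\,g\inv$, the freeness argument, the injectivity and openness computations for the induced map $\mathcal{X}/\cH\to\cG\z$, and the closing identity $\Leoq[X]{\cG}{x}{x'}\cdot\tilde x'=\tilde x$ (which pins down $\leoq[\mathcal{X}]{\cG}{\pi(x)}{\pi(x')}$ by uniqueness) all coincide with the paper's. The genuinely different step is properness, which is also the technical core. The paper never verifies the net condition \ref{item:def-GA:proper} directly: having established freeness, it proves instead that $\Phi_0(g,\tilde x)=(g\cdot\tilde x,\tilde x)$ is a \emph{closed} map, testing closedness of $\Phi_0(A)$ by pulling back along the open quotient $\pi\times\pi$, so the whole argument runs with the given net already in $X\times X$ and no lifting is needed; the cost is invoking (via freeness and \cite[Proposition 2.7]{Wil2019}) the equivalence of properness with closedness of $\Phi_0$. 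You verify \ref{item:def-GA:proper} head-on, lifting the convergent nets from $\mathcal{X}$ to $X$ by two applications of the subnet-lifting property of continuous open surjections (\cite[II.13.2]{FellDoranVol1}, a tool the paper itself uses in Proposition~\ref{prop:cd}); after that, both arguments hinge on the same mechanism, namely recovering $g_\lambda$ as $\Leoqempty[X]{\cG}$ of suitable representatives and applying continuity of $\Leoqempty[X]{\cG}$. Your route is more self-contained with respect to the stated definition of properness but requires subnet bookkeeping; the paper's avoids subnets but leans on the closed-map characterization. A further minor difference worth keeping: you derive the $\cG$-invariance of $s_X$ and $\cH$-invariance of $r_X$ (your observation (ii)) from \ref{item:def-preequiv:balancing} and \ref{item:def-preequiv:units}, whereas the paper uses these facts tacitly as part of the commuting-actions hypothesis; since Definition~\ref{def:preequiv} never states them explicitly, making them explicit is a small but genuine improvement.
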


\begin{proof}
    We will prove all things only for the left side; the right side will follow by symmetry and since the actions clearly commute. So let us first check that the \LCH\ space $\mathcal{X}=X/\Rel$ is a left $\cG$-space.
    
    To see that the action is well defined, suppose $\tilde{x}=\tilde{y}$, i.e., $\Leoq[X]{\cG}{x}{y}\in\cG\z.$ Then 
        \[
            \Leoq[X]{\cG}{g\cdot x}{g\cdot y} = g\,\Leoq[X]{\cG}{x}{y}\,g\inv = gg\inv \in \cG\z,
        \]
        which shows that $\pi(g\cdot x)=\pi(g\cdot y).$ 
        
        Since $r_X\colon X\to \cG\z$ is continuous, open, and surjective, and since $\pi$ is open by Lemma~\ref{lem:preequiv:taking-the-quotient}, we have that $\tilde{r}$ is continuous, open, and surjective. 
        
        Next, let $f_0\colon \cG\bfp{s}{r}\mathcal{X}\to \mathcal{X}$ denote the newly defined action. To show that $f_0$ is continuous, suppose $U_0\subseteq \mathcal{X}$ is open, i.e., $U:=\pi\inv (U_0)$ is open in $X$.
        As the action $f\colon \cG\bfp{s}{r}X\to X$ on $X$ is continuous, $f\inv (U)$ is open in $\cG\bfp{s}{r}X$, so we may write it as $\bigcup_{i\in I} V_{i}\ast U_{i}$ for some open sets $V_{i}\subseteq \cG$ and $U_{i}\subseteq X$. But then
        \[
            f_0\inv (U_0) =  (\mathrm{id}\ast\pi) (f\inv (\pi\inv(U_0))=  (\mathrm{id}\ast\pi) \Bigl(\bigcup_{i\in I} V_{i}\ast U_{i}\Bigr) = \bigcup_{i\in I} V_{i}\ast \pi(U_{i}),
        \]
        which is open in $\cG\bfp{s}{r}\mathcal{X}$ since $\pi$ is an open map by Lemma~\ref{lem:preequiv:taking-the-quotient}.
        
        \smallskip 
        
        We now verify that this action satisfies all the axioms listed in Definition~\ref{df.left.action}. 
        The items \mbox{\ref{item:def-GA:range}}, \mbox{\ref{item:def-GA:assoc}}, \mbox{\ref{item:def-GA:unit}} follow immediately from the corresponding property of the $\cG$-action on $X$. Now, if $g\cdot\tilde{x}=\tilde{x}$, then $\Leoq[X]{\cG}{g\cdot x}{x}\in\cG\z$ by definition of $\Rel$, so by {\mbox{\ref{item:def-preequiv:equivariant}}}
        and \mbox{\ref{item:def-preequiv:units}}, that means $g\in \cG\z$; in other words, the action on $\mathcal{X}$ is free.
        To see that the action is proper, it thus suffices to check that
        $\Phi_0\colon \cG\bfp{s}{r}\mathcal{X}\to \mathcal{X}\times \mathcal{X}, (g,\tilde{x})\mapsto (g\cdot \tilde{x}, \tilde{x})$, is a closed map.
        For any closed $A    \subseteq \cG\bfp{s}{r}\mathcal{X}$, we must show that 
        \begin{align*}
            (\pi\times\pi)\inv (\Phi_0(A))
            &=
            \left\{
                (y,x)\in X\times X \,\mid\,
                \exists g\in \cG, \tilde{y}=\pi(g\cdot x) \text{ and } (g,\tilde{x})\in A
            \right\}
        \end{align*}
        is closed in $X\times X$. So suppose that $\{(y_{\lambda},x_{\lambda})\}_{\lambda}$ is a net in $(\pi\times\pi)\inv (\Phi_0(A))$ that converges to $(y,x)$ in $X\times X.$ 
        By the above description of the preimage, it follows that, for every $\lambda$, there exists $g_{\lambda}\in \cG$ such that $\tilde{y}_{\lambda}=\pi(g_{\lambda}\cdot x_{\lambda})$ and $ (g_{\lambda},\tilde{x}_{\lambda})\in A$. The equality in $\mathcal{X}$ implies
        \(
            \Leoq[X]{\cG}{g_{\lambda}\cdot x_{\lambda}}{y_{\lambda}}
            \in \cG\z.
        \)
        By Conditions~\mbox{\ref{item:def-preequiv:equivariant}} and \mbox{\ref{item:def-preequiv:inverse}}, that means
        \[
            g_{\lambda} = \Leoq[X]{\cG}{x_{\lambda}}{y_{\lambda}}\inv = \Leoq[X]{\cG}{y_{\lambda}}{x_{\lambda}}.
        \]
        By continuity of $\Leoq[X]{\cG}{\cdot}{\cdot}$, we conclude that $g_{\lambda}\to \Leoq[X]{\cG}{y}{x}=:g$. As $x_{\lambda}\to x$ by assumption, it follows that the net $\{(g_{\lambda}, \tilde{x}_{\lambda})\}_{\lambda}$ converges to $(g,\tilde{x})$. 
        Since $A$ contains the net and is closed, we conclude 
        $(g,\tilde{x})\in A$. By definition of $g$ and using Conditions~\mbox{\ref{item:def-preequiv:equivariant}} and \mbox{\ref{item:def-preequiv:inverse}}, we see
		\begin{align*}
			\Leoq[X]{\cG}{g\cdot x}{y}
			=
			g\cdot \Leoq[X]{\cG}{x}{y}
			=
			gg\inv \in \cG\z,
		\end{align*}
        so that $\pi(g\cdot x)=\tilde{y}$ by definition of $\Rel$. In particular, the limit $(g\cdot x,x)=(y,x)$ of $\{(y_{\lambda},x_{\lambda})\}_{\lambda}$ is an element of $(\pi\times\pi)\inv (\Phi_0(A)).$ This concludes our proof that this set is closed.
        
        \smallskip 
        
        Lastly, we need to check that the map $r\colon \mathcal{X}/\cH \to \cG\z, \tilde{x}\cdot \cH \mapsto r_{X}(x),$ is well defined, injective, continuous, and open. We let $q\colon \mathcal{X}\to\mathcal{X}/\cH$ denote the quotient map.
        Suppose $\tilde{x}\cdot \cH=\tilde{y}\cdot \cH$, so there exists $h\in\cH$ such that $v:=\Leoq[X]{\cG}{x}{y\cdot h}\in\cG\z$. Then $v=r_{\cG}(v)=r_{X}(x)$ and $v=s_{\cG}(v)=r_{X}(y\cdot h)=r_{X}(y)$, showing that $r_{X}(x)=r_{X}(y)$ and thus $r$ is well defined on $\mathcal{X}/\cH$.
        
        To see that $r$ is injective, suppose $r_{X}(x)=r_{X}(y)$ and let $h:= \Reoq[X]{y}{x}{\cH}$. Then using Conditions~\mbox{\ref{item:def-preequiv:s-and-r}} and \mbox{\ref{item:def-preequiv:transitive}}, we have
        \[ 
        \Reoq[X]{x}{y\cdot h }{\cH}=\Reoq[X]{x}{y}{\cH} h = \Reoq[X]{x}{x}{\cH} \in \cH\z,
        \]
        showing that $(x,y\cdot h)\in \ker (\Reoqempty[X]{\cH})= \ker (\Leoqempty[X]{\cG}).$ In particular, $\tilde{x}=\pi(x)=\pi(y\cdot h)= \pi(y)\cdot h = \tilde{y}\cdot h$. That means that $\tilde{x}\cdot \cH =\tilde{y}\cdot \cH$, as claimed.

        For continuity, take $U\subseteq \cG\z$. Then
        \begin{align*}
            r\inv (U) \text{ is open in } \mathcal{X}/\cH
            &
            \iff 
            q\inv( r\inv (U))\text{ is open in } \mathcal{X}
            \\&
            \iff 
            \pi\inv(q\inv (r\inv (U)))\text{ is open in } X.
        \end{align*}
        But note that
        \begin{align*}
            \pi\inv(q\inv (r\inv (U)))
            &=
            \{
                x\in X
                \,\mid\,
                r(q(\pi(x)))\in U
            \}
            =
            \{
                x\in X
                \,\mid\,
                r_{X}(x)\in U
            \}
            =
            r_{X}\inv (U),
        \end{align*}
        which, if $U$ is open, is open by continuity of $r_{X}$.

        Finally, to see that $r$ is an open map, suppose $V\subseteq  \mathcal{X}/\cH$ is open, i.e., $q\inv (V)$ is open in $\mathcal{X}$, i.e., $\pi\inv(q\inv (V))$ is open in $X$. Since $r_{X}$ is an open map, $r_{X}(\pi\inv(q\inv (V)))$ is open in $\cG\z$. We compute:
        \begin{align*}
            r_{X}(\pi\inv(q\inv (V)))
            &=
            \{
                u\in\cG\z\,\mid\,
                \exists x\in X\text{ such that } q(\pi(x))\in V \text{ and } r_{X}(x)=u
            \}\\
            &=
            \{
                u\in\cG\z\,\mid\,
                \exists \tilde{x}\in \mathcal{X}\text{ such that } q(\tilde{x})\in V \text{ and } \tilde{r}(\tilde{x})=u
            \}\\
            &=
            \{
                u\in\cG\z\,\mid\,
                \exists \tilde{x}\cdot\cH\in \mathcal{X}/\cH\text{ such that } \tilde{x}\cdot\cH\in V \text{ and } r(\tilde{x}\cdot\cH)=u
            \}
            =
            r (V).
        \end{align*}
    We conclude that $\mathcal{X}$ is an equivalence between the groupoids $\cG$ and~$\cH$. 

    \smallskip
    
    It remains to show that \(
        \leoq[\mathcal{X}]{\cG}{\tilde{x}}{\tilde{y}} = \Leoq[X]{\cG}{x}{y}
    \). We have that
    \begin{align*}
        \Leoq[X]{\cG}{\Leoq[X]{\cG}{x}{y} \cdot y}{x}
        =
        \Leoq[X]{\cG}{x}{y} \,
        \Leoq[X]{\cG}{y}{x}
        =
        r_{\cG} \left(\Leoq[X]{\cG}{x}{y}\right)
        \in\cG\z,
    \end{align*}
    which means that $\Leoq[X]{\cG}{x}{y} \cdot y$ is related to $x$ via $\Rel$. In other words,
    \[ 
        \Leoq[X]{\cG}{x}{y} \cdot \tilde{y} = \pi\left(\Leoq[X]{\cG}{x}{y} \cdot y\right) = \tilde{x}.
    \]
    This means that $\Leoq[X]{\cG}{x}{y}$ satisfies the property that uniquely determines $\leoq[\mathcal{X}]{\cG}{\tilde{x}}{\tilde{y}}$.
\end{proof}

\begin{remark}
    It is clear that a groupoid equivalence is also a \pe.  Proposition~\ref{prop:from-preequiv-to-equivalence} proves 
    that any \pe\ gives rise to an equivalence, 
    and therefore, two groupoids are equivalent if and only if they allow a groupoid \pe. 
\end{remark}

\begin{corollary}[cf.\ \cite{MRW:Grpd}]
\label{cor:bfp-of-gpd} 
    If $\X$ is a $(\cG,\cH)$-equivalence and $\Y$ is an $(\cH,\mathcal{K})$-equivalence, then their balanced fibre product $\X\bfp{}{\cH}\Y$ is a $(\cG,\mathcal{K})$-equivalence and the quotient map $[\cdot]_{\ipscriptstyle\cH}\colon X\bfp{s}{r}Y\to X\bfp{}{\cH}Y$ is open.
\end{corollary}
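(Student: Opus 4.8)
The plan is to realize $\X\bfp{}{\cH}\Y$ as the quotient of a groupoid \pe, so that the conclusion follows from the machinery already developed. By Example~\ref{ex:equivalences-are-preequivs}, the equivalences $\X$ and $\Y$ are in particular a $(\cG,\cH)$- resp.\ $(\cH,\mathcal{K})$-\pe\ when equipped with $\Leoqempty{\cG}=\leoq{\cG}{\cdot}{\cdot}$, $\Reoqempty{\cH}=\reoq{\cdot}{\cdot}{\cH}$, etc. By Lemma~\ref{lem:product-of-preequiv}, the fibre product $Z:=X\bfp{s}{r}Y$ is then a $(\cG,\mathcal{K})$-\pe, and combining Lemma~\ref{lem:preequiv:taking-the-quotient} with Proposition~\ref{prop:from-preequiv-to-equivalence} shows that the quotient $\mathcal{Z}:=Z/\Rel$ is a $(\cG,\mathcal{K})$-equivalence with open quotient map $\pi\colon Z\to\mathcal{Z}$. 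It therefore only remains to identify $\mathcal{Z}$, together with its equivalence structure, with the balanced fibre product $\X\bfp{}{\cH}\Y$.

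For this I would show that the relation $\Rel$ on $Z$ agrees with the balancing relation that identifies $(x\cdot h, h\inv\cdot y)$ with $(x,y)$. The key elementary fact is that, for an equivalence, $\leoq{\cG}{u}{w}$ is the unique element carrying $w$ to $u$, so $\leoq{\cG}{u}{w}\in\cG\z$ if and only if $u=w$. Using the explicit formula from Lemma~\ref{lem:product-of-preequiv}, $\Leoq[Z]{\cG}{(x,y)}{(x',y')}=\leoq[X]{\cG}{x}{x'\cdot\leoq[Y]{\cH}{y'}{y}}$ (where the capital maps have been replaced by the lower-case equivalence pairings via Example~\ref{ex:equivalences-are-preequivs}), and writing $h:=\leoq[Y]{\cH}{y'}{y}$ for the unique $h\in\cH$ with $y'=h\cdot y$, I would deduce that $(x,y)\Rel(x',y')$ if and only if $x=x'\cdot h$, i.e.\ if and only if $(x',y')=(x\cdot h\inv, h\cdot y)$. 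Setting $k:=h\inv$ this reads $(x',y')=(x\cdot k, k\inv\cdot y)$, which is exactly the balancing relation. Hence $\mathcal{Z}=Z/\Rel$ coincides with $X\bfp{}{\cH}Y$ as a set, and since the two relations are literally equal, also as a topological space, with $\pi$ being the quotient map $[\cdot]_{\ipscriptstyle\cH}$; its openness is then immediate from Lemma~\ref{lem:preequiv:taking-the-quotient}.

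Finally I would verify that the equivalence structure produced by Proposition~\ref{prop:from-preequiv-to-equivalence} is the standard one on the balanced product: under the identification $\pi=[\cdot]_{\ipscriptstyle\cH}$, the momentum maps $\tilde r(\pi(x,y))=r_X(x)$ and $\tilde s(\pi(x,y))=s_Y(y)$ and the actions $g\cdot\pi(x,y)=\pi(g\cdot x,y)$ and $\pi(x,y)\cdot k=\pi(x,y\cdot k)$ match the maps and actions on $\X\bfp{}{\cH}\Y$ recalled after Definition~\ref{def:GE}. The one genuinely non-formal point is the middle step: identifying $\ker(\Leoqempty[Z]{\cG})$ with the balancing relation, which rests on unwinding the composite formula for $\Leoq[Z]{\cG}{\cdot}{\cdot}$ and on the unit-characterisation $\leoq{\cG}{u}{w}\in\cG\z\iff u=w$. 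Once that identification is in place, both the $(\cG,\mathcal{K})$-equivalence axioms and the openness of $[\cdot]_{\ipscriptstyle\cH}$ come for free from the already-established results.
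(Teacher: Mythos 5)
Your proposal is correct and follows essentially the same route as the paper: view $\X$ and $\Y$ as \pe s (Example~\ref{ex:equivalences-are-preequivs}), form the \pe\ $Z=\X\bfp{s}{r}\Y$ (Lemma~\ref{lem:product-of-preequiv}), and identify $\ker(\Leoqempty[Z]{\cG})$ with the $\cH$-balancing relation via the characterisation $\leoq{\cG}{u}{w}\in\cG\z\iff u=w$, so that Lemma~\ref{lem:preequiv:taking-the-quotient} and Proposition~\ref{prop:from-preequiv-to-equivalence} yield the equivalence structure and the openness of $[\cdot]_{\ipscriptstyle\cH}$. Your closing check that the induced momentum maps and actions agree with the standard ones on $\X\bfp{}{\cH}\Y$ is a harmless addition the paper leaves implicit.
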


\begin{proof}
    We have seen in Example~\ref{ex:equivalences-are-preequivs} that $\X$ and $\Y$ are groupoid \pe s, so by
     Lemma~\ref{lem:product-of-preequiv},  $ Z := \X\bfp{s}{r}\Y$ is a $(\cG,\mathcal{K})$-\pe.
    We claim that $\Rel$ is exactly the equivalence relation that gives rise to $\X\bfp{}{\cH}\Y$
    , so let us unravel what exactly $\Rel$ means here:
    \begin{align*}
		(x,y) \Rel  (x',y') 
		&\iff
        ((x,y),(x',y'))\in \ker(\Leoq[ Z ]{\cG}{\cdot}{\cdot})
        \\&\iff
        \leoq[\X]{\cG}{x}{x'\cdot \leoq[\Y]{\cH}{y'}{y}}
        \in\cG\z
        \\
        &\iff
        x = x'\cdot \leoq[\Y]{\cH}{y'}{y}
        \\
        &\iff
        \exists h \in\cH, \,
        x = x'\cdot h
        \text{ and }
        y'=h\cdot y
        \\
        &\iff
        \exists h \in\cH, \,
        (x,y) = (x'\cdot h, h\inv \cdot y')
        \\&\iff
        [x,y]_{\ipscriptstyle\cH} = [x',y']_{\ipscriptstyle\cH} \text{ in } \X\bfp{}{\cH}\Y.
    \end{align*}
    We conclude that $\X\bfp{}{\cH}\Y$ is exactly $(\X\bfp{s}{r}\Y) / \Rel$, which is a $(\cG,\mathcal{K})$-equivalence by  Proposition~\ref{prop:from-preequiv-to-equivalence}.
\end{proof}

\begin{corollary}\label{cor:preequiv-almost-equivalence}
Suppose $X$ is a $(\cG,\cH)$-\pe. Then the following  are equivalent:
\begin{enumerate}[label=\textup{(\arabic*)}]
    \item\label{item:preequiv:L-nondeg} For all $x,x'\in X $ such that $ s_{X}(x)=s_{X}(x'),$  we have $ \Leoq{\cG}{x}{x'} \cdot x' = x$.
    \item\label{item:preequiv:R-nondeg} For all $ y,y'\in X $ such that $r_{X}(y)=r_{X}(y'), $ we have $y\cdot \Reoq{y}{y'}{\cH} = y' $.
    \item\label{item:preequiv:X-equiv} $X$ is a groupoid equivalence for which $\leoq{\cG}{\cdot}{\cdot}=\Leoq{\cG}{\cdot}{\cdot}$ and $\reoq{\cdot}{\cdot}{\cH}=\Reoq{\cdot}{\cdot}{\cH}$.
\end{enumerate}
\end{corollary}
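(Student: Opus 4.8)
The plan is to establish $\ref{item:preequiv:X-equiv}\Rightarrow\ref{item:preequiv:L-nondeg}$, then the symmetric equivalence $\ref{item:preequiv:L-nondeg}\Leftrightarrow\ref{item:preequiv:R-nondeg}$, and finally $\ref{item:preequiv:L-nondeg}\Rightarrow\ref{item:preequiv:X-equiv}$; together these yield all three equivalences. The implication $\ref{item:preequiv:X-equiv}\Rightarrow\ref{item:preequiv:L-nondeg}$ is immediate: if $X$ is an equivalence with $\leoq{\cG}{\cdot}{\cdot}=\Leoq{\cG}{\cdot}{\cdot}$, then $\Leoq{\cG}{x}{x'}=\leoq{\cG}{x}{x'}$ is by definition the unique $g\in\cG$ satisfying $g\cdot x'=x$, whence $\Leoq{\cG}{x}{x'}\cdot x'=x$.

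The heart of the argument is $\ref{item:preequiv:L-nondeg}\Leftrightarrow\ref{item:preequiv:R-nondeg}$, and it is precisely here that the kernel condition \ref{item:def-preequiv:kernels} is essential, since the two statements a priori concern the different fibre products $X\bfp{s}{s}X$ and $X\bfp{r}{r}X$. Assuming \ref{item:preequiv:L-nondeg}, I take $y,y'$ with $r_X(y)=r_X(y')$ and set $h:=\Reoq{y}{y'}{\cH}$; by \ref{item:def-preequiv:r-and-s} and \ref{item:def-preequiv:s-and-r} this $h$ satisfies $r_{\cH}(h)=s_X(y)$ and $s_{\cH}(h)=s_X(y')$, so $y\cdot h$ is defined with $s_X(y\cdot h)=s_X(y')$. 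Combining \ref{item:def-preequiv:inverse} and \ref{item:def-preequiv:equivariant} gives $\Reoq{y\cdot h}{y'}{\cH}=h\inv\Reoq{y}{y'}{\cH}=h\inv h\in\cH\z$, so $(y\cdot h,y')\in\ker(\Reoqempty{\cH})$. The kernel condition \ref{item:def-preequiv:kernels} then puts $(y\cdot h,y')\in\ker(\Leoqempty{\cG})$, i.e.\ $\Leoq{\cG}{y\cdot h}{y'}\in\cG\z$; as $r_X(y\cdot h)=r_X(y')$, this unit equals $r_X(y')$ by \ref{item:def-preequiv:r-and-s}. Substituting into \ref{item:preequiv:L-nondeg} gives $y\cdot h=\Leoq{\cG}{y\cdot h}{y'}\cdot y'=r_X(y')\cdot y'=y'$, which is \ref{item:preequiv:R-nondeg}. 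The converse $\ref{item:preequiv:R-nondeg}\Rightarrow\ref{item:preequiv:L-nondeg}$ is the mirror image, interchanging $\Leoqempty{\cG}$ with $\Reoqempty{\cH}$ and again invoking \ref{item:def-preequiv:kernels}.

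For $\ref{item:preequiv:L-nondeg}\Rightarrow\ref{item:preequiv:X-equiv}$ I may now use both \ref{item:preequiv:L-nondeg} and \ref{item:preequiv:R-nondeg}. Properness and commutativity of the two actions are built into the definition of a \pe, so \ref{item:def-GE:commute} holds and it remains to show that the actions are principal (\ref{item:def-GE:principal}) and that the momentum maps induce homeomorphisms (\ref{item:def-GE:homeos}). Freeness needs no extra hypothesis: if $g\cdot x=x$, then, since the left action preserves $s_X$, the element $\Leoq{\cG}{g\cdot x}{x}$ is defined, and comparing $\Leoq{\cG}{g\cdot x}{x}=\Leoq{\cG}{x}{x}=r_X(x)$ (by \ref{item:def-preequiv:units}) with $\Leoq{\cG}{g\cdot x}{x}=g\,\Leoq{\cG}{x}{x}=g$ (by \ref{item:def-preequiv:equivariant}) forces $g=r_X(x)$; the right action is free by symmetry. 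For \ref{item:def-GE:homeos}, the maps induced by $r_X$ on $X/\cH$ and by $s_X$ on $\cG\backslash X$ are continuous open surjections (because $r_X,s_X$ are continuous open surjections and the orbit maps are open, e.g.\ by Lemma~\ref{lemma:proj-open-from-bfp} together with the groupoids being \etale), while their injectivity is exactly \ref{item:preequiv:R-nondeg} resp.\ \ref{item:preequiv:L-nondeg}, which exhibit points of equal momentum as lying in a common $\cH$- resp.\ $\cG$-orbit. Hence $X$ is a $(\cG,\cH)$-equivalence, and once this is known the uniqueness defining $\leoq{\cG}{\cdot}{\cdot}$, combined with \ref{item:preequiv:L-nondeg} (resp.\ \ref{item:preequiv:R-nondeg}), gives $\leoq{\cG}{\cdot}{\cdot}=\Leoq{\cG}{\cdot}{\cdot}$ (resp.\ $\reoq{\cdot}{\cdot}{\cH}=\Reoq{\cdot}{\cdot}{\cH}$), establishing \ref{item:preequiv:X-equiv}.

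The main obstacle is the step $\ref{item:preequiv:L-nondeg}\Leftrightarrow\ref{item:preequiv:R-nondeg}$: the two conditions are defined on different domains and each sees only one side of the structure, so no purely formal manipulation connects them. The bridge is the kernel condition \ref{item:def-preequiv:kernels}, and the technical crux is the identity $\Reoq{y\cdot h}{y'}{\cH}\in\cH\z$ for $h=\Reoq{y}{y'}{\cH}$, which transports the pair $(y\cdot h,y')$ from $\ker(\Reoqempty{\cH})$ into $\ker(\Leoqempty{\cG})$ so that \ref{item:preequiv:L-nondeg} becomes applicable. I should also confirm the standing convention that commuting actions leave the opposite momentum map invariant, i.e.\ $s_X(g\cdot x)=s_X(x)$ and $r_X(x\cdot h)=r_X(x)$, which is used silently when forming the fibre products above and in the freeness argument.
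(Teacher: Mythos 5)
Your proposal is correct, but it takes a genuinely different route from the paper's. The paper settles the corollary in a few lines by reusing its quotient machinery: after noting that \ref{item:preequiv:X-equiv} trivially implies \ref{item:preequiv:L-nondeg} and \ref{item:preequiv:R-nondeg}, it shows that under \ref{item:preequiv:L-nondeg} the closed equivalence relation $\Rel=\ker(\Leoqempty{\cG})$ of Lemma~\ref{lem:preequiv:taking-the-quotient} is the diagonal --- if $\Leoq{\cG}{x}{y}\in\cG\z$ then this unit equals $r_{X}(y)$, so \ref{item:preequiv:L-nondeg} forces $x=\Leoq{\cG}{x}{y}\cdot y=y$ --- whence $X$ coincides with its quotient $\mathcal{X}$, which Proposition~\ref{prop:from-preequiv-to-equivalence} has already shown to be an equivalence with $\leoq{\cG}{\cdot}{\cdot}=\Leoq{\cG}{\cdot}{\cdot}$ and $\reoq{\cdot}{\cdot}{\cH}=\Reoq{\cdot}{\cdot}{\cH}$; the implication \ref{item:preequiv:R-nondeg}$\Rightarrow$\ref{item:preequiv:X-equiv} is then quoted by symmetry, so the link between \ref{item:preequiv:L-nondeg} and \ref{item:preequiv:R-nondeg} is never argued directly. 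You instead prove \ref{item:preequiv:L-nondeg}$\Leftrightarrow$\ref{item:preequiv:R-nondeg} head-on via the kernel axiom \ref{item:def-preequiv:kernels} --- your identity $\Reoq{y\cdot h}{y'}{\cH}=h\inv\Reoq{y}{y'}{\cH}\in\cH\z$ for $h=\Reoq{y}{y'}{\cH}$ is precisely the computation the paper performs inside the proof of Proposition~\ref{prop:from-preequiv-to-equivalence} when proving injectivity of the induced map on $\mathcal{X}/\cH$ --- and then you verify \ref{item:def-GE:principal}--\ref{item:def-GE:homeos} for $X$ from scratch: freeness from \ref{item:def-preequiv:equivariant} and \ref{item:def-preequiv:units}, and the homeomorphisms in \ref{item:def-GE:homeos} from continuity, openness, and surjectivity of the momentum maps together with injectivity, which is exactly where \ref{item:preequiv:L-nondeg} and \ref{item:preequiv:R-nondeg} enter. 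In effect you inline the relevant (trivial-quotient) special case of Proposition~\ref{prop:from-preequiv-to-equivalence} rather than citing it: the paper's argument is shorter because freeness, properness, and the quotient homeomorphisms are inherited wholesale, while yours is more self-contained and isolates exactly how \ref{item:def-preequiv:kernels} bridges the left- and right-hand conditions. Both arguments rely on the standing convention that commuting actions preserve the opposite momentum maps, which you rightly flag.
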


\begin{proof}
    Clearly, \ref{item:preequiv:X-equiv} implies  \ref{item:preequiv:L-nondeg} and \ref{item:preequiv:R-nondeg}. By symmetry, it now suffices to prove that \ref{item:preequiv:L-nondeg} implies \ref{item:preequiv:X-equiv}:
    
    Suppose $x\Rel y$ with $\Rel$ as defined in Lemma~\ref{lem:preequiv:taking-the-quotient}, so $\Leoq{\cG}{x}{y}\in \cG\z$. Then in particular, $\Leoq{\cG}{x}{y}=s_{\cG}(\Leoq{\cG}{x}{y})= r_{X}(y)$. It follows from our assumption \ref{item:preequiv:L-nondeg} that
    \(
        y = \Leoq{\cG}{x}{y} \cdot y = x.
    \)
    In other words, $\Rel$ is the diagonal in $X\times X$.     Thus, $X$ equals $\mathcal{X},$ which we have shown in Proposition~\ref{prop:from-preequiv-to-equivalence} to be an equivalence such that $\leoq{\cG}{\cdot}{\cdot}=\Leoq{\cG}{\cdot}{\cdot}$ and $\reoq{\cdot}{\cdot}{\cH}=\Reoq{\cdot}{\cdot}{\cH}$.
\end{proof}

\section{Fell Bundle \Hequiv s}
Just like we weakened the notion of groupoid equivalences to account for the fact that their fibre product is not another equivalence, we need to weaken the notion of Fell bundle equivalence as follows.

\begin{definition}\label{def:FBword}
  Suppose that $X$ is a $(\cG,\cH)$-\pe, that $\cB=(p_{\cG }\colon B\to \cG)$ and $\cC=(p_{\cH}\colon C\to \cH)$ are saturated Fell
  bundles, and that $\cM=(q_{\cM}\colon M\to X)$ is a \USCBb. We say that $\cM$ is a {\em $\cB\sme\cC$-\hequiv}
  if it satisfies the conditions in Definition~\ref{def:FBequivalence}, only that Conditions~\ref{item:FE:ip:fibre} and~\ref{item:FE:ip:compatibility} 
  are replaced by the following:
  \begin{description}
     \item[{
     \namedlabel{item:FHE:ip-fibre}
     {\ref*{item:FE:ip:fibre}${}_{0}$}
     }]
     $p_{\cG }\bigl(\lip\cB<m_{1},m_{2}>\bigr)=\Leoq{\cG}{q_{\cM}(m_{1})}{q_{\cM}(m_{2})}$\; and \;
      $p_{\cH}\bigl(\rip\cC<m_{1},m_{2}>\bigr)=\Reoq{q_{\cM}(m_{1})}{q_{\cM}(m_{2})}{\cH}$.
     \item[{
     \namedlabel{item:FHE:ip:adjointables}
     {\ref*{item:FE:ip:compatibility}${}_{0}$}
     }]
     $\linner{\cB}{m_{1}\cdot c}{m_{2}}
       =
       \linner{\cB}{m_{1}}{m_{2}\cdot c^*}$ and $
       \rinner{\cC}{b\cdot m_{1}}{m_{2}}
       =
       \rinner{\cC}{m_{1}}{b^*\cdot m_{2}}$
    \end{description}
    for all appropriately chosen $m_{i}\in M$, $c\in C$, and $b\in B.$ We furthermore require the following condition: 
    \begin{description}
    \item[\namedlabel{item:FHE:2e}{(FE2.e)}] For any $(m_1,m_2),(m_3,m_4)\in  M \bfp{s}{s} M$ with $q_\cM(m_2)=q_\cM(m_3)$, we have 
    \begin{align*}
        \lip\cB<m_{1},m_{2}> \cdot \lip\cB<m_{3},m_{4}> &= \lip\cB<m_{1}\cdot {\rip\cC<m_{2},m_{3}>},m_{4}> , \text{ and}       \\
        \rip\cC<m_{1},m_{2}> \cdot \rip\cC<m_{3},m_{4}> &= \rip\cC<m_{1},  {\lip\cB<m_{2},m_{3}>}\cdot m_{4}>.
    \end{align*}
    \end{description}
\end{definition}

\begin{remark}
    Condition~\ref{item:FHE:2e} is the Fell bundle analogue of Condition~\ref{item:def-preequiv:transitive} for groupoid {\pe s}. Indeed, by Condition~\ref{item:FHE:ip-fibre}, we have 
    \[
    p_{\cG }\bigl(\lip\cB<m_{i},m_{j}>\bigr)= \Leoq{\cG}{q_{\cM}(m_{i})}{q_{\cM}(m_{j})}.
    \]
    Since $q_\cM(m_2)=q_\cM(m_3)$, we have by Condition~\ref{item:def-preequiv:transitive} that
    \begin{align*}
        p_{\cG }\bigl(\lip\cB<m_{1},m_{2}>\lip\cB<m_{3},m_{4}>\bigr) &= \Leoq{\cG}{q_{\cM}(m_{1})}{q_{\cM}(m_{2})} \Leoq{\cG}{q_{\cM}(m_{3})}{q_{\cM}(m_{4})} \\
        &=\Leoq{\cG}{q_{\cM}(m_{1})}{q_{\cM}(m_{4})}
        .
    \end{align*}
    On the other hand, by Conditions~\ref{item:FA:fibre} and~\ref{item:FHE:ip-fibre},
    \[
    q_\cM(m_{1}\cdot {\rip\cC<m_{2},m_{3}>})
    =
    q_\cM(m_{1})\cdot p_{\cC}(\rip\cC<m_{2},m_{3}>)
    =
    q_\cM(m_1)\cdot \Reoq{q_{\cM}(m_{2})}{q_{\cM}(m_{3})}{\cH}
    .\]
    Again, since $q_\cM(m_2)=q_\cM(m_3)$,  we have
    by Condition~\ref{item:def-preequiv:units} that
    $
    \Reoq{q_{\cM}(m_{2})}{q_{\cM}(m_{3})}{\cH}
    =s_\cM(m_2)
    ,$
    which equals $s_\cM(m_1)$ since $(m_{1},m_{2})\in M\bfp{s}{s}M$.
    Therefore, $q_\cM(m_{1}\cdot {\rip\cC<m_{2},m_{3}>}) = q_\cM(m_1)$, and thus by Condition~\ref{item:FHE:ip-fibre}
    \[
    p_\cB(\lip\cB<m_{1}\cdot {\rip\cC<m_{2},m_{3}>},m_{4}>)=\Leoq{\cG}{q_{\cM}(m_{1})}{q_{\cM}(m_{4})},\]
    so the equalities in Condition~\ref{item:FHE:2e} make sense.
\end{remark}

\begin{remark}\label{rmk:MW-6.2-for-words}
    Remark~\ref{rmk:MW-6.2} still holds in the case of \hequiv s.
    Note further that any Fell bundle equivalence is a \hequiv. Indeed, Condition~\ref{item:FHE:ip-fibre} is exactly Condition~\ref{item:FE:ip:fibre} in the case that $X$ and $\cM$ are equivalences of groupoids resp.\ Fell bundles, with $\Leoq[X]{\cG}{\cdot}{\cdot}=\leoq[X]{\cG}{\cdot}{\cdot}$ and $\Reoq[X]{\cdot}{\cdot}{\cH}=\reoq[X]{\cdot}{\cdot}{\cH}$. 
    Moreover, we will see in Corollary~\ref{cor:adjointable} that
    Condition~\ref{item:FHE:ip:adjointables} is weaker than \ref{item:FE:ip:compatibility}.
    Lastly, Condition~\ref{item:FHE:2e} follows as well because
    \begin{alignat*}{2}
        \lip\cB<m_{1},m_{2}> \cdot \lip\cB<m_{3},m_{4}> 
        &= \lip\cB<{\lip\cB<m_{1},m_{2}>} \cdot m_{3},m_{4}>
        \quad &&\text{by Condition~\ref{item:FE:ip:C*linear}}
        \\
        &=\lip\cB<m_{1}\cdot {\rip\cC<m_{2},m_{3}>},m_{4}>
        &&
        \text{by \ref{item:FE:ip:compatibility}}.
    \end{alignat*}
\end{remark}

We will use this section to establish some basic results about \hequiv s. In the next section, we will then prove our first main result, namely that we can take the product of two \hequiv s $\cM$ and $\cN$ to construct another \hequiv; see Theorem~\ref{thm:cK-is-word} for the exact statement. This result is analogous to Lemma~\ref{lem:product-of-preequiv} in the land of groupoids.

Our second main result will then show that, when the original data $\cM,\cN$ are actually {\em equivalences}, their product gives rise to another {\em equivalence} by taking an appropriate quotient of the product \hequiv; see Theorem~\ref{thm:cP-is-equiv}. This result, in turn, is analogous to Corollary~\ref{cor:bfp-of-gpd} in groupoid land.

One groupoid result that currently does not have an analogue is Proposition~\ref{prop:from-preequiv-to-equivalence} which states that {\em any} groupoid \pe\ gives rise to an equivalence. The authors are convinced that \hequiv\ is not quite strong enough to force the existence of a Fell bundle equivalence, which is the reason for the chosen terminology.

\medskip
Let us  first proof some basic results. For the remainder of this section unless otherwise indicated, $\cM=(q_{\cM}\colon M\to X)$ will denote a fixed but arbitrary $(\cB,\cC)$-\hequiv.

\begin{lemma}\label{lem:non-trivial}
    For any $b\in \cB$ with $\norm{b}\neq 0$, there exists $m\in M $ with $r_{\cM}(m)=s_{\cB}(b)$ and $\norm{m}\neq 0$ in the Banach space norm of $\cM_x$ for $x=q_{\cM}(m)$.
\end{lemma}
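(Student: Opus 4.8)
The plan is to produce the desired $m$ in two stages: first fix a base point $x\in X$ sitting over the correct unit, then exploit the imprimitivity-bimodule structure of the fibre $\cM_x$ to extract a nonzero vector there.

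For the first stage, I would use that $X$ is, in particular, a left $\cG$-space, so by Definition~\ref{df.left.action} its momentum map $r_X\colon X\to\cG\z$ is a continuous open surjection. The element $s_{\cB}(b)=s_{\cG}(p_{\cB}(b))$ lies in $\cG\z$, so surjectivity of $r_X$ yields some $x\in X$ with $r_X(x)=s_{\cB}(b)$. Then any $m\in\cM_x$ automatically satisfies $r_{\cM}(m)=r_X(q_{\cM}(m))=r_X(x)=s_{\cB}(b)$, so the only remaining task is to guarantee $\cM_x\neq\{0\}$.

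For the second stage, I would first check that the unit fibre of $\cB$ over $s_{\cB}(b)$ is nonzero. Writing $g:=p_{\cB}(b)$, axioms \ref{cond.F5} and \ref{cond.F1} give $b^{*}\cdot b\in\cB_{g\inv g}=\cB_{s_{\cB}(b)}$, while \ref{cond.F9} gives $\norm{b^{*}\cdot b}=\norm{b}^{2}\neq 0$. Hence $\cB_{s_{\cB}(b)}$ is a nonzero $\textrm{C}^{*}$-algebra. Now Condition~\ref{item:FE:SMEs}, which is retained verbatim in the definition of a \hequiv, says that $M(x)$ is a $B(r(x))\sme C(s(x))$-\ib, whose left coefficient algebra is $B(r(x))=\cB_{r_X(x)}=\cB_{s_{\cB}(b)}$. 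Since an \ib\ is full, its left coefficient algebra is the closed linear span of the values of the left inner product; a trivial bimodule would therefore force $\cB_{s_{\cB}(b)}=\{0\}$, which we have just ruled out. Thus $\cM_x\neq\{0\}$, and picking any $m\in\cM_x$ with $\norm{m}\neq 0$ finishes the proof.

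I expect the only genuine content — and hence the main obstacle — to be the non-triviality argument in the last stage: one must remember that an \ib\ is automatically nonzero as soon as one of its coefficient algebras is, which is precisely where the fullness built into Condition~\ref{item:FE:SMEs} is used. Everything else is routine bookkeeping with the momentum maps and the Fell-bundle axioms.
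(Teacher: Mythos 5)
Your proof is correct, and it shares the paper's skeleton (choose $x$ with $r_X(x)=s_{\cB}(b)$ by surjectivity of the momentum map, then exploit fullness of $M(x)$ from Condition~\ref{item:FE:SMEs} together with $\norm{b^*b}=\norm{b}^2\neq 0$), but your finishing step is genuinely different and softer. The paper argues quantitatively: it approximates $b^*b$ by a finite sum $\sum_i \linner{\cB}{m_i}{m_i'}$ using density of the span of inner products, extracts one summand with $\norm{\linner{\cB}{m_i}{m_i'}}>0$, and then applies the Cauchy--Schwarz inequality for Hilbert modules to conclude $\norm{m_i}\neq 0$, finally remarking that the Hilbert-module norm is the bundle norm. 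You instead argue by contradiction at the level of the whole module: if $\cM_x=\{0\}$, fullness would force $B(r_X(x))=\cB_{s_{\cB}(b)}=\{0\}$, contradicting $b^*b\neq 0$; hence $\cM_x\neq\{0\}$ and any nonzero vector has nonzero norm. Your route is shorter, avoids Cauchy--Schwarz entirely, and sidesteps the norm-identification issue altogether, since nonvanishing of the norm at a nonzero vector holds for \emph{any} norm on the fibre --- the vector-space structures of the bimodule and the fibre coincide by \ref{item:FE:SMEs}, which is all you need. What the paper's quantitative argument buys is an explicit construction of $m$ out of the inner products (and an explicit check that the relevant norms agree), but for the statement as formulated your argument suffices.
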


\begin{proof}
     Let $g:= p_{\cB}(b) \in\cG$. By assumption, $r_{X}\colon X\to \cG\z$ is onto, so there exist $x\in X$ such that $r_{X} (x)=s_{\cG}(g)$. Now, by Condition~\ref{item:FE:SMEs} on $\cM$, we know in particular that $M(x)$ is a full left Hilbert module over
     \[
       B(r_{X} (x)) = B(s_{\cG}(g)) = B(s_{\cB}(b)).
     \]
     By fullness, it follows that $\mathrm{span}\left\{\linner{\cB}{m}{m'}: m, m' \in M(x)\right\}$ is dense in $B(s_{\cB}(b))$.
     
     By assumption on $b$, we have $0 < \epsilon := \norm{b}^2= \norm{b^*b}.$ Since $b^*b\in B(s_{\cB}(b))$,  there exist $m_{i}, m'_{i} \in M(x)$ with $\norm{b^*b - \sum_{i=1}^{n} \linner{\cB}{m_{i}}{m'_{i}}} < \frac{\epsilon}{2}$. In particular, $r_{\cM}(m) = r_{X}(q_{\cM}(m)) = r_{X} (x) = s_{\cB} (b)$, and further $\norm{\sum_{i=1}^{n} \linner{\cB}{m_{i}}{m'_{i}}} > \frac{\epsilon}{2} $ by the reverse triangle inequality. In particular, for some $1\leq i\leq n,$ we must have $\norm{\linner{\cB}{m_{i}}{m'_{i}}} > \frac{\epsilon}{2n}>0. $ By the Cauchy--Schwarz inequality for Hilbert modules \cite[Lemma 2.5]{RW:Morita}, we have
     \begin{align*}
        \linner{\cB}{m_{i}}{m'_{i}}^* \,\linner{\cB}{m_{i}}{m'_{i}} \leq \norm{\linner{\cB}{m_{i}}{m_{i}}} \,\linner{\cB}{m_{i}'}{m_{i}'}.
     \end{align*}
     Recall that, if $a, b$ in some $\textrm{C}^*$-algebra $A$ are self-adjoint with $a\leq b$, then $\norm{a}\leq \norm{b}$, 
     so the above implies that
     \begin{align*}
        0 < \norm{\linner{\cB}{m_{i}}{m'_{i}}}^2 \leq  \norm{\linner{\cB}{m_{i}}{m_{i}}} \,\norm{\linner{\cB}{m_{i}'}{m_{i}'}}
        = \norm{m_{i}}_{M(x)}^2 \,\norm{m_{i}'}_{M(x)}^2,
     \end{align*}
     where the last equality again stems from the fact that $M(x)$ is a Hilbert module, so its norm comes from the inner product. As the norm is exactly the norm with respect to which $\cM$ is a \USCBb, this concludes our proof.
\end{proof}

\begin{corollary}\label{cor:left-nondeg}
    If $b\cdot m = b'\cdot m$ for all $m\in \cM$ with $r_{\cM}(m)=s_{\cB}(b)=s_{\cB}(b')$, then $b=b'$.
\end{corollary}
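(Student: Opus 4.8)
The plan is to reduce the statement to the non-degeneracy result established in Lemma~\ref{lem:non-trivial}. Setting $d := b - b' \in \cB$, the hypothesis says that $d \cdot m = 0$ for all $m \in \cM$ with $r_{\cM}(m) = s_{\cB}(d)$ (note $s_{\cB}(b) = s_{\cB}(b')$ guarantees that $d$ lives in a single fibre $\cB_g$ for $g = p_{\cB}(b)$, so $s_{\cB}(d)$ is well-defined). The goal is then to show that $\norm{d} = 0$, i.e.\ that $d$ is the zero element of its fibre, which gives $b = b'$.

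First I would argue by contraposition: suppose $\norm{d} \neq 0$. By Lemma~\ref{lem:non-trivial} applied to $d$, there exists some $m \in M$ with $r_{\cM}(m) = s_{\cB}(d)$ and $\norm{m} \neq 0$ in the Banach space $\cM_x$, where $x = q_{\cM}(m)$. The element $d \cdot m$ is then defined, and by hypothesis it must vanish. So the crux is to derive a contradiction from $d \cdot m = 0$ together with $\norm{d}, \norm{m} \neq 0$. The natural tool is the inner-product structure on $\cM$: since each $M(x)$ is a $B(r(x))\sme C(s(x))$-\ib\ by Condition~\ref{item:FE:SMEs}, the left action of $B(s_{\cB}(d))$ on the Hilbert module $M(x)$ is by adjointable (indeed isometric-on-norms) operators, and one can recover $\norm{d \cdot m}$ from the inner products.

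The key computation I would carry out is to show $\norm{d}^2 \norm{m}^2$ is controlled by $\norm{d \cdot m}$, forcing $d \cdot m \neq 0$. Concretely, I would choose $m$ more carefully: rather than an arbitrary nonzero vector, I want $m$ so that $\linner{\cB}{m}{m}$ is close to having norm $\norm{d}$-comparable mass in the direction of $d^* d$. Following the strategy of Lemma~\ref{lem:non-trivial}, fullness of the left inner product lets me approximate $d^* d \in B(s_{\cB}(d))$ by a finite sum $\sum_i \linner{\cB}{m_i}{m_i'}$. Then using Condition~\ref{item:FHE:ip:adjointables} (the weakened compatibility) or directly the adjointability of the action, I would relate $\linner{\cB}{d \cdot m_i}{d \cdot m_i'}$ to $d\, \linner{\cB}{m_i}{m_i'}\, d^*$ via Conditions~\ref{item:FE:ip:C*linear} and~\ref{item:FE:ip:adjoint}, so that if all $d \cdot m = 0$ then $d (d^* d) d^* = (d^* d)^2$ has norm zero, whence $\norm{d^* d} = 0$ and $\norm{d} = 0$, a contradiction.

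The main obstacle I anticipate is the bookkeeping around \emph{which} fibre everything lives in and making sure the inner products and actions are all defined on compatible base points; the momentum-map conditions \ref{item:FE:ip:fibre} and \ref{item:FA:fibre} must be invoked carefully to guarantee that expressions like $\linner{\cB}{d \cdot m_i}{d \cdot m_i'}$ and $d\, \linner{\cB}{m_i}{m_i'}\, d^*$ sit in the same fibre of $\cB$ and can legitimately be compared. A cleaner route, which I would try first, avoids the finite-sum approximation entirely: take a single $m$ from Lemma~\ref{lem:non-trivial}, use that $M(x)$ is a Hilbert module so $\norm{d \cdot m}^2 = \norm{\rinner{\cC}{d\cdot m}{d \cdot m}}$, rewrite $\rinner{\cC}{d\cdot m}{d \cdot m} = \rinner{\cC}{m}{d^* d \cdot m}$ using \ref{item:FHE:ip:adjointables}, and then bound this below by a positive multiple of $\norm{m}^2$ by exploiting that $d^* d$ acts as a positive operator whose norm is $\norm{d}^2$; the hardest sub-step there is establishing the needed lower bound, for which I would again select $m$ in the spectral subspace where $d^* d$ is bounded away from zero, using fullness to produce such an $m$ with $r_{\cM}(m) = s_{\cB}(d)$.
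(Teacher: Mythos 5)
Your proposal is correct, and its decisive step takes a genuinely different---and more careful---route than the paper's. The paper's proof is two lines: setting $d:=b-b'$, it writes $0=\norm{d\cdot m}=\norm{d}\,\norm{m}$ for all admissible $m$ and then cites Lemma~\ref{lem:non-trivial} to produce a single $m$ with $\norm{m}\neq 0$, forcing $\norm{d}=0$. Your suspicion that one nonzero test vector cannot suffice is well founded: for a fixed $m$ the identity $\norm{d\cdot m}=\norm{d}\,\norm{m}$ is false in general (take $\cG$, $\cH$ trivial, $\cB=M_2(\mathbb{C})$, $\cC=\mathbb{C}$, $\cM=\mathbb{C}^2$, $d$ a rank-one projection and $m$ a unit vector orthogonal to its range: then $d\cdot m=0$ while $\norm{d}\,\norm{m}=1$); it can only be read as the operator-norm statement $\sup_{\norm{m}\leq 1}\norm{d\cdot m}=\norm{d}$, and justifying \emph{that} is precisely the fullness argument you spell out. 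Your main route is the standard C*-algebraic one and is complete: by Condition~\ref{item:FE:ip:C*linear}, $d\,\linner{\cB}{m}{m'}=\linner{\cB}{d\cdot m}{m'}=0$ for all $m,m'\in M(x)$ with $r_{X}(x)=s_{\cB}(d)$, so by fullness (Condition~\ref{item:FE:SMEs}) $d$ annihilates a dense subspace of $B(s_{\cB}(d))$, hence $dd^*d=0$ and $\norm{d}^4=\norm{(d^*d)^2}=\norm{d^*(dd^*d)}=0$. Two minor points: $d(d^*d)d^*$ equals $(dd^*)^2$, not $(d^*d)^2$, though the two have the same norm, so nothing breaks; and once this computation is run, your contrapositive framing and the appeal to Lemma~\ref{lem:non-trivial} for a nonzero vector become superfluous, since $d=0$ falls out directly. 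What the paper's phrasing buys is brevity; what yours buys is an argument that actually closes the gap the paper's norm identity glosses over. Your alternative ``cleaner route'' at the end (a spectral lower bound for $d^*d$ on a well-chosen $m$) is the only sketchy part of the proposal, but it is not needed.
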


\begin{proof}
     Since $b\cdot m - b'\cdot m = (b- b')\cdot m$, we have 
     \[
       0
       =
       \norm{b\cdot m - b'\cdot m}
       =
       \norm{(b- b')\cdot m}
       =
       \norm{b-b'} \,\norm{m}
     \]
     for all $m$.
     By Lemma~\ref{lem:non-trivial}, it follows that $\norm{b-b'}=0$ in $\cB_{g},$ meaning that $b=b'$ as claimed.
\end{proof}

\begin{corollary}\label{cor:adjointable}
    If $\cM$ is an equivalence of Fell bundles, then Condition~\ref{item:FE:ip:compatibility} implies that
    \[
       \linner{\cB}{m_{1}\cdot c}{m_{2}}
       =
       \linner{\cB}{m_{1}}{m_{2}\cdot c^*}
    \quad\text{and}\quad
       \rinner{\cC}{b\cdot m_{1}}{m_{2}}
       =
       \rinner{\cC}{m_{1}}{b^*\cdot m_{2}}
    \] 
    whenever each side makes sense for $m_{i}\in M $, $c\in C $, and $b\in B $.
\end{corollary}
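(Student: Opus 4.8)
The strategy is to prove both claimed equalities by \emph{testing against a third element} and then invoking the nondegeneracy of the module actions. To establish the first identity $\linner{\cB}{m_{1}\cdot c}{m_{2}} = \linner{\cB}{m_{1}}{m_{2}\cdot c^{*}}$, I will show that the two sides act identically on an arbitrary $m_{3}\in M$ for which every expression below is defined, and then appeal to Corollary~\ref{cor:left-nondeg}. Both inner products lie in the same fibre of $\cB$ (their common source is $r_{\cM}(m_{3})$ by the fibre Condition~\ref{item:FE:ip:fibre}), so the problem genuinely reduces to an equality of actions on all admissible test vectors.

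To carry this out, I apply the compatibility Condition~\ref{item:FE:ip:compatibility} to each side, obtaining $\linner{\cB}{m_{1}\cdot c}{m_{2}}\cdot m_{3} = (m_{1}\cdot c)\cdot \rinner{\cC}{m_{2}}{m_{3}}$ and $\linner{\cB}{m_{1}}{m_{2}\cdot c^{*}}\cdot m_{3} = m_{1}\cdot \rinner{\cC}{m_{2}\cdot c^{*}}{m_{3}}$. The key intermediate computation is the identity $\rinner{\cC}{m_{2}\cdot c^{*}}{m_{3}} = c\,\rinner{\cC}{m_{2}}{m_{3}}$ in $C$, which follows by taking adjoints of $\rinner{\cC}{m_{3}}{m_{2}\cdot c^{*}} = \rinner{\cC}{m_{3}}{m_{2}}\,c^{*}$, combining the right-linearity of Condition~\ref{item:FE:ip:C*linear} with the adjoint-symmetry of Condition~\ref{item:FE:ip:adjoint}. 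Feeding this back in and using associativity of the right $\cC$-action, I get $m_{1}\cdot \rinner{\cC}{m_{2}\cdot c^{*}}{m_{3}} = m_{1}\cdot\bigl(c\,\rinner{\cC}{m_{2}}{m_{3}}\bigr) = (m_{1}\cdot c)\cdot \rinner{\cC}{m_{2}}{m_{3}}$, which is exactly the first expression above. Hence the two actions agree for every such $m_{3}$, and Corollary~\ref{cor:left-nondeg} yields the first identity.

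The second identity $\rinner{\cC}{b\cdot m_{1}}{m_{2}} = \rinner{\cC}{m_{1}}{b^{*}\cdot m_{2}}$ is proved in the mirror-image fashion: now I test on the left by an arbitrary $m_{0}$, rewriting $m_{0}\cdot\rinner{\cC}{b\cdot m_{1}}{m_{2}}$ and $m_{0}\cdot\rinner{\cC}{m_{1}}{b^{*}\cdot m_{2}}$ via Condition~\ref{item:FE:ip:compatibility} as $\linner{\cB}{m_{0}}{b\cdot m_{1}}\cdot m_{2}$ and $\linner{\cB}{m_{0}}{m_{1}}\cdot(b^{*}\cdot m_{2})$, respectively. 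The analogous intermediate step is $\linner{\cB}{m_{0}}{b\cdot m_{1}} = \linner{\cB}{m_{0}}{m_{1}}\,b^{*}$, again obtained from Conditions~\ref{item:FE:ip:C*linear} and~\ref{item:FE:ip:adjoint}; associativity of the left $\cB$-action (Condition~\ref{item:FA:assoc}) then equates the two expressions, and the right-handed analogue of Corollary~\ref{cor:left-nondeg} (valid by symmetry) closes the argument.

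There is no deep obstacle here: the content is purely formal manipulation of the inner-product axioms. The only point requiring genuine care is the bookkeeping of the fibre conditions, ensuring that every product and action written down is actually defined and that the hypothesis of Corollary~\ref{cor:left-nondeg} is truly satisfied for all relevant test vectors, so that passing from equality of actions to equality of the inner products is legitimate.
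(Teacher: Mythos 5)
Your proposal is correct and follows essentially the same route as the paper's proof: both arguments establish the first identity by letting the two inner products act on an arbitrary test element $m_{3}$, using Condition~\ref{item:FE:ip:compatibility} together with Conditions~\ref{item:FE:ip:adjoint} and~\ref{item:FE:ip:C*linear} to identify $\rinner{\cC}{m_{2}\cdot c^*}{m_{3}}$ with $c\,\rinner{\cC}{m_{2}}{m_{3}}$, and then invoke Corollary~\ref{cor:left-nondeg}, handling the second identity by symmetry exactly as the paper does. The one point to tighten is your parenthetical same-fibre justification: agreement of sources does not by itself place $\linner{\cB}{m_{1}\cdot c}{m_{2}}$ and $\linner{\cB}{m_{1}}{m_{2}\cdot c^*}$ in the same fibre of $\cB$; one should check, as the paper does, that $p_{\cB}$ of both sides equals $\leoq{\cG}{q_{\cM}(m_{1})}{q_{\cM}(m_{2})\cdot p_{\cC}(c)\inv}$ (using \ref{item:FE:ip:fibre}, \ref{item:FA:fibre}, \ref{cond.F5}, and the balancing property of $\leoq{\cG}{\cdot}{\cdot}$), since Corollary~\ref{cor:left-nondeg} is proved by forming the difference $b-b'$ and hence implicitly requires $b$ and $b'$ to lie in a common fibre.
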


{Corollary~\ref{cor:adjointable}} is not as trivial as it seems, despite Condition~\ref{item:FE:SMEs}, since $c$ is allowed to be in a fibre that does not live over a unit in~$\cH$.

\begin{proof}
     We do the proof for the $\cB$-valued inner product; the other one follows {\em mutatis mutandis}.
     
     First, a sanity check:
     \begin{align*}
        \linner{\cB}{m_{1}\cdot c}{m_{2}} \text{ is defined}
        &\iff s_{\cM}(m_{1})=r_{\cC}(c)\text{ and } s_{\cM}(m_{1}\cdot c)=s_{\cM} (m_{2})
        \\&\iff
        s_{\cM}(m_{1})=r_{\cC}(c)
        \text{ and }
        s_{\cC}(c)=s_{\cM}(m_{2})
        \\&\iff 
        s_{\cM}(m_{1})=s_{\cC}(c^*)
        \text{ and }
        r_{\cC}(c^*)=s_{\cM}(m_{2})
        \\&\iff 
        \linner{\cB}{m_{1}}{m_{2}\cdot c^*} \text{ is defined}.
     \end{align*}     
     Furthermore, 
     \begin{alignat*}{2}
        p_{\cB}\Bigl(\linner{\cB}{m_{1}\cdot c}{m_{2}} \Bigr)
        &=
        \leoq{\cG}{q_{\cM}(m_{1}\cdot c)}{q_{\cM}(m_{2})}
        &&\text{{ by \ref{item:FE:ip:fibre}}}
        \\&=
        \leoq{\cG}{q_{\cM}(m_{1})\cdot p_{\cC}(c)}{q_{\cM}(m_{2})}
        &&\text{{ by \ref{item:FA:fibre}}}
        \\&=
        \leoq{\cG}{q_{\cM}(m_{1})}{q_{\cM}(m_{2})\cdot p_{\cC}(c)\inv}
        &
        \\&=
        \leoq{\cG}{q_{\cM}(m_{1})}{q_{\cM}(m_{2}\cdot c^*)}
        &&\text{{ by \ref{item:FA:fibre} and \ref{cond.F5}}}
        \\&=
        p_{\cB}\Bigl(\linner{\cB}{m_{1}}{m_{2}\cdot c^*}\Bigr),&&\text{{ by \ref{item:FE:ip:fibre}}}
     \end{alignat*}
     so the two inner products live in the same fibre.
    Condition~\ref{item:FE:ip:compatibility} implies the first and last equality in the following computation:
    \begin{align*}
       \linner{\cB}{m_{1} \cdot c}{m_{2}} \cdot m_{3}
       &=
       (m_{1} \cdot c) \cdot \rinner{\cC}{m_{2}}{m_{3}}
       =
       m_{1} \cdot \Bigl(c \rinner{\cC}{m_{2}}{m_{3}}\Bigr)
       \\&=
       m_{1} \cdot \rinner{\cC}{m_{2}\cdot  c^*}{m_{3}}
       =
       \linner{\cB}{m_{1}}{m_{2}\cdot  c^*} \cdot m_{3}.
    \end{align*}
    Since $m_{3}\in M $ was arbitrary, it follows from Corollary~\ref{cor:left-nondeg} that $\linner{\cB}{m_{1} \cdot c}{m_{2}}=\linner{\cB}{m_{1}}{m_{2}\cdot  c^*}$, as claimed. 
\end{proof}

Even the `inhomogeneous' inner product on the \hequiv\ $\cM$, which allows to take in elements from two different fibres, satisfies a Cauchy--Schwarz inequality; one can compare our result below with the classical Cauchy--Schwarz inequality on an inner product module {\cite[Lemma 2.5]{RW:Morita}}.

\begin{lemma}[Cauchy--Schwarz] \label{lem:ineq of inner products}
    If $(m,m')\in \cM\bfp{s}{s}\cM$, then 
    \begin{align*}
       \linner{\cB}{m}{m'}\; \linner{\cB}{m}{m'}^*
       \leq
       \norm{m'}^2\;
       \linner{\cB}{m}{m} .
    \end{align*}
\end{lemma}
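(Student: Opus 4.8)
The plan is to show that both sides of the claimed inequality are positive elements of one and the same $\textrm{C}^{*}$-algebra, and then to reduce the statement, via Condition~\ref{item:FHE:2e}, to a classical positivity inequality inside a single imprimitivity bimodule. Write $u:=q_{\cM}(m)$ and $u':=q_{\cM}(m')$, so that $(m,m')\in\cM\bfp{s}{s}\cM$ means $s_{X}(u)=s_{X}(u')$. First I would record the relevant fibres. By \ref{item:def-preequiv:units} and \ref{item:FE:SMEs}, the element $\linner{\cB}{m}{m}$ is a positive element of the $\textrm{C}^{*}$-algebra $B(r_{X}(u))=\cB_{r_{X}(u)}$. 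By \ref{item:FHE:ip-fibre} we have $\linner{\cB}{m}{m'}\in\cB_{\Leoq{\cG}{u}{u'}}$, while its adjoint $\linner{\cB}{m}{m'}^{*}=\linner{\cB}{m'}{m}\in\cB_{\Leoq{\cG}{u'}{u}}$ by \ref{item:FE:ip:adjoint} and \ref{item:def-preequiv:inverse}; using \ref{item:def-preequiv:r-and-s} their product again lands in $\cB_{r_{X}(u)}$. Thus the inequality is a genuine comparison of two positive elements of $B(r_{X}(u))$.

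The key step collapses the two different fibres $\cM_{u}$ and $\cM_{u'}$ into one. Applying Condition~\ref{item:FHE:2e} with $(m_{1},m_{2},m_{3},m_{4})=(m,m',m',m)$ — whose hypotheses hold since $s_{X}(u)=s_{X}(u')$ and $m_{2}=m_{3}=m'$ — yields
\[
\linner{\cB}{m}{m'}\,\linner{\cB}{m'}{m}
=
\linner{\cB}{m\cdot \rinner{\cC}{m'}{m'}}{m}.
\]
Here $c_{0}:=\rinner{\cC}{m'}{m'}$ lies in $\cC_{\Reoq{u'}{u'}{\cH}}=C(s_{X}(u'))=C(s_{X}(u))$ by \ref{item:FHE:ip-fibre} and \ref{item:def-preequiv:units}, and it is a \emph{positive} element of this $\textrm{C}^{*}$-algebra with $\norm{c_{0}}=\norm{m'}^{2}$, since $\rinner{\cC}{\cdot}{\cdot}$ is the right inner product of the imprimitivity bimodule $M(u')$. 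Moreover $q_{\cM}(m\cdot c_{0})=q_{\cM}(m)\cdot p_{\cC}(c_{0})=u\cdot s_{X}(u)=u$ by \ref{item:FA:fibre}, so $m\cdot c_{0}$ and $m$ both lie in $\cM_{u}$; after this reduction every remaining computation takes place inside the single $B(r_{X}(u))$--$C(s_{X}(u))$-imprimitivity bimodule $M(u)$.

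It then remains to prove the standard inequality $\linner{\cB}{m\cdot c_{0}}{m}\le \norm{c_{0}}\,\linner{\cB}{m}{m}$ for a positive $c_{0}$, entirely inside $M(u)$. Writing $c_{0}=d^{2}$ with $d=c_{0}^{1/2}\ge 0$ and using the adjointability relation \ref{item:FHE:ip:adjointables} (equivalently Corollary~\ref{cor:adjointable}), one rewrites $\linner{\cB}{m\cdot c_{0}}{m}=\linner{\cB}{m\cdot d}{m\cdot d}\ge 0$, so the claim is the familiar imprimitivity-bimodule bound $\linner{\cB}{m\cdot d}{m\cdot d}\le \norm{d}^{2}\,\linner{\cB}{m}{m}$. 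I would finish this either by testing against the states of $B(r_{X}(u))$, or directly by factoring $\norm{d}^{2}\,1-d^{2}=f^{*}f$ in the unitization $\widetilde{C(s_{X}(u))}$ — whose canonical non-degenerate extension acts on $M(u)$ — and applying \ref{item:FHE:ip:adjointables} once more to obtain $\norm{d}^{2}\linner{\cB}{m}{m}-\linner{\cB}{m\cdot d}{m\cdot d}=\linner{\cB}{m\cdot f^{*}}{m\cdot f^{*}}\ge 0$.

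The main obstacle is not any single computation but the fibre bookkeeping: the inhomogeneous inner product $\linner{\cB}{m}{m'}$ a priori lives over a \emph{non-unit} of $\cG$, so the classical Cauchy--Schwarz inequality cannot be invoked naively. The crucial insight is that Condition~\ref{item:FHE:2e} is precisely what rewrites the product $\linner{\cB}{m}{m'}\linner{\cB}{m'}{m}$ as an honest, homogeneous inner product inside $M(u)$, reducing the statement to the classical imprimitivity-bimodule inequality; the only mild technical care required is the (routine, non-unital) passage to the unitization of $C(s_{X}(u))$ in the last step.
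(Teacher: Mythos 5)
Your proof is correct, but it follows a genuinely different route from the paper's. The paper transplants the proof of \cite[Lemma~2.5]{RW:Morita}: it fixes a state $\rho$ on the fibre of $\cB$ over $r_{X}(q_{\cM}(m))$, applies the ordinary Cauchy--Schwarz inequality for the positive sesquilinear form $(m_{1},m_{2})\mapsto\rho\bigl(\linner{\cB}{m_{1}}{m_{2}}\bigr)$ on $\cM_{q_{\cM}(m)}$ with the auxiliary vector $m_{2}:=\linner{\cB}{m}{m'}\cdot m'$, invokes the $\textrm{C}^{*}$-inequality $bcb^{*}\leq\norm{c}\,bb^{*}$ of \cite[Corollary~2.22]{RW:Morita}, and concludes by squaring and cancelling a factor; Condition~\ref{item:FHE:2e} is never used there. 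You instead use \ref{item:FHE:2e} at the outset to collapse $\linner{\cB}{m}{m'}\,\linner{\cB}{m'}{m}$ into the homogeneous element $\linner{\cB}{m\cdot c_{0}}{m}$ with $c_{0}=\rinner{\cC}{m'}{m'}\geq 0$, so that the whole statement becomes the classical bound $\linner{\cB}{m\cdot d}{m\cdot d}\leq\norm{d}^{2}\linner{\cB}{m}{m}$ (with $d=c_{0}^{1/2}$) inside the single \ib\ $M(q_{\cM}(m))$ furnished by \ref{item:FE:SMEs}, which you then prove by the standard unitization trick. Comparing the two: the paper's argument is more economical on axioms (it needs only \ref{item:FE:ip:adjoint}, \ref{item:FE:ip:C*linear}, \ref{item:FE:SMEs} and their consequences) and avoids any multiplier/unitization discussion, whereas your extra step --- extending \ref{item:FHE:ip:adjointables} to the unitization of $C(s_{X}(q_{\cM}(m)))$ --- is routine but does need to be said (or sidestepped by finishing with states). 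On the other hand, your reduction buys something real in the stated generality: the paper's auxiliary vector $\linner{\cB}{m}{m'}\cdot m'$ lies in the fibre over $\Leoq{\cG}{q_{\cM}(m)}{q_{\cM}(m')}\cdot q_{\cM}(m')$, and identifying that fibre with $\cM_{q_{\cM}(m)}$ (as the paper's proof does) uses exactly the property $\Leoq{\cG}{x}{x'}\cdot x'=x$ that a \pe\ is allowed to violate (cf.\ Corollary~\ref{cor:preequiv-almost-equivalence}); your element $m\cdot c_{0}$ lands in $\cM_{q_{\cM}(m)}$ by a computation valid for every \hequiv, so your argument covers the general case without leaning on that identification.
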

\begin{proof}
    First, we want to establish that the inequality makes sense. So note that, if
    \[
       g:=p_{\cB}\Bigl(\linner[\cM]{\cB}{m}{m'}\Bigr)=\Leoq{\cG}{q_{\cM}(m)}{q_{\cM}(m')},
    \]
    then by Item~\ref{item:FE:ip:adjoint} and by  Item~\mbox{\ref{item:def-preequiv:inverse}} in  Definition~\ref{def:preequiv},
    \[
       p_{\cB}\Bigl(\linner[\cM]{\cB}{m}{m'}^*\Bigr)
       =
       p_{\cB}\Bigl(\linner[\cM]{\cB}{m'}{m}\Bigr)
       =
       \Leoq{\cG}{q_{\cM}(m')}{q_{\cM}(m)}
       =
       \Leoq{\cG}{q_{\cM}(m)}{q_{\cM}(m')}\inv
       =
       g\inv,
    \]
    so that the product of 
    \(
       \linner[\cM]{\cB}{m}{m'}
     \) and
     \(  \linner[\cM]{\cB}{m}{m'}^*
    \)
    lives in the fibre over $gg\inv=r_{\cG}(g)$. Furthermore, it follows from the assumptions in Definition~\ref{def:preequiv} that
    \[
       p_{\cB}(\linner[\cM]{\cB}{q_{\cM}(m)}{q_{\cM}(m)}) = \Leoq{\cG}{q_{\cM}(m)}{q_{\cM}(m)} = r(g).
    \]
    Thus, the claimed inequality
    {\em makes sense} in the $\textrm{C}^*$-algebra $\cB_{r(g)}.$
    To prove it, we follow the proof of \cite[Lemma 2.5]{RW:Morita}. Let $\rho$ be a state on $\cB_{r(g)}$; it will suffice to prove that
    \begin{align}\label{ineq:rho}
       \rho\Bigl(\linner{\cB}{m}{m'}\; \linner{\cB}{m}{m'}^*\Bigr)
       \leq
       \norm{m'}^2\;
       \rho\Bigl(\linner{\cB}{m}{m}\Bigr).
    \end{align}
    Let $x=q_{\cM}(m)$. The map 
    \[ 
       \cM_{x}\times \cM_{x}\to \mathbb{C},\quad
       (m_{1},m_{2})\mapsto \rho\Bigl(\linner{\cB}{m_{1}}{m_{2}}\Bigr)
       ,
    \]
    is a (semi-definite) positive sesquilinear form on $\cM_{x}$, so the ordinary Cauchy-Schwarz inequality implies that
    \[
       \abs{\rho\Bigl(\linner{\cB}{m_{1}}{m_{2}}\Bigr)}
       \leq 
       \rho\Bigl(\linner{\cB}{m_{1}}{m_{1}}\Bigr)^{\frac{1}{2}}
       \,
       \rho\Bigl(\linner{\cB}{m_{2}}{m_{2}}\Bigr)^{\frac{1}{2}}.
    \]
    Notice that
    \[
       q_{\cM} (\linner{\cB}{m}{m'}m')
       =
       p_{\cB} (\linner{\cB}{m}{m'})\,q_{\cM}(m')
       =
       g\,q_{\cM}(m')
       =
       q_{\cM}(m)=x,
    \]
    so we may let
    \[
       m_{1}:= m
       \quad\text{and}\quad
       m_{2}:= \linner{\cB}{m}{m'}m',
    \]
    which yields:
    \begin{align*}
       0
       &\leq
       \rho\Bigl(\linner{\cB}{m}{m'}\; \linner{\cB}{m}{m'}^*\Bigr)
       =
       \rho\Bigl( 
           \linner{\cB}{m}{\linner{\cB}{m}{m'}m'}
       \Bigr)
       \\
       &
       =
       \rho\Bigl( 
           \linner{\cB}{m_{1}}{m_{2}}
       \Bigr)
       \leq 
       \rho\Bigl(\linner{\cB}{m_{1}}{m_{1}}\Bigr)^{\frac{1}{2}}
       \,
       \rho\Bigl(\linner{\cB}{m_{2}}{m_{2}}\Bigr)^{\frac{1}{2}}
       \\
       &= 
       \rho\Bigl(
           \linner{\cB}{m}{m}
       \Bigr)^{\frac{1}{2}}
       \,
       \rho\Bigl(
           \linner{\cB}{\linner{\cB}{m}{m'}m'}{\linner{\cB}{m}{m'}m'}
       \Bigr)^{\frac{1}{2}}
       \\
       &= 
       \rho\Bigl(
           \linner{\cB}{m}{m}
       \Bigr)^{\frac{1}{2}}
       \,
       \rho\Bigl(
           \linner{\cB}{m}{m'}\,
           \linner{\cB}{m'}{m'}\,
           \linner{\cB}{m}{m'}^*
       \Bigr)^{\frac{1}{2}}
       .
    \end{align*}
    With $b:= \linner{\cB}{m}{m'}$ and $c:=\linner{\cB}{m'}{m'}$, it follows from \cite[Corollary 2.22]{RW:Morita} that
    \begin{align*}
       \rho\Bigl(\linner{\cB}{m}{m'}\,\linner{\cB}{m}{m'}^*\Bigr)
       \leq
       \rho\Bigl(
           \linner{\cB}{m}{m}
       \Bigr)^{\frac{1}{2}}
       \,
       \norm{
           \linner{\cB}{m'}{m'}
       }^{\frac{1}{2}}\,
       \rho\Bigl(
           \linner{\cB}{m}{m'}\,
           \linner{\cB}{m}{m'}^*
       \Bigr)^{\frac{1}{2}},
    \end{align*}
    which yields the desired Inequality~\eqref{ineq:rho} after squaring and cancelling a factor of $$\rho\left(\linner{\cB}{m}{m'}\,\linner{\cB}{m}{m'}^*\right)$$ on both sides.
\end{proof}

\section{Product of Fell Bundle \Hequiv s}\label{sec:Product}
The goal of this section is to show an analogous result of Lemma~\ref{lem:product-of-preequiv} for Fell bundle \hequiv s, namely that the tensor product of two \hequiv s is again a \hequiv.
Throughout this section, we make the following assumptions:
\begin{itemize}
    \item We fix three saturated Fell bundles  $\cB=(p_{\cB}\colon B \to \cG),\cC=(p_{\cC}\colon C \to \cH), \cD=(p_{\cD}\colon D \to \mathcal{K})$ over \LCH\ \etale\ groupoids $\cG,\cH,\mathcal{K}$.
    \item Let $X$ be a $(\cG,\cH)$-\pe\  and $Y$ an $(\cH,\mathcal{K})$-\pe; we let $ Z := X\bfp{s}{r}Y$, which is a $(\cG,\mathcal{K})$-\pe\ by Lemma~\ref{lem:product-of-preequiv}.
    \item Let $\cM=(q_{\cM}\colon M \to X)$ be a $(\cB,\cC)$-\hequiv\ and $\cN=(q_{\cN}\colon N \to Y)$ a $(\cC,\cD)$-\hequiv.
    \item We write $\cdot$ for the left and right actions on $\cM$, $X$, $\cN$, and $Y$. 
\end{itemize}
Out of $\cM$ and $\cN$, we will construct a new bundle, denoted $\cM\otimes_{\cC\z}\cN$, that we will show to be a \hequiv\ between $\cB$ and~$\cD$.

\subsection{Tensor product bundle}

If $x\in X,y\in Y$ with $s_{X}(x)=u=r_{Y}(y)$, we may let $\cM_x \odot_{C(u)} \cN_y$ denote the algebraic balanced tensor product as defined in \cite[Proposition 3.16]{RW:Morita}. On it, we define two inner products with values in a fibre of $\cB$ resp.\ of $\cD$; they are determined by
\begin{align}
\begin{split}\label{eq:ip-on-cKz}
    \lInner{\cB}{m\odot n}{m'\odot n'} &= \linner{\cB}{m}{m'\cdot \linner{\cC}{n'}{n}}, \text{ and}\\
    \rInner{\cD}{m\odot n}{m'\odot n'} &= \rinner{\cD}{ \rinner{\cC}{m'}{m}\cdot n}{n'},
\end{split}
\end{align}
and then extended to the algebraic tensor product by linearity.
Recall from \cite[Remark 3.17]{RW:Morita} that these are indeed `positive definite', meaning if $\inner{\xi}{\xi}=0$, then $\xi=0.$ Furthermore, $\norm{\linner{\cB}{\xi}{\xi}}=\norm{\rinner{\cD}{\xi}{\xi}},$ so that we can unambiguously denote its completion with respect to this norm by $\cM_x \otimes_{C(u)} \cN_y$. Note that, since $u$ is the source of $x$, it is clear from context, and so we will instead often write $\cM_x \otimes_{\cC\z} \cN_y$.

\begin{lemma}\label{lem:topology-on-cK}
    On the set $$K
       :=\bigsqcup_{(x,y)\in X \bfp{s}{r} Y} M(x) \otimes_{\cC\z} N(y),$$ consider all cross-sections of the form
    \[
       \sigma\otimes\tau\colon \quad Z=X\bfp{s}{r}Y \to K,\quad (x,y) \mapsto \sigma(x)\otimes \tau(y),
    \]
    for $\sigma\in \Gamma_{0} (X;\cM)$ and $\tau\in \Gamma_{0} (Y;\cN)$. Then there is a unique topology on 
    $K$ making it a \USCBb\ over $ Z $
    such that all of those cross-sections are continuous. We denote said bundle by $\cM\otimes_{\cC\z} \cN$ or, since $\cM$ and $\cN$ are fixed, by $\cK$; its bundle map will be denoted by $q_{\cK}$.
\end{lemma}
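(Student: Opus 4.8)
The plan is to realize the topology on $K$ through the section-space machinery of Remark~\ref{rmk:topology-in-USC-bundles}, applied to the untopologized set $K$ over the base $Z$. First I would record that $Z=X\bfp{s}{r}Y$ is \LCH: since the momentum maps are continuous into the Hausdorff unit space, $Z$ is a closed subspace of the \LCH\ space $X\times Y$. Each fibre $M(x)\otimes_{\cC\z}N(y)$ is by construction a complex Banach space, namely the completion of the algebraic balanced tensor product $M(x)\odot_{C(u)}N(y)$ discussed before the statement, with $u:=s_{X}(x)=r_{Y}(y)$. I then take $\Gamma$ to be the linear span of the cross-sections $\sigma\otimes\tau$ with $\sigma\in\Gamma_{0}(X;\cM)$, $\tau\in\Gamma_{0}(Y;\cN)$, which is a vector space of sections of $K$. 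It then remains only to verify the two hypotheses \ref{item:sections:usc} and \ref{item:sections:dense} of Remark~\ref{rmk:topology-in-USC-bundles}; once these hold, the existence and uniqueness of the claimed \USCBb\ topology, together with the continuity of every $\sigma\otimes\tau$, follow immediately.

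Density (\ref{item:sections:dense}) is the routine half. Fixing $(x,y)\in Z$, the bundles $\cM$ and $\cN$ have enough sections (Remark~\ref{rmk:enough-sections}), so $\{\sigma(x):\sigma\in\Gamma_{0}(X;\cM)\}=M(x)$ and $\{\tau(y):\tau\in\Gamma_{0}(Y;\cN)\}=N(y)$. Hence $\Gamma(x,y)$ contains every simple tensor $m\otimes n$ with $m\in M(x)$ and $n\in N(y)$, and being a linear span it is exactly the algebraic balanced tensor product $M(x)\odot_{C(u)}N(y)$, which is dense in its completion $M(x)\otimes_{\cC\z}N(y)$ by definition.

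The upper semi-continuity of the norm (\ref{item:sections:usc}) is the main obstacle, and the key idea is to express the fibre norm through the $\cB$-valued inner product. For $\gamma=\sum_{i}\sigma_{i}\otimes\tau_{i}\in\Gamma$ we have $\norm{\gamma(x,y)}^{2}=\norm{\lInner{\cB}{\gamma(x,y)}{\gamma(x,y)}}$, and expanding via \eqref{eq:ip-on-cKz} gives
\[
    \lInner{\cB}{\gamma(x,y)}{\gamma(x,y)}
    =
    \sum_{i,j}\linner{\cB}{\sigma_{i}(x)}{\sigma_{j}(x)\cdot\linner{\cC}{\tau_{j}(y)}{\tau_{i}(y)}}.
\]
Each summand is well defined: by \ref{item:FHE:ip-fibre} (applied to $\cN$) and \ref{item:def-preequiv:units}, the element $\linner{\cC}{\tau_{j}(y)}{\tau_{i}(y)}$ lies over the unit $u\in\cH\z$, so the right $\cC$-action on $\cM$ and then the $\cB$-valued inner product with $\sigma_{i}(x)$ are all legal, with the result landing in the $\textrm{C}^*$-algebra $\cB(r_{X}(x))$. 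Now $(x,y)\mapsto\linner{\cC}{\tau_{j}(y)}{\tau_{i}(y)}$ is continuous by continuity of the inner product and of the $\tau_{k}$; then $(x,y)\mapsto\sigma_{j}(x)\cdot\linner{\cC}{\tau_{j}(y)}{\tau_{i}(y)}$ is continuous by continuity of the $\cC$-action on $\cM$; and finally applying the continuous $\cB$-valued inner product with the continuous section $\sigma_{i}$ shows the whole displayed expression is a continuous section of $\cB$ over $Z$. Since $\cB$ is a \USCBb, its norm is \usc\ by \ref{cond.B-USC}, and composing with the continuous increasing map $t\mapsto\sqrt{t}$ shows that $(x,y)\mapsto\norm{\gamma(x,y)}$ is \usc, as required. (The same argument runs with the $\cD$-valued inner product, using the identity $\norm{\lInner{\cB}{\xi}{\xi}}=\norm{\rInner{\cD}{\xi}{\xi}}$ noted before the statement.) With both hypotheses verified, Remark~\ref{rmk:topology-in-USC-bundles} produces the unique \USCBb\ topology on $K$ making all $\sigma\otimes\tau$ continuous, which is the assertion of the lemma.
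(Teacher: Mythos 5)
Your proposal is correct and follows essentially the same route as the paper's proof: both invoke Remark~\ref{rmk:topology-in-USC-bundles} with $\Gamma$ the linear span of the sections $\sigma\otimes\tau$, prove density via the existence of enough sections on $\cM$ and $\cN$, and prove upper semi-continuity of the norm by writing $\norm{\gamma(x,y)}$ as the square root of the norm of the continuous $\cB$-valued function $(x,y)\mapsto\sum_{i,j}\linner{\cB}{\sigma_{i}(x)}{\sigma_{j}(x)\cdot\linner{\cC}{\tau_{j}(y)}{\tau_{i}(y)}}$ and composing with the \usc\ norm on $\cB$. The only differences are cosmetic: you additionally spell out why $Z$ is \LCH\ and why the summands of the inner-product expression live in a common fibre, details the paper leaves implicit.
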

We note that we reserve the notation $\otimes_{\cC}$ for a different construction.

\begin{proof}
    By construction, the obvious map
    $$
    K=\bigsqcup_{(x,y)\in X \bfp{s}{r} Y} \cM_x \otimes_{\cC\z} \cN_y \overset{q_{\cK}}{\longrightarrow} X \bfp{s}{r} Y
    $$
    is a surjection and each fibre $q_{\cK}\inv(x,y)$ is a Banach space. We now let $\Gamma$ denote the $\mathbb{C}$-linear span of all elements $\sigma\otimes\tau$. If we can show that $\Gamma$ satisfies \ref{item:sections:usc} and \ref{item:sections:dense} in Remark~\ref{rmk:topology-in-USC-bundles},
    then there is such a unique topology.
    
    For \ref{item:sections:usc}, fix an arbitrary element of $\Gamma$, and write it as $f=\sum_{i=1}^k \sigma_{i}\otimes \tau_{i}.$ Let
    \[
       f'\colon X \bfp{s}{r} Y \to \cB,\quad
       f'(x,y):=\sum_{i,j=1}^k \linner{\cB}{\sigma_{i} (x)}{\sigma_{j} (x)\cdot \linner{\cC}{\tau_{j} (y)}{\tau_{i} (y)}}.
    \]
    Note that all summands in $f'(x,y)$ live in the same fibre, so that $f'$ is well defined.
    By continuity of $\sigma_{i}, \tau_{i}$, of the right $\cC$-action on $\cM$, of $\linner{\cB}{\cdot}{\cdot}$ and of $\linner{\cC}{\cdot}{\cdot}$, and by continuity of taking finite sums, we know that $f'$ is continuous. 
    By definition of the fibrewise norm, we have
    \begin{align*}
       \norm{f(x,y)}
       =
        \norm{\sum_{i=1}^k \sigma_{i} (x)\otimes \tau_{i} (y)}
        =
        \norm{f'(x,y)}^{\frac{1}{2}},
    \end{align*}
    so we have written $(x,y)\mapsto \norm{f(x,y)}$ as the composition of the continuous map $f'$ with the map $b\mapsto\norm{b}^{\frac{1}{2}}$, which is \usc\ by assumption on~$\cB$. As such, $(x,y)\mapsto \norm{f(x,y)}$ is  \usc\ also.

    \smallskip
    
    For \ref{item:sections:dense}, fix any $m\in \cM_x$ and $n\in \cN_y$. By assumption, there exist elements $\sigma\in \Gamma_{0} (X;\cM)$ and $\tau\in \Gamma_{0} (Y;\cN)$ for which $\sigma(x)=m$ and $\tau(y)=n$. 
    In particular, $\Gamma(x,y)$ contains all elementary tensors $m\otimes n$ and, by construction, their linear hull, making $\Gamma(x,y)$ dense.\qedhere
\end{proof}

\begin{remark}
    As pointed out in Remark~\ref{rmk:enough-sections}, the bundle $\cK$ automatically has enough continuous cross-sections  according to \cite[Proposition 3.4]{Hofmann1977} (see also \cite[II.13.19]{FellDoranVol1} and the discussion in \cite[Appendix A]{MW2008}). One needs to be cautious, however, that the linear span of sections $\sigma\otimes \tau$ is by itself {\em not} sufficient for that.
\end{remark}

\begin{lemma}\label{lem:topology-on-cMcN-w-nets}
    Suppose we have convergent nets $\{m_{\lambda}\}_{\lambda\in\Lambda}$ in $M$ and $\{n_{\lambda}\}_{\lambda\in\Lambda}$ in $N$, with limits $m$ resp.\ $n$. Suppose $s_{\cM}(m_{\lambda})=r_{\cN}(n_{\lambda})$ for all $\lambda\in\Lambda.$ Then $m_{\lambda}\otimes n_{\lambda} \to m\otimes n$ in $\cM\otimes_{\cC\z}\cN.$
\end{lemma}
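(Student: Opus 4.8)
The plan is to prove this convergence by exhibiting $m \otimes n$ as the limit along a description of net convergence in the topology on $\cK$ constructed in Lemma~\ref{lem:topology-on-cK}. The key tool will be the characterization of net convergence in a \USCBb\ (referenced in Remark~\ref{rmk:topology-in-USC-bundles} via \cite[Theorem C.25]{Wil2007}): a net $\xi_\lambda$ in the total space converges to $\xi$ if and only if $q_\cK(\xi_\lambda) \to q_\cK(\xi)$ in the base and, for some (equivalently any) continuous section $\gamma$ with $\gamma(q_\cK(\xi)) = \xi$, we have $\norm{\xi_\lambda - \gamma(q_\cK(\xi_\lambda))} \to 0$. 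So first I would observe that $q_\cK(m_\lambda \otimes n_\lambda) = (q_\cM(m_\lambda), q_\cN(n_\lambda)) \to (q_\cM(m), q_\cN(n)) = q_\cK(m \otimes n)$ in $Z = X\bfp{s}{r}Y$, using that $q_\cM, q_\cN$ are continuous.

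The second and main step is to control the norm difference $\norm{m_\lambda \otimes n_\lambda - \gamma(q_\cK(m_\lambda \otimes n_\lambda))}$ for a suitable section $\gamma$. Using Remark~\ref{rmk:enough-sections}, pick continuous sections $\sigma \in \Gamma_0(X;\cM)$ and $\tau \in \Gamma_0(Y;\cN)$ with $\sigma(q_\cM(m)) = m$ and $\tau(q_\cN(n)) = n$, and take $\gamma := \sigma \otimes \tau$, which is continuous by Lemma~\ref{lem:topology-on-cK}. The goal is then to show
\[
    \norm{m_\lambda \otimes n_\lambda - \sigma(q_\cM(m_\lambda)) \otimes \tau(q_\cN(n_\lambda))} \longrightarrow 0.
\]
The natural way to bound this is to split it via the triangle inequality through the intermediate term $\sigma(q_\cM(m_\lambda)) \otimes n_\lambda$, writing the difference as
\[
    \bigl(m_\lambda - \sigma(q_\cM(m_\lambda))\bigr) \otimes n_\lambda
    \;+\;
    \sigma(q_\cM(m_\lambda)) \otimes \bigl(n_\lambda - \tau(q_\cN(n_\lambda))\bigr).
\]
Since $m_\lambda \to m$ and $\sigma$ is a continuous section with $\sigma(q_\cM(m)) = m$, the \USCBb\ convergence criterion for $\cM$ gives $\norm{m_\lambda - \sigma(q_\cM(m_\lambda))} \to 0$, and analogously $\norm{n_\lambda - \tau(q_\cN(n_\lambda))} \to 0$ in $\cN$.

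The main obstacle is that the tensor-product norm on $\cM_x \otimes_{\cC\z} \cN_y$ is \emph{not} simply the product of the fibre norms, so I cannot directly estimate $\norm{p \otimes q} \le \norm{p}\,\norm{q}$ without justification. I would resolve this using the inner-product definition \eqref{eq:ip-on-cKz} together with the Cauchy--Schwarz inequality of Lemma~\ref{lem:ineq of inner products}: for an elementary tensor, $\norm{p \otimes q}^2 = \norm{\lInner{\cB}{p\otimes q}{p\otimes q}} = \norm{\linner{\cB}{p}{p \cdot \linner{\cC}{q}{q}}}$, and the axioms \ref{item:FA:norm}, \ref{cond.F4}, \ref{cond.F9} bound this by $\norm{p}^2\,\norm{q}^2$, giving the needed estimate $\norm{p \otimes q} \le \norm{p}\,\norm{q}$. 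Applying this to each summand above, together with the boundedness of the convergent norms $\norm{n_\lambda}$ and $\norm{\sigma(q_\cM(m_\lambda))}$ (which follows since norms along convergent nets are eventually bounded, using upper semicontinuity of the norm and continuity of $\sigma$), forces both summands to $0$. Combined with the base-convergence from the first step, this verifies the net-convergence criterion and completes the proof.
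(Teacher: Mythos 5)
Your proof is correct, but it takes a genuinely different route from the paper's. The paper verifies the criterion of Lemma~\ref{lm.conv.equivalence.conditions}, \ref{item:convergence2}$\implies$\ref{item:convergence1}: for an \emph{arbitrary} finite sum of sections $\sum_i\sigma_i\otimes\tau_i$, it writes the entire quantity $\norm{m_\lambda\otimes n_\lambda-\sum_i\sigma_i(x_\lambda)\otimes\tau_i(y_\lambda)}^2$ as the $\cB$-norm of a double sum of inner products via \eqref{eq:tensor-minus-tensor}, notes that this $\cB$-valued expression is continuous in $(m_\lambda,n_\lambda,x_\lambda,y_\lambda)$, and applies upper semi-continuity of the norm on $\cB$ once to get the required $\limsup$ inequality --- no section through the limit point and no cross-norm estimate are needed. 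You instead use the ``section through the limit'' criterion (the paper's Remark after Lemma~\ref{lm.conv.equivalence.conditions}, i.e.\ essentially \ref{item:convergence3.gamma}$\implies$\ref{item:convergence1}): you fix one section $\sigma\otimes\tau$ with $\sigma(x)=m$, $\tau(y)=n$, split the difference into the two elementary tensors $\bigl(m_\lambda-\sigma(x_\lambda)\bigr)\otimes n_\lambda$ and $\sigma(x_\lambda)\otimes\bigl(n_\lambda-\tau(y_\lambda)\bigr)$, and bound each by the cross-norm inequality $\norm{p\otimes q}\leq\norm{p}\,\norm{q}$, which you correctly justify from $\norm{p\otimes q}^2=\norm{\linner{\cB}{p}{p\cdot\linner{\cC}{q}{q}}}$ together with Cauchy--Schwarz and \ref{item:FA:norm}; the fibrewise convergences $\norm{m_\lambda-\sigma(x_\lambda)}\to0$ and $\norm{n_\lambda-\tau(y_\lambda)}\to0$ and the eventual boundedness of $\norm{n_\lambda}$, $\norm{\sigma(x_\lambda)}$ then finish the argument. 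Your approach is more elementary and mirrors the standard Hilbert-module tensor-product estimate, at the cost of needing the cross-norm bound as an extra ingredient; the paper's approach is more uniform, avoids that bound entirely, and produces the stronger $\limsup$ statement against \emph{all} spanning sections, which is the form the paper reuses in later continuity proofs (e.g.\ Propositions~\ref{prop:cK:actions} and~\ref{prop:cK:ip}). One small point worth making explicit in your write-up: you should note, as the paper does, that $s_X(x)=r_Y(y)$ holds in the limit (by continuity and closedness of the fibre product), so that $m\otimes n$ is actually a well-defined element of $\cK$.
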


\begin{proof}
We will use Lemma~\ref{lm.conv.equivalence.conditions}. Let $x_{\lambda}:=q_{\cM}(m_{\lambda})$, $y_{\lambda}:=q_{\cN}(n_{\lambda})$. Since $s_{X}(x_{\lambda})=s_{\cM}(m_{\lambda})=r_{\cN}(n_{\lambda})=s_{Y}(y_{\lambda})$ by assumption, continuity of the maps involved imply that $s_{X}(x)=r_{Y}(y)$ for $x:=q_{\cM}(m)$ and $y:=q_{\cN}(n)$ also.  Moreover, we automatically have $$q_{\cK}(m_{\lambda}\otimes n_{\lambda}) = (x_{\lambda},y_{\lambda}) \to (x,y) = q_{\cK}(m\otimes n).$$  If $m_{i}\in \cM_{x},n_{i}\in\cN_{y},$ then
\begin{align}\label{eq:tensor-minus-tensor}
    \norm{\sum_{i=0}^{k}m_{i}\otimes n_{i}}^2
    &=
    \norm{\sum_{i,j=0}^{k}\Inner{m_{i}\otimes n_{i}}{m_{j}\otimes n_{j}}}
    =
    \norm{
        \sum_{i,j=0}^{k}
         \linner{\cB}{m_{i}}{m_{j}\cdot \linner{\cC}{n_{j}}{n_{i}}}
    }.
\end{align}
Now take arbitrary $\sigma_{i}\in \Gamma_{0}(X;\cM)$ and $\tau_{i}\in\Gamma_{0} (Y;\cN)$. Their continuity, the continuity of the right $\cC$-action on $\cM$, of $\linner{\cB}{\cdot}{\cdot}$ and $\linner{\cC}{\cdot}{\cdot}$, and of taking finite sums, and the upper semi-continuity of the norm on $\cB$ together with Equation~\eqref{eq:tensor-minus-tensor} imply that
\begin{align*}
    \limsup_{\lambda}
    \norm{
    m_{\lambda} \otimes n_{\lambda}  -
    \sum_{i=1}^{k}\sigma_{i}(x_{\lambda})\otimes \tau_{i}(y_{\lambda})}^2
    \leq
    \norm{
    m\otimes n -
    \sum_{i=1}^{k}\sigma_{i}(x)\otimes \tau_{i}(y)}^2.
\end{align*}
Since the sections on $\cM$ and $\cN$ were arbitrary, it follows from Lemma~\ref{lm.conv.equivalence.conditions}, \ref{item:convergence2}$\implies$\ref{item:convergence1}, that $m_{\lambda}\otimes n_{\lambda}\to m\otimes n$ in $\cM\otimes_{\cC\z}\cN,$ as claimed.
\end{proof}

\begin{lemma}\label{lem:qMN-open}
    The map $ q_{\cK} \colon \cK=\cM\otimes_{\cC\z}\cN \to X \bfp{s}{r} Y=Z$ is open.
\end{lemma}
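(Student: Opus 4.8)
The plan is to exploit the explicit description of the basic open sets of the bundle topology on $\cK$. Recall from the construction in Lemma~\ref{lem:topology-on-cK} (and the topologization result cited in Remark~\ref{rmk:topology-in-USC-bundles}, namely \cite[Theorem C.25]{Wil2007}) that a basis for the topology on $\cK$ is given by the sets
\[
    W(f,V,\epsilon):=\left\{\xi\in\cK \,\mid\, q_{\cK}(\xi)\in V,\ \norm{\xi - f(q_{\cK}(\xi))}<\epsilon\right\},
\]
where $f$ ranges over the vector space $\Gamma$ spanned by the cross-sections $\sigma\otimes\tau$, where $V\subseteq Z$ is open, and where $\epsilon>0$. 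Since taking images commutes with taking unions, and every open subset of $\cK$ is a union of such basic open sets, it suffices to prove that $q_{\cK}$ maps each $W(f,V,\epsilon)$ to an open subset of $Z$.

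First I would observe that $q_{\cK}(W(f,V,\epsilon))\subseteq V$ is immediate from the definition. For the reverse inclusion, the crucial point is that every $f\in\Gamma$ is a \emph{global} continuous cross-section over $Z=X\bfp{s}{r}Y$: for any $(x,y)\in V$ the element $f(x,y)$ lies in the fibre $q_{\cK}\inv(x,y)$ and trivially satisfies $\norm{f(x,y)-f(q_{\cK}(f(x,y)))}=\norm{f(x,y)-f(x,y)}=0<\epsilon$. Hence $f(x,y)\in W(f,V,\epsilon)$ and $q_{\cK}(f(x,y))=(x,y)$, which shows $V\subseteq q_{\cK}(W(f,V,\epsilon))$.

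Combining the two inclusions gives $q_{\cK}(W(f,V,\epsilon))=V$, which is open by assumption. As the sets $W(f,V,\epsilon)$ form a basis for $\cK$, every open subset of $\cK$ has open image under $q_{\cK}$, so $q_{\cK}$ is open, as claimed.

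I do not expect a serious obstacle here: this is essentially the standard fact that the bundle projection of a \USCBb\ with enough sections is open. The only points to handle with care are invoking the correct description of the basic open sets of $\cK$ and noting that the defining sections in $\Gamma$ are globally defined over all of $Z$, so that each point of the base is realised in the fibre within the tolerance $\epsilon$ by the section $f$ itself. (One should also keep in mind the caution recorded after Lemma~\ref{lem:topology-on-cK}: although $\Gamma$ alone is not enough to guarantee \emph{enough} continuous sections, it does determine the topology, and that is all that is needed to describe the basic open sets used above.)
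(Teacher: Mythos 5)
Your proof is correct, and it takes a genuinely different route from the paper's. You argue directly on the basis of the topology: since $\cK$ was topologized via the Fell--Williams tube construction from the family $\Gamma = \operatorname{span}\{\sigma\otimes\tau\}$ of \emph{global} continuous sections, each basic open set $W(f,V,\epsilon)$ with $f\in\Gamma$ has image exactly $V$ under $q_{\cK}$ (the section $f$ itself witnesses every point of $V$), and openness follows since images commute with unions. The paper instead uses the dual characterization of openness: $q_{\cK}$ is open if and only if, for every closed $A\subseteq K$, the set $\{z\in Z \mid q_{\cK}\inv(z)\subseteq A\}$ is closed; it verifies this by a net argument that picks, for an arbitrary point $\xi$ in an arbitrary fibre, a continuous section through $\xi$ --- thereby invoking the nontrivial ``enough sections'' property of Remark~\ref{rmk:enough-sections} (Hofmann/Lazar). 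The trade-off: your argument is more elementary and self-contained, needing only the defining data of the topology (the tubes around sections in $\Gamma$, exactly as described in the proofs of Lemma~\ref{lem:beefing-up-a-USC-bundle} and Proposition~\ref{prop:Q-open}), whereas the paper's argument is intrinsic to the bundle --- it would prove openness of the projection of \emph{any} \USCBb\ with enough sections, regardless of how its topology arose, which is a pattern the paper reuses elsewhere. Your closing parenthetical is also the right caution: $\Gamma$ need not supply enough sections through every point, but it does generate the topology, and that is all your argument requires.
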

\begin{proof}
    Let $A  \subseteq K$ be any closed subset.
    To see that $q:= q_{\cK} $ is open, it suffices to prove that the set $B:=\{(x,y)\in Z  \,\mid\,q\inv (x,y)\subseteq A\}$ is closed.
    To this end, let $(x_{\lambda},y_{\lambda})\in B$ be a net that converges to $(x,y)$ in $Z$; we claim that $(x,y)\in B.$ In other words, if we fix any $\xi\in q\inv (x,y),$ we need to show that $\xi\in A$.
    
    For $\xi$, take any $\sigma\in \Gamma_{0} (Z;\cK)$ for which $\sigma(x,y)=\xi.$ Then, by continuity of $\sigma$, we have that $\xi_{\lambda} := \sigma(x_{\lambda},y_{\lambda})$  converges to $\xi$ in~$\cK$. Since $\sigma$ is a section, $\xi_{\lambda} \in q\inv (x_{\lambda},y_{\lambda})$. As $(x_{\lambda},y_{\lambda})\in B$, we have
    \(
       q\inv (x_{\lambda},y_{\lambda})\subseteq A,
    \)
    so that $\xi_{\lambda}\in A$ and thus $\xi$ is the limit of a net in $A$. As $A$ was closed by assumption, it follows that, indeed, $\xi\in A$ as claimed.
\end{proof}

\subsection{Additional structure on the product bundle}
In this subsection, we will study the \USCBb\ $\cK:=\cM\otimes_{\cC\z}\cN=(q_{\cK}\colon K \to X\bfp{s}{r}Y)$ in more detail. Recall that $ Z :=X\bfp{s}{r}Y$ naturally is a $(\cG,\mathcal{K})$-\pe\ by Lemma~\ref{lem:product-of-preequiv}, which is also where we defined $\Leoq[ Z ]{\cG}{\cdot}{\cdot}$ and $\Reoq[ Z ]{\cdot}{\cdot}{\cG}.$
As always, we will write $s_{\cK}:= s_{ Z }\circ q_{\cK}$ and $r_{\cK}:= r_{ Z }\circ q_{\cK}$.

\begin{proposition}\label{prop:cK:actions}
    The bundle $\cK$ naturally carries a left $\cB$- and a right $\cD$-action in the sense of Definition~\ref{def:USCBb-action} determined by
    \[ 
       b\cdot (m\otimes n) = (b\cdot m)\otimes n 
       \quad\text{and}\quad 
       (m\otimes n)\cdot d = m\otimes (n\cdot d)
    \]
    where $b\in B $, $m\otimes n\in K $, and $d\in D$ are such that $s_{\cB}(b)=r_{\cM}(m)$ and $s_{\cN}(n)=r_{\cD}(d).$ Moreover, these actions commute. 
\end{proposition}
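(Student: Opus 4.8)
The plan is to check the three axioms of Definition~\ref{def:USCBb-action} for the left $\cB$-action; the right $\cD$-action is symmetric, and the commuting statement is a one-line computation on elementary tensors. Fix $b\in\cB_g$ with $s_\cB(b)=r_X(x)$, so that $b$ should act on the fibre $\cM_x\otimes_{\cC\z}\cN_y$ over $(x,y)$. Set $u:=s_X(x)=r_Y(y)$; since the actions on $X$ commute we also have $s_X(g\cdot x)=u$ (which is exactly what makes the $\cG$-action $g\cdot(x,y)=(g\cdot x,y)$ on $Z$ of Lemma~\ref{lem:product-of-preequiv} land in $Z$), so that $\cM_x$ and $\cM_{g\cdot x}$ are both right Hilbert $C(u)$-modules. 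First I would observe that left multiplication $L_b\colon\cM_x\to\cM_{g\cdot x}$, $m\mapsto b\cdot m$, is a right $C(u)$-module map by the compatibility of the two actions in~\ref{item:FE:actions}, that it is bounded with $\norm{L_b}\le\norm{b}$ by~\ref{item:FA:norm}, and that it is adjointable with adjoint $L_{b^*}$ by Condition~\ref{item:FHE:ip:adjointables}. By functoriality of the interior tensor product of Hilbert modules \cite{RW:Morita}, $L_b\otimes\mathrm{id}$ is then a well-defined bounded operator $\cM_x\otimes_{\cC\z}\cN_y\to\cM_{g\cdot x}\otimes_{\cC\z}\cN_y$ of norm at most $\norm{b}$. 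This simultaneously shows that $\xi\mapsto b\cdot\xi$ is well defined on the balanced tensor product and establishes~\ref{item:FA:norm}.

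The remaining algebraic axioms hold on elementary tensors and extend by linearity and fibrewise continuity. For~\ref{item:FA:fibre}, $q_\cK(b\cdot(m\otimes n))=(q_\cM(b\cdot m),y)=(g\cdot x,y)=p_\cB(b)\cdot q_\cK(m\otimes n)$, using \ref{item:FA:fibre} for $\cM$ and the $\cG$-action on $Z$ from Lemma~\ref{lem:product-of-preequiv}; for~\ref{item:FA:assoc}, $a\cdot(b\cdot(m\otimes n))=(a\cdot(b\cdot m))\otimes n=((ab)\cdot m)\otimes n=(ab)\cdot(m\otimes n)$ by~\ref{item:FA:assoc} for $\cM$; and the two actions commute since $b\cdot((m\otimes n)\cdot d)=(b\cdot m)\otimes(n\cdot d)=(b\cdot(m\otimes n))\cdot d$.

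The main obstacle is the continuity of $(b,\xi)\mapsto b\cdot\xi$, because a general $\xi$ is not an elementary tensor, so one cannot invoke continuity of the $\cB$-action on $\cM$ directly. I would argue with nets, in the spirit of the proof of Lemma~\ref{lem:topology-on-cMcN-w-nets}. Let $(b_\lambda,\xi_\lambda)\to(b,\xi)$, and write $(x_\lambda,y_\lambda):=q_\cK(\xi_\lambda)$ and $(x,y):=q_\cK(\xi)$. The base points converge, $q_\cK(b_\lambda\cdot\xi_\lambda)=p_\cB(b_\lambda)\cdot q_\cK(\xi_\lambda)\to q_\cK(b\cdot\xi)=:w$, so by Lemma~\ref{lm.conv.equivalence.conditions} it remains to control fibre norms. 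Fix $\epsilon>0$ and, using density of the elementary-tensor sections (Lemma~\ref{lem:topology-on-cK} and Remark~\ref{rmk:enough-sections}), pick a finite sum $\zeta_0=\sum_i\sigma_i\otimes\tau_i$ of such sections with $\norm{\xi-\zeta_0(q_\cK(\xi))}<\epsilon$. Writing $b_\lambda\cdot\xi_\lambda=b_\lambda\cdot\zeta_0(q_\cK(\xi_\lambda))+b_\lambda\cdot(\xi_\lambda-\zeta_0(q_\cK(\xi_\lambda)))$, the tail satisfies $\limsup_\lambda\norm{b_\lambda\cdot(\xi_\lambda-\zeta_0(q_\cK(\xi_\lambda)))}\le\norm{b}\,\epsilon$ by~\ref{item:FA:norm} and upper semicontinuity of the norm, while the main term converges, $b_\lambda\cdot\zeta_0(q_\cK(\xi_\lambda))=\sum_i(b_\lambda\cdot\sigma_i(x_\lambda))\otimes\tau_i(y_\lambda)\to\sum_i(b\cdot\sigma_i(x))\otimes\tau_i(y)=b\cdot\zeta_0(q_\cK(\xi))$, by continuity of the $\cB$-action on $\cM$ together with Lemma~\ref{lem:topology-on-cMcN-w-nets}.

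Finally, since $\norm{b\cdot\zeta_0(q_\cK(\xi))-b\cdot\xi}\le\norm{b}\,\epsilon$ by~\ref{item:FA:norm} again, choosing a continuous section of $\cK$ through $b\cdot\zeta_0(q_\cK(\xi))$ and feeding the two $\limsup$-estimates above into the criterion of Lemma~\ref{lm.conv.equivalence.conditions} (the implication \ref{item:convergence2}$\implies$\ref{item:convergence1}), then letting $\epsilon\to0$, yields $b_\lambda\cdot\xi_\lambda\to b\cdot\xi$. This completes the verification of Definition~\ref{def:USCBb-action}; the right $\cD$-action is handled identically, and the commuting relation established on elementary tensors passes to all of $\cK$ by linearity and continuity.
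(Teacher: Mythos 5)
Your proof is correct, and its two halves relate to the paper's proof in different ways. The continuity argument --- the hard part --- is essentially the paper's own: approximate $\xi$ by a finite sum of elementary-tensor sections, control the tail term via $\limsup_\lambda\norm{b_\lambda}\le\norm{b}$ together with~\ref{item:FA:norm}, push the main term through continuity of the $\cB$-action on $\cM$ and Lemma~\ref{lem:topology-on-cMcN-w-nets}, and conclude with the net criterion of Lemma~\ref{lm.conv.equivalence.conditions}. (One slip there: the implication you actually use is \ref{item:convergence3.gamma}$\implies$\ref{item:convergence1} --- for each $\epsilon$ produce a single section $\varphi$ of $\cK$ satisfying both the bound at $b\cdot\xi$ and the eventual bound along the net --- not \ref{item:convergence2}$\implies$\ref{item:convergence1}, which would require the $\limsup$ inequality for \emph{every} section; since the argument you describe is precisely the former, this is a labelling error, not a gap.) Where you genuinely diverge is the construction of the fibrewise operator: the paper factors the bilinear map $(m,n)\mapsto(b\cdot m)\otimes n$ through the $\cC\z$-balancing by hand and derives the bound $\norm{b\cdot\xi}\le\norm{b}\,\norm{\xi}$ from the explicit computation $\lInner{\cB}{b\cdot\xi}{b\cdot\xi}=b\,\lInner{\cB}{\xi}{\xi}\,b^*$ (using \ref{item:FE:ip:adjoint} and \ref{item:FE:ip:C*linear}), whereas you observe that $L_b\colon\cM_x\to\cM_{g\cdot x}$ is an adjointable map of right Hilbert $C(u)$-modules --- adjointability being exactly the second identity in \ref{item:FHE:ip:adjointables}, with the left $C(u)$-action on $\cN_y$ acting by adjointables thanks to \ref{item:FE:SMEs} --- and then invoke functoriality of the interior tensor product to obtain $L_b\otimes\mathrm{id}$ with norm at most $\norm{b}$. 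That shortcut is legitimate (the paper's balanced completion is the interior tensor product, as its norms agree by \cite[Remark 3.17]{RW:Morita}), and it has the merit of isolating precisely which axiom of a \hequiv\ powers the construction; what the paper's hands-on route buys instead is self-containedness and the identity $\lInner{\cB}{b'\cdot\xi'}{b\cdot\xi}=b'\,\lInner{\cB}{\xi'}{\xi}\,b^*$ as a by-product.
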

    
\begin{proof}
    We will do the proof for $\cB$; the proof for $\cD$ will follow {\em mutatis mutandis}, and it is clear from the given description that the actions commute.
    Suppose $g\in \cG$ and $(x,y)\in  Z$ are such that $s_{\cG}(g)=r_{X}(x).$ For any fixed $b\in \cB_{g}$, define
    \[
       \dbtilde{\phi}^{(x,y)}_{b} = \dbtilde{\phi} \colon\quad \cM_{x}\times \cN_{y} \to \cM_{gx}\otimes_{\cC\z} \cN_{y},\quad
       (m, n) \mapsto (b\cdot m)\otimes n.
    \]
    By the properties of the left $\cB$-action on $\cM$, one quickly verifies that $\dbtilde{\phi}$ is bilinear, and since
    \[
    	(b\cdot (m\cdot c))\otimes n
    	=
    	((b\cdot m)\cdot c))\otimes n
    	=
 	    (b\cdot m)\otimes (c\cdot n)   	    	
    \]
    for any $c\in\cC_{s(x)}$, we conclude that the induced linear map on $\cM_{x}\odot \cN_{y}$ factors through the $\cC\z$-balancing. In other words, there exists a map
    \[
       \tilde{\phi}^{(x,y)}_{b} \colon\quad \cM_{x}\odot_{\cC\z} \cN_{y} \to \cM_{gx}\otimes_{\cC\z} \cN_{y}\quad \text{ determined by }\quad
       m \odot n \mapsto (b\cdot m)\otimes n.
    \]
    By Conditions~\ref{item:FE:ip:adjoint} and \ref{item:FE:ip:C*linear} on $\cM$ ($\cB$-*-linearity of the inner products), we have
    \begin{align*}
  		\lInner{\cB}{(b' \cdot m' )\otimes n' }{(b \cdot m )\otimes n }
  		&=
  		\linner{\cB}{b' \cdot m' }{(b \cdot m )\cdot \linner{\cC}{n }{n' }}
  		=
  		b' \,\linner{\cB}{m' }{m\cdot \linner{\cC}{n }{n' }}\,b^*
  		\\
  		&=
  		b' \,\lInner{\cB}{ m' \otimes n' }{m\otimes n }\,b^*,
     	\end{align*}
    so we conclude that for any $\xi\in \cM_{x}\odot_{\cC\z} \cN_{y}$,
 \begin{align*}
  		\tensor*[]{\norm{\tilde{\phi}^{(x,y)}_{b}(\xi)
  		}}{^2_{\cK}}
  		&=
  		\tensor*[]{\norm{
  			\lInner[\cK]{\cB}{\tilde{\phi}^{(x,y)}_{b}(\xi)
  			}{\tilde{\phi}^{(x,y)}_{b}(\xi)
  			}
  		}}{_{\cB}}
  		=
  		\tensor*[]{\norm{
  			b\,\lInner[\cK]{\cB}{ \xi
  			}{ \xi
  			}\,b^*
  		}}{_{\cB}}
  		\leq
  		\norm{b}^2\,
  		\tensor*[]{\norm{\lInner[\cK]{\cB}{ \xi
  		}{ \xi
  		}
  		}}{_{\cB}}
  		=
  		\norm{b}^2\,
  		\tensor*[]{\norm{ \xi
  		}}{^2_{\cK_{(x,y)}}}
  		,
  	\end{align*}
  	where we added the subscripts for clarity.
    It is now clear that $\tilde\phi^{(x,y)}_{b}$ extends to a continuous linear map 
  	\[
       {\phi}^{(x,y)}_{b} \colon\quad \cM_{x}\otimes_{\cC\z} \cN_{y} \to \cM_{gx}\otimes_{\cC\z} \cN_{y}\quad \text{ determined by }\quad
       m \otimes n \mapsto (b\cdot m)\otimes n,
    \]
    which satisfies
  	\begin{equation}\label{ineq:phi-norm-decreasing}
		\norm{\phi^{(x,y)}_{b}(\xi)}
  		\leq
  		\norm{b}\,\norm{\xi}
  		\quad\text{ for all }
  		\xi\in \cK_{(x,y)}=\cM_{x}\otimes_{\cC\z} \cN_{y}
  		.
  	\end{equation}
    Other properties of the $\cB$-action on $\cM$ imply that 
    \begin{equation}\label{eq:phi-lin-and-comp}
    	\phi^{(x,y)}_{b + \lambda b'}
    	=
    	\phi^{(x,y)}_{b}
    	+
    	\lambda
    	\phi^{(x,y)}_{b'}
    	\quad\text{and}\quad
       	\phi^{(g\cdot x,y)}_{b_{1}}
       	\circ 
       	\phi^{(x,y)}_{b_{2}}
       	=
       	\phi^{(x,y)}_{b_{1}b_{2}}
       	\quad\text{ where } g=p_{\cB}(b_{2})
    \end{equation}
    for all $b,b'\in B $ with $p_{\cB}(b)=p_{\cB}(b')$ and all $(b_{1},b_{2})\in\cB^{(2)}.$  We define 
   	\[
   		\cB \bfp{s}{r} \cK \to \cK,
   		\quad
   		(b,\xi) \mapsto 
   		b\cdot \xi := \phi^{(x,y)}_{b} (\xi)
   		\quad\text{ where }
       (x,y)=q_{\cK}(\xi)
   	    .
   	\]
   	By construction, we have $q_{\cK}(b\cdot \xi)=p_{\cB}(b)\cdot q_{\cK}(\xi)$. The first equality in~\eqref{eq:phi-lin-and-comp} implies that the action is linear and the second that
   	$a\cdot (b\cdot \xi)=(ab)\cdot \xi$ for appropriate $a,b\in \cB$.
   	Inequality~\eqref{ineq:phi-norm-decreasing} implies that $\norm{b\cdot \xi}\leq\norm{b}\,\norm{\xi}.$
   	
   	\smallskip
   	
   It only remains to show that $(b,\xi)\mapsto b\cdot \xi$ is continuous,    so suppose that we have a convergent net in $\cB\bfp{s}{r}\cK$, say $(b_{\lambda},\xi_{\lambda})\to (b,\xi)$. In particular,  $(g_{\lambda},x_{\lambda},y_{\lambda}):=(p_{\cB}(b_{\lambda}), q_{\cK}(\xi_{\lambda})) \to (p_{\cB}(b), q_{\cK}(\xi))=:(g,x,y)$.
  Fix $\epsilon >0$. Because of Lemma~\ref{lm.conv.equivalence.conditions}, \ref{item:convergence3.gamma}$\implies$\ref{item:convergence1},
  it suffices to find a section $\varphi$ of $\cK$ and some $\lambda_0$ such that for all $\lambda\geq\lambda_0,$ we have
  \[
        \norm{
            b_{\lambda}\cdot \xi_{\lambda}
          -
          \varphi(g_{\lambda}x_{\lambda},y_{\lambda})
        }
        < \epsilon
        \quad\text{and}\quad
        \norm{
            b\cdot \xi
          -
          \varphi(gx,y)
        }
        <\epsilon.
  \]
  Pick finitely many $\tau_{j}\in\Gamma_0 (X;\cM),\kappa_{j}\in\Gamma_0 (Y;\cN)$ such that
   \begin{equation}\label{eq:choice of tau and kappa}
       \norm{\xi-\sum_{j}\tau_{j}(x)\otimes \kappa_{j}(y)}
       <
        \frac{\epsilon}{2(\norm{b}+1)}.
   \end{equation}
    Because of Inequality~\eqref{ineq:phi-norm-decreasing}, this implies
   \begin{align}
      \label{eq:xi-tau0}
      \norm{b\cdot \xi- b\cdot\sum_{j}\tau_{j}(x)\otimes \kappa_{j}(y)}
      \leq
      \norm{b}\norm{\xi-\sum_{j}\tau_{j}(x)\otimes \kappa_{j}(y)}
      &< \frac\epsilon2.
    \end{align}
    Since $(b_{\lambda},\xi_{\lambda})\to(b,\xi),$ we know from Lemma~\ref{lm.conv.equivalence.conditions}, \ref{item:convergence1}$\implies$\ref{item:convergence2}, that
    \[
       \limsup_{\lambda}\norm{b_{\lambda}}\leq\norm{b}
       \quad\text{and}\quad
       \limsup_{\lambda}\norm{\xi_{\lambda}-\sum_{j}\tau_{j}(x_{\lambda})\otimes \kappa_{j}(y_{\lambda})}
       \leq
       \norm{\xi-\sum_{j}\tau_{j}(x)\otimes \kappa_{j}(y)}.
    \]
    Together with Inequalities~\eqref{ineq:phi-norm-decreasing} and~\eqref{eq:choice of tau and kappa}, this implies that 
    \begin{align*}
      &\limsup_{\lambda}\norm{b_{\lambda}\cdot \xi_{\lambda}- b_{\lambda}\cdot\sum_{j}\tau_{j}(x_{\lambda})\otimes \kappa_{j}(y_{\lambda})}
      \leq
      \limsup_{\lambda} \left(\norm{b_{\lambda}}\norm{\xi_{\lambda}-\sum_{j}\tau_{j}(x_{\lambda})\otimes\kappa_{j}(y_{\lambda})}\right)
      \notag
      \\
      &\qquad\leq
      \left(
      \limsup_{\lambda}\norm{b_{\lambda}}\right)
      \left(
      \limsup_{\lambda}
      \norm{\xi_{\lambda}-\sum_{j}\tau_{j}(x_{\lambda})\otimes\kappa_{j}(y_{\lambda})}
      \right)
      \leq
      \norm{b}
       \norm{\xi-
       \sum_{j}
       \tau_{j}(x)\otimes \kappa_{j}(y)}
      < \frac\epsilon2.
   \end{align*}
   So there exists $\lambda_{1}$ such that for $\lambda\geq\lambda_1$
    \begin{equation}\label{eq:cty-of-action:lambda1}
      \norm{b_{\lambda}\cdot \xi_{\lambda}- b_{\lambda}\cdot\sum_{j}\tau_{j}(x_{\lambda})\otimes \kappa_{j}(y_{\lambda})}
      < \frac\epsilon2.
    \end{equation}
   Since the $\cB$-action on $\cM$ is continuous, we have that
   \(
       \lim_{\lambda} b_{\lambda}\cdot\tau_{j} (x_{\lambda}) = b\cdot \tau_{j}(x).
   \)
   By Lemma~\ref{lem:topology-on-cMcN-w-nets}, it follows that  
   \(
       \lim_{\lambda} [b_{\lambda}\cdot \tau_{j} (x_{\lambda})]\otimes \kappa_{j} (y_{\lambda}) = [b\cdot \tau_{j}(x)]\otimes \kappa_{j} (y)
   \)
   in~$\cK$. By Lemma~\ref{lm.conv.equivalence.conditions}, \ref{item:convergence1}$\implies$\ref{item:convergence3.gamma},
    there exists a section $\varphi$ of $\cK$ and some $\lambda_{2}$ such that for all $\lambda\geq \lambda_2$
   \begin{equation}\label{eq:lambda2}
    \norm{
          \sum_{j}
           [b_{\lambda}\cdot \tau_{j} (x_{\lambda})] \otimes \kappa_{j} (y_{\lambda})
           -
           \varphi(g_{\lambda}x_{\lambda},y_{\lambda})
       }
       < \frac\epsilon2
   \end{equation}
   and
   \begin{equation}\label{eq:2}
    \norm{ 
          \sum_{j}
           [b\cdot \tau_{j} (x)] \otimes \kappa_{j} (y)
            -
           \varphi(gx,y)
       }
       < \frac\epsilon2.
   \end{equation}
   Fix any $\lambda_0\geq \lambda_1,\lambda_2$.
  Combining Equations~\eqref{eq:cty-of-action:lambda1} and~\eqref{eq:lambda2} respectively Equations~\eqref{eq:xi-tau0} and~\eqref{eq:2} with the triangle inequality, we see that for all $\lambda\geq \lambda_0$
  \begin{align*}
        \norm{
            b_{\lambda}\cdot \xi_{\lambda}
          -
          \varphi(g_{\lambda}x_{\lambda},y_{\lambda})
        }
        &
        < \epsilon
        \quad\text{and}\quad
        \norm{
          b\cdot \xi
          -
          \varphi(gx,y)
        }
        < \epsilon.
  \end{align*}
  This concludes our proof of continuity.  
\end{proof}

\begin{lemma}\label{lem:cK:ip-on-elementary-tensors}
    Suppose we are given elements 
    $m,m'\in\cM$ and $n,n'\in\cN$ such that
    $(x,y):=(q_{\cM}(m),q_{\cN}(n)),(x',y'):=(q_{\cM}(m'),q_{\cN}(n'))$ are elements of $Z=X\bfp{s}{r}Y$.
    \begin{enumerate}[label=\textup{(\arabic*)}]
       \item\label{item:cK:lip-on-elementary-tensors} We have $s_{\cK}(m\otimes n)=s_{\cK}(m'\otimes n'\bigr)$ if and only if $s_{\cN}(n)=s_{\cN}(n').$ In that case, the element \(
           \linner{\cB}{m}{m'\cdot \linner{\cC}{n'}{n} }\)
       is well defined and lives in the fibre over
       $\Leoq[ Z ]{\cG}{(x,y)}{(x',y')}
       $
       of~$\cB$. 
       \item\label{item:cK:rip-on-elementary-tensors}
        We have $r_{\cK}(m\otimes n)=r_{\cK}(m'\otimes n'\bigr)$ if and only if $r_{\cM}(m)=r_{\cM}(m').$ In that case,  the element \(
           \rinner{\cD}{\rinner{\cC}{m'}{m}\cdot n}{n'}\)
       is well defined and lives in the fibre over $\Reoq[ Z ]{(x,y)}{(x',y')}{\mathcal{K}}
    $ of~$\cD$.
    \end{enumerate}
\end{lemma}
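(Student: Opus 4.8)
The plan is to establish both parts by tracking source and range maps, using only the fibre identities~\ref{item:FHE:ip-fibre} for the two inner products, the action--fibre identities~\ref{item:FA:fibre}, the pre-equivalence axioms of Definition~\ref{def:preequiv} (for $X$, $Y$, and $Z$), and the explicit formulas for $\Leoq[Z]{\cG}{\cdot}{\cdot}$ and $\Reoq[Z]{\cdot}{\cdot}{\mathcal{K}}$ recorded in Lemma~\ref{lem:product-of-preequiv}. Since part~\ref{item:cK:rip-on-elementary-tensors} is the mirror image of part~\ref{item:cK:lip-on-elementary-tensors}, I would prove the latter in detail and obtain the former by the obvious left--right symmetry. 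For the equivalence of source maps, Notation~\ref{notation:s-and-r} together with $s_{Z}(x,y)=s_{Y}(y)$ gives $s_{\cK}(m\otimes n)=s_{Z}(x,y)=s_{Y}(y)=s_{\cN}(n)$ and likewise $s_{\cK}(m'\otimes n')=s_{\cN}(n')$, so the stated ``if and only if'' is immediate.

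Assuming now $s_{\cN}(n)=s_{\cN}(n')$, I would check that the nested expression $\linner{\cB}{m}{m'\cdot \linner{\cC}{n'}{n}}$ is defined by working from the inside out. The inner product $\linner{\cC}{n'}{n}$ is then defined, and \ref{item:FHE:ip-fibre} applied to $\cN$ gives $p_{\cC}\bigl(\linner{\cC}{n'}{n}\bigr)=\Leoq[Y]{\cH}{y'}{y}$. Using \ref{item:def-preequiv:r-and-s} for $Y$ one computes $r_{\cC}\bigl(\linner{\cC}{n'}{n}\bigr)=r_{Y}(y')$, which equals $s_{X}(x')=s_{\cM}(m')$ because $(x',y')\in Z$; hence the right action $m'\cdot\linner{\cC}{n'}{n}$ is defined, and the right-hand analogue of \ref{item:FA:fibre} gives $q_{\cM}\bigl(m'\cdot\linner{\cC}{n'}{n}\bigr)=x'\cdot\Leoq[Y]{\cH}{y'}{y}$. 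A second source computation, now via the primed axiom \ref{item:def-preequiv:s-and-r} for $Y$ and $(x,y)\in Z$, yields $s_{\cM}\bigl(m'\cdot\linner{\cC}{n'}{n}\bigr)=r_{Y}(y)=s_{X}(x)=s_{\cM}(m)$, so the outer $\cB$-valued inner product is defined.

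For the fibre claim, I would apply \ref{item:FHE:ip-fibre} to $\cM$ and substitute the value of $q_{\cM}\bigl(m'\cdot\linner{\cC}{n'}{n}\bigr)$ computed above, giving $p_{\cB}\bigl(\linner{\cB}{m}{m'\cdot\linner{\cC}{n'}{n}}\bigr)=\Leoq[X]{\cG}{x}{x'\cdot\Leoq[Y]{\cH}{y'}{y}}$, which is precisely $\Leoq[Z]{\cG}{(x,y)}{(x',y')}$ by the formula in Lemma~\ref{lem:product-of-preequiv}. Part~\ref{item:cK:rip-on-elementary-tensors} follows the same template, with $r_{\cK}(m\otimes n)=r_{Z}(x,y)=r_{X}(x)=r_{\cM}(m)$ handling the ``if and only if'', and with the chain of domain checks landing on $\Reoq[Y]{\Reoq[X]{x'}{x}{\cH}\cdot y}{y'}{\mathcal{K}}=\Reoq[Z]{(x,y)}{(x',y')}{\mathcal{K}}$. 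The whole argument is bookkeeping rather than conceptually hard; the only real obstacle is staying disciplined about \emph{which} fibre product each inner product and action is defined over, and about the primed-versus-unprimed source/range axioms. The two intermediate domain checks---which hinge on the defining relation $s_{X}=r_{Y}$ on $Z$ and on correctly transporting source and range through the $\cH$-actions on $X$ and $Y$---are where I would be most careful.
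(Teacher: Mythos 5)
Your proposal is correct and follows essentially the same route as the paper's proof: reduce the source/range ``if and only if'' to $s_{Z}$ resp.\ $r_{Z}$, verify the nested expression is defined from the inside out using \ref{item:FHE:ip-fibre}, the action--fibre compatibility, and the axioms \ref{item:def-preequiv:r-and-s}/\ref{item:def-preequiv:s-and-r}, and then identify the fibre via the formula for $\Leoq[Z]{\cG}{\cdot}{\cdot}$ from Lemma~\ref{lem:product-of-preequiv}. The only cosmetic difference is that you justify the intermediate source/range matches via $(x,y),(x',y')\in Z$ where the paper phrases them as $s_{\cM}(m')=r_{\cN}(n')$ etc., which are the same conditions.
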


\begin{proof}
    We will only deal with \ref{item:cK:lip-on-elementary-tensors}; {\em mutatis mutandis,} one proves  \ref{item:cK:rip-on-elementary-tensors}.
    We compute
    \begin{align*}
       s_{\cK}(m\otimes n)=s_{\cK}(m'\otimes n')
       \iff&s_{ Z }(q_{\cK}(m\otimes n))=s_{ Z }(q_{\cK}(m'\otimes n'))
       \\
       \iff& s_{ Z }(x,y) = s_{ Z }(x',y')
       \\
       \iff& s_{Y}(y) = s_{Y}(y')
       \iff s_{\cN}(n) = s_{\cN}(n'),
    \end{align*}
    which shows that $\linner{\cC}{n'}{n}$ makes sense. 
    Further, as $m\otimes n, m'\otimes n' \in \cK$, we have $s_{\cM}(m)=r_{\cN}(n)$ and $s_{\cM}(m')=r_{\cN}(n')$ in $\cH\z$, so 
    \begin{align*}
       r_{\cC} \left(\linner{\cC}{n'}{n} \right)
       &=
       (r_{\cH}\circ p_{\cC}) \left(\linner{\cC}{n'}{n} \right)
       =
       r_{\cH}\left(\Leoq[Y]{\cH}{q_{\cN}(n')}{q_{\cN}(n)} \right)
       =
       r_{\cH}\left(\Leoq[Y]{\cH}{y'}{y} \right)
       ,
    \end{align*}
    where the second equality follows by Condition~\ref{item:FHE:ip-fibre}, a property of the $\cC$-valued inner product on $\cN$. By Items~\mbox{\ref{item:def-preequiv:s-and-r}} and \mbox{\ref{item:def-preequiv:r-and-s}} in Definition~\ref{def:preequiv}, the range of $\Leoq[Y]{\cH}{y'}{y}$ in $\cH\z$ is $r_{Y}(y')$ and its source is $r_{Y}(y)$. Thus,
    \begin{align*}
       r_{\cC} \left(\linner{\cC}{n'}{n} \right)
       =
       r_{\cH}\left( \Leoq[Y]{\cH}{y'}{y} \right)
       =
       r_{Y}(y')
       =
       r_{\cN}(n')
       =s_{\cM}(m'),
    \end{align*}
    so that $m'\cdot \linner{\cC}{n'}{n}$ makes sense, and 
    \begin{align*}
       s_{\cM} \left( m'\cdot \linner{\cC}{n'}{n} \right) 
       &=
       s_{\cC} \left(\linner{\cC}{n'}{n} \right)
       =
       (s_{\cH}\circ p_{\cC}) \left(\linner{\cC}{n'}{n} \right)
       \\
       &
       =
       s_{\cH}\left( \Leoq[Y]{\cH}{y'}{y} \right)
       =
       r_{Y}(y)
       =
       r_{\cN}(n)
       =
       s_{\cM} (m),
    \end{align*}
    so that \(
       \linner{\cB}{m}{m'\cdot \linner{\cC}{n'}{n} }
    \) makes sense. 
    To see over which fibre that element lives, we compute
    \begin{alignat*}{2}
       p_{\cB}
       \left(
           \linner{\cB}{m}{m'\cdot \linner{\cC}{n'}{n} }
       \right)
       &=
       \Leoq[X]{\cG}{q_{\cM}(m)}{q_{\cM}(m'\cdot \linner{\cC}{n'}{n})}
       && \text{by \ref{item:FHE:ip-fibre}  for $\cM$}
       \\&=
       \Leoq[X]{\cG}{x}{q_{\cM}(m')\cdot p_{\cC}(\linner{\cC}{n'}{n})}
       && \text{by \ref{item:FA:fibre}  for $\cM$}
       \\&=
       \Leoq[X]{\cG}{x}{x'\cdot \Leoq[Y]{\cH}{q_{\cN}(n')}{q_{\cN}(n)}}
       && \text{by \ref{item:FHE:ip-fibre}  for $\cN$}
       \\&=
       \Leoq[X]{\cG}{x}{x'\cdot \Leoq[Y]{\cH}{y'}{y}}
       =
       \Leoq[ Z ]{\cG}{(x,y)}{(x',y')}.
    \end{alignat*}
    This concludes our proof.
\end{proof}

\begin{lemma}
\label{lem:cK:ip-on-odot+adjoint}
   Assume $(x,y),(x',y')\in Z= X\bfp{s}{r}Y$.
   \begin{enumerate}[label=\textup{(\arabic*)}]
       \item\label{item:cK:ip-on-elementary-tensors-cB} If $s_{Y}(y)=s_{Y}(y')$ and $g:= \Leoq[ Z ]{\cG}{(x,y)}{(x',y')}
       $, then there exists a sesquilinear map
       \begin{align*}
       \lInner{\cB}{\cdot}{\cdot}\colon\quad
       \left(\cM_{x}\odot \cN_{y}\right)
       &\times \left(\cM_{x'}\odot \cN_{y'}\right)
       \to 
       \cB_{g}
    \intertext{ determined by }
       \lInner{\cB}{m\odot n}{m'\odot n'}
       :=\ &
       \linner{\cB}{m}{m'\cdot \linner{\cC}{n'}{n} }
    \intertext{which satisfies}
        \lInner{\cB}{m\odot n}{m'\odot n'}^*
        =\ &
        \lInner{\cB}{m'\odot n'}{m\odot n}.
       \end{align*}
       \item
       If $r_{X}(x)=r_{X}(x')$ and 
       $k:= \Reoq[Z]{(x,y)}{(x',y')}{\mathcal{K}}$, then there exists a sesquilinear map
       \begin{align*}
       \rInner{\cD}{\cdot}{\cdot}\colon\quad
       \left(\cM_{x}\odot \cN_{y}\right)
       &\times \left(\cM_{x'}\odot \cN_{y'}\right)
       \to 
       \cD_{k}
   \intertext{
       determined by
       }
       \rInner{\cD}{m\odot n}{m'\odot n'}
       :=\ &
       \rinner{\cD}{\rinner{\cC}{m'}{m}\cdot n}{n'}
   \intertext{which satisfies}
       \rInner{\cD}{m\odot n}{m'\odot n'}^*
             =\ &
              \rInner{\cD}{m'\odot n'}{m\odot n}.
       \end{align*}
    \end{enumerate}
\end{lemma}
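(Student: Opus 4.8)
The plan is to prove part~(1) in detail; part~(2) then follows \emph{mutatis mutandis} by interchanging the roles of $\cM$ and $\cN$, of $\cB$ and $\cD$, and of left and right. Throughout, I fix $(x,y),(x',y')\in Z$ with $s_{Y}(y)=s_{Y}(y')$ and set $g:=\Leoq[Z]{\cG}{(x,y)}{(x',y')}$. By Lemma~\ref{lem:cK:ip-on-elementary-tensors}\ref{item:cK:lip-on-elementary-tensors}, for every choice of $m\in\cM_{x}$, $n\in\cN_{y}$, $m'\in\cM_{x'}$, $n'\in\cN_{y'}$ the element $\linner{\cB}{m}{m'\cdot\linner{\cC}{n'}{n}}$ is already defined and lies in the single fibre $\cB_{g}$, which depends only on $(x,y),(x',y')$ and not on the chosen representatives; this settles the output fibre once and for all. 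It therefore remains to (a)~check that the formula on elementary tensors extends to a well-defined sesquilinear map on the algebraic tensor products, and (b)~verify the adjoint identity.

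For (a), I would simply record the homogeneity of the defining expression in each of its four arguments. Since $\linner{\cB}{\cdot}{\cdot}$ and $\linner{\cC}{\cdot}{\cdot}$ are linear in their first and conjugate-linear in their second entry (a consequence of \ref{item:FE:ip:C*linear} and \ref{item:FE:ip:adjoint}), and since the right $\cC$-action on $\cM$ is linear in its $\cC$-entry, the assignment $(m,n)\mapsto\linner{\cB}{m}{m'\cdot\linner{\cC}{n'}{n}}$ is $\mathbb{C}$-bilinear while $(m',n')\mapsto\linner{\cB}{m}{m'\cdot\linner{\cC}{n'}{n}}$ is conjugate-$\mathbb{C}$-bilinear. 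By the universal property of the algebraic tensor product, and because all values lie in the single Banach space $\cB_{g}$, this yields a well-defined map $\lInner{\cB}{\cdot}{\cdot}\colon(\cM_{x}\odot\cN_{y})\times(\cM_{x'}\odot\cN_{y'})\to\cB_{g}$ that is linear in the first and conjugate-linear in the second variable, i.e.\ sesquilinear.

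For (b), I expect the short computation that is the real heart of the lemma. Using \ref{item:FE:ip:adjoint} for $\cM$,
\[
\lInner{\cB}{m\odot n}{m'\odot n'}^{*}
=\linner{\cB}{m}{m'\cdot\linner{\cC}{n'}{n}}^{*}
=\linner{\cB}{m'\cdot\linner{\cC}{n'}{n}}{m}.
\]
Writing $c:=\linner{\cC}{n'}{n}\in C$, we have $\linner{\cC}{n}{n'}=c^{*}$ by \ref{item:FE:ip:adjoint} for $\cN$, and Condition~\ref{item:FHE:ip:adjointables} for $\cM$ gives $\linner{\cB}{m'\cdot c}{m}=\linner{\cB}{m'}{m\cdot c^{*}}$. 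Hence
\[
\lInner{\cB}{m\odot n}{m'\odot n'}^{*}
=\linner{\cB}{m'}{m\cdot\linner{\cC}{n}{n'}}
=\lInner{\cB}{m'\odot n'}{m\odot n},
\]
as required. The main obstacle is precisely this adjoint step: it is the only point at which the weakened compatibility axiom \ref{item:FHE:ip:adjointables} (rather than the full \ref{item:FE:ip:compatibility}) must be invoked, and one has to transport the element $c=\linner{\cC}{n'}{n}$ from one slot of $\linner{\cB}{\cdot}{\cdot}$ to the other while tracking the involution on both $\cC$ and $\cB$. Everything else is bookkeeping of the sesquilinearity conventions.

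For part~(2), the same argument applies after the obvious substitutions, now using the right-hand $\cC$- and $\cD$-valued inner products, the linearity of the left $\cC$-action on $\cN$, and the second identity in Condition~\ref{item:FHE:ip:adjointables} for $\cN$; the output lands in $\cD_{k}$ with $k=\Reoq[Z]{(x,y)}{(x',y')}{\mathcal{K}}$ by Lemma~\ref{lem:cK:ip-on-elementary-tensors}\ref{item:cK:rip-on-elementary-tensors}.
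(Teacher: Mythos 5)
Your proposal is correct and follows essentially the same route as the paper: the fibre identification via Lemma~\ref{lem:cK:ip-on-elementary-tensors}, sesquilinearity via the universal property of the algebraic tensor product, and the identical adjoint computation using \ref{item:FE:ip:adjoint} for~$\cM$, then \ref{item:FHE:ip:adjointables} for~$\cM$, then \ref{item:FE:ip:adjoint} for~$\cN$. The only cosmetic difference is that the paper, following \cite[Prop.~3.16]{RW:Morita}, makes the conjugate-linear factorization explicit by first building linear maps $F_{m',n'}$ on $\cM_x\odot\cN_y$ and then a linear map $F^*$ into the space of conjugate-linear maps, whereas you invoke the universal property directly for both the bilinear and conjugate-bilinear dependence; both arguments are sound.
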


We point out that, while the formulas of the above 
forms look identical to those for the inner products on a fibre $\cK_{(x,y)}$ as defined in \eqref{eq:ip-on-cKz}, there is one major difference: These new forms can be evaluated at two elements that live in {\em different} fibres; this is hinted at in the codomains of the map, since the elements $g$ and $k$ are not necessarily units in $\cG$ resp.\ $\mathcal{K}$. We will soon see that the forms can be induced from the algebraic tensor product to the balanced completions, $\cK_{(x,y)}$ and $\cK_{(x',y')}$. 

\begin{proof}
    We will prove \ref{item:cK:ip-on-elementary-tensors-cB}; the other claim is done analogously. We have seen in Lemma~\ref{lem:cK:ip-on-elementary-tensors} that the formula makes sense for elementary tensors. 
    We now follow the ideas in \cite[Proof of Prop.\ 3.16]{RW:Morita}.  Fix $m'\in \cM_{x'},n'\in\cN_{y'}$, and consider the map
    \[
       \cM_{x}\times \cN_{y}\ni (m,n)
       \mapsto
       \linner{\cB}{m}{m'\cdot \linner{\cC}{n'}{n} }
       \in \cB_{g}
       .
    \]
    As $\linner{\cB}{\cdot}{\cdot}$ and $\linner{\cC}{\cdot}{\cdot}$ are both linear in the {\em first} and  conjugate linear in the {\em second} component, this map is bilinear and hence induces a {\em linear} map
      \[
           F_{m',n'}\colon \cM_{x}\odot \cN_{y} \to \cB_{g}
           \quad\text{ determined by }\quad
           m\odot n
           \mapsto
           \linner{\cB}{m}{m'\cdot \linner{\cC}{n'}{n} }
           .
       \]
    Again, sesquilinearity of $\linner{\cB}{\cdot}{\cdot}$ and $\linner{\cC}{\cdot}{\cdot}$ imply that the map
       \[
           (m',n')
           \mapsto
           F^*(m',n') :=
           \bigl(
              \xi\mapsto F_{m',n'}(\xi)^*
           \bigr)
       \]
       is a bilinear map from $\cM_{x'}\times \cN_{y'}$ into the vector space $\mathrm{CL}$ of conjugate linear maps $\cM_{x}\odot \cN_{y} \to \cB_{g\inv}$. Thus, there exists a {\em linear} map $F^*\colon \cM_{x'}\odot \cN_{y'}\to \mathrm{CL}$ determined by $m'\odot n'
       \mapsto
       F^*(m',n')$. We can thus define
       \[
           \lInner{\cB}{\cdot}{\cdot}\colon\quad
       \left(\cM_{x}\odot \cN_{y}\right)
       \times \left(\cM_{x'}\odot \cN_{y'}\right)
       \to 
       \cB_{g},\quad
           \lInner{\cB}{\xi}{\xi'} := [F^*(\xi')(\xi)]^*.
       \]
       The  map is sesquilinear: it is linear in the first component since $F^*(\xi')$ is conjugate linear, and it is conjugate linear in the second component by linearity of $F^*$.

\smallskip

 For the claim about the adjoint, we compute
       \begin{alignat*}{2}
           \lInner{\cB}{m\odot n}{m'\odot n'}^*
           &=
           \linner{\cB}{m}{m'\cdot \linner{\cC}{n'}{n} }^*
           \quad &&
           \\
           &=
           \linner{\cB}{m'\cdot \linner{\cC}{n'}{n} }{m}
           &&
           \text{{by \ref{item:FE:ip:adjoint} for~$\cM$}}
           \\
           &=
           \linner{\cB}{m'}{m \cdot \linner{\cC}{n'}{n}^*}
           &&
           \text{{by \ref{item:FHE:ip:adjointables} for~$\cM$}}
           \\
           &=
           \linner{\cB}{m'}{m \cdot \linner{\cC}{n}{n'}}
           &&
           \text{{by \ref{item:FE:ip:adjoint} for~$\cN$}}
           \\
           &=
           \lInner{\cB}{m'\odot n'}{m\odot n}.
           &&
       \end{alignat*}
       This concludes our proof.
\end{proof}

We now verify that the products satisfy Condition~\ref{item:FHE:2e}.

\begin{lemma}\label{lem:FHE:2e for cK} 
Suppose we are given elements $\xi_{i}\in \cM_{x_{i}}\odot \cN_{y_{i}}\subseteq \cK_{(x_{i},y_{i})}$ for $i=1,2,3,4$. If $(x_2,y_2)=(x_3,y_3)$, then we have
\begin{align*}
    \lInner{\cB}{\xi_{1}}{\xi_{2}} \lInner{\cB}{\xi_3}{\xi_4} &=  \lInner{\cB}{\xi_{1} \cdot \rInner{\cD}{\xi_{2}}{\xi_3} }{\xi_4},\text{ and}\\
    \rInner{\cD}{\xi_{1}}{\xi_2} \rInner{\cD}{\xi_3}{\xi_4} &=  \rInner{\cD}{\xi_{1}}{\lInner{\cB}{\xi_2}{\xi_3} \cdot \xi_4},
\end{align*}
wherever the inner products and products on the left-hand side make sense.
\end{lemma}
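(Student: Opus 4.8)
The plan is to reduce to elementary tensors and then strip the two ``middle'' $\cC$-factors one at a time, invoking Condition~\ref{item:FHE:2e} first for $\cM$ and then for $\cN$. Since $\lInner{\cB}{\cdot}{\cdot}$ and $\rInner{\cD}{\cdot}{\cdot}$ are sesquilinear (Lemma~\ref{lem:cK:ip-on-odot+adjoint}), the actions are linear (Proposition~\ref{prop:cK:actions}), and the Fell bundle multiplications are bilinear, both sides of each identity are linear in $\xi_1,\xi_3$ and conjugate linear in $\xi_2,\xi_4$. As the four arguments range over the fixed fibres $\cM_{x_i}\odot\cN_{y_i}$, the ``makes sense'' hypotheses are uniform across summands, so it suffices to prove each identity for elementary tensors $\xi_i=m_i\odot n_i$. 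I will treat only the first identity; the second follows by the symmetry interchanging $(\cB,\cM,X,\cG)$ with $(\cD,\cN,Y,\mathcal{K})$ and left with right.

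For elementary tensors the hypotheses read $s_{\cN}(n_1)=s_{\cN}(n_2)$ and $s_{\cN}(n_3)=s_{\cN}(n_4)$ (so that the two left inner products make sense, by Lemma~\ref{lem:cK:ip-on-elementary-tensors}), while $(x_2,y_2)=(x_3,y_3)$ gives $q_{\cM}(m_2)=q_{\cM}(m_3)$ and $q_{\cN}(n_2)=q_{\cN}(n_3)$. Writing out the left-hand side,
\[
    \lInner{\cB}{\xi_1}{\xi_2}\,\lInner{\cB}{\xi_3}{\xi_4}
    =
    \linner{\cB}{m_1}{m_2\cdot\linner{\cC}{n_2}{n_1}}\cdot\linner{\cB}{m_3}{m_4\cdot\linner{\cC}{n_4}{n_3}},
\]
the crucial first move is to rewrite the first factor as $\linner{\cB}{m_1\cdot\linner{\cC}{n_1}{n_2}}{m_2}$ using adjointability \ref{item:FHE:ip:adjointables} for $\cM$ together with \ref{item:FE:ip:adjoint} for $\cN$. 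This clears the $\cC$-twist off the middle argument, so that the inner arguments $m_2$ and $m_3$ now lie over the same point $x_2=x_3$ and Condition~\ref{item:FHE:2e} for $\cM$ applies, collapsing the product into the single inner product $\linner{\cB}{m_1\cdot\bigl(\linner{\cC}{n_1}{n_2}\cdot\rinner{\cC}{m_2}{m_3}\bigr)}{m_4\cdot\linner{\cC}{n_4}{n_3}}$.

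Next I would use associativity of the right $\cC$-action and adjointability \ref{item:FHE:ip:adjointables} for $\cM$ once more to sweep every $\cC$-factor into the second slot, reaching $\linner{\cB}{m_1}{m_4\cdot(c'f^*e^*)}$ with $c'=\linner{\cC}{n_4}{n_3}$, $f^*=\rinner{\cC}{m_3}{m_2}$, and $e^*=\linner{\cC}{n_2}{n_1}$. Everything now reduces to the $\cC$-level identity
\[
    \linner{\cC}{n_4}{n_3}\cdot\rinner{\cC}{m_3}{m_2}\cdot\linner{\cC}{n_2}{n_1}
    =
    \linner{\cC}{n_4}{n_1\cdot\rinner{\cD}{\rinner{\cC}{m_3}{m_2}\cdot n_2}{n_3}},
\]
both sides of which lie in $\cC_{\Leoq[Y]{\cH}{y_4}{y_1}}$. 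To prove it, observe that $\rinner{\cC}{m_3}{m_2}$ lies over a unit of $\cH$ (since $x_2=x_3$, by \ref{item:def-preequiv:units}), so $\cC$-linearity \ref{item:FE:ip:C*linear} for $\cN$ gives $\rinner{\cC}{m_3}{m_2}\cdot\linner{\cC}{n_2}{n_1}=\linner{\cC}{\rinner{\cC}{m_3}{m_2}\cdot n_2}{n_1}$; then Condition~\ref{item:FHE:2e} for $\cN$ applies, which is legitimate because $q_{\cN}(\rinner{\cC}{m_3}{m_2}\cdot n_2)=y_2=q_{\cN}(n_3)$; and a final use of \ref{item:FHE:ip:adjointables} and \ref{item:FE:ip:adjoint} for $\cN$ relocates the resulting $\cD$-factor. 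Unwinding the definition of $\rInner{\cD}{\xi_2}{\xi_3}$ and of the right $\cD$-action on $\cK$ (Proposition~\ref{prop:cK:actions}) then identifies the final expression $\linner{\cB}{m_1}{m_4\cdot\linner{\cC}{n_4}{n_1\cdot\rInner{\cD}{\xi_2}{\xi_3}}}$ with $\lInner{\cB}{\xi_1\cdot\rInner{\cD}{\xi_2}{\xi_3}}{\xi_4}$, as required; since each step produces well-defined objects, the right-hand side makes sense automatically.

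The main obstacle is precisely the fibre mismatch just described: Condition~\ref{item:FHE:2e} for $\cM$ cannot be applied to the product in its original form, because the middle argument $m_2\cdot\linner{\cC}{n_2}{n_1}$ sits over $x_2\cdot\Leoq[Y]{\cH}{y_2}{y_1}$ rather than over $x_3=x_2$. The whole strategy hinges on first using adjointability to relocate the offending $\cC$-factor before invoking the transitivity-type condition, and then discharging the leftover $\cC$-algebra by the same condition one level down on $\cN$. The remaining difficulty is purely bookkeeping: tracking which fibre of $\cB$, $\cC$, or $\cD$ each intermediate inner product and product lands in.
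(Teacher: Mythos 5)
Your proof is correct, and it follows the same basic route as the paper's: reduce to elementary tensors by sesquilinearity, then manipulate with Conditions~\ref{item:FE:ip:adjoint}, \ref{item:FE:ip:C*linear}, \ref{item:FHE:ip:adjointables}, and~\ref{item:FHE:2e} for $\cM$ and $\cN$; indeed, both arguments pass through the very same intermediate expression $\linner{\cB}{m_1\cdot(\linner{\cC}{n_1}{n_2}\rinner{\cC}{m_2}{m_3})}{m_4\cdot\linner{\cC}{n_4}{n_3}}$. Two differences are worth recording. Structurally, the paper computes \emph{both} sides down to this common expression (it expands $\lInner{\cB}{\xi_1\cdot\rInner{\cD}{\xi_2}{\xi_3}}{\xi_4}$ and works backwards), whereas you push the left-hand side all the way to the right-hand side in one chain, isolating the $\cC$-level identity $\linner{\cC}{n_4}{n_3}\,\rinner{\cC}{m_3}{m_2}\,\linner{\cC}{n_2}{n_1}=\linner{\cC}{n_4}{n_1\cdot\rinner{\cD}{\rinner{\cC}{m_3}{m_2}\cdot n_2}{n_3}}$; this is cosmetic. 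More substantively, your reordering---first use \ref{item:FHE:ip:adjointables} to move $\linner{\cC}{n_2}{n_1}$ off the middle argument, and only then invoke \ref{item:FHE:2e} for $\cM$---is actually more careful than the paper's own first step. The paper applies \ref{item:FHE:2e} directly to the product $\linner{\cB}{m_1}{m_2\cdot \linner{\cC}{n_2}{n_1}}\,\linner{\cB}{m_3}{m_4\cdot \linner{\cC}{n_4}{n_3}}$, whose middle entries lie over $x_2\cdot\Leoq[Y]{\cH}{y_2}{y_1}$ and $x_3=x_2$ respectively; these points of $X$ differ in general, so the stated hypothesis of \ref{item:FHE:2e} (equality of the $q_{\cM}$-images of the two middle entries) is not literally satisfied at that step. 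The conclusion of that step is nonetheless true, precisely because it follows from \ref{item:FHE:2e} combined with \ref{item:FHE:ip:adjointables}---which is exactly the derivation you carry out---so your write-up fills in a detail the paper glosses over. Your fibre bookkeeping is also accurate (e.g.\ that $\rinner{\cC}{m_3}{m_2}$ lies over the unit $s_X(x_2)$ by \ref{item:def-preequiv:units}, which legitimizes applying \ref{item:FHE:2e} for $\cN$ to $n_3$ and $\rinner{\cC}{m_3}{m_2}\cdot n_2$), and your symmetry reduction for the second identity matches the paper's ``follows similarly.''
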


\begin{proof}
It suffices to prove the claim for elementary tensors $\xi_{i}=m_{i}\otimes n_{i}$.
Applying the definition of $\lInner{\cB}{\cdot}{\cdot}$ and $\rInner{\cD}{\cdot}{\cdot}$, we have:
\begin{alignat*}{2}
    &\lInner{\cB}{m_1\odot n_1}{m_2\odot n_2} \lInner{\cB}{m_3\odot n_3}{m_4\odot n_4} 
    \qquad&&\\ 
    =& \linner{\cB}{m_1}{m_2\cdot \linner{\cC}{n_2}{n_1}} \linner{\cB}{m_3}{m_4\cdot \linner{\cC}{n_4}{n_3}} \\
    =& \linner{\cB}{m_1\cdot \rinner{\cC}{m_2\cdot \linner{\cC}{n_2}{n_1}}{m_3}}{m_4\cdot \linner{\cC}{n_4}{n_3}} &&\text{by Condition~\ref{item:FHE:2e}}\\
    =& \linner{\cB}{m_1\cdot (\linner{\cC}{n_1}{n_2} \rinner{\cC}{m_2}{m_3})}{m_4\cdot \linner{\cC}{n_4}{n_3}} &&\text{by Conditions~\ref{item:FE:ip:C*linear} and~\ref{item:FE:ip:adjoint}.}
\end{alignat*}
On the other hand,
\begin{alignat*}{2}
    &\lInner{\cB}{(m_1\odot n_1) \cdot \rInner{\cD}{m_2\odot n_2}{m_3\odot n_3} }{m_4\odot n_4}\qquad&& \\
    =& \lInner{\cB}{m_1\odot (n_1\cdot (\rinner{\cD}{\rinner{\cC}{m_3}{m_2} \cdot n_2}{n_3}))}{m_4\odot n_4} \\
    =& \linner{\cB}{m_1}{m_4 \cdot \linner{\cC}{n_4}{n_1\cdot (\rinner{\cD}{\rinner{\cC}{m_3}{m_2} \cdot n_2}{n_3})}} \\
    =& \linner{\cB}{m_1}{m_4 \cdot \linner{\cC}{n_4 \cdot (\rinner{\cD}{n_3}{\rinner{\cC}{m_3}{m_2} \cdot n_2})}{n_1}} &&\text{by Conditions~\ref{item:FHE:ip:adjointables} and \ref{item:FE:ip:adjoint}}\\
    =& \linner{\cB}{m_1}{m_4\cdot (\linner{\cC}{n_4}{n_3}
    \linner{\cC}{\rinner{\cC}{m_3}{m_2}\cdot n_2}{n_1})} &&\text{by Condition~\ref{item:FHE:2e}}\\
    =& \linner{\cB}{m_1}{m_4\cdot (\linner{\cC}{n_4}{n_3} \rinner{\cC}{m_3}{m_2} \linner{\cC}{n_2}{n_1})} &&\text{by Condition~\ref{item:FE:ip:C*linear}}\\
    =& \linner{\cB}{m_1\cdot (\rinner{\cC}{m_3}{m_2}\linner{\cC}{n_2}{n_1})^*}{m_4\cdot \linner{\cC}{n_4}{n_3}} &&\text{by Condition~\ref{item:FHE:ip:adjointables}} \\
    =& \linner{\cB}{m_1\cdot (\linner{\cC}{n_1}{n_2} \rinner{\cC}{m_2}{m_3})}{m_4\cdot \linner{\cC}{n_4}{n_3}} &&\text{by Condition~\ref{item:FE:ip:C*linear}}\\
    =& \lInner{\cB}{m_1\odot n_1}{m_2\odot n_2} \lInner{\cB}{m_3\odot n_3}{m_4\odot n_4}&&\text{by our computation above}.
\end{alignat*}
The other equality follows similarly. 
\end{proof}

\begin{lemma}
    \label{prop:Inner-ineq}
       Suppose we are given elements $\xi\in \cM_{x}\odot \cN_{y}\subseteq \cK_{(x,y)}, \xi'\in \cM_{x'}\odot \cN_{y'}\subseteq \cK_{(x',y')}$.
    \begin{enumerate}[label=\textup{(\arabic*)}]
       \item
       If $s_{Y}(y)=s_{Y}(y')$, then the following
       inequality of positive elements in the $\textrm{C}^*$-algebra $\cB(r(x))$ holds:
        \begin{align*}
            \lInner{\cB}{\xi}{\xi'}
            \,
            \lInner{\cB}{\xi}{\xi'}^*
            \leq
            \norm{\xi'}^2
            \,
            \lInner{\cB}{\xi}{\xi}.
        \end{align*}
       \item 
       If $r_{X}(x)=r_{X}(x')$, then the following
       inequality of positive elements in the $\textrm{C}^*$-algebra $\cD(s(y))$ holds:
        \begin{align*}
            \rInner{\cD}{\xi}{\xi'}^*
            \,
            \rInner{\cD}{\xi}{\xi'}
            \leq
            \norm{\xi}^2
            \,
            \rInner{\cD}{\xi'}{\xi'}.
        \end{align*}
    \end{enumerate}
    In both cases, the norm
    on the right-hand side is on the bi-Hilbert module $\cM_{x'}\otimes_{\cC\z}\cN_{y'}$ resp.\  $\cM_{x}\otimes_{\cC\z}\cN_{y}$.
\end{lemma}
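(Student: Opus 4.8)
The plan is to mirror the proof of the single-bundle Cauchy--Schwarz inequality, Lemma~\ref{lem:ineq of inner products}: reduce to a scalar estimate through an arbitrary state and then invoke \cite[Corollary 2.22]{RW:Morita}. By symmetry it suffices to prove~(1), so I set $b:=\lInner{\cB}{\xi}{\xi'}$. First I would record that the asserted inequality lives in a single $\textrm{C}^*$-algebra. By Lemma~\ref{lem:cK:ip-on-odot+adjoint} we have $b\in\cB_g$ with $g=\Leoq[Z]{\cG}{(x,y)}{(x',y')}$ and $b^*=\lInner{\cB}{\xi'}{\xi}\in\cB_{g\inv}$, so $bb^*$ lies over $gg\inv=r_{\cG}(g)$, which by Condition~\ref{item:def-preequiv:r-and-s} equals $r_Z(x,y)=r(x)$; likewise $\lInner{\cB}{\xi}{\xi}$ lies over $\Leoq[Z]{\cG}{(x,y)}{(x,y)}=r(x)$ by Condition~\ref{item:def-preequiv:units}. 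Thus both sides are positive elements of $\cB(r(x))$, and it is enough to fix a state $\rho$ on $\cB(r(x))$ and prove $\rho(bb^*)\le\norm{\xi'}^2\,\rho(\lInner{\cB}{\xi}{\xi})$.

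The one genuinely new point — the reason the single-fibre argument of Lemma~\ref{lem:ineq of inner products} does not transcribe verbatim — is that $b\cdot\xi'$ lands in $\cK_{(g\cdot x',y')}$, which is \emph{not} $\cK_{(x,y)}$ once $Z$ is only a \pe. To stay inside one fibre I would instead feed the product formula of Lemma~\ref{lem:FHE:2e for cK}, which applies since $q_{\cK}(\xi')=q_{\cK}(\xi')$ and $s_{Y}(y)=s_{Y}(y')$ makes $\lInner{\cB}{\xi}{\xi'}$ defined, to obtain
\[
    bb^* = \lInner{\cB}{\xi}{\xi'}\,\lInner{\cB}{\xi'}{\xi} = \lInner{\cB}{\xi\cdot d}{\xi},
    \qquad d:=\rInner{\cD}{\xi'}{\xi'}.
\]
Here $d$ is a positive element of $\cD(s(y))$ lying over the unit $s_{Y}(y')=s_{Y}(y)$, so the right $\cD$-action of Proposition~\ref{prop:cK:actions} keeps $\xi\cdot d$ in $\cK_{(x,y)}$. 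Crucially, $\cK_{(x,y)}=\cM_x\otimes_{\cC\z}\cN_y$ is a genuine $\cB(r(x))$--$\cD(s(y))$-imprimitivity bimodule (a balanced tensor product of imprimitivity bimodules, \cite[Proposition 3.16]{RW:Morita}), so $\xi\cdot d$ and $\xi$ now lie in the \emph{same} Hilbert module, where the homogeneous inner product of~\eqref{eq:ip-on-cKz} is honestly positive definite \cite[Remark 3.17]{RW:Morita}.

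With both arguments in $\cK_{(x,y)}$, the map $(\eta,\eta')\mapsto\rho(\lInner{\cB}{\eta}{\eta'})$ is a positive semi-definite sesquilinear form, and the ordinary Cauchy--Schwarz inequality gives
\[
    \rho(bb^*) = \rho\bigl(\lInner{\cB}{\xi\cdot d}{\xi}\bigr)
    \le \rho\bigl(\lInner{\cB}{\xi\cdot d}{\xi\cdot d}\bigr)^{\frac12}\,\rho\bigl(\lInner{\cB}{\xi}{\xi}\bigr)^{\frac12}.
\]
To finish I would bound the first factor. Using adjointability of the right $\cD$-action on the imprimitivity bimodule $\cK_{(x,y)}$ one has $\lInner{\cB}{\xi\cdot d}{\xi\cdot d}=\lInner{\cB}{\xi}{\xi\cdot(dd^*)}$, and the standard Hilbert-module estimate — extracted exactly as in \cite[Corollary 2.22]{RW:Morita} from the positivity of $\lInner{\cB}{\xi\cdot s}{\xi\cdot s}$ for $s:=(\norm{dd^*}-dd^*)^{1/2}$ — yields $\lInner{\cB}{\xi\cdot d}{\xi\cdot d}\le\norm{d}^2\,\lInner{\cB}{\xi}{\xi}$. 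Since $\norm{d}=\norm{\rInner{\cD}{\xi'}{\xi'}}=\norm{\xi'}^2$, applying $\rho$ and substituting back gives $\rho(bb^*)\le\norm{\xi'}^2\,\rho(\lInner{\cB}{\xi}{\xi})$; as $\rho$ was arbitrary, this proves~(1). Part~(2) follows \emph{mutatis mutandis}, interchanging the roles of $\cB$ and $\cD$ and of the left and right structures.

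I expect the main obstacle to be precisely the fibre bookkeeping of the middle paragraph: verifying that the product formula relocates $bb^*$ into $\cK_{(x,y)}$ and that $\xi\cdot d$ stays there. This is where the weakening from equivalence to \hequiv\ (equivalently, the failure of $\Leoq[Z]{\cG}{(x,y)}{(x',y')}\cdot x'=x$) is absorbed, and it is the step that forces the use of Lemma~\ref{lem:FHE:2e for cK} in place of the naive choice $\xi\cdot d\rightsquigarrow b\cdot\xi'$ used in the equivalence setting.
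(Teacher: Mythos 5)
Your proposal is correct and follows essentially the same route as the paper: the key step in both is to use Lemma~\ref{lem:FHE:2e for cK} to rewrite $\lInner{\cB}{\xi}{\xi'}\,\lInner{\cB}{\xi}{\xi'}^*$ as $\lInner{\cB}{\xi\cdot\rInner{\cD}{\xi'}{\xi'}}{\xi}$, and to observe that $\rInner{\cD}{\xi'}{\xi'}$ lies in the fibre over a unit, so that the whole inequality takes place inside the single imprimitivity bimodule $\cK_{(x,y)}$, where the standard facts \cite[Cor.~2.22 and Prop.~3.16]{RW:Morita} apply. The only difference is cosmetic: you finish with a state-plus-Cauchy--Schwarz argument mirroring Lemma~\ref{lem:ineq of inner products}, whereas the paper invokes the Hilbert-module inequality directly at that point.
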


\begin{proof}
    We will prove the claim for the $\cB$-valued inner product.
    By Lemma~\ref{lem:FHE:2e for cK}, we can rewrite the left-hand side  of our inequality as
    \[
        \lInner{\cB}{\xi}{\xi'} \lInner{\cB}{\xi}{\xi'}^*
        =
      \lInner{\cB}{\xi}{\xi'} \lInner{\cB}{\xi'}{\xi} =  \lInner{\cB}{\xi \cdot \rInner{\cD}{\xi'}{\xi'} }{\xi},
    \]
    so our claim becomes
    \[
        \lInner{\cB}{\xi \cdot \rInner{\cD}{\xi'}{\xi'} }{\xi}
            \overset{!}{\leq}
            \norm{\xi'}^2
            \,
            \lInner{\cB}{\xi}{\xi}.
    \]
    Let $z:=(x,y)$ and $z:=(x',y').$
    Notice that 
        by Lemma~\ref{lem:cK:ip-on-elementary-tensors}\ref{item:cK:rip-on-elementary-tensors} and Condition~\ref{item:def-preequiv:units},
    $\rInner{\cD}{\xi'}{\xi'}$ is an element of the fibre of $\cD$ over 
    $\Reoq[Z]{z'}{z'}{\mathcal{K}}=s_{Z}(z')$.
    Moreover, since 
    \[
        q_{\cK} \left( \norm{\xi'}^2\xi  -  \xi\cdot \rInner{\cD}{\xi'}{\xi'} \right)
        =
        q_{\cK}(\xi),
    \]
    we similarly have that $\lInner{\cB}{ \norm{\xi'}^2\xi  -  \xi\cdot \rInner{\cD}{\xi'}{\xi'} }{\xi}$ is an element of the fibre of $\cB$ over
    \(
    \Leoq[Z]{\cG}{z}{z} = r_{Z}(z).
    \)
    Thus, we have identified that the  inner  products in question are simply the inner products as in Equation~\eqref{eq:ip-on-cKz} on the spaces $K(z)=\cM_{x}\otimes_{C(s_{X}(x))}\cN_{y}$ and $K(z')$, which are \ib s by Assumption~\ref{item:FE:SMEs}  on $\cM$ and $\cN$:
    \[
        \lInner[K(z)]{B(r(z))}{\xi \cdot \rInner[K(z')]{D(s(z'))}{\xi'}{\xi'} }{\xi}
            \overset{!}{\leq}
            \norm{\xi'}^2
            \,
            \lInner[K(z)]{B(r(z))}{\xi}{\xi}.
    \]
    The claim now follows from \cite[Cor.\ 2.22]{RW:Morita} and \cite[Prop.\ 3.16]{RW:Morita}.
\end{proof}


\begin{corollary}
    \label{cor:Inner-ineq-norm}
    If $\xi,\xi'\in\cK$, then 
      \(
        \norm{
        \lInner{\cB}{\xi}{\xi'}
        }_{\cB}
          \leq
          \norm{\xi}_{\cK}
          \,
          \norm{\xi'}_{\cK}
      \)
    resp.\
    \(
        \norm{
            \rInner{\cD}{\xi}{\xi'}
        }_{\cD}
          \leq
          \norm{\xi}_{\cK}
          \,
          \norm{\xi'}_{\cK},
    \)
    whenever the left-hand side of the inequality makes sense.
\end{corollary}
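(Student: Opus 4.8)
The plan is to prove the inequality first for finite algebraic tensors, where the genuine Cauchy--Schwarz estimate of Lemma~\ref{prop:Inner-ineq} is directly available, and then to extend it to arbitrary elements of $\cK$ by density. By the symmetry between the two sides, I would only treat the $\cB$-valued form and leave the $\cD$-valued one to a \emph{mutatis mutandis} argument using part~(2) of Lemma~\ref{prop:Inner-ineq}. So suppose first that $\xi\in\cM_{x}\odot\cN_{y}$ and $\xi'\in\cM_{x'}\odot\cN_{y'}$ are algebraic tensors with $(x,y)=q_{\cK}(\xi)$ and $(x',y')=q_{\cK}(\xi')$; for $\lInner{\cB}{\xi}{\xi'}$ to make sense we need $s_{Y}(y)=s_{Y}(y')$ by Lemma~\ref{lem:cK:ip-on-elementary-tensors}. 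Lemma~\ref{prop:Inner-ineq} then supplies the inequality of positive elements
\[
\lInner{\cB}{\xi}{\xi'}\,\lInner{\cB}{\xi}{\xi'}^{*} \leq \norm{\xi'}^{2}\,\lInner{\cB}{\xi}{\xi}
\]
in the $\textrm{C}^{*}$-algebra $\cB(r(x))$. The key observation is that, taking both arguments equal, Condition~\ref{item:def-preequiv:units} forces $\Leoq[Z]{\cG}{(x,y)}{(x,y)}=r_{Z}(x,y)$ to be a unit, so that $\lInner{\cB}{\xi}{\xi}$ lives in $\cB(r(x))$ and coincides with the fibrewise Hilbert-module inner product of Equation~\eqref{eq:ip-on-cKz}; hence $\norm{\lInner{\cB}{\xi}{\xi}}_{\cB}=\norm{\xi}_{\cK}^{2}$ by the very definition of the completion norm.

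Next I would take norms. Since both sides of the displayed inequality are positive, the order-norm monotonicity recalled in the proof of Lemma~\ref{lem:non-trivial}, together with the $\textrm{C}^{*}$-identity, gives
\[
\norm{\lInner{\cB}{\xi}{\xi'}}_{\cB}^{2} = \norm{\lInner{\cB}{\xi}{\xi'}\,\lInner{\cB}{\xi}{\xi'}^{*}}_{\cB} \leq \norm{\xi'}^{2}\,\norm{\lInner{\cB}{\xi}{\xi}}_{\cB} = \norm{\xi'}_{\cK}^{2}\,\norm{\xi}_{\cK}^{2},
\]
and taking square roots yields the claim for algebraic tensors.

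Finally, I would extend by density. The algebraic tensors $\cM_{x}\odot\cN_{y}$ are dense in the completion $\cK_{(x,y)}$, and likewise for $(x',y')$. Using the sesquilinearity of $\lInner{\cB}{\cdot}{\cdot}$ established in Lemma~\ref{lem:cK:ip-on-odot+adjoint} and the bound just proved, one checks in the usual way that for approximating nets $\xi_{\lambda}\to\xi$ and $\xi'_{\lambda}\to\xi'$ by algebraic tensors the net $\lInner{\cB}{\xi_{\lambda}}{\xi'_{\lambda}}$ is Cauchy in the Banach space $\cB_{g}$ and its limit is independent of the chosen approximations. This is precisely the promised extension of the forms from the algebraic tensor product to $\cK_{(x,y)}\times\cK_{(x',y')}$ announced after Lemma~\ref{lem:cK:ip-on-odot+adjoint}, and the inequality passes to the limit, giving $\norm{\lInner{\cB}{\xi}{\xi'}}_{\cB}\leq\norm{\xi}_{\cK}\,\norm{\xi'}_{\cK}$ for all $\xi,\xi'\in\cK$ whenever the left-hand side makes sense.

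The only genuinely substantive point, and the step I would treat most carefully, is the identification $\norm{\lInner{\cB}{\xi}{\xi}}_{\cB}=\norm{\xi}_{\cK}^{2}$, i.e.\ recognizing that the inhomogeneous form collapses to the fibrewise inner product exactly when its two arguments coincide, so that its value lies over a unit of $\cG$. Everything else is the $\textrm{C}^{*}$-identity, order monotonicity of the norm, and a routine density argument.
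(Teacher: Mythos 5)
Your proof is correct and is essentially the argument the paper leaves implicit: the corollary is immediate from Lemma~\ref{prop:Inner-ineq} via the $\textrm{C}^*$-identity, order-monotonicity of the norm on positive elements, and the identification $\norm{\lInner{\cB}{\xi}{\xi}}_{\cB}=\norm{\xi}_{\cK}^{2}$ --- precisely the point you single out as the substantive step. Your closing density argument is sound but duplicates work the paper defers: there the corollary is invoked only for algebraic tensors, and the extension of the forms to the completed fibres (together with the same bound, by continuity) is carried out in Proposition~\ref{prop:cK:ip}, so you could stop after your second paragraph.
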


\begin{proposition}\label{prop:cK:ip}
    There exist continuous sesquilinear maps $\lInner{\cB}{\cdot}{\cdot}$ on $\cK\bfp{s}{s}\cK$ and $\rInner{\cD}{\cdot}{\cdot}$ on $\cK\bfp{r}{r}\cK$ which 
    are determined by
    \begin{align*}
       \lInner{\cB}{m\otimes n}{m'\otimes n'}
       :=\ &
       \linner{\cB}{m}{m'\cdot \linner{\cC}{n'}{n} }
       \\
       \text{and} \quad
       \rInner{\cD}{m\otimes n}{m'\otimes n'}
       :=\ &
       \rinner{\cD}{\rinner{\cC}{m'}{m}\cdot n}{n'}.
    \end{align*}
    These maps further satisfy for all $\xi_{i}\in K$, $b\in B ,$ $d\in D$ wherever it makes sense:
    \begin{enumerate}[style=multiline, labelwidth=.5cm,
    leftmargin=1cm, label=\textup{(\alph*)}]
       \item\label{item:cK:ip-fibre} 
       \(
           p_{\cB}
           \left(
              \lInner{\cB}{\xi_{1}}{\xi_{2}}
           \right)
           =
           \Leoq[ Z ]{\cG}{q_{\cK}(\xi_{1})}{q_{\cK}(\xi_{2})}
       \) and \(
           p_{\cD}
           \left(
              \rInner{\cD}{\xi_{1}}{\xi_{2}}
           \right)
           =
           \Reoq[ Z ]{q_{\cK}(\xi_{1})}{q_{\cK}(\xi_{2})}{\mathcal{K}},
       \)
       \item\label{item:cK:ip-adj} $\lInner{\cB}{\xi_{1}}{\xi_{2}}^* = \lInner{\cB}{\xi_{2}}{\xi_{1}}$ and $\rInner{\cD}{\xi_{1}}{\xi_{2}}^* = \rInner{\cD}{\xi_{2}}{\xi_{1}}$, and
       \item\label{item:cK:ip-C*linear} $\lInner{\cB}{b\cdot \xi_{1}}{\xi_{2}}=b\lInner{\cB}{\xi_{1}}{\xi_{2}}$ and $\rInner{\cD}{\xi_{1}}{\xi_{2}\cdot d}=\lInner{\cD}{\xi_{1}}{\xi_{2}}d$.
       \item\label{item:cK:ip-adjointable} $\lInner{\cB}{\xi_{1}\cdot d}{\xi_{2}}=\lInner{\cB}{\xi_{1}}{\xi_{2}\cdot d^*}$ and $\rInner{\cD}{b\cdot \xi_{1}}{\xi_{2}}=\lInner{\cD}{\xi_{1}}{b^*\cdot \xi_{2}}$.
       \item\label{item:cK:ip-FHE2e} 
       If $q_\cK(\xi_2)=q_\cK(\xi_3)$,
       \(
    \lInner{\cB}{\xi_{1}}{\xi_{2}} \lInner{\cB}{\xi_3}{\xi_4} =  \lInner{\cB}{\xi_{1} \cdot \rInner{\cD}{\xi_{2}}{\xi_3} }{\xi_4}\) and \(
    \rInner{\cD}{\xi_{1}}{\xi_2} \rInner{\cD}{\xi_3}{\xi_4} =  \rInner{\cD}{\xi_{1}}{\lInner{\cB}{\xi_2}{\xi_3} \cdot \xi_4}.
    \)
    \end{enumerate}
\end{proposition}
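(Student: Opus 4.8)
The plan is to build both forms in two stages: first fibrewise, by extending the algebraic forms of Lemma~\ref{lem:cK:ip-on-odot+adjoint} to the completed fibres, and then to establish joint continuity on the fibre product by the same section-approximation technique used in the proof of Proposition~\ref{prop:cK:actions}. Fix $(x,y),(x',y')\in Z$ with $s_{Y}(y)=s_{Y}(y')$ and set $g:=\Leoq[Z]{\cG}{(x,y)}{(x',y')}$. Lemma~\ref{lem:cK:ip-on-odot+adjoint}\ref{item:cK:ip-on-elementary-tensors-cB} already furnishes a sesquilinear form on $(\cM_{x}\odot\cN_{y})\times(\cM_{x'}\odot\cN_{y'})$ valued in the fixed Banach space $\cB_{g}$, and Corollary~\ref{cor:Inner-ineq-norm} shows it is bounded, $\norm{\lInner{\cB}{\xi}{\xi'}}\leq\norm{\xi}\,\norm{\xi'}$. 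Since the algebraic tensors are dense in the fibres $\cK_{(x,y)}$, $\cK_{(x',y')}$ and $\cB_{g}$ is complete, this bounded form extends uniquely to a bounded sesquilinear form on $\cK_{(x,y)}\times\cK_{(x',y')}$ with the same bound. The same construction gives $\rInner{\cD}{\cdot}{\cdot}$.

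Next I would verify (a)--(e). Each identity holds on elementary tensors: (a) is Lemma~\ref{lem:cK:ip-on-elementary-tensors}, (b) is the adjoint identity in Lemma~\ref{lem:cK:ip-on-odot+adjoint}, and (e) is Lemma~\ref{lem:FHE:2e for cK}; property (c) is immediate from the description of the actions in Proposition~\ref{prop:cK:actions} together with Condition~\ref{item:FE:ip:C*linear}, while (d) follows by combining Conditions~\ref{item:FE:ip:adjoint} and~\ref{item:FHE:ip:adjointables} for $\cN$ (exactly as in the computation of Corollary~\ref{cor:adjointable}). Because both sides of each identity are continuous in their entries---the forms by the fibrewise extension just built, and multiplication and involution on $\cB$ and $\cD$ by the Fell bundle axioms---and because algebraic tensors are dense, each identity passes to all of $K$. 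In particular (a) certifies that the extended form lands in the single (closed) fibre $\cB_{\Leoq[Z]{\cG}{q_{\cK}(\xi_{1})}{q_{\cK}(\xi_{2})}}$, so it is fibrewise well defined; sesquilinearity is inherited from the dense subspace.

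The remaining and main point is joint continuity of $\lInner{\cB}{\cdot}{\cdot}$ on $\cK\bfp{s}{s}\cK$. Let $(\xi_{\lambda},\xi'_{\lambda})\to(\xi,\xi')$ with $q_{\cK}(\xi_{\lambda})=(x_{\lambda},y_{\lambda})\to(x,y)$ and $q_{\cK}(\xi'_{\lambda})=(x'_{\lambda},y'_{\lambda})\to(x',y')$; by (a) and continuity of $\Leoqempty[Z]{\cG}$ the base points $p_{\cB}(\lInner{\cB}{\xi_{\lambda}}{\xi'_{\lambda}})$ already converge to $p_{\cB}(\lInner{\cB}{\xi}{\xi'})$. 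Fixing $\epsilon>0$, I would use that $\cM$ and $\cN$ have enough sections to pick finitely many $\sigma_{i}\in\Gamma_{0}(X;\cM)$, $\tau_{i}\in\Gamma_{0}(Y;\cN)$ and $\sigma'_{j},\tau'_{j}$ with $\norm{\xi-\sum_{i}\sigma_{i}(x)\otimes\tau_{i}(y)}<\delta$ and $\norm{\xi'-\sum_{j}\sigma'_{j}(x')\otimes\tau'_{j}(y')}<\delta$. The section-level form $\Psi(x_{\lambda},y_{\lambda},x'_{\lambda},y'_{\lambda}):=\sum_{i,j}\linner{\cB}{\sigma_{i}(x_{\lambda})}{\sigma'_{j}(x'_{\lambda})\cdot\linner{\cC}{\tau'_{j}(y'_{\lambda})}{\tau_{i}(y_{\lambda})}}$ is continuous into $\cB$ (being assembled from continuous sections, the continuous actions and inner products, and finite sums), so $\Psi(x_{\lambda},y_{\lambda},x'_{\lambda},y'_{\lambda})\to\Psi(x,y,x',y')$ in $\cB$. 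Expanding $\lInner{\cB}{\xi_{\lambda}}{\xi'_{\lambda}}-\Psi(x_{\lambda},y_{\lambda},x'_{\lambda},y'_{\lambda})$ by sesquilinearity and bounding each summand with Corollary~\ref{cor:Inner-ineq-norm} and the net estimates of Lemma~\ref{lm.conv.equivalence.conditions} (\ref{item:convergence1}$\implies$\ref{item:convergence2}) shows that both $\limsup_{\lambda}\norm{\lInner{\cB}{\xi_{\lambda}}{\xi'_{\lambda}}-\Psi(x_{\lambda},y_{\lambda},x'_{\lambda},y'_{\lambda})}$ and $\norm{\lInner{\cB}{\xi}{\xi'}-\Psi(x,y,x',y')}$ are $O(\delta)$. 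Feeding the convergent net $\Psi(x_{\lambda},y_{\lambda},x'_{\lambda},y'_{\lambda})\to\Psi(x,y,x',y')$ into Lemma~\ref{lm.conv.equivalence.conditions} (\ref{item:convergence1}$\implies$\ref{item:convergence3.gamma}) produces a section $\gamma$ of $\cB$ and a tail along which $\lInner{\cB}{\xi_{\lambda}}{\xi'_{\lambda}}$ and $\lInner{\cB}{\xi}{\xi'}$ sit within $O(\delta)$ of $\gamma$ evaluated at their common base points; as $\epsilon$ (hence $\delta$) is arbitrary, the criterion \ref{item:convergence3.gamma} gives $\lInner{\cB}{\xi_{\lambda}}{\xi'_{\lambda}}\to\lInner{\cB}{\xi}{\xi'}$, and $\rInner{\cD}{\cdot}{\cdot}$ is handled identically. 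I expect this last step to be the main obstacle: the moving base points in the fibre product must be reconciled with the upper-semicontinuous topology on $\cB$, and it is precisely the uniform norm bound of Corollary~\ref{cor:Inner-ineq-norm} that keeps the section approximation controlled along the net.
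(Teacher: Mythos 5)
Your proposal is correct and takes essentially the same route as the paper: fibrewise forms from Lemma~\ref{lem:cK:ip-on-odot+adjoint} extended via the bound in Corollary~\ref{cor:Inner-ineq-norm}, joint continuity by the section-approximation argument through Lemma~\ref{lm.conv.equivalence.conditions}, and properties (a)--(e) deduced from the preceding lemmas plus elementary-tensor computations. The only point you gloss over is that $\cM_{x}\odot\cN_{y}$ is not a subspace of $\cK_{(x,y)}$ but merely surjects onto a dense one, so ``extend by density'' presupposes that the form descends through the $\cC\z$-balancing; the paper checks this explicitly (via Condition~\ref{item:FHE:ip:adjointables} for $\cM$ and Condition~\ref{item:FE:ip:C*linear} for $\cN$), while in your ordering it follows from the Corollary~\ref{cor:Inner-ineq-norm} bound itself, since boundedness with respect to the fibre seminorm forces the form to vanish whenever an argument lies in the balancing subspace --- a one-line observation that should be made explicit for the density argument to close.
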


\begin{proof}
    As always, we will only deal with the left (pre-)inner product $\lInner{\cB}{\cdot}{\cdot}$. 
    We have already seen in Lemma~\ref{lem:cK:ip-on-odot+adjoint} for $(x,y),(x',y')\in Z$ with $s_{Y}(y)=s_{Y}(y')$ that there exists a map
    \[
       \lInner{\cB}{\cdot}{\cdot}\colon\quad
       \left(\cM_{x}\odot \cN_{y}\right)
       \times \left(\cM_{x'}\odot \cN_{y'}\right)
       \to 
       \cB_{g}
    \]
    with the defining property. Note that the maps $F_{m',n'}$ that  we used in its definition factor through the $\cC\z$-balancing because of Condition~\ref{item:FHE:ip:adjointables} for $\cM$ and Condition~\ref{item:FE:ip:C*linear} for $\cN$:
       \begin{align*}
           F_{m',n'}((m\cdot c)\odot n)
           &=
           \linner{\cB}{m\cdot c}{m'\cdot \linner{\cC}{n'}{n} }
           =
           \linner{\cB}{m}{(m'\cdot \linner{\cC}{n'}{n})\cdot c^* }
           \\&
           =
           \linner{\cB}{m}{m'\cdot (\linner{\cC}{n'}{n} c^*) }
           =
           \linner{\cB}{m}{m'\cdot \linner{\cC}{n'}{c\cdot n} }
           =
           F_{m',n'}(m\odot (c\cdot n)).
       \end{align*}
       Similarly, the map $F^*$ factors through the $\cC\z$-balancing because of 
       Condition~\ref{item:FE:ip:C*linear} for $\cN$:
       \begin{align*}
           F^*\left((m'\cdot c)\odot n'\right)\left(m\odot n\right)
           &=
           \linner{\cB}{m}{(m'\cdot c)\cdot \linner{\cC}{n'}{n}}^*
           \\&=
           \linner{\cB}{m}{m'\cdot \linner{\cC}{c\cdot n'}{n}}^*
           =
           F^*\left(m'\odot (c\cdot n')\right)\left(m\odot n\right)
       \end{align*}
        Consequently, we may define the pre-inner product on the {\em balanced} spaces,
    \[
       \lInner{\cB}{\cdot}{\cdot}\colon\quad
       \left(\cM_{x}\odot_{\cC\z} \cN_{y}\right)
       \times \left(\cM_{x'}\odot_{\cC\z} \cN_{y'}\right)
       \to 
       \cB_{g}.
    \]
    By Corollary~\ref{cor:Inner-ineq-norm}, we have
       \(
         \norm{\lInner{\cB}{\xi}{\xi'}}
        \leq 
        \norm{
            \xi
        }_{\cK}
        \,
         \norm{
            \xi'
        }_{\cK}
        ,\)
       which shows that that map then extends to a sesquilinear map
    \[
       \lInner{\cB}{\cdot}{\cdot}\colon\quad
       \cK_{(x,y)}
       \times \cK_{(x',y')}
       \to 
       \cB_{g}
    \]
    which also satisfies $\norm{\lInner{\cB}{\xi}{\xi'}}\leq\norm{\xi}\,\norm{\xi'}$.
    Since $(x,y),(x',y')$ were arbitrary with $s_{Y}(y)=s_{Y}(y')$, we get the map
       \[
           \lInner{\cB}{\cdot}{\cdot}\colon\quad
           \cK \bfp{s}{s} \cK \to \cB
           .
       \]
       
       \medskip 
       
       To see that this map is continuous, assume that $(\xi_{\lambda},\xi'_{\lambda})$ is a net in $\cK \bfp{s}{s} \cK$ converging to $(\xi,\xi')$. Let $q_{\cK}(\xi_{\lambda},\xi'_{\lambda})=(x_{\lambda},y_{\lambda},x'_{\lambda},y'_{\lambda})$ and $q_{\cK}(\xi,\xi')=(x,y,x',y')$, and write
       \(
        g_{\lambda}:=\Leoq{\cG}{x_{\lambda}}{x'_{\lambda} \cdot \Leoq{\cH}{y'_{\lambda}}{y_{\lambda}}}
       \)
       and
       \(
        g:=\Leoq{\cG}{x}{x' \cdot \Leoq{\cH}{y'}{y}}.
       \)
       By continuity of $q_{\cK}$, of $\Leoq[Y]{\cH}{\cdot}{\cdot}$, of the $\cH$-action on $X$, and of $\Leoq[X]{\cG}{\cdot}{\cdot}$, it follows that $g_{\lambda}\to g$. 
       Fix $\epsilon>0.$ By Lemma~\ref{lm.conv.equivalence.conditions}, \ref{item:convergence3.gamma}$\implies$\ref{item:convergence1}, it suffices to fix a section $\sigma$ of $\cK$ for which
       \begin{equation}\label{eq:choice-of-sigma}
        \norm{
            \lInner{\cB}{\xi}{\xi'}
            -
            \sigma (g)
          }
          <\frac\epsilon3
       \end{equation}
       and to show that there exists $\lambda_0$ such that for all $\lambda\geq\lambda_0$, we have
       \(
         \norm{
            \lInner{\cB}{\xi_{\lambda}}{\xi'_{\lambda}}
            -
            \sigma (g_{\lambda})
            }
          < \epsilon.
       \)
        The proof will follow the same ideas as that of continuity of the left $\cB$-action on $\cK$ (see Proposition~\ref{prop:cK:actions}). Fix sections $\tau_{i},\tau'_{i}$  of $\cM$ and $\kappa_{i},\kappa'_{i}$ of $\cN$ such that for the sections $\alpha:=\sum_{i} \tau_{i} \otimes\kappa_{i}$ and $
            \alpha' :=\sum_{i} \tau'_{i} \otimes\kappa'_{i}$ of $\cK$,
        we have
       \begin{equation}\label{eq:choice of alpha and alpha'}
            \norm{\xi - \alpha (x,y)} 
            <
            \frac{\epsilon}{6(1+\norm{\xi'})}
            \quad\text{and}\quad
            \norm{\xi' - \alpha' (x',y')} 
            <
            \frac{\epsilon}{6(1+\norm{\alpha (x,y)})}.
       \end{equation}

        We write
       \begin{align*}
            \lInner{\cB}{\alpha (x,y)}{\alpha' (x,'y')}
            =\ &
            \lInner{\cB}{\alpha (x,y)}{\alpha' (x,'y')-\xi' }
            +
            \lInner{\cB}{
                \alpha (x,y) - \xi
                }{
                \xi' 
                }
            +
            \lInner{\cB}{\xi }{\xi' }
            .
       \end{align*}
       Using the triangle inequality and that $\norm{\lInner{\cB}{\eta}{\eta'}}\leq\norm{\eta}\,\norm{\eta'}$, the choice of $\sigma$ in Equation~\eqref{eq:choice-of-sigma} and of $\alpha$ and $\alpha'$ in Equation~\eqref{eq:choice of alpha and alpha'} then implies
       \begin{align}
            \norm{\lInner{\cB}{\alpha (x,y)}{\alpha' (x,'y')}
            -
            \sigma(g)}
            \leq
            \
            &
            \norm{\alpha (x,y)}
            \,
            \norm{\alpha' (x,'y')-\xi'}
            +
            \norm{
                \alpha (x,y)-\xi 
                }
                \,
            \norm{
                \xi' 
                }
             +
            \norm{\lInner{\cB}{\xi }{\xi' }
            -
            \sigma(g)}\notag
            \\
            <\ &
            \frac{2\epsilon}{3}
            \label{ineq:alpha-close-to-sigma}
            .
       \end{align}

       As $\xi_{\lambda}\to\xi,\xi_{\lambda}'\to\xi'$ in $\cK$ and as $\alpha,\alpha'$ are continuous,
       we have by Lemma~\ref{lm.conv.equivalence.conditions}, \ref{item:convergence1}$\implies$\ref{item:convergence2}, that there exists $\lambda_1$ such that for all $\lambda\geq\lambda_1$, we have
       \begin{gather}
       \label{eq:cty-of-ip:1}
            \norm{
                \xi_{\lambda}
            }
            \leq
            \norm{
                \xi
            }
            \quad\text{and}\quad
            \norm{
                \xi_{\lambda}'
            }
            \leq
            \norm{
                \xi'
            }
            \\
            \label{eq:cty-of-ip:2}
            \norm{
                \alpha (x_{\lambda},y_{\lambda})
            }
            \leq
            \norm{
                \alpha (x,y)
            }
            \quad\text{and}\quad
            \norm{
                \alpha (x_{\lambda}',y_{\lambda}')
            }
            \leq
            \norm{
                \alpha (x',y')
            }
            \\
            \label{eq:cty-of-ip:3}
            \norm{
                \xi_{\lambda}
                -
                \alpha(x_{\lambda},y_{\lambda})
            }
            \leq
            \norm{
                \xi
                -
                 \alpha (x,y)
            }
            \quad\text{and}\quad
            \norm{
                \xi_{\lambda}'
                -
                \alpha(x_{\lambda}',y_{\lambda}')
            }
            \leq
            \norm{
                \xi'
                -
                 \alpha (x',y')
            }
            .
        \end{gather}
       This time, we write 
       \begin{align*}
            \lInner{\cB}{\xi_{\lambda}}{\xi'_{\lambda}}
            =\ &
            \lInner{\cB}{
                \xi_{\lambda}-\alpha (x_{\lambda},y_{\lambda})
                }{
                \xi'_{\lambda}
                }
          +
            \lInner{\cB}{\alpha (x_{\lambda},y_{\lambda})}{\xi'_{\lambda}-\alpha(x_{\lambda}',y_{\lambda}')}
          +
            \lInner{\cB}{\alpha (x_{\lambda},y_{\lambda})}{\alpha (x_{\lambda}',y_{\lambda}')},
       \end{align*}
       so that similarly to before in Inequality~\eqref{ineq:alpha-close-to-sigma}, we get
       \begin{align*}
        \norm{
            \lInner{\cB}{\xi_{\lambda}}{\xi'_{\lambda}}
            -
            \sigma (g_{\lambda})
          }
          \leq\ &
            \norm{
                \xi_{\lambda}-\alpha (x_{\lambda},y_{\lambda})
                }
            \,
            \norm{
                \xi'_{\lambda}
                }
          +
            \norm{\alpha (x_{\lambda},y_{\lambda})}
            \,
            \norm{\xi'_{\lambda}-\alpha (x_{\lambda}',y_{\lambda}')}
          \\&+
            \norm{
                \linner{\cB}{\alpha (x_{\lambda},y_{\lambda})}{\alpha (x_{\lambda}',y_{\lambda}')}
                -
            \sigma (g_{\lambda})
            }.
       \end{align*}
       By the second inequality in~\eqref{eq:cty-of-ip:1}, the first in~\eqref{eq:cty-of-ip:3}, and by choice of $\alpha$ in~\eqref{eq:choice of alpha and alpha'}, the first summand for $\lambda\geq \lambda_{1}$ is bounded by
       \begin{align*}
           \norm{
                \xi_{\lambda}-\alpha (x_{\lambda},y_{\lambda})
                }
            \,
            \norm{
                \xi'_{\lambda}
                }
            &\leq 
            \norm{
                \xi-\alpha (x,y)
                }
            \,
            \norm{
                \xi'
                }
            <\frac\epsilon6
            .
       \end{align*}
       Similarly, by the first inequality in~\eqref{eq:cty-of-ip:2}, the second in~\eqref{eq:cty-of-ip:3}, and by choice of $\alpha'$ in~\eqref{eq:choice of alpha and alpha'}, the second summand for $\lambda\geq \lambda_{2}$ is bounded by
       \begin{align*}
           \norm{\alpha (x_{\lambda},y_{\lambda})}
            \,
            \norm{\xi'_{\lambda}-\alpha (x_{\lambda}',y_{\lambda}')}
            &\leq 
            \norm{\alpha (x,y)}
            \,
            \norm{\xi'-\alpha(x',y')}
            <\frac\epsilon6.
       \end{align*}
        Using the last two inequalities in the preceding one, we have for $\lambda\geq \lambda_{1}$
       \begin{align}\label{ineq:net:xi-close-to-sigma}
        \norm{
            \lInner{\cB}{\xi_{\lambda}}{\xi'_{\lambda}}
            -
            \sigma (g_{\lambda})
          }
          \leq\ &
          \frac{\epsilon}{3}
          +
            \norm{
                \lInner{\cB}{ \alpha (x_{\lambda},y_{\lambda})}{\alpha (x_{\lambda}',y_{\lambda}')}
                -
                \sigma (g_{\lambda})
            }.
       \end{align}
        By definition of the inner product and of $\alpha$ and $\alpha'$, we have
        \begin{align*}
            \lInner{\cB}{\alpha(x_{\lambda},y_{\lambda})}{\alpha(x_{\lambda}',y_{\lambda}')}
            =\ &
            \sum_{i,j}
            \linner{\cB}{\tau_{i}(x_{\lambda})}{\tau'_{j} (x_{\lambda}')\cdot \linner{\cC}{\kappa'_{j} (y_{\lambda}')}{\kappa_{i} (y_{\lambda})} }
        \end{align*}
        By continuity of all maps involved in the right-hand side, we know that this converges to 
        \begin{align*}
            \lInner{\cB}{\alpha(x,y)}{\alpha(x',y')}
            =\ &
            \sum_{i,j}
            \linner{\cB}{\tau_{i}(x)}{\tau'_{j} (x')\cdot \linner{\cC}{\kappa'_{j} (y')}{\kappa_{i} (y)} }
        \end{align*}
        in $\cB$. Thus, there exists $\lambda_2$ such that for all $\lambda\geq \lambda_2$, we have
        \begin{align*}
            \norm{\lInner{\cB}{\alpha (x_{\lambda},y_{\lambda})}{\alpha (x_{\lambda}',y_{\lambda}')}
            -
            \sigma(g_{\lambda})}
            \notag
            \leq\ 
            \norm{\lInner{\cB}{\alpha (x,y)}{\alpha (x',y')}
            -
            \sigma(g)}            <\frac{2\epsilon}{3}
            .
      \end{align*}
      Combining the above with Inequality~\eqref{ineq:net:xi-close-to-sigma}, we arrive at $\norm{
            \lInner{\cB}{\xi_{\lambda}}{\xi'_{\lambda}}
            -
            \sigma (g_{\lambda})
          } <\epsilon$ for all $\lambda\geq \lambda_{1},\lambda_{2}$, as claimed. The map $\lInner{\cB}{\cdot}{\cdot}$ is hence continuous.

    \medskip
    
    \pagebreak[0]
    It remains to check Conditions~\ref{item:cK:ip-fibre}--  \ref{item:cK:ip-FHE2e}. Condition~\ref{item:cK:ip-fibre} follows directly from Lemma~\ref{lem:cK:ip-on-elementary-tensors}, Condition~\ref{item:cK:ip-adj} from Lemma~\ref{lem:cK:ip-on-odot+adjoint}, and Condition~\ref{item:cK:ip-FHE2e} from Lemma~\ref{lem:FHE:2e for cK}. For the other two, it suffices to restrict our attention to elementary tensors $\xi=m
       \otimes n$ and $\xi'= m'\otimes n'$: 
       For \ref{item:cK:ip-C*linear}, we compute
       \begin{alignat*}{2}
           \lInner{\cB}{ b\cdot \xi}{\xi'}
           &=
           \lInner{\cB}{(b\cdot m)\otimes n}{m'\otimes n'}
           \quad&&
           \text{{by def.\ of the action on $\cK$}}
           \\
           &=
           \linner{\cB}{b\cdot m}{m'\cdot \linner{\cC}{n'}{n} }
           &&
           \text{{by def.\ of the inner product on $\cK$}}
           \\
           &=
           b\linner{\cB}{m}{m'\cdot \linner{\cC}{n'}{n} }
           &&
           \text{{by \ref{item:FE:ip:C*linear} for~$\cM$}}
           \\
           &=
           b \lInner{\cB}{\xi}{\xi'}.
        \end{alignat*}
    Lastly, for \ref{item:cK:ip-adjointable}, we compute 
    \begin{alignat*}{2}
        \lInner{\cB}{\xi\cdot d}{\xi'}
        &=
         \lInner{\cB}{ m\otimes (n\cdot d)}{ m'\otimes n'}
        \quad&&
        \text{{by def.\ of the action on $\cK$}}
           \\
           &=
         \linner{\cB}{m}{m'\cdot \linner{\cC}{n'}{n\cdot d} }
        &&
           \text{{by def.\ of the inner product on $\cK$}}
           \\
           &=
         \linner{\cB}{m}{m'\cdot \linner{\cC}{n'\cdot d^*}{n} }
         &&
           \text{{by Cond.~~\ref{item:FHE:ip:adjointables} for~$\cN$}}
           \\
           &=
         \lInner{\cB}{ m\otimes n}{ m'\otimes (n'\cdot d^*)}
        &&
           \text{{by def.\ of the inner product on $\cK$}}
           \\
           &=\lInner{\cB}{\xi}{\xi'\cdot d^*}.
    \end{alignat*}
    This concludes our proof.
\end{proof}

We can now come to our first main theorem:
\pagebreak[3]\begin{theorem}\label{thm:cK-is-word}
    Assume we are given 
    \begin{itemize}
    \item three saturated Fell bundles $\cB, \cC, \cD$ over \LCH\ \etale\ groupoids $\cG,\cH,\mathcal{K}$, respectively;
    \item a $(\cG,\cH)$-\pe\ $X$ and an $(\cH,\mathcal{K})$-\pe\ $Y$;
    \item a $(\cB,\cC)$-\hequiv\ $\cM$ over $X$
	and a $(\cC,\cD)$-\hequiv\ $\cN$ over $Y$.
\end{itemize}
    Let $\cK$ be the bundle over  the $(\cG,\mathcal{K})$-\pe\ $X\bfp{s}{r}Y$  defined in Lemma~\ref{lem:topology-on-cK}. Then $\cK$ is a $(\cB,\cD)$-\hequiv\ when equipped with the actions defined in Proposition~\ref{prop:cK:actions} and the inner products defined in Proposition~\ref{prop:cK:ip}.
\end{theorem}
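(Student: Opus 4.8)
The plan is to verify that $\cK$ satisfies every condition of Definition~\ref{def:FBword}, almost all of which have already been assembled in the preceding results, so that the theorem is essentially an exercise in collecting them. To begin, $\cK$ is a \USCBb\ over the $(\cG,\mathcal{K})$-\pe\ $Z=X\bfp{s}{r}Y$ by Lemma~\ref{lem:topology-on-cK} (and $Z$ is a \pe\ by Lemma~\ref{lem:product-of-preequiv}), so the ambient data has the correct shape.

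Condition~\ref{item:FE:actions} is exactly Proposition~\ref{prop:cK:actions}, which supplies the commuting left $\cB$- and right $\cD$-actions with $b\cdot(\xi\cdot d)=(b\cdot\xi)\cdot d$. The inner-product requirements of~\ref{item:FE:ip} in their \hequiv\ form, together with the extra Condition~\ref{item:FHE:2e}, are precisely the content of Proposition~\ref{prop:cK:ip}: its parts \ref{item:cK:ip-fibre}, \ref{item:cK:ip-adj}, \ref{item:cK:ip-C*linear}, \ref{item:cK:ip-adjointable}, and \ref{item:cK:ip-FHE2e} are the required \ref{item:FHE:ip-fibre}, \ref{item:FE:ip:adjoint}, \ref{item:FE:ip:C*linear}, \ref{item:FHE:ip:adjointables}, and \ref{item:FHE:2e} (and that proposition also establishes continuity and sesquilinearity).

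Thus the only condition left to check is~\ref{item:FE:SMEs}: that each fibre $\cK_{(x,y)}$ is a $B(r_X(x))\sme D(s_Y(y))$-\ib. Here I would fix $(x,y)\in Z$ and set $u:=s_X(x)=r_Y(y)\in\cH\z$. By~\ref{item:FE:SMEs} applied to $\cM$ and to $\cN$, the space $\cM_x$ is a $B(r_X(x))\sme C(u)$-\ib\ and $\cN_y$ is a $C(u)\sme D(s_Y(y))$-\ib. By construction (Lemma~\ref{lem:topology-on-cK}), $\cK_{(x,y)}=\cM_x\otimes_{\cC\z}\cN_y$ is the completion of the balanced tensor product $\cM_x\odot_{C(u)}\cN_y$ with respect to the inner products~\eqref{eq:ip-on-cKz}, so by \cite[Proposition 3.16]{RW:Morita} this internal tensor product of \ib s is itself a $B(r_X(x))\sme D(s_Y(y))$-\ib. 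It then remains to confirm that the \ib\ structure produced by \cite[Proposition 3.16]{RW:Morita} is exactly the restriction of the global data to this single fibre. For the inner products this is immediate, as~\eqref{eq:ip-on-cKz} is literally the restriction of the forms of Proposition~\ref{prop:cK:ip}; moreover, by~\ref{item:def-preequiv:units} we have $\Leoq[Z]{\cG}{(x,y)}{(x,y)}=r_X(x)\in\cG\z$ and $\Reoq[Z]{(x,y)}{(x,y)}{\mathcal{K}}=s_Y(y)\in\mathcal{K}\z$, so these forms land in $B(r_X(x))$ and $D(s_Y(y))$ as needed. For the actions, if $b\in B(r_X(x))$ then $g:=p_{\cB}(b)=r_X(x)$ is a unit, whence $g\cdot x=x$ by~\ref{item:def-GA:unit}, so $b\cdot(m\otimes n)=(b\cdot m)\otimes n$ stays in $\cK_{(x,y)}$ and coincides with the left $B(r_X(x))$-action inherited from $\cM_x$; the right $D(s_Y(y))$-action is handled identically.

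Having met every condition of Definition~\ref{def:FBword}, we conclude that $\cK$ is a $(\cB,\cD)$-\hequiv. The genuine difficulty of the construction was already absorbed by Propositions~\ref{prop:cK:actions} and~\ref{prop:cK:ip}, where continuity of the actions and of the inhomogeneous inner products had to be proved; the present theorem is then the assembly of those facts together with the fibrewise transitivity of strong Morita equivalence \cite[Proposition 3.16]{RW:Morita}. I expect the only subtle point to be the last one above, namely verifying that restricting the globally-defined actions and inner products to a fixed fibre reproduces exactly the \ib\ structure of the internal tensor product; this is precisely where the unit identities~\ref{item:def-preequiv:units} and the action axiom~\ref{item:def-GA:unit} are used.
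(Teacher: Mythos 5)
Your proposal is correct and follows essentially the same route as the paper's proof: it assembles Lemma~\ref{lem:product-of-preequiv}, Lemma~\ref{lem:topology-on-cK}, Proposition~\ref{prop:cK:actions}, and Proposition~\ref{prop:cK:ip} for Conditions~\ref{item:FE:actions} and~\ref{item:FE:ip}, and then invokes the fibrewise identification $\cK_{(x,y)}=\cM_x\otimes_{\cC\z}\cN_y$ together with \cite[Proposition 3.16]{RW:Morita} for Condition~\ref{item:FE:SMEs}. Your extra verification that the globally defined actions and inner products restrict on each fibre to the imprimitivity-bimodule structure of the internal tensor product is a worthwhile refinement that the paper leaves implicit, but it does not change the argument.
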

\begin{proof}
    In this proof, all Items are references to Definition~\ref{def:FBequivalence} unless otherwise specified.
    
    We have seen in  Lemma~\ref{lem:product-of-preequiv} that $ Z =X\bfp{s}{r}Y$ is a groupoid \pe, and in Lemma~\ref{lem:topology-on-cK} that $\cK$ is a \USCBb\ over $ Z $. By Proposition~\ref{prop:cK:actions}, $\cK$ carries commuting $\cB$- and $\cD$-actions, so Item~\ref{item:FE:actions} is satisfied. Conditions~\ref{item:cK:ip-adj} and \ref{item:cK:ip-C*linear} in Proposition~\ref{prop:cK:ip} prove Items~\ref{item:FE:ip:adjoint} and \ref{item:FE:ip:C*linear} respectively, while
    Conditions~\ref{item:cK:ip-fibre}, \ref{item:cK:ip-adjointable}, and~\ref{item:cK:ip-FHE2e} in the same proposition prove Items~\ref{item:FHE:ip-fibre}, \ref{item:FHE:ip:adjointables}, and~\ref{item:FHE:2e} of Definition~\ref{def:FBword}.
    
    Lastly, the fibre of $\cK$ over $(x,y)\in  Z $ is $\cM_x \otimes_{C(u)}\cN_{y}$ with $u=s_{X}(x)=r_{Y}(y)$; as $\cM_{x}$ is a $B(r_{X}(x))-C(u)$-\ib\ and $\cN_{y}$ is a $C(u)-D(s_{Y}(y))$-\ib\ by Item~\ref{item:FE:SMEs} for $\cM$ resp.\ $\cN$, we conclude that $\cK_{(x,y)}$ is a $B(r_{X}(x))-D(s_{Y}(y))$-\ib, as needed for $\cK$ for Item~\ref{item:FE:SMEs}.
\end{proof}

\begin{remark}
\label{rmk:why-cP-is-not-faulty}
    One may be tempted to say that, if $X=\X$ and $Y=\Y$ are {\em equivalences} of groupoids and $\cM$ and $\cN$ {\em equivalences} of Fell bundles, then $\cK$ is a $(\cB,\cD)$-{\em equivalence}. Unfortunately, that is not the case: Just like one has to take a quotient of $\X\bfp{s}{r}\Y$ to make the groupoid equivalence~$\X\bfp{}{\cH}\Y$, we will have to take a quotient of $\cK$ to get a Fell bundle equivalence. 
    
    As $ Z =\X\bfp{s}{r}\Y$ is not a $\cG-\mathcal{K}$ equivalence as pointed out in Remark~\ref{rm.product.is.not.eq}, it does not admit a map $\leoq[ Z ]{\cG}{\cdot}{\cdot}$;
    in particular, we cannot ask for $\cK$ to satisfy Item~\ref{item:FE:ip:fibre}. But we can remedy that to some extent: 
    Suppose we are given $m\otimes n\in\cK_{(x,y)}$ and  $m'\otimes n'\in\cK_{(x',y')}$ where $r_{X}(x)=r_{X}(x')$ and $s_{Y}(y)=s_{Y}(y')$.
    Let $h:=\leoq[\Y]{\cH}{y'}{y}$, the unique element of $\cH$ such that $y' = h\cdot y$, and let
    \(
       g
       :=
       \leoq[\X]{\cG}{x}{x'\cdot h},
    \)
    the unique element of $\cG$ such that 
    \(
       x = g \cdot \left(x'\cdot h\right).
    \)
    Then
    \begin{equation*}
       (q_{\cM}(m),q_{\cN}(n)) = (x,y) = (g \cdot \left( x'\cdot h\right), h\inv \cdot y').
    \end{equation*}
    If $[\cdot]_{\ipscriptstyle\cH}$ denotes the quotient map $ Z =\X\bfp{s}{r}\Y\to \X\ast_{\cH}\Y=:\mathcal{Z}$, then we at least have
    \begin{align*}
       \leoq[\mathcal{Z}]{\cG}{[q_{\cK}(m\otimes n)]_{\ipscriptstyle\cH}}{[q_{\cK}(m'\otimes n')]_{\ipscriptstyle\cH}}
       &=
       \leoq[\mathcal{Z}]{\cG}{[(x,y)]_{\ipscriptstyle\cH}}{[(x',y')]_{\ipscriptstyle\cH}}
        \\&
       =
       \leoq[\mathcal{Z}]{\cG}{[(g \cdot \left( x'\cdot h\right), h\inv \cdot y')]_{\ipscriptstyle\cH}}{[(x',y')]_{\ipscriptstyle\cH}}
       \\&=
       \leoq[\mathcal{Z}]{\cG}{[(g \cdot x', y')]_{\ipscriptstyle\cH}}{[(x',y')]_{\ipscriptstyle\cH}}
       \\&=
       \leoq[\mathcal{Z}]{\cG}{g \cdot [(x', y')]_{\ipscriptstyle\cH}}{[(x',y')]_{\ipscriptstyle\cH}}
       =
       g
        \\&
       =
       p_{\cB}
       \left(
           \lInner{\cB}{m\otimes n}{m'\otimes n'}
       \right).
    \end{align*}
    
    We will soon adjust $\cK$ to give us an actual equivalence between $\cB$ and~$\cD$.
\end{remark}

\begin{remark}
    Just as with balanced tensor products of Hilbert $\textrm{C}^*$-modules, one can show that the above construction of taking a balanced tensor product of Fell bundle \hequiv s is associative. To be precise, if $V,X,Y$ are groupoid \pe s such that $V\bfp{s}{r}X$ and $X\bfp{s}{r}Y$ make sense, and if $\mathscr{L}$, $\cM$, and $\cN$  are 
    \hequiv s over those spaces, then there is a natural isomorphism
    \[
        (\mathscr{L}\otimes_{\cB\z}\cM)\otimes_{\cC\z}\cN
        \cong 
        \mathscr{L}\otimes_{\cB\z}(\cM\otimes_{\cC\z}\cN)
    \]
    of \USCBb s.
\end{remark}

\subsection{Some isomorphic bundles}

As before, we will let $\cK$ denote $\cM\otimes_{\cC\z}\cN$, the Fell bundle \hequiv\ described in Theorem~\ref{thm:cK-is-word}.
The goal of this subsection is to construct a map that allows us to `collapse' certain fibres of $\cK$: We intend to mimic the balancing over $C(u)$ in the tensor product $\cM_{x'} \otimes \cN_{y'}$, where $s_{X}(x')=u=r_{Y}(y')$, and extend it to a balancing over $C(h)$ for any $h\in \cH$. In other words, we would like to identify the fibres $\cM_{x}\otimes_{C(u)}\cN_{hy}$ and $\cM_{xh}\otimes_{C(v)}\cN_{y}$, where $u=r_{\cH}(h)$ and $v=s_{\cH}(h)$. A na{\"i}ve approach would be to mod out by a space spanned by all elements of the form
\[
    (m\cdot c) \otimes n - m\otimes (c\cdot n),
\]
where $c\in C(h)$ for some $h$. However, this 
is doomed to fail:  the displayed difference of elementary tensors does not make sense since the two summands live in distinct vector spaces, $\cM_{x}\otimes_{C(u)}\cN_{hy}$ resp.\ $\cM_{xh}\otimes_{C(v)}\cN_{y}$. Therefore, we must construct a function on $\cK$ that allows us to identify these fibres; see Theorem~\ref{thm:Psi-cts}.

\begin{definition}\label{def:t}
    Let $t_{ Z }\colon Z=X \bfp{s}{r} Y \to \cH\z$  be defined by $t_{ Z }(x,y)=s_{X}(x)= r_{Y}(y)$, and let $t_{\cK}:=t_{ Z }\circ q_{\cK}$. Note that 
    \[
       t_{ Z }(g\cdot x,y) = t_{ Z }(x,y)
       =
       t_{ Z }(x,y\cdot k)
    \]
    for all $g\in\cG$ and $k\in\mathcal{K}$ where it makes sense.
\end{definition}
\begin{lemma}\label{lem:MCN-iso-HMN}
    There exists an isomorphism
    \begin{align*}
       (\cM\otimes_{\cC\z}\cC)\otimes_{\cC\z} \cN &\overset{\Omega}{\longrightarrow}
       \cH  \bfp{s}{t} \cK
    \intertext{of \USCBb s determined by}
       (m\otimes c) \otimes n &\longmapsto (p_{\cC}(c), (m\cdot c) \otimes n )
    \end{align*}
    covering the homeomorphism
    \begin{align*}
       \omega\colon
       \quad
       (X \bfp{s}{r} \cH) \bfp{s}{r}  Y
       &\to
       \cH \bfp{s}{t}  (X \bfp{s}{r} Y)
       ,
       \quad
       (x,h,y)\mapsto (h,x\cdot h,y).
    \end{align*}
    Here, $\cH  \bfp{s}{t} \cK$ is as defined in Definition~\ref{def:beefing-up-a-USC-bundle}.
\end{lemma}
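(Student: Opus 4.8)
The plan is to build $\Omega$ fibrewise from the canonical identification $\cM_x\otimes_{\cC\z}\cC_h\cong\cM_{x\cdot h}$, check that it is a fibrewise isometric linear bijection covering $\omega$, and then promote it to a homeomorphism of total spaces by the convergence criterion of Lemma~\ref{lm.conv.equivalence.conditions}.

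First the setup. Since $\cH$ is an $(\cH,\cH)$-\pe\ (Example~\ref{ex:equivalences-are-preequivs}) and $\cC$ is a $(\cC,\cC)$-\hequiv\ over it, Theorem~\ref{thm:cK-is-word} makes $\cM\otimes_{\cC\z}\cC$ a $(\cB,\cC)$-\hequiv\ over $X\bfp{s}{r}\cH$, so $(\cM\otimes_{\cC\z}\cC)\otimes_{\cC\z}\cN$ is a \USCBb\ over $(X\bfp{s}{r}\cH)\bfp{s}{r}Y$; the target $\cH\bfp{s}{t}\cK$ is a \USCBb\ by Lemma~\ref{lem:beefing-up-a-USC-bundle}. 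The map $\omega$ is a homeomorphism with inverse $(h,x',y')\mapsto(x'\cdot h\inv,h,y')$; both directions are continuous because the right $\cH$-action on $X$ and inversion on $\cH$ are continuous, and one checks directly that the fibre-product constraints match (e.g.\ $s_{X}(x\cdot h)=s_{\cH}(h)=r_{Y}(y)$, so $(x\cdot h,y)\in Z$ and $s_{\cH}(h)=t_{Z}(x\cdot h,y)$).

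Next I define $\Omega$ fibrewise. By Remark~\ref{rmk:MW-6.2-for-words}, for $(x,h)\in X\bfp{s}{r}\cH$ the assignment $m\otimes c\mapsto m\cdot c$ is an isomorphism $\Phi_{x,h}\colon\cM_x\otimes_{\cC\z}\cC_h\to\cM_{x\cdot h}$ of $\cB_{r(x)}-\cC_{s(h)}$-\ib s; in particular it is isometric and intertwines the right $\cC$-actions. Balancing over $\cC\z$ with $\mathrm{id}_{\cN_y}$ yields an isometric linear bijection $\Omega_{(x,h,y)}:=\Phi_{x,h}\otimes\mathrm{id}\colon(\cM_x\otimes_{\cC\z}\cC_h)\otimes_{\cC\z}\cN_y\to\cM_{x\cdot h}\otimes_{\cC\z}\cN_y=\cK_{(x\cdot h,y)}$, determined by $(m\otimes c)\otimes n\mapsto(m\cdot c)\otimes n$. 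It is well defined on both balancings since $\Phi_{x,h}$ respects the $C(r(h))$-balancing and the right $C(s(h))$-action, and since $\cK$ is $\cC\z$-balanced; pairing with $h=p_{\cC}(c)$ produces a fibrewise isometric linear bijection $\Omega$ covering $\omega$ with the stated formula.

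Continuity is the crux. For $\Omega$ itself, let $\zeta_\lambda\to\zeta$ in the domain with base points $(x_\lambda,h_\lambda,y_\lambda)\to(x,h,y)$; writing $\Omega(\zeta_\lambda)=(h_\lambda,\Xi_\lambda)$ and $\Omega(\zeta)=(h,\Xi)$, Lemma~\ref{lem:beefing-up-a-USC-bundle} reduces the claim to $h_\lambda\to h$ (automatic) together with $\Xi_\lambda\to\Xi$ in $\cK$. Fix $\epsilon>0$. By Lemma~\ref{lm.conv.equivalence.conditions}, \ref{item:convergence1}$\Rightarrow$\ref{item:convergence3.gamma}, I pick a section $s=\sum_i(\rho_i\otimes\gamma_i)\otimes\tau_i$ (with $\rho_i\in\Gamma_0(X;\cM)$, $\gamma_i\in\Gamma_0(\cH;\cC)$, $\tau_i\in\Gamma_0(Y;\cN)$) with $\|\zeta-s(x,h,y)\|<\epsilon$ and $\|\zeta_\lambda-s(x_\lambda,h_\lambda,y_\lambda)\|<\epsilon$ eventually; fibrewise isometry of $\Omega$ then gives $\|\Xi_\lambda-\eta_\lambda\|<\epsilon$ and $\|\Xi-\eta\|<\epsilon$ for $\eta_\lambda:=\sum_i(\rho_i(x_\lambda)\cdot\gamma_i(h_\lambda))\otimes\tau_i(y_\lambda)$ and $\eta:=\sum_i(\rho_i(x)\cdot\gamma_i(h))\otimes\tau_i(y)$. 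Continuity of the sections, of the right $\cC$-action on $\cM$, and Lemma~\ref{lem:topology-on-cMcN-w-nets} give $\eta_\lambda\to\eta$ in $\cK$, so by \ref{item:convergence1}$\Rightarrow$\ref{item:convergence3.gamma} for $\cK$ there is $\varphi\in\Gamma_0(Z;\cK)$ with $\|\eta-\varphi(x\cdot h,y)\|,\|\eta_\lambda-\varphi(x_\lambda\cdot h_\lambda,y_\lambda)\|<\epsilon$; the triangle inequality yields $\|\Xi-\varphi(x\cdot h,y)\|,\|\Xi_\lambda-\varphi(x_\lambda\cdot h_\lambda,y_\lambda)\|<2\epsilon$, and as $\epsilon$ was arbitrary, \ref{item:convergence3.gamma}$\Rightarrow$\ref{item:convergence1} gives $\Xi_\lambda\to\Xi$. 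Finally, a fibrewise isometric bijection covering a homeomorphism that is continuous is automatically open: if $(h_\lambda,\Xi_\lambda)\to(h,\Xi)$ and $\zeta_\lambda:=\Omega\inv(h_\lambda,\Xi_\lambda)$, then the base points converge via $\omega\inv$, and for each generating section $\varphi$ of the domain the section $\Omega\circ\varphi\circ\omega\inv$ of $\cH\bfp{s}{t}\cK$ is continuous (as $\Omega$ is now continuous), so upper semicontinuity of the norm and the isometry give $\limsup_\lambda\|\zeta_\lambda-\varphi(\omega\inv(h_\lambda,\Xi_\lambda)\text{-base})\|\le\|\zeta-\varphi(\cdot)\|$; by \ref{item:convergence2}$\Rightarrow$\ref{item:convergence1}, $\zeta_\lambda\to\zeta$. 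Hence $\Omega$ is a homeomorphism and thus an isomorphism of \USCBb s. The main obstacle is precisely this continuity step, where one must thread the net-convergence criterion through both tensor factors simultaneously; the well-definedness and the fibrewise isometry are routine consequences of Remark~\ref{rmk:MW-6.2-for-words}.
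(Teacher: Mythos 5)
Your proposal is correct and follows essentially the same route as the paper: define $\Omega$ fibrewise via Remark~\ref{rmk:MW-6.2-for-words}, observe it is isometric and covers $\omega$, and verify continuity by pushing generating sections of the form $(\sigma_{\cM}\otimes\sigma_{\cC})\otimes\sigma_{\cN}$ through $\Omega$ using Lemma~\ref{lem:topology-on-cMcN-w-nets} and the convergence criterion of Lemma~\ref{lm.conv.equivalence.conditions}. The only difference is organizational: where the paper cites Proposition~\ref{prop.13.16} (to reduce continuity of $\Omega$ to continuity of $\Omega\circ\sigma\circ\omega\inv$ on a generating family) and Proposition~\ref{prop.13.17} (to get continuity of $\Omega\inv$ from the fibrewise isometry), you inline both of those net arguments directly, which is valid but re-derives the appendix machinery in situ.
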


In the above, we have tacitly used Theorem~\ref{thm:cK-is-word}: Since $\cM$ and $\cC$ are both \hequiv s of  Fell bundles, it follows that $\cM\otimes_{\cC\z}\cC$ is also a \hequiv, namely over the $(\cG,\cH)$-\pe\ $X\bfp{s}{r}\cH$; otherwise, we would not be able to consider $(\cM\otimes_{\cC\z}\cC)\otimes_{\cC\z} \cN$.

\begin{proof}
    For this proof, let \(\cL := (\cM\otimes_{\cC\z}\cC) \otimes_{\cC\z}\cN\).
    The fibre of $\cL$ at some element $(x,h,y)$ of $(X \bfp{s}{r} \cH) \bfp{s}{r}  Y$ is by definition (Lemma~\ref{lem:topology-on-cK}) exactly 
    \[
       \cL_{(x,h,y)}
       =
       (\cM \otimes_{\cC\z} \cC)_{(x,h)} \otimes_{\cC\z}\cN_{y}.
    \]
    By another application of the same lemma (this time with $\cN:=\cC$ and $Y:=\tensor*[_{\cH}]{\cH}{_{\cH}}$), we further have
    \[
       (\cM \otimes_{\cC\z} \cC)_{(x,h)}
       =
       \cM_x \otimes_{\cC\z} \cC_{h}.
    \]
    As a consequence of Remark~\ref{rmk:MW-6.2-for-words}, the fibrewise map determined by $(m\otimes c)\otimes n \mapsto (m\cdot c)\otimes n$ is not only well defined but induces an isomorphism
    \[
       \Omega_{(x,h, y)}\colon\quad
       \cL_{(x,h,y)}
       =
       (\cM_x \otimes_{\cC\z} \cC_{h}) \otimes_{\cC\z}\cN_{y}
       \cong
       \{h\}\times \cM_{x\cdot h} \otimes_{\cC\z}\cN_{y}
       =
       \bigl(\cH  \bfp{s}{t} \cK\bigr)_{\omega(x,h,y)}
    \]
    of \ib s. In particular, $\norm{\Omega_{(x,h, y)}(\xi)}=\norm{\xi}$ for all $\xi\in  \cL_{(x,h,y)}.$ These fibrewise maps can be `stitched together' to a global, bijective map $\Omega$. To see that $\Omega$ is an isomorphism of Banach bundles, we will show that $\Omega$ is continuous; from 
    Proposition~\ref{prop.13.17}
    and from the fact that $\Omega$ is isometric, it will follow that $\Omega\inv$ is likewise continuous, proving that $\Omega$ is the claimed isomorphism.
    
    \smallskip
    
    For continuity of $\Omega$,  
    Proposition~\ref{prop.13.16}
    asserts that it suffices to find a 
    family
	$\Gamma$ of continuous cross-sections of $\cL$ such that
    \begin{enumerate}[label=(\arabic*)]
       \item\label{item:dense span in cL} for each $(x,h,y)\in (X \bfp{s}{r} \cH) \bfp{s}{r}  Y$, 
       the linear span of 
       $\{\gamma(x,h,y)\,\mid\,\gamma\in\Gamma\}$ is dense in $\cL_{(x,h,y)}$, and
       \item\label{item:concatenation cts on cL} for all $\gamma\in\Gamma$, $\Omega\circ\gamma\circ \omega\inv$ is a continuous cross-section of $\cH  \bfp{s}{t} \cK
    $.
    \end{enumerate}
    We  let $\Gamma$ be the collection of elements of the form $\sigma:=(\sigma_{\cM}\otimes\sigma_{\cC})\otimes\sigma_{\cN}$ for $\sigma_{\cM}\in \Gamma_{0} (X;\cM)$ etc.; these cross-sections are, by construction of the topology on  $\cL$, indeed continuous and satisfy~\ref{item:dense span in cL}.
    
    For~\ref{item:concatenation cts on cL}, suppose that  $(h_{\lambda}, x_{\lambda},y_{\lambda})\to (h,x,y)$ is a convergent net in $(X \bfp{s}{r} \cH) \bfp{s}{r}  Y$.
    We have to show that, for any fixed $\sigma$ as above, the net $$s_{\lambda}:=(\Omega\circ \sigma\circ\omega\inv) (h_{\lambda},x_{\lambda},y_{\lambda}) \quad\text{converges to}\quad s:=(\Omega\circ \sigma\circ\omega\inv) (h,x,y).$$
    We compute:
    \begin{align*}
    	s &=
    	(\Omega\circ \sigma\circ\omega\inv) (h,x,y)
    	=
    	(\Omega\circ \sigma) (x\cdot h\inv,h,y)
    	=
    	\Omega \bigl( (\sigma_{\cM}(x\cdot h\inv)\otimes\sigma_{\cC}(h))\otimes\sigma_{\cN}(y)\bigr)
    	\\&=
    	\bigl(h, [\sigma_{\cM}(x\cdot h\inv)\cdot\sigma_{\cC}(h)]\otimes\sigma_{\cN}(y)\bigr)
    	\intertext{and similarly}
	    s_{\lambda} &=
	\bigl(h, [\sigma_{\cM}(x_{\lambda}\cdot h_{\lambda}\inv)\cdot\sigma_{\cC}(h_{\lambda})]\otimes\sigma_{\cN}(_{\lambda}y)\bigr).
    \end{align*}
    We know by continuity of $\sigma_{\cM}$, $\sigma_{\cC}$, and the right-action of $\cH$ on $X$ that 
    \begin{alignat*}{2}
    	m_{\lambda}:= \sigma_{\cM}(x_{\lambda}\cdot h_{\lambda}\inv)\cdot\sigma_{\cC}(h_{\lambda}) &\longrightarrow \sigma_{\cM}(x\cdot h\inv)\cdot\sigma_{\cC}(h)=: m\quad &&\text{ in }\cM, \text{ and}\\
    	n_{\lambda}:= \sigma_{\cN}(y_{\lambda}) &\longrightarrow \sigma_{\cN}(y)=:n&&\text{ in }\cN.
    \end{alignat*}
    By Lemma~\ref{lem:topology-on-cMcN-w-nets}, it follows that 
    \(
    	m_{\lambda}\otimes n_{\lambda} \to m\otimes n
    \)
    in $\cK$. By Lemma~\ref{lm.conv.equivalence.conditions},  \ref{item:convergence1}$\implies$\ref{item:convergence2}, we conclude that for any $\tau_{\cM}\in \Gamma_{0} (X;\cM)$ and $\tau_{\cN}\in \Gamma_{0} (X;\cN)$, 
    \begin{equation}\label{eq:elementary in cK}
	    \limsup\norm{m_{\lambda}\otimes n_{\lambda} -  (\tau_{\cM}\otimes\tau_{\cN}) (x_{\lambda},y_{\lambda})}
	    \leq
	    \norm{m\otimes n - (\tau_{\cM}\otimes\tau_{\cN}) (x,y)}.
    \end{equation}
    But note that
    \begin{align*}
    	\norm{m\otimes n - (\tau_{\cM}\otimes\tau_{\cN}) (x,y)}_{\cK}
    	&=
    	\norm{s - [\mathrm{id}(\tau_{\cM}\otimes\tau_{\cN})] (h,x,y)}_{\cH\bfp{}{}\cK},
\end{align*}
	and similarly for the version with subscript $\lambda$.
   Thus, Equation~\eqref{eq:elementary in cK} is exactly a proof of convergence $s_{\lambda}\to s$ in $\cH\bfp{s}{t}\cK$.
	This proves continuity of $\Omega.$ 
\end{proof}

Clearly, we could have proved a fully analogous statement for the other set of parentheses. To be precise, there exists another isomorphism
\begin{align*}
       \Lambda\colon \quad
       \cM\otimes_{\cC\z}(\cC\otimes_{\cC\z} \cN) &\longrightarrow
       \cK  \bfp{t}{r} \cH
    \\
       m\otimes (c \otimes n) &\longmapsto (m\otimes (c\cdot n ), p_{\cC}(c))
    \end{align*}
    of \USCBb s,   covering the homeomorphism
    \begin{align*}
       \lambda\colon
       \quad
       (X \bfp{s}{r} \cH) \bfp{s}{r}  Y
       &\to
       (X \bfp{s}{r} Y) \bfp{t}{r} \cH
       ,
       \quad
       (x,h,y)\mapsto (x,h\cdot y, h).
    \end{align*}
We arrive at the following commutative diagram:
    \begin{equation}\label{diag:huge}
 \begin{tikzcd}[column sep = tiny]
       &[-45pt]
           (\cM\otimes_{\cC\z} \cC)\otimes_{\cC\z} \cN
           \ar[ddd, "\cong", "\Omega"']
           \ar[ld]
           \ar[rrr,"\cong"]
       &[-60pt]&&[-60pt]
           \cM\otimes_{\cC\z} (\cC\otimes_{\cC\z} \cN)
           \ar[ddd,"\cong"', "\Lambda"]
           \ar[rd]
       &[-45pt]
       \\
           (X \bfp{s}{r} \cH) \bfp{s}{r} Y
           \ar[d, "\approx", "\omega"']
       &&
           (m\otimes c)\otimes n
           \ar[r, mapsto]
           \ar[d, mapsto]
       &
           m\otimes (c\otimes n)
           \ar[d, mapsto]
       &&
           X \bfp{s}{r} (\cH \bfp{s}{r} Y)
           \ar[d, "\approx"', "\lambda"]
       \\
           \cH  \bfp{s}{t}  (X \bfp{s}{r}  Y)
       &&
           (p(c), (m\cdot c) \otimes n)
       &
           (m\otimes (c\cdot n), p(c))
       &&
           (X \bfp{s}{r} Y) \bfp{t}{r} \cH
       \\
       &
           \cH  \bfp{s}{t} ( \cM \otimes_{\cC\z} \cN)
           \ar[lu]
           \ar[rrr, dashed, "\Psi"]
       &&&
           ( \cM \otimes_{\cC\z} \cN)  \bfp{t}{r} \cH
           \ar[ru]
       &
    \end{tikzcd}
    \end{equation}
    In particular, since the vertical maps and the horizontal map at the top are isomorphisms, it follows that there is a unique isomorphism on the bottom making the diagram commute.
To sum up: 
\pagebreak[3]\begin{theorem}\label{thm:Psi-cts}
     	Suppose $\cM$ and $\cN$ are {\hequiv s} and let $\cK=\cM\otimes_{\cC\z}\cN$ be the \hequiv\ constructed in Theorem~\ref{thm:cK-is-word}. Then the map
    \begin{align*}
       \Psi\colon\quad 
       \cH \bfp{s}{t}  \cK
       &\to
       \cK\bfp{t}{r}  \cH
       \\
       (h, \xi)&\mapsto (\Psi_{h} (\xi), h)
    \quad\text{
    determined by
    }\quad
    	\Psi_{h} ((m\cdot c)\otimes n) = m\otimes (c\cdot n)
    	\text{ for } c\in \cC_{h},
    \end{align*}
    is an isomorphism of \USCBb s covering the homeomorphism
    \begin{align*}
       \psi:=\lambda\circ\omega\inv\colon
       \quad
       \cH \bfp{s}{t}  (X \bfp{s}{r} Y)
       &\to
       (X \bfp{s}{r} Y) \bfp{t}{r} \cH
       \\
       (h,x,y)& \mapsto (xh\inv, h y, h).
    \end{align*}
    The map $\Psi$ has the following additional properties:
    \begin{enumerate}[label=\textup{($\Psi$\arabic*)}]
       \item\label{item:Psi:jointly-cts}
       If $(h_{\lambda}, \xi_{\lambda})\to (h,\xi)$ is a convergent net in $\cH \bfp{s}{t} \cK$, then $\Psi_{h_{\lambda}}(\xi_{\lambda})\to \Psi_{h}(\xi)$ in~$\cK$.
    \item\label{item:Psi:transitivity}
    	For appropriate $x\in X,y\in Y,$ and $(h',h)\in\cH^{(2)}$, the composition of
    	    \[
    	       \cM_{xh'h} \otimes_{\cC\z} \cN_{y}
    	       \overset{\Psi_{h}}{\longrightarrow}
    	       \cM_{xh'} \otimes_{\cC\z} \cN_{hy}
    	    \quad\text{
    	    with 
    	    }\quad
    	       \cM_{xh'} \otimes_{\cC\z} \cN_{hy}
    	       \overset{\Psi_{h'}}{\longrightarrow}
    	       \cM_{x} \otimes_{\cC\z} \cN_{h'hy}
    	    \]
    	    coincides with 
    	    \[
    	    	\cM_{xh'h} \otimes_{\cC\z} \cN_{y}
    	    	\overset{\Psi_{h'h}}{\longrightarrow}
    	    	\cM_{x} \otimes_{\cC\z} \cN_{h'hy}.
    	    \]
    \item\label{item:Psi:unit-identity}
       If $h=v\in\cH\z$ is a unit, then $\Psi_{h}$ is the identity on $\cM_{x} \otimes_{\cC\z} \cN_{y}$.
     \item\label{item:Psi:inverse}
     For appropriate $x,h,y$, the map
        \begin{align*}
            \Psi_{h}\colon \quad\cM_{x} \otimes_{\cC\z} \cN_{y}=\cM_{(xh\inv)h} \otimes_{\cC\z} \cN_{y}\longrightarrow \cM_{xh\inv} \otimes_{\cC\z} \cN_{hy} 
            \intertext{is inverse to}
            \Psi_{h\inv}\colon \quad
            \cM_{xh\inv} \otimes_{\cC\z} \cN_{hy} \longrightarrow \cM_{x} \otimes_{\cC\z} \cN_{h\inv(hy)}= \cM_{x} \otimes_{\cC\z} \cN_{y}.
        \end{align*}
    \item\label{item:Psi:biequivariant}
    The map $\Psi$ is bi-equivariant in the following sense: if $(h,\xi)\in\cH \bfp{s}{t}  \cK$ and if $b\in B$ and $d\in D$ are such that $s_{\cB}(b)=r_{\cK}(\xi)$ and $s_{\cK}(\xi)=r_{\cD}(d)$, then
    \[
        \Psi_{h}(b\cdot \xi)=
        b\cdot \Psi_{h}(\xi)
        \quad\text{and}\quad
        \Psi_{h}(\xi\cdot d)=
        \Psi_{h}(\xi)\cdot d.
    \]
    In other words, if $p_{\cB}(b)=g\in\cG$ and $p_{\cD}(d)=k\in\mathcal{K}$, then the following diagrams commute:
    \[
    \begin{tikzcd}
       \cM_{x\cdot h}\otimes \cN_{y} \ar[r, "\Psi_{h}"]\ar[d, "b\cdot"']& \cM_{x}\otimes \cN_{h \cdot y}\ar[d, "b\cdot"]
       \\
       \cM_{(g\cdot x)\cdot h}\otimes \cN_{y} \ar[r, "\Psi_{h}"']& \cM_{g\cdot x}\otimes \cN_{h \cdot y}
    \end{tikzcd}
    \quad
    \begin{tikzcd}
       \cM_{x\cdot h}\otimes \cN_{y} \ar[r, "\Psi_{h}"]\ar[d, "\cdot d"']& \cM_{x}\otimes \cN_{h \cdot y}\ar[d, "\cdot d"]
       \\
       \cM_{x\cdot h}\otimes \cN_{y\cdot k} \ar[r, "\Psi_{h}"']& \cM_{g\cdot x}\otimes \cN_{h \cdot (y\cdot k)}
    \end{tikzcd}
    \]
    \end{enumerate}
\end{theorem}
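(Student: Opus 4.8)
The plan is to obtain $\Psi$ directly from diagram~\eqref{diag:huge} and then to check the five listed properties on a dense set of elementary tensors. Since the top horizontal arrow is the associativity isomorphism from the Remark following Theorem~\ref{thm:cK-is-word}, the left vertical arrow is $\Omega$ from Lemma~\ref{lem:MCN-iso-HMN}, and the right vertical arrow is its analogue $\Lambda$, all three are isomorphisms of \USCBb s; hence the composite $\Psi=\Lambda\circ(\text{assoc})\circ\Omega\inv$ realised along the bottom of the diagram is itself an isomorphism of \USCBb s, and it covers $\psi=\lambda\circ\omega\inv$ because $\Omega$ covers $\omega$, the associativity map covers the canonical identification of the base spaces, and $\Lambda$ covers $\lambda$. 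Tracing a generator $(h,(m\cdot c)\otimes n)$ with $c\in\cC_{h}$ around the diagram gives $\Omega\inv(h,(m\cdot c)\otimes n)=(m\otimes c)\otimes n$, then $m\otimes(c\otimes n)$ under associativity, then $(m\otimes(c\cdot n),h)$ under $\Lambda$; this yields the stated formula $\Psi_{h}((m\cdot c)\otimes n)=m\otimes(c\cdot n)$. By Remark~\ref{rmk:MW-6.2-for-words}, such elements span a dense subspace of each fibre, so the formula determines $\Psi$, and it suffices to verify all algebraic identities on generators of this form.

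Property~\ref{item:Psi:jointly-cts} is then immediate: continuity of the bundle map $\Psi$ gives $(\Psi_{h_\lambda}(\xi_\lambda),h_\lambda)=\Psi(h_\lambda,\xi_\lambda)\to\Psi(h,\xi)=(\Psi_h(\xi),h)$ in $\cK\bfp{t}{r}\cH$, and postcomposing with the continuous projection onto the first coordinate yields $\Psi_{h_\lambda}(\xi_\lambda)\to\Psi_h(\xi)$ in $\cK$.

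For the remaining, algebraic properties I would argue on generators and invoke continuity. For~\ref{item:Psi:transitivity}, I would write a generator of $\cM_{xh'h}\otimes_{\cC\z}\cN_{y}$ in the form $(m''\cdot(c''c))\otimes n$ with $m''\in\cM_{x}$, $c''\in\cC_{h'}$, $c\in\cC_{h}$, $n\in\cN_{y}$; then $\Psi_{h}$ sends $((m''\cdot c'')\cdot c)\otimes n$ to $(m''\cdot c'')\otimes(c\cdot n)$, $\Psi_{h'}$ sends this to $m''\otimes\bigl((c''c)\cdot n\bigr)$, and $\Psi_{h'h}$ sends the original element directly to the same $m''\otimes\bigl((c''c)\cdot n\bigr)$. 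For~\ref{item:Psi:unit-identity}, if $h=v\in\cH\z$ the defining formula reads $\Psi_{v}((m\cdot c)\otimes n)=m\otimes(c\cdot n)$, which is already an equality in the $\cC\z$-balanced tensor product, so $\Psi_{v}=\mathrm{id}$. Property~\ref{item:Psi:inverse} follows formally from~\ref{item:Psi:transitivity} and~\ref{item:Psi:unit-identity}, since $\Psi_{h\inv}\circ\Psi_{h}=\Psi_{s(h)}=\mathrm{id}$ and $\Psi_{h}\circ\Psi_{h\inv}=\Psi_{r(h)}=\mathrm{id}$. Finally, for~\ref{item:Psi:biequivariant} I would combine the action formulas of Proposition~\ref{prop:cK:actions} with the compatibility of the module actions on $\cM$ and $\cN$ (Condition~\ref{item:FE:actions}); for example $\Psi_{h}\bigl(b\cdot((m\cdot c)\otimes n)\bigr)=\Psi_{h}\bigl(((b\cdot m)\cdot c)\otimes n\bigr)=(b\cdot m)\otimes(c\cdot n)=b\cdot\Psi_{h}((m\cdot c)\otimes n)$, and the right $\cD$-identity is entirely analogous using $c\cdot(n\cdot d)=(c\cdot n)\cdot d$.

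I expect no genuine obstacle, since the analytic substance has already been absorbed into the three isomorphisms assembled in~\eqref{diag:huge}; the remaining work is bookkeeping on elementary tensors. The one point needing care is the density used in~\ref{item:Psi:transitivity}: one must confirm that elements $m''\cdot(c''c)$ exhaust a dense subspace of $\cM_{xh'h}$. This is where saturation of $\cC$ enters, guaranteeing $\overline{\mathrm{span}}\,\cC_{h'}\cdot\cC_{h}=\cC_{h'h}$ and hence, via Remark~\ref{rmk:MW-6.2-for-words}, that $\cM_{x}\cdot(\cC_{h'}\cdot\cC_{h})$ is dense in $\cM_{x}\cdot\cC_{h'h}$, which is in turn dense in $\cM_{xh'h}$.
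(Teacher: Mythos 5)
Your proposal is correct and follows essentially the same route as the paper's own proof: $\Psi$ is extracted as the unique isomorphism closing diagram~\eqref{diag:huge}, \ref{item:Psi:jointly-cts} follows from continuity of $\Psi$ and the pull-back topology, \ref{item:Psi:transitivity} is checked on elements of $(\cM_{x}\cdot\cC_{h'}\cdot\cC_{h})\odot\cN_{y}$ and extended by density via Remark~\ref{rmk:MW-6.2-for-words}, \ref{item:Psi:unit-identity} comes from the $\cC\z$-balancing, \ref{item:Psi:inverse} is formal from the previous two, and \ref{item:Psi:biequivariant} is a computation on elementary tensors using commutativity of the actions. Your closing remark about saturation guaranteeing the density needed in \ref{item:Psi:transitivity} makes explicit a point the paper leaves inside its citation of Remark~\ref{rmk:MW-6.2-for-words}, but it is the same argument.
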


In Item~\ref{item:Psi:biequivariant}, we made use of the fact that our computations in Definition~\ref{def:t} imply
\begin{align*}
    t_{\cK}(b\cdot \xi)
    =
    t_{ Z }\bigl( q_{\cK}(b\cdot \xi)\bigr)
    =
    t_{ Z }\bigl( p_{\cB}(b)\cdot q_{\cK}(\xi)\bigr)
    =
    t_{ Z }\bigl( q_{\cK}(\xi)\bigr)
    =
    t_{\cK}(\xi),
\end{align*}
so that $\Psi_{h}(b\cdot \xi)$ makes sense if and only if $b\cdot \Psi_{h}(\xi)$ makes sense; similarly for $\xi\cdot d.$ 

\begin{proof}
The existence of the isomorphism follows from our prior discussion, so we only have to check the additional properties.
\begin{description}
    \item[Re \ref{item:Psi:jointly-cts}] $\Psi$ is continuous by construction as the concatenation of continuous functions. The claim now follows directly from the definition of the topology of the bundles involved.
    \item[Re \ref{item:Psi:transitivity}]
    For $ m \in \cM_{x}, c_{1}\in \cC_{h'}, c_{2}\in \cC_{h}, n \in \cN_{y}$, we have
       \[
           \Psi_{h} (( m c_{1})c_{2}\otimes  n ) =  m c_{1} \otimes c_{2} n .
       \]
       As this is an element of $(\cM_{x}\cdot \cC_{h'}) \odot (\cC_{h}\cdot \cN_{y})$, we know how $\Psi_{h'}$ acts:
       \[
           \Psi_{h'} ( m c_{1} \otimes c_{2} n ) =  m  \otimes c_{1}c_{2} n .
       \]
       On the other hand,
       \[
           \Psi_{h'h} ( m (c_{1}c_{2})\otimes  n ) =  m  \otimes (c_{1}c_{2}) n  = \Psi_{h'}\circ\Psi_{h} (( m c_{1})c_{2}\otimes  n ).
       \]
       Using linearity, we conclude that $\Psi_{h'h}$ and $\Psi_{h'}\circ\Psi_{h}$ coincide on the subspace $(\cM_{x}\cdot \cC_{h'}\cdot \cC_{h})\odot_{C(v)} \cN_{y}$ of $\cM_{xh'h} \otimes_{\cC\z} \cN_{y}$, which is dense by Remark~\ref{rmk:MW-6.2-for-words}. They thus agree everywhere.
       
    \item[Re \ref{item:Psi:unit-identity}]
    Follows since the module is balanced over $\cC_{v}=C(v)$.
    
     \item[Re \ref{item:Psi:inverse}]
     If $v= h\inv h$ and $u=hh\inv,$ then $\Psi_{h\inv} \circ \Psi_{h} = \Psi_{v}$ and $\Psi_{h}\circ \Psi_{h\inv} = \Psi_{u}$ by \ref{item:Psi:transitivity}, both of which are the identity map according to \ref{item:Psi:unit-identity}.
     \item[Re \ref{item:Psi:biequivariant}]
    Since the actions on $\cM$ and on $\cN$ commute, we have
    \begin{align*}
       b\cdot \Psi_{h} ((m\cdot c)\otimes n) 
       &=
       b\cdot \left[ m \otimes (c\cdot n) \right]
       =
       [b\cdot m] \otimes (c\cdot n)
       =
       \Psi_{h} (([b\cdot m]\cdot c)\otimes n) 
       \\&=
       \Psi_{h} ([b\cdot (m\cdot c)]\otimes n) =
       \Psi_{h} (b\cdot [( m\cdot c)\otimes n]) ,
    \end{align*}
    and similarly for $d\in D$. \qedhere
    \end{description}
\end{proof}

\section{Transitivity of Fell bundle equivalence}\label{sec:Transitivity}
In this section, we prove that Fell bundle equivalence is transitive and hence an equivalence relation.
Throughout this section, we make slightly stronger assumptions than in Section~\ref{sec:Product}:
\begin{itemize}
    \item We still fix three saturated Fell bundles $\cB=(p_{\cB}\colon B \to \cG),\cC=(p_{\cC}\colon C \to \cH), \cD=(p_{\cD}\colon D \to \mathcal{K})$ over \LCH\ \etale\ groupoids
    		$\cG,\cH,\mathcal{K}$.
 \item This time, let $\X$ be a $(\cG,\cH)$- and  $\Y$ be an $(\cH,\mathcal{K})$-{\em equivalence}; let $ Z := X\bfp{s}{r}Y$, which is just a $(\cG,\mathcal{K})$-\pe.
\item Similarly, we let $\cM=(q_{\cM}\colon M \to  \X)$ be a $(\cB,\cC)$- and $\cN=(q_{\cN}\colon N \to  \Y)$ be a $(\cC,\cD)$-{\em equivalence}.
    \item As before, we write $\cdot$ for the left and right actions on $\cM$, $\X$, $\cN$, and $\Y$. 
\end{itemize}

Using the techniques developed in the last section, we will build a \USCBb\ $\cP$ over the known $(\cG,\mathcal{K})$-groupoid equivalence
\[
    \Z:=\X*_{\cH}\Y=\{[x,y]_{\ipscriptstyle\cH}: s_{\X}(x)=r_{\Y}(y)\}
\]
where $[\cdot]_{\ipscriptstyle\cH}\colon Z\to \Z$ is the quotient map, defined by setting $[xh, h\inv y]_{\ipscriptstyle\cH} = [x,y]_{\ipscriptstyle\cH}$ for all $h\in\cH^{s(x)}$.
We will show that $\cP$ is the alleged Fell bundle equivalence between $\cB$ and~$\cD$.

\subsection{The quotient bundle}

\begin{lemma}\label{lem:sim-on-MastN}
    Let $\xi, \xi'\in \cK= \cM\otimes_{\cC\z}\cN$ with $ q_{\cK} (\xi)=(x,y)$ and $ q_{\cK} (\xi')=(x',y')$. We define the relation $\FBRel$ by 
    \[
       \xi \FBRel  \xi'
       :\iff \exists h\in\cH^{s(x')}_{s(x)} \text{ such that }
       x=x'h,
       \quad 
       y=h\inv y',
       \quad
       \Psi_{h} (\xi) = \xi'.
    \]
    This defines a closed equivalence relation on $\cK$.
\end{lemma}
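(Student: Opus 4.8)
The plan is to check the three algebraic properties of an equivalence relation first, each of which reduces immediately to a corresponding property of the maps $\Psi_h$ recorded in Theorem~\ref{thm:Psi-cts}, and then to treat closedness separately, as that is where the real topological content lies. For reflexivity, given $\xi$ with $q_{\cK}(\xi)=(x,y)$ I would take $h=s_{\X}(x)=r_{\Y}(y)\in\cH\z$: then $x\cdot h=x$ and $h\inv\cdot y=y$ by the unit axioms of the actions, and $\Psi_h=\mathrm{id}$ by \ref{item:Psi:unit-identity}, so $\xi\FBRel\xi$. For symmetry, if $\xi\FBRel\xi'$ via $h$, I would verify that $h\inv$ witnesses $\xi'\FBRel\xi$: the identities $x\cdot h\inv=x'$ and $(h\inv)\inv\cdot y=y'$ follow from $x=x'h$, $y=h\inv y'$ and the action axioms, while $\Psi_{h\inv}(\xi')=\Psi_{h\inv}(\Psi_h(\xi))=\xi$ by \ref{item:Psi:inverse}. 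For transitivity, if $\xi\FBRel\xi'$ via $h$ and $\xi'\FBRel\xi''$ via $k$, then $(k,h)\in\cH^{(2)}$ and the product $kh$ witnesses $\xi\FBRel\xi''$, since $x''(kh)=x$, $(kh)\inv y''=y$, and $\Psi_{kh}(\xi)=\Psi_k(\Psi_h(\xi))=\Psi_k(\xi')=\xi''$ by the cocycle identity \ref{item:Psi:transitivity}.

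For closedness I would take a net $(\xi_\lambda,\xi'_\lambda)\to(\xi,\xi')$ in $\cK\times\cK$ with $\xi_\lambda\FBRel\xi'_\lambda$ via some $h_\lambda$, writing $q_{\cK}(\xi_\lambda)=(x_\lambda,y_\lambda)$, $q_{\cK}(\xi'_\lambda)=(x'_\lambda,y'_\lambda)$, and the analogous data without subscripts for the limits; continuity of $q_{\cK}$ ensures these fibre data converge. The key step is to produce a single limiting $h$ rather than merely a convergent subnet. Here I would exploit that $\X$ is a genuine equivalence: from $x_\lambda=x'_\lambda h_\lambda$ we get $r_{\X}(x'_\lambda)=r_{\X}(x_\lambda)$, so $h_\lambda$ is forced to equal the unique element $\reoq[\X]{x'_\lambda}{x_\lambda}{\cH}$ with $x'_\lambda h_\lambda=x_\lambda$. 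Since $\reoq[\X]{\cdot}{\cdot}{\cH}$ is continuous by Example~\ref{ex:equivalences-are-preequivs}, it follows that $h_\lambda\to h:=\reoq[\X]{x'}{x}{\cH}$, which already gives $x'h=x$. Passing to the limit in $y_\lambda=h_\lambda\inv y'_\lambda$ using continuity of inversion and of the right $\cH$-action on $\Y$, together with uniqueness of limits in the Hausdorff space $\Y$, then yields $y=h\inv y'$.

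It remains to identify $\Psi_h(\xi)=\xi'$, which is exactly where the joint continuity of $\Psi$ enters. Because $s_{\cH}(h_\lambda)=s_{\X}(x_\lambda)=t_{\cK}(\xi_\lambda)$, each pair $(h_\lambda,\xi_\lambda)$ lies in $\cH\bfp{s}{t}\cK$, and passing to the limit shows $(h,\xi)$ does too; hence $(h_\lambda,\xi_\lambda)\to(h,\xi)$ in $\cH\bfp{s}{t}\cK$. Property \ref{item:Psi:jointly-cts} then gives $\xi'_\lambda=\Psi_{h_\lambda}(\xi_\lambda)\to\Psi_h(\xi)$, and comparing with $\xi'_\lambda\to\xi'$ in the Hausdorff total space $\cK$ forces $\Psi_h(\xi)=\xi'$. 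Thus $h\in\cH^{s(x')}_{s(x)}$ witnesses $\xi\FBRel\xi'$, so $\FBRel$ is closed.

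I expect the main obstacle to be this closedness argument rather than the algebraic properties. The delicate points are, first, recognizing that the witnesses $h_\lambda$ are not arbitrary but must coincide with the continuous map $\reoq[\X]{\cdot}{\cdot}{\cH}$ — it is precisely this rigidity of a genuine equivalence (as opposed to a mere \pe) that upgrades a subnet argument to honest convergence of the whole net; and second, marshalling the source/range compatibilities so that the net $(h_\lambda,\xi_\lambda)$ genuinely converges \emph{inside} the fibre product $\cH\bfp{s}{t}\cK$, which is what makes \ref{item:Psi:jointly-cts} applicable.
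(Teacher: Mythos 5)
Your proposal is correct and takes essentially the same route as the paper: reflexivity, symmetry, and transitivity are reduced to \ref{item:Psi:unit-identity}, \ref{item:Psi:inverse}, and \ref{item:Psi:transitivity} exactly as in the paper's proof, and closedness is proved by the same net argument, ending with \ref{item:Psi:jointly-cts} and uniqueness of limits in the Hausdorff space $\cK$. The only (harmless) divergence is how you obtain convergence of the witnesses $h_\lambda$: the paper invokes properness of the $\cH$-action on $\X$ directly, whereas you route it through the rigidity $h_\lambda=\reoq[\X]{x'_\lambda}{x_\lambda}{\cH}$ and the continuity of $\reoq[\X]{\cdot}{\cdot}{\cH}$ from Example~\ref{ex:equivalences-are-preequivs} --- which rests on the same properness-plus-freeness facts and, if anything, handles the subnet-versus-whole-net issue a bit more cleanly.
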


\begin{proof}
    For reflexivity, fix any $\xi$ with $q_{\cK}(\xi)=(x,y)$. Take $h= s_{\X}(x)\in \cH\z$. As units act trivially on both $\X$ and $\Y$, we automatically have $x=xh$ and $y=h\inv y$. Furthermore, by Theorem~\ref{thm:Psi-cts}, \ref{item:Psi:unit-identity}, we have $\Psi_{h} (\xi) = \xi$ and thus $\xi \FBRel \xi$.
    
    \smallskip

    For symmetry, suppose we have some $h$ such that $x=x'h$, $y=h\inv y'$, and $\Psi_{h} (\xi) = \xi'.$ Then in particular, $xh\inv =x'$ and $h\inv y=y'$. By  Theorem~\ref{thm:Psi-cts}, \ref{item:Psi:inverse}, we further have that $\Psi_{h} (\xi) = \xi'$ implies $\xi = \Psi_{h\inv}(\xi')$. This shows that $\xi\FBRel \xi'$ implies $\xi' \FBRel  \xi .$
    
    \smallskip

    For transitivity, suppose that $\xi  \FBRel  \xi'$ and $\xi' \FBRel  \xi''$; let $h$ implement the first and $h'$ implement the second. Since $s_{\X}(x)=s(h)=r_{\Y}(y)$, $s_{\X}(x')=r(h)=r_{\Y}(y')$, and similarly $s_{\X}(x')=s(h')=r_{\Y}(y')$, $s_{\X}(x'')=r(h')=r_{\Y}(y''),$ we see that $r(h)=s(h')$, so $(h',h) \in \cH^{(2)}$; let $k=h'h$. Then
    \begin{align*}
       x&=x'h =(x''h') h =x''k, \text{ and}
       \\
       y&=h\inv y'  = h\inv ((h')\inv y'') = k\inv y''.
    \end{align*}
    Since $\Psi_{h'}\circ \Psi_{h}=\Psi_k$ by   Theorem~\ref{thm:Psi-cts}, \ref{item:Psi:transitivity}, we lastly conclude that 
    \[
       \Psi_k (\xi) = \Psi_{h'}\circ \Psi_{h} (\xi) = \Psi_{h'}(\xi') = \xi'',
    \]
    as required.

    \smallskip
    
    To see that $\FBRel$ is closed, suppose $\xi_{\lambda},\xi_{\lambda}'\in \cM\otimes_{\cC\z}\cN$ are such that $\xi_{\lambda} \FBRel \xi_{\lambda}'$ and $(\xi_{\lambda},\xi_{\lambda}')\to (\xi,\xi');$ let $q_\cK (\xi_{\lambda})=(x_{\lambda},y_{\lambda}),$ $q_\cK(\xi)=(x,y)$, and analogously for the primed versions.  For each $\lambda,$ there exists $h_{\lambda}\in \cH^{s(x_{\lambda}')}_{s(x_{\lambda})}$ such that $x_{\lambda} = x_{\lambda}' \cdot h_{\lambda}$ and $\Psi_{h_{\lambda}}(\xi_{\lambda})=\xi_{\lambda}'$. Being an equivalence,  $\X$ is a proper $\cH$-space, and so the fact that $(x_{\lambda}' \cdot h_{\lambda}, x_{\lambda}')=(x_{\lambda},x_{\lambda}')$ converges to $(x,x')$ implies that $h_{\lambda}$ converges to some $h\in \cH$ which satisfies $x=x'\cdot h$. By uniqueness, this $h$ also satisfies $y=h\inv \cdot y'.$ It follows from \ref{item:Psi:jointly-cts} that 
    \(
        \xi_{\lambda}' = \Psi_{h_{\lambda}}(\xi_{\lambda})\to \Psi_{h}(\xi).
    \)
    Since $\cM\otimes_{\cC\z}\cN$ is Hausdorff, limits are unique, so that $\xi_{\lambda}'\to\xi'$ implies that $\Psi_{h}(\xi)=\xi'$. In other words, $\xi \FBRel \xi'.$
    \qedhere
\end{proof}

\begin{definition}\label{def:cP}
We let $\cP$ be the quotient of $\cM\otimes_{\cC\z}\cN$ by the above equivalence relation, equipped with the quotient topology, and let $Q\colon \cM\otimes_{\cC\z}\cN \to (\cM\otimes_{\cC\z}\cN)/\FBRel$ denote the quotient map. The equivalence class $Q(\xi)$ of $\xi$ will be written as $[\xi]$.
\end{definition}

\begin{remark}\label{rmk:balancing-in-cP}
    By construction, if $\xi\in \cM\otimes_{\cC\z}\cN$ with  $ q_{\cK} (\xi)=(x,y)$, then for any $h\in\cH$ such that $s_{\X}(x)=r_{\cH}(h)=r_{\Y}(y)$, we have $[\xi]=[\Psi_{h} (\xi)]$ in $\cP$. In particular, for any $m \in \cM, c\in \cC, n \in \cN$ with $s_{\cM}(m)=r_{\cC}(c)$ and $s_{\cC}(c)=r_{\cN}(n)$, we have $[(m\cdot c)\otimes n]=[m\otimes (c\cdot n)]$ in $\cP$.
\end{remark}

\begin{lemma}\label{lem:q_cP open, cts, surjective}
    The map $q_{\cP}\colon \cP \to \Z:= \X\ast_{\cH} \Y$ given by 
    $q_{\cP}[\xi] := [ q_{\cK} (\xi)]_{\ipscriptstyle\cH}$ is well defined, surjective, continuous, and open. 
\end{lemma}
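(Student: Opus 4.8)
The plan is to exploit the commutative square of quotient maps. Writing $[\cdot]_{\ipscriptstyle\cH}\colon Z\to \Z$ for the balancing quotient of $Z=X\bfp{s}{r}Y$ onto $\Z=\X\ast_{\cH}\Y$ and $Q\colon \cK\to\cP$ for the map from Definition~\ref{def:cP}, the defining formula $q_{\cP}[\xi]=[q_{\cK}(\xi)]_{\ipscriptstyle\cH}$ is precisely the statement that the square $q_{\cP}\circ Q = [\cdot]_{\ipscriptstyle\cH}\circ q_{\cK}$ commutes. Once this identity is in place, all four properties follow by transporting the corresponding property of $q_{\cK}$ (Lemma~\ref{lem:qMN-open}) and of $[\cdot]_{\ipscriptstyle\cH}$ (Corollary~\ref{cor:bfp-of-gpd}) across the square.

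First I would check well-definedness. Suppose $\xi\FBRel\xi'$, and set $(x,y)=q_{\cK}(\xi)$, $(x',y')=q_{\cK}(\xi')$. By Lemma~\ref{lem:sim-on-MastN} there is $h\in\cH$ with $x=x'h$ and $y=h\inv y'$; note that the third clause $\Psi_{h}(\xi)=\xi'$ is not even needed here. Hence $(x,y)=(x'h,h\inv y')$, and by the very definition of the $\cH$-balancing on $\Z$ we obtain
\[
    [q_{\cK}(\xi)]_{\ipscriptstyle\cH}=[x'h,h\inv y']_{\ipscriptstyle\cH}=[x',y']_{\ipscriptstyle\cH}=[q_{\cK}(\xi')]_{\ipscriptstyle\cH}.
\]
Thus the formula descends to $\cP$, and the square commutes by construction.

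For surjectivity and continuity I would invoke the universal property of the quotient topology on $\cP$. The composite $q_{\cP}\circ Q=[\cdot]_{\ipscriptstyle\cH}\circ q_{\cK}$ is onto $\Z$, being a composite of the surjections $q_{\cK}$ (a bundle projection, hence fibrewise onto each Banach space $\cK_{(x,y)}$) and $[\cdot]_{\ipscriptstyle\cH}$ (a quotient map); since it factors through $q_{\cP}$, the map $q_{\cP}$ is surjective. For continuity: because $Q$ is a topological quotient map, $q_{\cP}$ is continuous if and only if $q_{\cP}\circ Q$ is, and the latter equals the composite of the continuous maps $q_{\cK}$ and $[\cdot]_{\ipscriptstyle\cH}$.

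The only step drawing on the earlier work is openness, and this is the main point of the lemma. Let $U\subseteq\cP$ be open. Since $Q$ is surjective and continuous, $Q\inv(U)$ is open in $\cK$ and $Q(Q\inv(U))=U$, so
\[
    q_{\cP}(U)=q_{\cP}\bigl(Q(Q\inv(U))\bigr)=\bigl([\cdot]_{\ipscriptstyle\cH}\circ q_{\cK}\bigr)(Q\inv(U)).
\]
Now $q_{\cK}(Q\inv(U))$ is open in $Z$ because $q_{\cK}$ is open by Lemma~\ref{lem:qMN-open}, and applying the open map $[\cdot]_{\ipscriptstyle\cH}$ from Corollary~\ref{cor:bfp-of-gpd} shows $q_{\cP}(U)$ is open in $\Z$. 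The main obstacle is thus purely bookkeeping: one must verify that $\FBRel$ projects exactly onto the $\cH$-balancing defining $\Z$ (guaranteed by the clauses $x=x'h$, $y=h\inv y'$ of Lemma~\ref{lem:sim-on-MastN}) and that \emph{both} quotient maps in the square are open. No fibrewise estimates on $\cK$ nor any property of $\Psi_{h}$ beyond its existence enter this particular statement.
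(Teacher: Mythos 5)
Your proof is correct. The well-definedness, surjectivity, and continuity arguments coincide with the paper's (which simply declares surjectivity ``clear''), but your openness argument takes a genuinely different route. The paper never uses openness of $[\cdot]_{\ipscriptstyle\cH}$: it works from the quotient topology on $\Z$, reducing openness of $q_{\cP}(V)$ to openness of $[\cdot]_{\ipscriptstyle\cH}\inv\bigl(q_{\cP}(V)\bigr)$ in $\X\bfp{s}{r}\Y$, and then proves the set identity $[\cdot]_{\ipscriptstyle\cH}\inv\bigl(q_{\cP}(V)\bigr) = q_{\cK}\bigl(Q\inv(V)\bigr)$. The nontrivial inclusion there is a saturation argument that does need the maps $\Psi_{h}$ of Theorem~\ref{thm:Psi-cts}: given $(x,y)$ balanced-equivalent to $q_{\cK}(\xi)$ with $\xi\in Q\inv(V)$, one must exhibit an element of $Q\inv(V)$ sitting exactly over $(x,y)$, and that element is $\Psi_{h}(\xi)$. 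You bypass all of this by writing $q_{\cP}(U)=[\cdot]_{\ipscriptstyle\cH}\bigl(q_{\cK}(Q\inv(U))\bigr)$ and invoking openness of $[\cdot]_{\ipscriptstyle\cH}$ from Corollary~\ref{cor:bfp-of-gpd} --- a fact the paper has already established at that point (and itself uses later, in Corollary~\ref{cor:cP:USCBb}), so there is no circularity. Your route is shorter and, as you observe, keeps $\Psi$ entirely out of this lemma; the paper's route needs only that $[\cdot]_{\ipscriptstyle\cH}$ is a quotient map rather than an open map, and as a by-product records the slightly stronger fact that $q_{\cK}\bigl(Q\inv(V)\bigr)$ is already $\cH$-saturated.
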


\begin{proof}
    If $\xi  \FBRel  \xi'$ with $ q_{\cK} (\xi)=(x,y)$ and $ q_{\cK} (\xi')=(x', y')$, then there exists $h\in\cH$ such that 
    \[
       x=x'h, 
       \quad\text{and}\quad
       y=h\inv y',
    \]
    which exactly means that
    \[
       [ q_{\cK} (\xi)]_{\ipscriptstyle\cH}
       =
       [x, y]_{\ipscriptstyle\cH}
       =
       [x'h, h\inv y']_{\ipscriptstyle\cH}
       =
       [x', y']_{\ipscriptstyle\cH}
       =
       [ q_{\cK} (\xi')]_{\ipscriptstyle\cH}
    \]
    in $\Z$, which shows that $q_{\cP}$ is well defined. Surjectivity is clear. For continuity, assume $U$ is an open set in $\Z$. In order to show that $q_{\cP}\inv (U)$ is open in $\cP$, we have to show that $Q\inv(q_{\cP}\inv (U))$ is open in $\cM\otimes_{\cC\z}\cN$. 
    Since $q_{\cP}(Q(\xi))= [ q_{\cK} (\xi)]_{\ipscriptstyle\cH} $, we have
   \(
       Q\inv(q_{\cP}\inv (U))
       =
        q_{\cK} \inv\bigl( [\cdot ]_{\ipscriptstyle\cH}\inv (U) \bigr).
    \)   
    As $[\cdot ]_{\ipscriptstyle\cH}$ is continuous by definition of the topology on $\Z$, and as $ q_{\cK} \colon \cM\otimes_{\cC\z}\cN \to \X \bfp{s}{r} \Y$ is continuous by construction of the topology on $\cM\otimes_{\cC\z}\cN$, we see that $Q\inv(q_{\cP}\inv (U))$ is indeed open.
    
 \smallskip
    
    Lastly, to see that $q_{\cP}$ is an open map, take any $V\subseteq \cP$ open; we have to show that $q_{\cP}(V)$ is open in $\Z$, i.e., that $[\cdot]_{\ipscriptstyle\cH}\inv (q_{\cP}(V))$ is open in $\X \bfp{s}{r} \Y$. We claim that
    \begin{equation}\label{eq:inverse-images-equal}
       [\cdot]_{\ipscriptstyle\cH}\inv (q_{\cP}(V))
       \overset{!}{=}
        q_{\cK}  (Q\inv (V));
    \end{equation}
    this is sufficient for our claim because $Q$ is continuous and $ q_{\cK} $ is open (see Lemma~\ref{lem:qMN-open}).

    For $\supseteq$, take $(x',y')\in  q_{\cK}  (Q\inv (V))$, i.e., there exists some $\xi'\in Q\inv (V)$ such that $(x',y')=  q_{\cK} (\xi')$. As $p':=Q(\xi')\in V$, it follows from 
    \[
    [x',y']_{\ipscriptstyle\cH}
    = [ q_{\cK} (\xi')]_{\ipscriptstyle\cH}
    = q_{\cP}(p')
    \in
    q_{\cP}(V)
    \]
    that   $(x',y') \in [\cdot]_{\ipscriptstyle\cH}\inv (q_{\cP}(V))$.
    
    For $\subseteq$, take $(x,y)\in [\cdot]_{\ipscriptstyle\cH}\inv (q_{\cP}(V))$, i.e., there exists $p\in V$ such that $q_{\cP}(p)=[x,y]_{\ipscriptstyle\cH}$. By definition of $q_{\cP},$ any $\xi\in Q\inv (\{p\}) \subseteq Q\inv (V)$ then has the property $[ q_{\cK} (\xi)]_{\ipscriptstyle\cH}=[x,y]_{\ipscriptstyle\cH}$. If $(x',y'):=  q_{\cK} (\xi)$, then that implies that there exists $h\in\cH$ with $(xh,h\inv y)=(x', y'),$ so $\xi\in \cM_{xh}\otimes_{\cC\z} \cN_{h\inv y}$ and we may let $\eta:= \Psi_{h}(\xi)\in \cM_{x}\otimes_{\cC\z} \cN_{y}$. By construction we have $\xi \FBRel  \eta$, so that $Q(\eta)=Q(\xi)\in V$, i.e., $\eta\in Q\inv (V)$ also. We have shown that
    \[
       (x,y)
       =
        q_{\cK}  (\eta)
       \in
        q_{\cK}  (Q\inv (V)),
    \]
    proving Assertion~\eqref{eq:inverse-images-equal} and hence the claim that $q_{\cP}$ is open.
\end{proof}

\begin{proposition}\label{prop:linear-fibres}
    Any fibre $\cP_{z}:=q_{\cP}\inv(z)$ of $\cP$ is naturally a vector space. In fact, if $[\xi_{i}]\in \cP_{z}$ for $i=1,2$, then there exists a unique $h\in\cH$ such that $ q_{\cK} (\xi_{1}) =  q_{\cK} (\Psi_{h}(\xi_{2}))$ and we can define
    \[ 
       [\xi_{1}]+   [\xi_{2}]
       :=
       [\xi_{1} +   \Psi_{h}(\xi_{2})]
       \quad\text{and}\quad
       \lambda[\xi_{1}] := [\lambda\xi_{1}]
    \]
    where  $\lambda \in\mathbb{C}$.
\end{proposition}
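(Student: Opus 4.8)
The plan is to identify each fibre $\cP_z$ with a single Banach-space fibre of $\cK$ and then transport the vector-space structure. Fix $z\in\Z$ and choose any $(x,y)\in Z$ with $[x,y]_{\ipscriptstyle\cH}=z$. I will first show that the restriction $Q_{(x,y)}:=Q|_{\cK_{(x,y)}}\colon \cK_{(x,y)}\to\cP_z$ of the quotient map from Definition~\ref{def:cP} is a bijection. For surjectivity, take $[\xi]\in\cP_z$ with $q_{\cK}(\xi)=(x',y')$; since $[x',y']_{\ipscriptstyle\cH}=z=[x,y]_{\ipscriptstyle\cH}$ and the range map $[x,y]_{\ipscriptstyle\cH}\mapsto r_{\X}(x)$ of $\Z$ is well defined, we have $r_{\X}(x')=r_{\X}(x)$, so principality of the right $\cH$-action on the equivalence $\X$ yields a unique $h\in\cH$ with $x'\cdot h=x$, and the balancing in $\Z$ forces $y=h\inv\cdot y'$. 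Then $\Psi_{h\inv}(\xi)\in\cK_{(x,y)}$ and $[\Psi_{h\inv}(\xi)]=[\xi]$ by Remark~\ref{rmk:balancing-in-cP}, so $Q_{(x,y)}$ is onto. For injectivity, suppose $\eta,\eta'\in\cK_{(x,y)}$ with $\eta\FBRel\eta'$; the element $h$ implementing this relation (Lemma~\ref{lem:sim-on-MastN}) then satisfies $x=x\cdot h$, hence $h=s_{\X}(x)\in\cH\z$ by freeness of the right action~\ref{item:def-GA:free}, and therefore $\eta'=\Psi_{h}(\eta)=\eta$ by Theorem~\ref{thm:Psi-cts}\,\ref{item:Psi:unit-identity}.

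Since $Q_{(x,y)}$ is a bijection onto $\cP_z$ and $\cK_{(x,y)}$ is a Banach space, I can declare $Q_{(x,y)}$ a linear isomorphism, thereby equipping $\cP_z$ with a vector-space structure. It then remains to check that these operations agree with the formulas in the statement and that they are independent of the chosen base point. For the first point, given $[\xi_1],[\xi_2]\in\cP_z$ I use the base point $(x,y):=q_{\cK}(\xi_1)$; then $\xi_1$ itself represents $[\xi_1]$ in $\cK_{(x,y)}$, while the representative of $[\xi_2]$ is $\Psi_{h}(\xi_2)$, where $h$ is the unique element of $\cH$ with $q_{\cK}(\Psi_{h}(\xi_2))=q_{\cK}(\xi_1)$. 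Writing $q_{\cK}(\xi_i)=(x_i,y_i)$, this is exactly the unique $h$ with $x_1\cdot h=x_2$, which exists and is unique precisely because $r_{\X}(x_1)=r_{\X}(x_2)$, as in the surjectivity argument. Transporting the operations then yields $[\xi_1]+[\xi_2]=[\xi_1+\Psi_{h}(\xi_2)]$ and $\lambda[\xi_1]=[\lambda\xi_1]$ for $\lambda\in\mathbb{C}$, matching the statement.

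For independence of the base point, the key fact is that each $\Psi_h$ is a $\mathbb{C}$-linear bijection between fibres, being (fibrewise) an isomorphism of imprimitivity bimodules by Theorem~\ref{thm:Psi-cts}. If $(\bar x,\bar y)$ is a second representative of $z$, let $h$ be the unique element with $\bar x=x\cdot h$, so that $\bar y=h\inv\cdot y$; then $\Psi_{h\inv}\colon\cK_{(x,y)}\to\cK_{(\bar x,\bar y)}$ is a linear bijection satisfying $Q_{(\bar x,\bar y)}\circ\Psi_{h\inv}=Q_{(x,y)}$, again by Remark~\ref{rmk:balancing-in-cP}. Linearity of $\Psi_{h\inv}$ together with this intertwining shows that the structures transported from $\cK_{(x,y)}$ and from $\cK_{(\bar x,\bar y)}$ coincide; in particular the zero element $[0_{(x,y)}]$ is well defined.

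The step I expect to be the main obstacle is the careful bookkeeping of the right $\cH$-action conventions underlying the existence and uniqueness of $h$: existence rests on the well-definedness of the range map of $\Z$, uniqueness on principality of $\X$, and the injectivity of $Q_{(x,y)}$ on freeness of the action combined with $\Psi_h$ being the identity for unit $h$ (Theorem~\ref{thm:Psi-cts}\,\ref{item:Psi:unit-identity}). The pervasive use of the $\mathbb{C}$-linearity of the maps $\Psi_h$ is what makes the transported structure simultaneously well defined and independent of the base point.
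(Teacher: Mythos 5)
Your proof is correct, but it organizes the argument in a genuinely different way from the paper. The paper works intrinsically: it defines $[\xi_1]+[\xi_2]:=[\xi_1+\Psi_h(\xi_2)]$ directly, proves independence of the chosen representatives by an explicit computation (passing from the implementing elements $h_o,h_e$ to $h'=h_o\inv h h_e$ and using linearity of $\Psi_{h_o}$ together with Remark~\ref{rmk:balancing-in-cP}), and then verifies the vector-space axioms one by one, each inherited from $\cM_{x}\otimes_{\cC\z}\cN_{y}$. You instead first show that $Q$ restricts to a bijection $\cK_{(x,y)}\to\cP_z$ for any chosen representative $(x,y)$ of $z$ --- surjectivity via $\Psi_{h\inv}$ and Remark~\ref{rmk:balancing-in-cP}, injectivity via freeness \ref{item:def-GA:free} and Property~\ref{item:Psi:unit-identity} of Theorem~\ref{thm:Psi-cts} --- and then transport the Banach-space structure along this bijection, so that all axioms come for free and both representative-independence and base-point-independence collapse into the single intertwining identity $Q_{(\bar x,\bar y)}\circ\Psi_{h\inv}=Q_{(x,y)}$ combined with the $\mathbb{C}$-linearity of $\Psi_{h\inv}$. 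In effect you prove the set-theoretic (bijectivity) content of the paper's subsequent Lemma~\ref{lem:Pz=MxNy} first and derive the proposition from it; since bijectivity requires no linear structure on $\cP_z$, there is no circularity, even though the paper proves that lemma only after (and using) this proposition. The trade-off: the paper's route keeps the stated formulas as the actual definition and stays self-contained, while yours eliminates the axiom-by-axiom verification and delivers part of Lemma~\ref{lem:Pz=MxNy} early.

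One small attribution slip worth fixing: you credit the existence of the unique $h$ with $x'\cdot h=x$ to principality of the right $\cH$-action, but freeness and properness only give uniqueness; existence comes from Condition~\ref{item:def-GE:homeos} (injectivity of the map $\X/\cH\to\cG\z$ induced by $r_\X$), as the paper records immediately after Definition~\ref{def:GE}. This does not affect the correctness of your argument.
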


\begin{proof}
    Let  $\xi_{1},\xi_{2}$ be any representatives in $\cM\otimes_{\cC\z} \cN$ of the given elements of $\cP$. Let us first note that $\lambda[\xi_{1}] = [\lambda\xi_{1}]$ is well defined as $\Psi_{h}$ is $\mathbb{C}$-linear.

    As $[\xi_{i}]\in\cP_{z}$, we have $[x_{1}, y_{1}]_{\ipscriptstyle\cH} = z=
       [x_{2}, y_{2}]_{\ipscriptstyle\cH}$, where  $ q_{\cK} (\xi_{i})=(x_{i}, y_{i})$. Thus, there exists $h\in \cH^{s(x_{1})}_{s(x_{2})}$ such that 
       $
       x_{2}=x_{1}h, 
       $ and $y_{2}=h\inv y_{1}$.
    By Remark~\ref{rmk:balancing-in-cP}, we have
    $[\xi_{2}]=[\Psi_{h}(\xi_{2})].$ As $\xi_{1}$ and $\Psi_{h}(\xi_{2})$ are both elements of $\cM_{x_{1}}\otimes_{\cC\z}\cN_{y_{1}}
    =
    \cM_{x_{2}h\inv }\otimes_{\cC\z}\cN_{h y_{2}}
    $, it makes sense to consider the element $\xi_{1} + \Psi_{h}(\xi_{2})$. We now want to define 
    \[ 
       [\xi_{1}]+   [\xi_{2}]
       =
       [\xi_{1}]+   [\Psi_{h}(\xi_{2})]
       :=
       [\xi_{1} +    \Psi_{h}(\xi_{2})],
    \]
    and to this end, we need to check that it does not depend on the choice of representatives. So suppose that $\xi_{3},\xi_{4}$ are such that $[\xi_{1}]=[\xi_{3}]$ and $[\xi_{2}]=[\xi_{4}]$. With $x_{i}\in \X$ and $y_{i}\in \Y$ the elements such that $ q_{\cK} (\xi_{i})=(x_{i}, y_{i})$ for $i=3,4$, we then know
    \begin{equation}\label{eq:xi1xi3}
       \exists h_{o}\in\cH^{s(x_{1})}_{s(x_{3})} \text{ such that }
       \Psi_{h_{o}} (\xi_{3}) = \xi_{1}
    \end{equation}
    and
    \[
       \exists h_{e}\in\cH^{s(x_{2})}_{s(x_{4})} \text{ such that }
       \Psi_{h_{e}} (\xi_{4}) = \xi_{2}.
    \]
    We point out that, since $h\in \cH^{s(x_{1})}_{s(x_{2})}$, we can consider
    \[
       h':=
       h_{o}\inv h h_{e}
       \in \cH.
    \]
    We claim that $h'$ is the unique element such that $\xi_{3}$ and $\Psi_{h'}(\xi_{4})$ are in the same fibre. Indeed, by    Theorem~\ref{thm:Psi-cts}, \ref{item:Psi:transitivity}, we have
    \begin{align}\label{eq:xi4xi2}
       \Psi_{h'} (\xi_{4})
       =
       \Psi_{h_{o}\inv h h_{e}} (\xi_{4})
       =
       \Psi_{h_{o}\inv}
       \left(
        \Psi_{h} (\xi_{2})
       \right).
    \end{align}
    Now $\Psi_{h} (\xi_{2})$ is in the same fibre as $\xi_{1}=\Psi_{h_{o}} (\xi_{3})$, so that by  Theorem~\ref{thm:Psi-cts}, \ref{item:Psi:inverse}, we indeed have that $\Psi_{h'} (\xi_{4})$ is in the same fibre as $\xi_{3}.$ Thus, our definition requires
    \[
       [\xi_{3}]+   [\xi_{4}] 
       =
       [\xi_{3} +    \Psi_{h'} (\xi_{4})], 
    \]
    and we therefore only need to show that
    \[
       [\xi_{1} +    \Psi_{h}(\xi_{2})]
       \overset{!}{=}
       [\xi_{3} +    \Psi_{h'} (\xi_{4})].
    \]
    We have
    \begin{alignat*}{2}
       [\xi_{1} +    \Psi_{h}(\xi_{2})]
       &=
       [
       \Psi_{h_{o}} (\xi_{3})
       +
          \Psi_{h_{o}} (\Psi_{h'} (\xi_{4}))
       ]
      \qquad&&
       \text{by Equations~\eqref{eq:xi1xi3} and~\eqref{eq:xi4xi2}}
       \\
       &=
       [
       \Psi_{h_{o}} (\xi_{3}+   \Psi_{h'} (\xi_{4}))
       ]
       &&
       \text{by linearity}\\
       &=
       [
           \xi_{3}+   \Psi_{h'} (\xi_{4})
       ]
       &&
       \text{by Remark~\ref{rmk:balancing-in-cP}.}
    \end{alignat*}
    To see that this defines the structure of a vector space, we need to check some axioms.
    First, addition is associative: for appropriate $h, h'$, we have
    \begin{align*}
       ([\xi_{1}]+   [\xi_{2}]) + [\xi_{3}]
       &
       =  [\xi_{1} +   \Psi_{h}(\xi_{2})] + [\xi_{3}]
       = [(\xi_{1} +   \Psi_{h}(\xi_{2})) + \Psi_{h'}(\xi_{3})]
       \\&
       = [\xi_{1} +   \Psi_{h}(\xi_{2} + \Psi_{h\inv h'}(\xi_{3}))]
       \\&
       = [\xi_{1}] +   [\xi_{2} + \Psi_{h\inv h'}(\xi_{3})]
       \overset{(*)}{=} [\xi_{1}] +   \left( [\xi_{2}] + [\xi_{3}]\right).
    \end{align*}
    As both $h$ and $h'$ are unique, we do not need do dwell on the fact that $h\inv h'$ is the unique element that makes $(*)$ true. \smallskip
    
    Second, addition is also commutative: using that $[\xi]=[\Psi_{h}(\xi)]$ for appropriate $h$, we get with linearity of $\Psi_{h}$,
    \begin{align*}
       [\xi_{1}]+   [\xi_{2}]
       &
       =[\xi_{1} +   \Psi_{h}(\xi_{2})]
       =[\Psi_{h}(\xi_{2}) + \xi_{1}]
       \\&
       =[\xi_{2} + \Psi_{h\inv}(\xi_{1})]
       =[\xi_{2}]+   [\xi_{1}] \end{align*}
    The other axioms are even easier:
    \begin{itemize}
        \item the identity element of addition is $[0]=[\Psi_h(0)]$;
       \item the additive inverse of $[\xi]$ is $[- \xi]$;
       \item for $\lambda,\nu\in\mathbb{C}$, we have $(\lambda\nu) [\xi] = \lambda(\nu[ \xi])$, $\lambda([\xi_{1}]+   [\xi_{2}])=  [\lambda\xi_{1}]+[\lambda\xi_{2}]$, and $(\lambda+\nu)[\xi]= \lambda[\xi]+\nu[\xi]$; and lastly
       \item $1 [\xi] = [ \xi]$.
    \end{itemize} 
    All of these follow from the corresponding properties of the vector space $\cM_{x}\otimes_{\cC\z}\cN_{y}.$
\end{proof}

Fix $z\in \Z$ and let $(x,y)\in \X  \bfp{s}{r}  \Y$ with $v:=s_{\X}(x)=r_{\Y}(y)$ be any representative, i.e., $z=[x,y]_{\ipscriptstyle\cH}$. If we restrict the quotient map $Q\colon
       \cM \otimes_{\cC\z} \cN
       \to 
       \cP$ to the fibre $\cM_x \otimes_{\cC\z} \cN_y=
       \cM_x \otimes_{C(v)} \cN_y$, it is $\cP_z$-valued and  linear by construction of the linear structure on~$\cP_z$. In fact, more is true:

\begin{lemma}\label{lem:Pz=MxNy}
    The restricted quotient map $Q|\colon
       \cM_{x} \otimes_{\cC\z} \cN_{y}
       \to 
       \cP_{z}$ is a linear homeomorphism onto~$\cP_z$. In particular, the Banach space norm that $\cP_z$ inherits from $\cM_{x} \otimes_{\cC\z} \cN_{y}$ induces the subspace topology that $\cP_z$ inherits from~$\cP.$ 
\end{lemma}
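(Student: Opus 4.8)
The plan is to check in turn that $Q|$ is linear, bijective, and continuous, and then to spend essentially all the effort on continuity of the inverse, which is the only real point. Linearity is built in: the vector space structure on $\cP_z$ in Proposition~\ref{prop:linear-fibres} was defined precisely so that $Q$ restricts to a linear map on each fibre. Continuity of $Q|$ is automatic, since $Q$ is continuous and both $\cM_x\otimes_{\cC\z}\cN_y=\cK_{(x,y)}$ and $\cP_z$ carry subspace topologies. For injectivity, I would take $\xi,\xi'\in\cM_x\otimes_{\cC\z}\cN_y$ with $Q(\xi)=Q(\xi')$, i.e.\ $\xi\FBRel\xi'$; since both lie over $(x,y)$, the element $h\in\cH$ implementing the relation in Lemma~\ref{lem:sim-on-MastN} satisfies $x=xh$, and freeness of the right $\cH$-action on the equivalence $\X$ forces $h=s_{\X}(x)\in\cH\z$, so $\xi'=\Psi_h(\xi)=\xi$ by Theorem~\ref{thm:Psi-cts}\,\ref{item:Psi:unit-identity}. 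For surjectivity, given $[\xi]\in\cP_z$ with $q_{\cK}(\xi)=(x',y')$, the equality $[x',y']_{\ipscriptstyle\cH}=z=[x,y]_{\ipscriptstyle\cH}$ produces an $h\in\cH$ moving $(x',y')$ to $(x,y)$, and then $\eta:=\Psi_h(\xi)\in\cM_x\otimes_{\cC\z}\cN_y$ satisfies $Q(\eta)=[\xi]$ by Remark~\ref{rmk:balancing-in-cP}.

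The hard part will be continuity of $Q|^{-1}$. My first step is to record that the \emph{global} quotient map $Q$ is open. Indeed, for open $U\subseteq\cK$, symmetry of $\FBRel$ gives
\[
   Q\inv(Q(U))=\{\Psi_h(\xi)\,\mid\,\xi\in U,\ (h,\xi)\in\cH\bfp{s}{t}\cK\}=\Theta\bigl(\cH\bfp{s}{t}U\bigr),
\]
where $\Theta(h,\xi):=\Psi_h(\xi)$ is the composite of the homeomorphism $\Psi$ of Theorem~\ref{thm:Psi-cts} with the projection $\cK\bfp{t}{r}\cH\to\cK$. That projection is open because $\cH$ is \etale, so $r_{\cH}$ is open, and Lemma~\ref{lemma:proj-open-from-bfp} applies; hence $\Theta$ is open, $Q\inv(Q(U))$ is open, and $Q(U)$ is open in the quotient topology.

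With $Q$ open I can convert convergence in $\cP$ into convergence in $\cK$. So let $[\xi_\lambda]\to[\eta]$ in $\cP_z$ and let $\xi_\lambda,\eta\in\cM_x\otimes_{\cC\z}\cN_y$ be the unique representatives furnished by $Q|$; I must show $\xi_\lambda\to\eta$ in the fibre. Since a continuous open surjection lifts convergent nets up to subnets, after passing to a subnet there are $\zeta_\mu\to\eta$ in $\cK$ with $Q(\zeta_\mu)=[\xi_{\lambda_\mu}]$, so $\zeta_\mu=\Psi_{h_\mu}(\xi_{\lambda_\mu})$ for suitable $h_\mu\in\cH$, and $q_{\cK}(\zeta_\mu)=(x\cdot h_\mu\inv,\,h_\mu\cdot y)\to(x,y)$. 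In particular $x\cdot h_\mu\inv\to x$, so properness of the right $\cH$-action on $\X$ yields a further subnet along which $h_\mu\inv\to k$; continuity of the action then gives $x\cdot k=x$, whence $k=s_{\X}(x)=:v\in\cH\z$ by freeness, and so $h_\mu\inv\to v$. Applying joint continuity of $\Psi$ (Theorem~\ref{thm:Psi-cts}\,\ref{item:Psi:jointly-cts}) to $(h_\mu\inv,\zeta_\mu)\to(v,\eta)$ in $\cH\bfp{s}{t}\cK$ gives
\[
   \xi_{\lambda_\mu}=\Psi_{h_\mu\inv}(\zeta_\mu)\longrightarrow\Psi_v(\eta)=\eta,
\]
the last equality by Theorem~\ref{thm:Psi-cts}\,\ref{item:Psi:unit-identity}. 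Running this argument on an arbitrary subnet of $(\xi_\lambda)$ produces a further subnet converging to $\eta$, so the whole net converges to $\eta$ in $\cK_{(x,y)}$. As the subspace topology on this fibre coincides with its Banach norm topology, this is exactly continuity of $Q|^{-1}$, and the \emph{in particular} clause follows since $Q|$ is, by construction, isometric for the transported norm.

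The main obstacle is precisely this continuity of the inverse: the quotient topology on $\cP$ offers no direct grip on convergence inside a single fibre. The crux of the plan is therefore to lift convergence through the open map $Q$, and then to use properness of the $\cH$-action on $\X$ to show that the connecting elements $h_\mu$ collapse to a unit, so that the isometries $\Psi_{h_\mu}$ become the identity in the limit.
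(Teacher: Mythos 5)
Your argument is correct, but it reaches the homeomorphism by a genuinely different route than the paper. The paper does \emph{not} use openness of $Q$ here (that is only established afterwards, in Proposition~\ref{prop:Q-open}); instead it shows $Q|$ is a \emph{closed} map: for closed $A\subseteq\cM_{x}\otimes_{\cC\z}\cN_{y}$ it proves $Q\inv(Q|(A))$ is closed in $\cK$ by taking a convergent net $\xi_{\lambda}\to\xi$ with $\xi_{\lambda}\FBRel a_{\lambda}$, $a_{\lambda}\in A$, and using properness and freeness of the $\cH$-action on $\X$ together with \ref{item:Psi:jointly-cts} to produce $a=\Psi_{h}(\xi)\in A$ with $\xi\FBRel a$. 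You instead prove openness of $Q$ first: your identity $Q\inv(Q(U))=\mathrm{pr}_{\cK}\bigl(\Psi(\cH\bfp{s}{t}U)\bigr)$, combined with Lemma~\ref{lemma:proj-open-from-bfp} and the fact that $\Psi$ is a homeomorphism, is a correct and markedly shorter proof of Proposition~\ref{prop:Q-open} than the paper's own (which works with basic open sets $W(\kappa,V,\epsilon)$ and the induced sections $\Psi(\kappa)$); you then get continuity of $Q|^{-1}$ by lifting convergent nets through the continuous open surjection $Q$, which is exactly \cite[II.13.2]{FellDoranVol1}, the fact the paper itself invokes in the proof of Proposition~\ref{prop:cd}. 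After the lift, the two arguments coincide in substance: properness and freeness of the right $\cH$-action on $\X$ force the implementing elements $h_{\mu}$ to collapse to a unit, and \ref{item:Psi:jointly-cts}, \ref{item:Psi:unit-identity}, \ref{item:Psi:inverse} finish the job. What your route buys is economy over the whole section, since openness of $Q$ is needed later anyway and your proof of it would streamline Proposition~\ref{prop:Q-open}; what the paper's route buys is independence from that openness at this point in the development, and it avoids your subnet-of-a-subnet bookkeeping (which you do handle correctly, via the principle that a net converges iff every subnet has a further subnet with the same limit). Two minor points of difference: your injectivity argument (freeness forces the implementing $h$ to be a unit, so $\Psi_{h}=\mathrm{id}$) replaces the paper's kernel argument (only $0$ is $\FBRel$-related to $0$), and both you and the paper implicitly use the standard fact that the relative topology on a fibre of a \USCBb\ coincides with its norm topology.
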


\begin{proof}
    First note that the map is indeed $\cP_{z}$-valued: if we take $m\in \cM_{x}$ and $n\in \cN_{y}$, we have
       \[
           q_{\cP} ([m\otimes n]) 
           =
           [ q_{\cK} (m\otimes n)]_{\ipscriptstyle\cH}
           =
           [q_{\cM}(m), q_{\cN}(n)]_{\ipscriptstyle\cH}
           =
           [x,y]_{\ipscriptstyle\cH}
           =
           z.
       \]
    As $Q|$ maps elementary tensors to $\cP_z$, it does so for all other elements of $\cM_{x} \otimes_{\cC\z} \cN_{y}$ also, since $\cP_{z}$ is closed in $\cP$.
    Linearity is now obvious in light of the definition of $\cP$ and its fibre $\cP_z$.   For continuity, just note that $Q$ is continuous and $\cM_x\otimes_{\cC\z}\cN_y$ is closed in~$\cM\otimes_{\cC\z}\cN$.

    Note that $[\xi]=0$ in $\cP$ means $\xi=\Psi_{h}(0)=0$ for some $h\in\cH$, i.e., only the zero vector in any of the fibres of $\cM\otimes_{\cC\z}\cN$ gives rise to the zero vector in the corresponding fibre of $\cP$. In particular, $Q|$ can only send $0$ to $0,$ i.e., the linear map $Q|$ is injective. 
    
    To see that $Q|$ is surjective, take an arbitrary $p\in\cP_{z}$ and let $\xi'\in\cM\otimes_{\cC\z}\cN$ be any representative, i.e., $p=[\xi']$. As
    \[
       [x,y]_{\ipscriptstyle\cH}=z=q_{\cP}(p)=[ q_{\cK} (\xi')]_{\ipscriptstyle\cH},
    \]
    we know that, if $ q_{\cK} (\xi')=(x',y')$, there exists $h\in\cH$ such that
    $ xh=x'$ and $h\inv y=y'$. Let $\xi:= \Psi_{h}(\xi')$, an element of $\cM_{x} \otimes_{\cC\z} \cN_{y}$. Then
    \[
       Q|(\xi)=[\xi] = [\Psi_{h}(\xi')] = [\xi'] = p.
    \]
    
To see that $Q|$ is a homeomorphism, it now suffices to check that it is closed, so suppose that $A\subseteq \cM_{x}\otimes_{\cC\z}\cN_{y}$ is some closed set. We have to show that $Q|(A)$ is closed in $\cP_{z}$. Since $\cP_{z}$ is closed in $\cP$, this is equivalent to showing that $Q|(A)$ is closed in $\cP$, which, by the quotient topology, means that $Q\inv (Q|(A))$ is closed in~$\cM\otimes_{\cC\z}\cN$. So assume that $\xi_{\lambda}$ is a net in $Q\inv (Q|(A))$ which converges to some $\xi\in \cM\otimes_{\cC\z}\cN$; we have to show that $\xi\in Q\inv (Q|(A))$.

As $\xi_{\lambda}\in Q\inv (Q|(A))$, there exist $a_{\lambda}\in A$ such that $\xi_{\lambda} \FBRel  a_{\lambda}$. Since $A\subseteq \cM_{x}\otimes_{\cC\z}\cN_{y}$, this implies that there exist $h_{\lambda}\in \cH^{s(x)}$ such that \( q_{\cK} (\xi_{\lambda})=(xh_{\lambda} , h_{\lambda}\inv y)\) and \(a_{\lambda} = \Psi_{h_{\lambda}}(\xi_{\lambda})\). Since $\xi_{\lambda}\to \xi$, we  have
\[
    (xh_{\lambda} , h_{\lambda}\inv y) =  q_{\cK} (\xi_{\lambda})\to  q_{\cK} (\xi) =: (x_{0}, y_{0}).
\]
In particular, $r_{\X}(x_{0})=r_{\X}(x)$ and $s_{\Y}(y_{0})=s_{\Y}(y).$ Since $\X$ and $\Y$ are equivalences of groupoids, it follows from the assumption $\X / \cH \cong \cG\z$ and $\cH \backslash \Y \cong \cK\z$ that there exist $h\in\cH^{s(x)}, h'\in \cH_{r(y)}$ such that $x_{0}=xh$ and $y_{0}=h' y$. Since
\[
    xh_{\lambda} 
    \to 
    xh
    \quad\text{and}\quad
    h_{\lambda}\inv y
    \to 
    h' y,
\]
it follows that $h_{\lambda} \to h$ and $h_{\lambda}\inv\to h'$, so that $h'=h\inv$ because $\cH$ is Hausdorff.
    We may let $a:= \Psi_{h}(\xi)$, and it follows from Theorem~\ref{thm:Psi-cts}, \ref{item:Psi:jointly-cts}, that $a_{\lambda} \to a$ since $h_{\lambda}\to h$ and $\xi_{\lambda}\to\xi.$
Since $A$ is closed, this implies that $a\in A$.
As $\xi \FBRel  a$ by construction of $a$, it follows that $\xi\in Q\inv (Q|(A))$ as claimed. 
\end{proof}

\begin{proposition}
\label{prop:Q-open}
    The quotient map $Q\colon \cM\otimes_{\cC\z}\cN \to \cP$ is open. Consequently, $\cP$ is \LCH.
\end{proposition}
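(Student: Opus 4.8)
The plan is to prove directly that $Q$ is open via the quotient-topology criterion, and then read off the topological consequence. For an open set $U\subseteq\cK=\cM\otimes_{\cC\z}\cN$, the image $Q(U)$ is open in $\cP$ if and only if its full preimage $Q\inv(Q(U))$ — the $\FBRel$-saturation of $U$ — is open in $\cK$. So the whole statement reduces to showing that the saturation of an open set is open.

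The key tool is the isomorphism $\Psi\colon \cH\bfp{s}{t}\cK\to\cK\bfp{t}{r}\cH$ from Theorem~\ref{thm:Psi-cts}. Unwinding Lemma~\ref{lem:sim-on-MastN}, the relation $\xi_0\FBRel\xi'$ holds precisely when there is some $h\in\cH$ with $(h,\xi_0)\in\cH\bfp{s}{t}\cK$ (the base conditions $x_0=x'h$, $y_0=h\inv y'$ of Lemma~\ref{lem:sim-on-MastN} are exactly the requirement $s_{\cH}(h)=t_{\cK}(\xi_0)$ defining the domain of $\Psi$) and $\Psi_h(\xi_0)=\xi'$. I would therefore introduce the continuous map
\[
   \Theta\colon \cH\bfp{s}{t}\cK\to\cK,\qquad (h,\xi)\mapsto \Psi_h(\xi),
\]
which is the composite of $\Psi$ with the projection onto the $\cK$-factor, together with the other projection $\pi_2\colon \cH\bfp{s}{t}\cK\to\cK$, $(h,\xi)\mapsto\xi$, and establish the set-theoretic identity
\[
   Q\inv(Q(U))=\pi_2\bigl(\Theta\inv(U)\bigr).
\]
Both inclusions are immediate from the description of $\FBRel$ above: $\xi_0$ lies in the saturation iff some $h$ satisfies $\Psi_h(\xi_0)\in U$, i.e.\ $(h,\xi_0)\in\Theta\inv(U)$, i.e.\ $\xi_0=\pi_2(h,\xi_0)\in\pi_2(\Theta\inv(U))$.

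It then remains to see that the right-hand side is open. Since $\Psi$ is a homeomorphism of \USCBb s by Theorem~\ref{thm:Psi-cts}, the map $\Theta$ is continuous, so $\Theta\inv(U)$ is open in $\cH\bfp{s}{t}\cK$. The projection $\pi_2$ is open because its complementary leg is $s_{\cH}\colon \cH\to\cH\z$, which is a local homeomorphism — hence open — as $\cH$ is \etale; this is precisely Lemma~\ref{lemma:proj-open-from-bfp} applied to $\cH\bfp{s}{t}\cK$. Consequently $Q\inv(Q(U))=\pi_2(\Theta\inv(U))$ is open, $Q(U)$ is open, and $Q$ is an open map. I expect the only real friction to be bookkeeping: one must match the base-point data of $\FBRel$ with the domain condition of $\Psi$ so that the saturation is captured exactly by $\Theta$ and $\pi_2$, with no stray base-point constraints lost or added; everything else is formal once the identity above is in place.

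Finally, for the consequence, I would argue as in Lemma~\ref{lem:preequiv:taking-the-quotient}: $\cK$ is the \USCBb\ total space over the \LCH\ space $Z=X\bfp{s}{r}Y$, and $Q$ is now an open continuous surjection, so $\cP$ is locally compact (a compact neighbourhood of any $[\xi]$ is the image under $Q$ of one for $\xi$); and since $\FBRel$ is a \emph{closed} equivalence relation (Lemma~\ref{lem:sim-on-MastN}) while $Q$ is open, the quotient $\cP$ is Hausdorff. Hence $\cP$ is \LCH.
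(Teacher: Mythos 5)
Your proof of the openness of $Q$ is correct, and it takes a genuinely leaner route than the paper's. The paper restricts attention to \emph{basic} open sets $W=W(\kappa,V,\epsilon)$ of $\cK$, converts the section $\kappa$ into a section $\Psi(\kappa)$ of $\cK\bfp{t}{r}\cH$ (Lemma~\ref{lem:Psi-induces-sections}), proves the identity $Q\inv(Q(W))=\mathrm{pr}_{\cK}\bigl(W(\Psi(\kappa),\psi(\cH\bfp{s}{t}V),\epsilon)\bigr)$, and finishes with openness of $\mathrm{pr}_{\cK}\colon\cK\bfp{t}{r}\cH\to\cK$ (Lemma~\ref{lem:proj-map-open}). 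You instead saturate an \emph{arbitrary} open set via $Q\inv(Q(U))=\pi_2\bigl(\Theta\inv(U)\bigr)$ with $\Theta(h,\xi)=\Psi_h(\xi)$. Your bookkeeping worry is unfounded: if $(h,\xi_0)\in\cH\bfp{s}{t}\cK$ and $\xi':=\Psi_h(\xi_0)$, then $q_{\cK}(\xi')=(x_0h\inv,hy_0)$ by Theorem~\ref{thm:Psi-cts}, and $r_{\cH}(h)=s_{\X}(x_0h\inv)$ holds automatically, so the conditions of Lemma~\ref{lem:sim-on-MastN} are reproduced exactly, with nothing lost or added; conversely every witness $h$ for $\xi_0\FBRel\xi'$ satisfies $s_{\cH}(h)=t_{\cK}(\xi_0)$ by definition. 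Continuity of $\Theta$ is precisely Property~\ref{item:Psi:jointly-cts} of Theorem~\ref{thm:Psi-cts}, and openness of $\pi_2$ does follow from Lemma~\ref{lemma:proj-open-from-bfp} with the open map $s_{\cH}$ (\etale ness) as the complementary leg, once one notes that by the basis statement in Lemma~\ref{lem:beefing-up-a-USC-bundle} the pull-back-bundle topology on $\cH\bfp{s}{t}\cK$ coincides with the subspace topology of $\cH\times\cK$, so that the purely topological projection lemma applies (alternatively, mimic Lemma~\ref{lem:proj-map-open} verbatim). Both proofs run on the same engine --- the homeomorphism $\Psi$ plus an open projection off a fibre product with the \etale\ groupoid $\cH$ --- but yours needs neither the explicit tube sets $W(\kappa,V,\epsilon)$ nor Lemmas~\ref{lem:Psi-induces-sections} and~\ref{lem:proj-map-open}; that is a real simplification.

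The ``Consequently, $\cP$ is \LCH'' clause is where the one genuine problem lies --- note that the paper's own proof is entirely silent on it, so here you attempt more than the paper does. Your Hausdorff argument is correct and is evidently the intended one (cf.\ Lemma~\ref{lem:preequiv:taking-the-quotient}): $\FBRel$ is closed, $Q\times Q$ is an open continuous surjection, and $(Q\times Q)\inv(\Delta_{\cP})=\FBRel$, so $\Delta_{\cP}$ is closed; this rests on Hausdorffness of the total space of $\cK$, which the paper itself only asserts (in the proof of Lemma~\ref{lem:sim-on-MastN}). Your local-compactness argument, however, does not work: it presupposes that every $\xi\in\cK$ has a compact neighbourhood, i.e.\ that the total space of $\cK$ is locally compact. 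This fails whenever some fibre $\cM_x\otimes_{\cC\z}\cN_y$ is infinite-dimensional: fibres are closed in $\cK$ and carry their Banach-space topology as subspace topology, so a compact neighbourhood of $\xi$ in $\cK$ would cut out a compact neighbourhood of $\xi$ inside an infinite-dimensional Banach space, contradicting Riesz's theorem. So there are no compact neighbourhoods to push forward; and indeed $\cP$ itself cannot be locally compact in that case, by the same argument applied to $\cP_z\cong\cM_x\otimes_{\cC\z}\cN_y$ (Lemma~\ref{lem:Pz=MxNy}). This is a defect in the wording of the statement rather than in your strategy --- what the subsequent development actually uses is openness of $Q$ (and Hausdorff-type separation), not local compactness of a total space --- but your parenthetical justification of local compactness should be deleted rather than repaired.
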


To prove Proposition~\ref{prop:Q-open}, we need two lemmas; see Definition~\ref{def:t} for the map $t$ and Definition~\ref{def:beefing-up-a-USC-bundle} for that of pull-back bundles.
\begin{lemma}\label{lem:Psi-induces-sections}
Any continuous section $\kappa$ of $\cM\otimes_{\cC\z}\cN$ induces a continuous section $\Psi(\kappa)$ of $(\cM\otimes_{\cC\z} \cN) \bfp{t}{r}  \cH$ defined by
        \begin{align*}
            \Psi(\kappa)\colon \quad
            (\X  \bfp{s}{r}  \Y) \bfp{t}{r} \cH
            &\to
            (\cM\otimes_{\cC\z} \cN) \bfp{t}{r}  \cH
            \\
            (x,y,h)
            &\mapsto
            \Psi(h,\kappa(xh,h\inv y))    
        \end{align*}
\end{lemma}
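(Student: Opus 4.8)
The plan is to exhibit $\Psi(\kappa)$ as a composition of manifestly continuous maps and then invoke the continuity of $\Psi$ established in Theorem~\ref{thm:Psi-cts}. First I would check that the stated formula really does produce a section. Fix a point $(x,y,h)$ in the base $(\X\bfp{s}{r}\Y)\bfp{t}{r}\cH$, so that $s_{\X}(x)=r_{\Y}(y)=r_{\cH}(h)$. Then $x\cdot h$ is defined and $s_{\X}(x\cdot h)=s_{\cH}(h)$; moreover $h\inv\cdot y$ is defined, since $s_{\cH}(h\inv)=r_{\cH}(h)=r_{\Y}(y)$, with $r_{\Y}(h\inv\cdot y)=s_{\cH}(h)$. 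Hence $(x\cdot h,h\inv\cdot y)\in Z=\X\bfp{s}{r}\Y$, so $\kappa(x\cdot h,h\inv\cdot y)\in\cK_{(x\cdot h,\,h\inv\cdot y)}$. Because $t_{\cK}(\kappa(x\cdot h,h\inv\cdot y))=s_{\X}(x\cdot h)=s_{\cH}(h)$, the pair $(h,\kappa(x\cdot h,h\inv\cdot y))$ lies in $\cH\bfp{s}{t}\cK$, so $\Psi(h,\kappa(x\cdot h,h\inv\cdot y))$ makes sense; and by the covering homeomorphism $\psi$ of Theorem~\ref{thm:Psi-cts}, which sends $(h,x\cdot h,h\inv\cdot y)\mapsto(x,y,h)$, its first coordinate $\Psi_{h}(\kappa(x\cdot h,h\inv\cdot y))$ lies in $\cM_{x}\otimes_{\cC\z}\cN_{y}=\cK_{(x,y)}$. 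Consequently $\Psi(\kappa)(x,y,h)$ lies in the fibre of $(\cM\otimes_{\cC\z}\cN)\bfp{t}{r}\cH$ over $(x,y,h)$, confirming that $\Psi(\kappa)$ is a section.

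For continuity, I would factor $\Psi(\kappa)=\Psi\circ\beta$, where
\[
   \beta\colon (\X\bfp{s}{r}\Y)\bfp{t}{r}\cH \to \cH\bfp{s}{t}\cK,\qquad (x,y,h)\mapsto (h,\kappa(x\cdot h,h\inv\cdot y)).
\]
The key point is that $\beta$ is continuous, which in turn reduces to the continuity of the auxiliary map $\phi(x,y,h)=(x\cdot h,h\inv\cdot y)$ into $Z$. This holds because the right $\cH$-action on $\X$, inversion in $\cH$, and the left $\cH$-action on $\Y$ are all continuous, and $Z$ carries the subspace topology, so each coordinate of $\phi$ is continuous. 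Composing with the continuous section $\kappa$ and pairing with the projection $(x,y,h)\mapsto h$ then yields a continuous map into $\cH\times\cK$ whose image lies in the subspace $\cH\bfp{s}{t}\cK$; hence $\beta$ is continuous. Since $\Psi$ is in particular a homeomorphism of total spaces by Theorem~\ref{thm:Psi-cts}, the composite $\Psi\circ\beta=\Psi(\kappa)$ is continuous, as claimed.

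The argument is essentially formal once the factorization is in place, and I do not expect a serious obstacle. The only care required is bookkeeping of the various momentum maps, to confirm at each stage that the elements fed into the actions, into $\kappa$, and into $\Psi$ satisfy the matching conditions needed for those operations to be defined (in particular that $(h,\kappa(x\cdot h,h\inv\cdot y))$ genuinely lands in $\cH\bfp{s}{t}\cK$). No analytic input beyond the already-established continuity of $\Psi$ enters the proof.
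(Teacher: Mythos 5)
Your proposal is correct and follows essentially the same route as the paper's proof: the same momentum-map bookkeeping to show $(h,\kappa(xh,h\inv y))$ lies in $\cH\bfp{s}{t}\cK$ and that the output lands in the fibre over $(x,y,h)$, followed by continuity as a composition of the continuous actions, inversion, $\kappa$, and $\Psi$. Your explicit factorization $\Psi(\kappa)=\Psi\circ\beta$ merely spells out what the paper compresses into ``a concatenation of continuous functions.''
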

\begin{proof}
     As $(x,y,h)\in (\X  \bfp{s}{r}  \Y) \bfp{t}{r} \cH$, we have $r_{\cH}(h)= t_{Z}(x,y),$  so $xh$ and $h\inv y$ make sense. Moreover, since $\kappa$ is a section of $\cK=\cM\otimes_{\cC\z}\cN$, we have
     $$t(q_{\cK}(\kappa(xh ,h\inv y))) =  t(xh ,h\inv y) = s(h) ,$$ so $(h,\kappa(xh,h\inv y))$ is in the domain $\cH \bfp{s}{t}  (\cM\otimes_{\cC\z} \cN)$ of $\Psi$. In other words, the definition of $\Psi(\kappa)$
     makes sense. To see that $\Psi(\kappa)$ is a section, we note that $\kappa(xh,h\inv y)\in \cM_{xh}\otimes_{\cC\z}\cN_{h\inv y}$, which is being mapped to an element of $\cM_{x}\otimes_{\cC\z}\cN_{ y}$ by $\Psi_h$, so that
    \begin{align*}
        \Psi(\kappa)\, (x,y,h)
        &=
        \Psi(h,\kappa(xh,h\inv y))
        =
        (\Psi_h(\kappa(xh,h\inv y)), h)
        \\
        &\in 
        \cM_{x}\otimes_{\cC\z}\cN_{ y} \times \{h\}
        =
        \left[
            (\cM\otimes_{\cC\z} \cN) \bfp{t}{r}  \cH
        \right]_{(x,y,h)}
        .
    \end{align*}
    To see that $\Psi(\kappa)$ is continuous, just notice that it is a concatenation of continuous functions: inversion in $\cH$, the $\cH$-action on $\X$ resp.\ $\Y$, and the continuous map $\Psi$.
\end{proof}

\begin{lemma}
\label{lem:proj-map-open}
    The projection map
    \(
        \mathrm{pr}_{\cK}\colon
        \cK \bfp{t}{r}  \cH
        \to
        \cK,
        (\xi,h)\mapsto \xi,
    \)
    is open.
\end{lemma}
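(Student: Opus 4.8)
The plan is to reduce openness of $\mathrm{pr}_{\cK}$ to openness of the range map $r_{\cH}$, taking advantage of the fact that $\cH$ is \etale. First I would invoke the analogue of Lemma~\ref{lem:beefing-up-a-USC-bundle} for the pull-back bundle $\cK \bfp{t}{r} \cH$ (with $\cK$ playing the role of the bundle $\cM$ and $\cH$ that of the groupoid $\cG$, as foreseen by Definition~\ref{def:beefing-up-a-USC-bundle}): this tells us that the sets of the form $U \bfp{}{} V$, for $U\subseteq \cK$ and $V\subseteq \cH$ basic open, form a basis for the topology on $\cK \bfp{t}{r} \cH$. Since a map is open precisely when it sends the members of a basis to open sets — images of unions being unions of images — it therefore suffices to check that $\mathrm{pr}_{\cK}(U \bfp{}{} V)$ is open in $\cK$ for every such basic open set.

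The next step is to compute this image directly. Unwinding the definition of the fibre product $\cK\bfp{t}{r}\cH=\{(\xi,h):t_{\cK}(\xi)=r_{\cH}(h)\}$, we obtain
\[
    \mathrm{pr}_{\cK}(U \bfp{}{} V)
    =
    \left\{\xi\in U : \exists\, h\in V \text{ with } t_{\cK}(\xi)=r_{\cH}(h)\right\}
    =
    U \cap t_{\cK}\inv\bigl(r_{\cH}(V)\bigr).
\]
I would then invoke two facts. First, $t_{\cK}=t_{ Z }\circ q_{\cK}$ is continuous, being a composite of the continuous bundle map $q_{\cK}$ and the continuous map $t_{ Z }$ from Definition~\ref{def:t}. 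Second, $r_{\cH}$ is an open map, since $\cH$ is \etale\ and hence its range map is a local homeomorphism. Consequently $r_{\cH}(V)$ is open in $\cH\z$, so $t_{\cK}\inv(r_{\cH}(V))$ is open in $\cK$, and therefore the intersection $U \cap t_{\cK}\inv(r_{\cH}(V))$ is open, proving the claim.

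I do not expect a genuine obstacle here; the argument is a short application of continuity of $t_{\cK}$ together with openness of the \etale\ range map. The only points requiring a little care are to apply the basis description of the pull-back topology in the correct ``reversed'' orientation for $\cK \bfp{t}{r} \cH$ (rather than for $\cH \bfp{s}{t} \cK$), and to keep track that it is $t_{\cK}$, not $q_{\cK}$ or $r_{\cK}$, that lands in $\cH\z$ and must be matched against $r_{\cH}$.
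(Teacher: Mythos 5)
Your proposal is correct and follows essentially the same route as the paper's own proof: reduce to basic open sets $U\bfp{t}{r}V$ of the pull-back topology via Lemma~\ref{lem:beefing-up-a-USC-bundle}, compute the image as $U\cap t_{\cK}\inv\bigl(r_{\cH}(V)\bigr)$, and conclude using continuity of $t_{\cK}=t_{Z}\circ q_{\cK}$ together with openness of $r_{\cH}$ from \etale ness of $\cH$. The only cosmetic difference is that the paper writes the preimage as $U_{1}\cap q_{\cK}\inv\bigl(t\inv(r_{\cH}(U_{2}))\bigr)$ rather than collapsing it through $t_{\cK}$.
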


\begin{proof}
    As explained in Lemma~\ref{lem:beefing-up-a-USC-bundle}, subsets of the form $U_{1} \bfp{t}{r}  U_{2}$ for $U_{1}\subseteq \cK=\cM\otimes_{\cC\z} \cN$ and $U_{2}\subseteq \cH$ basic open sets form a basis of the topology of $\cK \bfp{t}{r}  \cH$, so it suffices to check that $\mathrm{pr}_{\cK}(U_{1}\bfp{t}{r} U_{2})$ is open. We compute:
    \begin{align*}
        \mathrm{pr}_{\cK}(U_{1}\bfp{t}{r} U_{2})
        &
        =
        \{
            \xi\in U_{1}\,\mid\,t(q_{\cK}(\xi))\in r_{\cH}(U_{2})
        \}
        \\&
        =
        U_{1}\cap q_{\cK}\inv \bigl( t\inv (r_{\cH}(U_{2})\bigr).
    \end{align*}
    Note that, since $\cH$ is \etale, $r_{\cH}(U_{2})$ is open in $\cH\z$. Since $t$ and $q_{\cK}$ are continuous, we therefore have shown that $\mathrm{pr}_{\cK}(U_{1}\bfp{t}{r} U_{2})$ is indeed open. 
\end{proof}

\begin{proof}[Proof of Proposition~\ref{prop:Q-open}]
    It suffices to show that $Q$ maps basic open sets to open sets. Recall \cite[Proof of Theorem C.25]{Wil2007}
    that a basic open set of $\cK=\cM\otimes_{\cC\z}\cN$ is of the form
    \[
        W:= 
        W(
    \kappa
    , V, \epsilon)
        :=
        \left\{
            \xi\in \cK
            \,\mid\,
            q_{\cK}(\xi)\in V, \norm{\xi - \kappa(q_{\cK}(\xi))}< \epsilon
        \right\}
    \]
    for  $\kappa$ a continuous section of $\cK$, $V\subseteq Z=\X  \bfp{s}{r}  \Y$ open, and $\epsilon >0.$ We have to show that $Q(W)$ is open in $\cP$, meaning that $Q\inv (Q(W))$ is open in~$\cK$. We claim that
    \begin{equation}\label{eq:QinvQW}
        Q\inv (Q(W))
        \overset{!}{=}
        \mathrm{pr}_{\cK}\bigl( W(\Psi(
    \kappa
    ), \psi(\cH \bfp{s}{t} V), \epsilon)\bigr),
    \end{equation}
    where $\mathrm{pr}_{\cK}$ is the open map from Lemma~\ref{lem:proj-map-open} and $\psi$ the homeomorphism from Theorem~\ref{thm:Psi-cts}.
    
    \medskip
    
    For ``$\subseteq$'', let $\eta$ be such that there exists $\xi\in W$ with $\eta \FBRel \xi.$ 
    In other words, there exists $h\in\cH$ with $s_{\cH}(h)=t_{\cK}(\xi)$
    such that 
    $\eta=\Psi_{h}(\xi).$ We claim that $(\eta,h)\in W(\Psi(
    \kappa
    ), \psi(\cH \bfp{s}{t} V), \epsilon)$. If $\pi$ denotes the projection map $(\cM\otimes_{\cC\z}\cN)\bfp{t}{r}\cH\to (\X\bfp{s}{r} \Y)\bfp{t}{r}\cH$ of the pull-back bundle,
    then we have
    \begin{align*}
        &\pi\left( \eta,h\right)
        \in \psi(\cH \bfp{s}{t} V)
        \\\iff& 
        \exists k\in \cH, (x,y)\in V\text{ such that } s_{\cH}(k)=t(x,y) \text{ and } (q_{\cK}(\eta),h) = (xk\inv, ky,k)
        \\\iff& 
        \exists (x,y)\in V\text{ such that } s_{\cH}(h)=t(x,y) \text{ and } q_{\cK}(\eta) = (xh\inv, hy).
    \end{align*}
    Such a tuple indeed exists: by choice of the element $\xi$ in~$W$, we have that $(x,y):=q_{\cK}(\xi)$ is in~$V$, and the equality $\eta=\Psi_{h}(\xi)$ implies $q_{\cK}(\eta) = (xh\inv, hy)$.
    Furthermore, 
    \begin{align*}
        \norm{
            (\eta,h) - \Psi(
    \kappa
    )(\pi\left( \eta,h\right))
        }
        &=
         \norm{
            (\eta,h) - \Psi(
    \kappa
    )\left( xh\inv, hy,h\right)
        }
        \\&=
        \norm{
            (\eta,h) -
            \Psi(h,
    \kappa
    (x,y))
        }
        \\
        &=
        \norm{
            \Psi_h (\xi) -
            \Psi_h (
    \kappa
    (x,y))
        }
        \\
        &=
        \norm{
            \xi -
    \kappa
    (q_{\cK}(\xi))
        }
        < \epsilon,
        \text{ since } \xi\in W.
    \end{align*}
    This proves that $(\eta,h)\in W(\Psi(
    \kappa
    ), \psi(\cH \bfp{s}{t} V), \epsilon)$, so that $\eta$ is in the right-hand side of the alleged equation.
    
    \smallskip
    
    For ``$\supset$'', assume that $\eta$ is in the right-hand side, so there exists $h\in \cH$ such that $t(q_{\cK}(\eta))=r_{\cH}(h)$, $\pi(\eta,h)\in \psi(\cH \bfp{s}{t} V)$, and
    \begin{align*}
        \norm{
            (\eta,h) - \Psi( \kappa )(\pi\left( \eta,h\right))
        }
        < \epsilon
        .
    \end{align*}
    As argued above, $\pi(\eta,h)\in \psi(\cH \bfp{s}{t} V)$ implies that there exists $(x,y)\in V$ such that $s_{\cH}(h)=t(x,y)$ and $q_{\cK}(\eta) = (xh\inv, hy)$. We claim that $\xi:=\Psi_{h\inv}(\eta)$ is an element of $W$, so that,
    since $\eta \FBRel \xi$, we can conclude $\eta\in Q\inv (Q(W)).$
    We have $q_{\cK}(\xi)=(x,y)\in V$ by construction, and the same computation as above yields
    \begin{align*}
        \norm{
            \xi -
            \kappa
            (q_{\cK}(\xi))
        }
        &=
        \norm{
            (\eta,h) - \Psi(\kappa)(\pi\left( \eta,h\right))
        },
    \end{align*}
    and the right-hand side is smaller than $\epsilon$ by choice of $\eta$.
    This proves that $\xi\in W$, as claimed.
    All in all, we have shown Equation~\eqref{eq:QinvQW}.
    
    \medskip
    
    By Lemma~\ref{lem:Psi-induces-sections}, $\Psi(\kappa)$ is a continuous section of $\cK\bfp{t}{r}\cH$. Since $\psi$ is a homeomorphism and $V$ is open in~$Z$, $\psi(\cH\bfp{s}{t}V)$ is open in~$Z\bfp{t}{r}\cH$. Thus, $W(\Psi(
    \kappa
    ), \psi(\cH \bfp{s}{t} V), \epsilon)$ is a basic open set in~$\cK\bfp{t}{s}\cH$. Since $\mathrm{pr}_{\cK}$ is open by Lemma~\ref{lem:proj-map-open}, we conclude that $Q\inv(Q(W))$ is open, as claimed.

\end{proof}

As $\cP$ is a bundle over a groupoid \pe, we will follow our convention in Notation~\ref{notation:s-and-r}: $r_{\cP}\colon \cP\to \cG\z$ and $s_{\cP}\colon \cP\to \mathcal{K}\z$ are defined by
\begin{align*}
    r_{\cP}([\xi]):=&\ (r_{\Z}\circ q_{\cP})([\xi])) =r_{\Z}([q_{\cK}(\xi)]_{\ipscriptstyle\cH}) = r_{Z}(q_{\cK}(\xi))
\end{align*}
respectively
    $s_{\cP}([\xi])
    =
    s_{Z}(q_{\cK}(\xi))
    .$
Note that $r_{\cP}$ and $s_{\cP}$ are continuous with respect to the quotient topology on $\cP$: 
for any $U\subseteq \cG\z$, we have that
\[
    r_{\cP}\inv (U)
    =
    Q\left(
        (r_{Z} \circ q_{\cK}
        )\inv
    (U)
    \right).
\]

Thus, if $U$ is open, then $r_{\cP}\inv (U)$ is open since $r_{Z}$ and $q_{\cK}$ are continuous and since $Q$ is open by Proposition~\ref{prop:Q-open}. 


\begin{remark}\label{rmk:id-ast-Q-open} 
    It now follows from  Proposition~\ref{prop:Q-open} and Lemma~\ref{lem:bfp of open is open} that  the maps
    \begin{align*}
        \mathrm{id}\ast Q\colon& \quad
        \cB\bfp{s}{r}(\cM\otimes_{\cC\z}\cN) \to 
        \cB\bfp{s}{r}\cP,
        \quad
        (b,\xi)\mapsto (b,[\xi]), \quad\text{and}
    \\
         Q\ast \mathrm{id}\colon& \quad
        (\cM\otimes_{\cC\z}\cN)\bfp{s}{r}\cD \to 
        \cP\bfp{s}{r}\cD,
        \quad
        (\xi,d)\mapsto ([\xi],d),
    \end{align*}
    are open.
\end{remark}

\subsection{The quotient bundle is USC}

\begin{proposition}\label{prop:cd}
    Let $\cB=(q_{\cB}\colon B\to X)$ and $\cC=(q_{\cC}\colon C\to Y)$ be bundles of Banach spaces over \LCH\ spaces. Assume we have maps
    \[
    \begin{tikzcd}[ampersand replacement=\&]
        B
        \ar[r, "\Phi"]\ar[d, "q_{\cB}"']
        \&
        C
        \ar[d, "q_{\cC}"]
        \\
        X
        \ar[r, "\varphi"]
        \&
        Y
    \end{tikzcd}
    \]
    and topologies on $B$ and $C$ with the following properties:
    \begin{enumerate}[label=\textup{(\arabic*)}]
        \item\label{item:general quot:quotient maps} $\Phi$ and $\varphi$ are open quotient maps; in particular, they are surjective and continuous.
        \item\label{item:general quot:Phi lin isometry}  $\Phi$ is a linear isometry when restricted to the map $B_{x}\to C_{\varphi(x)}$.
        \item\label{item:general quot:preimages} For every $c\in C$ and $x\in X$ with $q_{\cC}(c)=\varphi(x)$, there exists $b\in B(x)$ such that $\Phi(b)=c$. 
    \end{enumerate}
    If $\cB$ is an \USCBb, then 
    $\cC$ is \usc\ as well.
\end{proposition}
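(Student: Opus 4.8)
The plan is to verify the four bundle axioms \ref{cond.B-USC}--\ref{cond.B-nets} for $\cC$ one at a time, in each case transferring the corresponding property of $\cB$ across $\Phi$; two of them are immediate and the other two require net-lifting. For \ref{cond.B-USC} (upper semicontinuity of the norm) I would use that $\Phi$ is fibrewise isometric together with openness: for $r>0$ the set $\{b\in B:\norm{b}<r\}$ is open in $B$ by \ref{cond.B-USC} for $\cB$, and I claim $\Phi(\{b:\norm{b}<r\})=\{c\in C:\norm{c}<r\}$. The inclusion ``$\subseteq$'' is the isometry \ref{item:general quot:Phi lin isometry}; for ``$\supseteq$'', given $\norm{c}<r$ I pick $x\in\varphi\inv(q_{\cC}(c))$ and use \ref{item:general quot:preimages} to lift $c$ to some $b\in B(x)$ with $\norm{b}=\norm{c}<r$. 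As $\Phi$ is open, the image is open, so $c\mapsto\norm{c}$ is \usc. For \ref{cond.B-times} (continuity of $c\mapsto\lambda c$) I would invoke the universal property of the quotient map $\Phi$: writing $m^{\cC}_\lambda,m^{\cB}_\lambda$ for scalar multiplication, fibrewise linearity of $\Phi$ gives $m^{\cC}_\lambda\circ\Phi=\Phi\circ m^{\cB}_\lambda$, whose right-hand side is continuous; since $\Phi$ is a quotient map by \ref{item:general quot:quotient maps}, $m^{\cC}_\lambda$ is continuous.

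For \ref{cond.B-nets} I would argue by lifting nets. Suppose $(c_i)$ is a net in $C$ with $q_{\cC}(c_i)\to y$ and $\norm{c_i}\to 0$; I must show $c_i\to 0$ in $C(y)$. By the subnet criterion for convergence it suffices to show every subnet has a further subnet converging to $0_y$. Given a subnet, since $\varphi$ is an open surjection I may lift the base points: passing to a further subnet there are $\xi_j\to x_0$ in $X$ with $\varphi(\xi_j)=q_{\cC}(c_{i_j})$ and $\varphi(x_0)=y$. By \ref{item:general quot:preimages} I lift each $c_{i_j}$ to $b_j\in B(\xi_j)$ with $\Phi(b_j)=c_{i_j}$, so that $\norm{b_j}=\norm{c_{i_j}}\to 0$ and $q_{\cB}(b_j)=\xi_j\to x_0$. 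Now \ref{cond.B-nets} for $\cB$ gives $b_j\to 0_{x_0}$, and continuity of $\Phi$ yields $c_{i_j}=\Phi(b_j)\to\Phi(0_{x_0})=0_y$, as needed.

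The remaining axiom \ref{cond.B-plus} (continuity of addition) is the heart of the matter and is where I expect the real difficulty. The clean way to package it is to prove that the fibred map
\[
(\Phi\times\Phi)'\colon B\bfp{q}{q}B\to C\bfp{q}{q}C,\qquad (b_1,b_2)\mapsto(\Phi(b_1),\Phi(b_2)),
\]
is a quotient map: it is continuous, and it is surjective since any $(c_1,c_2)\in C\bfp{q}{q}C$ lifts over a common $x\in\varphi\inv(q_{\cC}(c_1))$ using \ref{item:general quot:preimages}. Granting this, the identity $\mathrm{add}_{\cC}\circ(\Phi\times\Phi)'=\Phi\circ\mathrm{add}_{\cB}$ (valid by fibrewise linearity, where $\mathrm{add}$ denotes the respective addition) has a continuous right-hand side, so $\mathrm{add}_{\cC}$ is continuous by the universal property, giving \ref{cond.B-plus}.

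The obstacle is precisely showing that $(\Phi\times\Phi)'$ is a quotient map, which I would do by showing it is open. In contrast to $\Phi\times\Phi\colon B\times B\to C\times C$, which is open as a product of open maps, the restriction to the fibre products is delicate because two lifts chosen in $B$ need not lie over the same point of $X$. I would prove openness through the net-lifting characterization of open maps: given $(c_1^i,c_2^i)\to(c_1,c_2)$ in $C\bfp{q}{q}C$ and a lift $(b_1,b_2)$ over a common base $x_0$, I would lift the base net along the open map $\varphi$ to obtain $\xi_i\to x_0$ and then lift both $c_1^i$ and $c_2^i$ over the \emph{same} $\xi_i$ via \ref{item:general quot:preimages}. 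The crux then reduces to the auxiliary fact that if $\beta_i\in B(\xi_i)$ with $\xi_i\to x_0$ and $\Phi(\beta_i)\to\Phi(\beta)$ for some $\beta\in B(x_0)$, then $\beta_i\to\beta$ in $B$ --- equivalently, that $\Phi$ identifies $B$ homeomorphically with the pull-back $\varphi^{*}\cC$. I would establish this using the net-convergence criterion of Lemma~\ref{lm.conv.equivalence.conditions} in the \USCBb\ $\cB$, the fibrewise isometry \ref{item:general quot:Phi lin isometry}, and --- crucially --- the openness of $\Phi$ (the property that fails for a general continuous fibrewise isomorphism); this step carries the genuine content of the proposition.
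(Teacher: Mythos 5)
Your treatment of \ref{cond.B-USC}, \ref{cond.B-times} and \ref{cond.B-nets} is correct and matches the paper's proof (the paper verifies \ref{cond.B-times} by pushing an open set forward along $\Phi$ rather than invoking the universal property of quotient maps, and it runs your subnet-lifting argument for \ref{cond.B-nets} as a proof by contradiction, but these are the same arguments). The genuine gap is in \ref{cond.B-plus}. You reduce everything to the claim that $\Phi\bfp{q}{q}\Phi\colon B\bfp{q}{q}B\to C\bfp{q}{q}C$ is open --- equivalently, that $b\mapsto(q_{\cB}(b),\Phi(b))$ is a homeomorphism of $B$ onto the pull-back $\varphi^{*}\cC$ --- and this claim is asserted, not proved; moreover, the ingredients you name cannot prove it. Applying Lemma~\ref{lm.conv.equivalence.conditions} in $\cB$ to show $\beta_i\to\beta$ requires the estimate $\limsup_i\norm{\beta_i-f(\xi_i)}\leq\norm{\beta-f(x_0)}$ for continuous sections $f$ of $\cB$; rewriting both sides via the fibrewise isometry \ref{item:general quot:Phi lin isometry}, this becomes $\limsup_i\norm{\Phi(\beta_i)-(\Phi\circ f)(\xi_i)}\leq\norm{\Phi(\beta)-(\Phi\circ f)(x_0)}$, an instance of upper semi-continuity of $(c,d)\mapsto\norm{c-d}$ on $C\bfp{q}{q}C$ --- which is precisely the conjunction of \ref{cond.B-USC} and \ref{cond.B-plus} for $\cC$ that one is trying to establish. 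The argument is circular. Openness of $\Phi$ does not break the circle by itself: it produces, after passing to a subnet, lifts of the net $\Phi(\beta_i)$ converging to $\beta$, but those lifts lie in fibres $B(\xi_i')$ you cannot prescribe, whereas your auxiliary claim concerns the lifts in the \emph{given} fibres $B(\xi_i)$; relating the two is exactly the original difficulty.

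For comparison, the paper settles this step differently: it does not go through any pull-back statement, but cites its Lemma~\ref{lem:bfp of open is open} for the openness of $\Phi\bfp{q}{q}\Phi$ and then computes directly that $\Phi\bfp{q}{q}\Phi$ maps the open set $+_{B}\inv\bigl(\Phi\inv(U)\bigr)$ onto $+_{C}\inv(U)$, the inclusion ``$\supseteq$'' being exactly where \ref{item:general quot:preimages} and fibrewise linearity enter (both lifts may be taken in one fibre). Your worry about this openness is in fact well founded: Lemma~\ref{lem:bfp of open is open} is stated and proved for bundle maps covering the identity of a common base, where lifts of a compatible pair automatically lie over a common point, and that is exactly what fails for $\Phi$ covering a non-injective $\varphi$. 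But identifying the delicate point is not the same as resolving it: as written, your proposal leaves the decisive step of the proposition unproved, and the route you sketch for it cannot be carried out from the stated hypotheses alone.
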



\begin{proof}
    If $U\subseteq Y$ is open, then $q_{\cC}\inv (U)$ is open in $C$ exactly when $\Phi\inv (q_{\cC}\inv (U))$ is open in $B$, but
    \[
        \Phi\inv (q_{\cC}\inv (U)) = q_{\cB}\inv (\varphi\inv(U))
    \]
    by \assumption{commutativity of the diagram}. As \assumption{$q_{\cB}$ and $\varphi$ are continuous}, $q_{\cC}$ is continuous. 
    Likewise,  \assumption{surjectivity of $q_{\cB}$ and $\varphi$} implies surjectivity of $q_{\cC}$: if $y\in Y$, pick any $x\in \varphi\inv(y)\subseteq X$ and then $b\in q_{\cB}\inv (x)\subseteq B$, so that $\Phi(b)\in q_{\cC}(y)$.
    
    \smallskip
    
    We now need to check Conditions~\ref{cond.B-USC}--
    \ref{cond.B-nets}, all of which follow from  $\cB$ satisfying these conditions and from $\Phi$ being open. To be more precise: \smallskip
    
    For \ref{cond.B-USC}, 
    we need to show that the map $C\to \mathbb{R}_{\geq 0}$, $c\mapsto\norm{c}$, is \usc, or in other words, that
    \(
       \{
           c\in C  \,\mid\,\norm{c} < r
       \}
    \)
    is open for any fixed $r\in \mathbb{R}_{\geq 0}.$
    \assumption{By Assumption~\ref{item:general quot:Phi lin isometry}, we have $\norm{ b }_{\cB}=\norm{\Phi( b )}_{\cC}$,} so that
    $$\{c\in C \,\mid\,\norm{c} < r\} =  \Phi(\{ b \in B \,\mid\,\norm{ b } < r\}).$$ Since \assumption{$\cB$ is a \USCBb\ }and \assumption{$\Phi$ is an open map}, the claim follows.

   \smallskip
   
   For \ref{cond.B-plus}, let $U\subseteq C$ be an open neighborhood. Since \assumption{$\Phi$ is continuous}, $\Phi\inv (U)$ is an open subset of $B$. Since \assumption{$\cB$ is a \USCBb}, $+_{B}$ is continuous on~$\cB$. Consequently, $V:=+_{B}\inv(\Phi\inv(U))$ is an open subset of $B\bfp{q}{q} B$. \assumption{As $\Phi$ is an open map}, so is the map $\Phi\bfp{q}{q} \Phi$ by Lemma~\ref{lem:bfp of open is open}, and hence $\Phi\bfp{q}{q} \Phi(V)$ is an open set in $C\bfp{q}{q} C$. We compute
    \begin{align*}
       \Phi\bfp{q}{q} \Phi(V)
       &=
       \left\{
           \big(\Phi( b ),\Phi( b')\big)
            \,\mid\,
           ( b , b')\in +_{B}\inv(\Phi\inv(U))
       \right\}
       \\
       &=
       \left\{
           \big(\Phi( b ),\Phi( b')\big)
            \,\mid\,
            b + b' \in \Phi\inv(U)
       \right\}
       =
       \left\{
           \big(\Phi( b ),\Phi( b')\big)
            \,\mid\,
           \Phi( b + b') \in U
       \right\}
       =
       +_{C}\inv (U),
    \end{align*}
    so $+_{C}\inv (U)$ is open in $C\bfp{q}{q} C$, meaning that $+_{C}$ is indeed continuous on $\cC$.
    
    \smallskip
    
    For \ref{cond.B-times}, take an arbitrary but fixed $\lambda\in\mathbb{C}$; we have to show that the map $f\colon C\to C,$ $c\mapsto \lambda c$, is continuous. Let $U\subseteq C$ be open, so $\Phi\inv (U)$ is open in $B$. Since \assumption{$\cB$ is a \USCBb},
    \[
       V:=
       \{ b \in B \,\mid\,
       \lambda b \in \Phi\inv (U)\}
    \]
    is open in $B$. Since \assumption{$\Phi$ is open}, $\Phi(V)$ is open in $C$. As
    \begin{align*}
       \Phi(V)
       &=
       \{\Phi( b )  \,\mid\,
       \lambda b \in \Phi\inv (U)\}
       =
       \{\Phi( b )  \,\mid\,
       \lambda \Phi( b )=\Phi(\lambda b )\in U\}
       =
       f\inv (U),
    \end{align*}
    using that \assumption{$\Phi$ is fibrewise linear by \ref{item:general quot:Phi lin isometry}},  it follows that $f$ is continuous.    
    
    \medskip
    
    For \ref{cond.B-nets}, suppose $\{c_{ i }\}_{ i \in I}$ is a net in $C$ such that $q_{\cC}(c_{ i })$ converges to some $ y\in Y$ and $\norm{c_{ i }}\to 0$. Assume for a contradiction that $\{c_i \}_{ i \in I}$ does not converge to $0_{ y}\in\cC_{ y}$ in $C$. In other words, there exists an open neighborhood $U$ of $0_{ y}$ in $C$ such that 
    \begin{equation}\label{eq:choice of U}
        \text{for all } i \in I\text{, there exists }j\geq  i \text{ such that }c_{j}\notin U.
    \end{equation} 
    
    The outline of the proof is as follows. We will first construct a subnet $\{c_{h(\beta)}\}_{\beta\in J }$ of $\{c_i \}_{ i \in I}$ which lies completely outside of~$U$. For a subnet $\{c_{f(\gamma)}\}_{\gamma\in \Gamma}$ of $\{c_{h(\beta)}\}_{\beta\in  J }$ and any fixed representative $ x\in \varphi\inv ( y)\subseteq X$, we will then find lifts $b_{\gamma}\in \Phi\inv (c_{f(\gamma)})\subseteq B$ which satisfy both $\lim_{\gamma} q_{\cB} (b_\gamma)=  x$ and $\lim_{\gamma} \norm{b_{\gamma}}= 0$. Since $\cB$ is \usc, this implies $b_{\gamma}\to 0_{ x }$ in $B$.
    Since $\Phi\inv (U)$ is an open neighborhood of $0_{ x }$ in $B$ by \assumption{continuity of $\Phi$}, this contradicts that $\Phi(b_{\gamma})= c_{f(\gamma)}\notin U$ for all $\gamma$.
    
    \smallskip
    
    Let
    \[
        J := \{ ( i , j) \in I\times I \,\mid\, i \leq j\text{ and } c_{j}\notin U
       \},
    \]
    which is non-empty by Assumption~\eqref{eq:choice of U} on our net.
    Define a preorder on $ J $ by
    \[
       ( i ,j)\leq ( i', j') :\iff
        i \leq i'\text{ and } j\leq j'.
    \]
    To see that $( J ,\leq)$ is directed, take two elements $(i_{1},j_{1}), (i_{2}, j_{2})\in  J $. As $I$ is directed, there exist $i_{3},j_{3}'\in I$ such that  $i_{i}\leq i_{3}$ and   $j_{i}\leq j_{3}'$ for $i=1,2$ and also $i_{i}\leq j_{3}'$  for $i=1,2,3$. By Assumption~\eqref{eq:choice of U}, there exists $j_{3}\geq j_{3}'$ such that $c_{j_{3}}\notin U. $  Note that $j_{3}$ is greater than or equal to  all other elements we have considered, so that  $(i_{3},j_{3})\in  J $ and $(i_{3},j_{3})$ dominates both $(i_{1},j_{1})$ and $(i_{2}, j_{2})$. 
    
    The map
    \(
       h\colon  J \to I, \,h( i ,j)=j,
    \)
    is monotone and final.
    Thus, $(c_{h(\beta)})_{\beta\in  J }$ is a subnet of $(c_{ i })_{ i \in I}$ that lives completely outside of $U$ by construction. The assumptions that $\lim_i  \norm{c_{ i }} = 0$ and $\lim_i  q_{\cC}(c_{ i })=  y$ imply that  $\lim_{\beta}\norm{c_{h(\beta)}}= 0$ and that $ y_{\beta} := q_{\cC}(c_{h(\beta)})$ converges to $ y$ also.
    
    \smallskip
    
    Now fix any $ x\in \varphi\inv ( y)\subseteq X$. Since \assumption{$\varphi$ is open and surjective},
    \cite[II.13.2]{FellDoranVol1} asserts that we can find a subnet set $\left\{ y_{f(\gamma)}\right\}_{\gamma\in \Gamma}$ of $\{ y_{\beta}\}_{\beta\in  J }$ and a net $\{ x_{\gamma}\}_{\gamma\in \Gamma}$ in $X$, indexed by the same set, such that  $ x_\gamma \to x $ and $\varphi( x_\gamma)= y_{f(\gamma)}$ for all $\gamma\in \Gamma$.

    For each $\gamma\in \Gamma$, let $b_{\gamma}$ be an element of $\Phi\inv(c_{f(\gamma)})\cap q_{\cB}\inv(x_{\gamma})$, which exists \assumption{because of \ref{item:general quot:preimages}}; in particular,
    \[
       \lim_{\gamma} q_{\cB} (b_{\gamma}) = \lim_{\gamma}  x_{\gamma} =  x 
       \quad\text{ and }\quad
       \lim_{\gamma}\norm{b_{\gamma}} = \lim_{\gamma}\norm{ c_{f(\gamma)} } = \lim_{i}\norm{ c_{i} } = 0,
    \]
    since, by construction, the net $\{c_{f(\gamma)}\}_{\gamma\in \Gamma}$ is a subnet of $\{c_{i}\}_{i\in I }$.
    Since \assumption{$\cB$ is a \USCBb}, it follows from Condition~\ref{cond.B-nets} of $\cB$ that $\lim_{\gamma}b_{\gamma}= 0_{ x }$ in $B$, implying that $\lim_{\gamma}\Phi(b_{\gamma})= 0_{ y}$ in $C$ by \assumption{fibrewise linearity of $\Phi$}. But by construction, $\Phi(b_{\gamma}) = c_{f(\gamma)} \notin U$ for all $\gamma\in \Gamma$, which is a contradiction as $U$ is a neighborhood of $0_{ y}$ in $C$.
\end{proof}

\begin{corollary}\label{cor:cP:USCBb}
    With the described structure on the bundle $\cP=(q_{\cP}\colon P \to \Z)$, it is a \USCBb.
\end{corollary}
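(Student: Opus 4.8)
The plan is to deduce the statement as a direct application of the abstract transfer principle Proposition~\ref{prop:cd}, with the r\^ole of the source \USCBb\ there played by $\cK=\cM\otimes_{\cC\z}\cN$ and that of the target bundle played by $\cP$. Concretely, I would consider the commutative square
\[
\begin{tikzcd}[ampersand replacement=\&]
    K \ar[r, "Q"]\ar[d, "q_{\cK}"'] \& P \ar[d, "q_{\cP}"]\\
    Z \ar[r, "{[\,\cdot\,]_{\ipscriptstyle\cH}}"] \& \Z
\end{tikzcd}
\]
where $Z=\X\bfp{s}{r}\Y$ and $\Z=\X\ast_{\cH}\Y$; its commutativity is precisely the defining identity $q_{\cP}([\xi])=[q_{\cK}(\xi)]_{\ipscriptstyle\cH}$ from Lemma~\ref{lem:q_cP open, cts, surjective}. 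The two base spaces are \LCH: $Z$ is closed in the \LCH\ space $\X\times\Y$, and $\Z$ is a groupoid equivalence, hence \LCH, by Corollary~\ref{cor:bfp-of-gpd}. Moreover, $\cK$ is a \USCBb\ by Lemma~\ref{lem:topology-on-cK}, so all hypotheses on the source bundle of Proposition~\ref{prop:cd} are in place.

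It then remains to check the three numbered conditions of that proposition. For~\ref{item:general quot:quotient maps}, the map $Q$ is an open quotient map by Definition~\ref{def:cP} (it is the quotient map for the quotient topology) and by Proposition~\ref{prop:Q-open}, while $[\,\cdot\,]_{\ipscriptstyle\cH}$ is an open quotient map by the construction of $\Z$ and by Corollary~\ref{cor:bfp-of-gpd}. For~\ref{item:general quot:preimages}, given $p\in\cP$ and $(x',y')\in Z$ with $q_{\cP}(p)=[x',y']_{\ipscriptstyle\cH}$, the surjectivity half of Lemma~\ref{lem:Pz=MxNy}, applied with the representative $(x',y')$ of the class $z:=q_{\cP}(p)$, produces an element $\xi\in\cM_{x'}\otimes_{\cC\z}\cN_{y'}=\cK_{(x',y')}$ with $Q(\xi)=p$; this is exactly the required lift lying in the fibre over $(x',y')$.

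The condition requiring the most care is~\ref{item:general quot:Phi lin isometry}, that $Q$ restricts to a fibrewise linear isometry. The linear structure on each fibre $\cP_z$ is the one supplied by Proposition~\ref{prop:linear-fibres}, and I would equip $\cP_z$ with the norm transported across the linear homeomorphism $Q|\colon\cM_x\otimes_{\cC\z}\cN_y\to\cP_z$ of Lemma~\ref{lem:Pz=MxNy}; this is the Banach-space norm referred to there, and under it $\cP_z$ is complete and $Q|$ is by construction an isometry. The only genuine point to verify is that this norm is independent of the chosen representative $(x,y)$ of $z$: two such fibres $\cM_x\otimes_{\cC\z}\cN_y$ and $\cM_{x'}\otimes_{\cC\z}\cN_{y'}$ are identified by a map $\Psi_{h}$, which is an isomorphism of \ib s and hence isometric (Theorem~\ref{thm:Psi-cts}, built from the fibrewise isometries $\Omega,\Lambda$ of Lemma~\ref{lem:MCN-iso-HMN}), so the transported norm does not depend on the representative. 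With~\ref{item:general quot:quotient maps}--\ref{item:general quot:preimages} in hand, Proposition~\ref{prop:cd} immediately yields that $\cP$ is a \USCBb, completing the proof. I do not expect any real difficulty beyond this bookkeeping, since the substantive work---openness of $Q$, the fibrewise homeomorphism onto $\cP_z$, and the abstract transfer principle itself---has already been established in the preceding results.
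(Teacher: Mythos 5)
Your proposal is correct and follows essentially the same route as the paper: the paper's proof likewise applies Proposition~\ref{prop:cd} to the commutative square formed by $Q$ and $[\cdot]_{\ipscriptstyle\cH}$, citing Proposition~\ref{prop:Q-open} and Corollary~\ref{cor:bfp-of-gpd} for Assumption~\ref{item:general quot:quotient maps} and Lemma~\ref{lem:Pz=MxNy} for Assumptions~\ref{item:general quot:Phi lin isometry} and~\ref{item:general quot:preimages}. Your extra check that the fibre norm on $\cP_z$ is independent of the chosen representative (via the fibrewise isometries $\Psi_h$) is a detail the paper leaves implicit, and it is correctly resolved.
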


\begin{proof}
    We will apply Proposition~\ref{prop:cd} to the following diagram which is commutative by construction; here, $K$ is the total space of the bundle $\cK=\cM\otimes_{\cC\z}\cN$.
    \[
    \begin{tikzcd}[ampersand replacement=\&]
        K
        \ar[r, "Q"]\ar[d, "q_{\cK}"']
        \&
        P
        \ar[d, "q_{\cP}"]
        \\
        \X\bfp{s}{r}\Y
        \ar[r, "{[\cdot]_{\ipscriptstyle\cH}}"]
        \&
        \Z
    \end{tikzcd}
    \]
    
    First recall that each fibre    $\cP_{z}$ has the structure of a complex Banach space by Lemma~\ref{lem:Pz=MxNy} and that $\cK$ is a \USCBb\ by  Lemma~\ref{lem:topology-on-cK}.
    By Proposition~\ref{prop:Q-open} and Corollary~\ref{cor:bfp-of-gpd}, the quotient maps $Q$ and  $[\cdot]_{\ipscriptstyle\cH}$ are open, so we have Assumption~\ref{item:general quot:quotient maps}. By definition of the Banach space
    structure on the fibres of $\cC$ (see Lemma~\ref{lem:Pz=MxNy}), we have both Assumption~\ref{item:general quot:Phi lin isometry} and~\ref{item:general quot:preimages}.
\end{proof}

\begin{lemma}\label{lem:cP:actions} 
    The left $\cB$-action on $\cM$ induces a left $\cB$-action on $\cP$. To be precise, the action is given by $b\cdot [\xi] := [b\cdot\xi]$ for $b\in B $ and $\xi\in \cK=\cM\otimes_{\cC\z}\cN$ with $s_{\cB}(b) =  r_{\cK} (\xi).$
    
    Similarly, the right $\cD$-action on $\cN$ induces a right $\cD$-action on $\cP$ given by $[\xi]\cdot d := [\xi\cdot d]$ for $d\in\cD$ and $\xi\in \cM\otimes_{\cC\z}\cN$ with $ s_{\cK} (\xi) = r_{\cD}(d).$
\end{lemma}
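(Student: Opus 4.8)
The plan is to prove everything for the left $\cB$-action, the right $\cD$-action being entirely analogous via the map $Q\ast\mathrm{id}$. Write $\cK=\cM\otimes_{\cC\z}\cN$ and, for $\xi\in\cK$, abbreviate $q_{\cK}(\xi)=(x,y)$. The formula $b\cdot[\xi]:=[b\cdot\xi]$ uses the left $\cB$-action on $\cK$ from Proposition~\ref{prop:cK:actions}, which is defined precisely when $s_{\cB}(b)=r_{\cK}(\xi)=r_{\cP}([\xi])$, so the stated domain is correct. First I would establish well-definedness: suppose $\xi\FBRel\xi'$, witnessed by $h\in\cH$ with $x=x'h$, $y=h\inv y'$, and $\Psi_{h}(\xi)=\xi'$ as in Lemma~\ref{lem:sim-on-MastN}. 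Since the left $\cG$- and right $\cH$-actions on $\X$ commute, $p_{\cB}(b)\cdot x=(p_{\cB}(b)\cdot x')\,h$, and the remark following Theorem~\ref{thm:Psi-cts} gives $t_{\cK}(b\cdot\xi)=t_{\cK}(\xi)$, so $\Psi_{h}(b\cdot\xi)$ makes sense. Bi-equivariance \ref{item:Psi:biequivariant} then yields $\Psi_{h}(b\cdot\xi)=b\cdot\Psi_{h}(\xi)=b\cdot\xi'$, so $b\cdot\xi\FBRel b\cdot\xi'$ via the same $h$; hence $[b\cdot\xi]=[b\cdot\xi']$ and the action is well defined.

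Next I would verify the axioms of Definition~\ref{def:USCBb-action}. For \ref{item:FA:fibre}, unwinding definitions gives $q_{\cP}(b\cdot[\xi])=[\,p_{\cB}(b)\cdot q_{\cK}(\xi)\,]_{\ipscriptstyle\cH}=p_{\cB}(b)\cdot[q_{\cK}(\xi)]_{\ipscriptstyle\cH}=p_{\cB}(b)\cdot q_{\cP}([\xi])$, using that the $\cG$-action on $\Z=\X\ast_{\cH}\Y$ from Corollary~\ref{cor:bfp-of-gpd} is $g\cdot[x,y]_{\ipscriptstyle\cH}=[g\cdot x,y]_{\ipscriptstyle\cH}$ together with \ref{item:FA:fibre} for $\cK$. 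Associativity \ref{item:FA:assoc} is immediate from the same property of the $\cB$-action on $\cK$: $a\cdot(b\cdot[\xi])=[a\cdot(b\cdot\xi)]=[(ab)\cdot\xi]=(ab)\cdot[\xi]$. For the norm bound \ref{item:FA:norm}, recall from Lemma~\ref{lem:Pz=MxNy} that each restricted quotient map $Q|$ is an isometric linear isomorphism, so $\norm{[\eta]}_{\cP}=\norm{\eta}_{\cK}$ for any representative $\eta$; combined with \ref{item:FA:norm} for $\cK$ this gives $\norm{b\cdot[\xi]}_{\cP}=\norm{b\cdot\xi}_{\cK}\leq\norm{b}\,\norm{\xi}_{\cK}=\norm{b}\,\norm{[\xi]}_{\cP}$.

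The main obstacle is continuity of $(b,[\xi])\mapsto b\cdot[\xi]$, and the cleanest route is to exploit that a quotient map factors continuous maps. I would consider the commuting square whose top arrow is the $\cB$-action $\mathrm{act}_{\cK}\colon\cB\bfp{s}{r}\cK\to\cK$, $(b,\xi)\mapsto b\cdot\xi$ (continuous by Proposition~\ref{prop:cK:actions}), left arrow $\mathrm{id}\ast Q\colon\cB\bfp{s}{r}\cK\to\cB\bfp{s}{r}\cP$, right arrow $Q\colon\cK\to\cP$, and bottom arrow the map $\mathrm{act}_{\cP}$ to be shown continuous; commutativity is exactly $Q(b\cdot\xi)=[b\cdot\xi]=\mathrm{act}_{\cP}(b,[\xi])$. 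By Remark~\ref{rmk:id-ast-Q-open} the map $\mathrm{id}\ast Q$ is an open continuous surjection, hence a quotient map, and by the well-definedness above the continuous composite $Q\circ\mathrm{act}_{\cK}$ is constant on the fibres of $\mathrm{id}\ast Q$. The universal property of quotient maps then forces the induced map $\mathrm{act}_{\cP}$ to be continuous, completing the proof; the right $\cD$-action is obtained by the symmetric argument using $Q\ast\mathrm{id}$ from Remark~\ref{rmk:id-ast-Q-open}.
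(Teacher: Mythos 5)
Your proposal is correct and follows essentially the same route as the paper: well-definedness via the bi-equivariance property \ref{item:Psi:biequivariant} of $\Psi$, the axioms \ref{item:FA:fibre}--\ref{item:FA:norm} inherited from the action on $\cK$ together with the isometry of $Q|$ from Lemma~\ref{lem:Pz=MxNy}, and continuity from the openness of $\mathrm{id}\ast Q$ (Remark~\ref{rmk:id-ast-Q-open}) combined with continuity of the $\cB$-action on $\cK$. The only difference is cosmetic: where you invoke the universal property of the quotient map $\mathrm{id}\ast Q$, the paper carries out the same argument by hand, verifying that $f\inv(U)=(\mathrm{id}\ast Q)\bigl(\{(b,\xi)\,\mid\,b\cdot\xi\in Q\inv(U)\}\bigr)$ is open.
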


\begin{proof}
    We will only deal with the left action; the same proof, replacing left- with right-arguments and vice versa, shows the claim about the right action.
    
    We have seen in Proposition~\ref{prop:cK:actions} that $\cK$ has a left $\cB$-action determined by $b\cdot (m\otimes n)=(b\cdot m)\otimes n$, which explains what `$b\cdot \xi$' means. Let us first note that our map is well defined, so suppose $[\xi]=[\eta].$ Then there exists $h\in \cH$ such that $\xi=\Psi_{h}(\eta).$ We have seen in  Theorem~\ref{thm:Psi-cts}, \ref{item:Psi:biequivariant}, that $b\cdot \xi=b\cdot \Psi_{h}(\eta) = \Psi_{h}(b\cdot \eta).$ Thus,  $[b\cdot \xi]=[\Psi_{h}(b\cdot \eta)]=[b\cdot \eta],$ so the definition of $b\cdot [\xi]$ does not depend on the chosen representative $\xi$.
    
    To see that the map
    \[
       f\colon \quad \cB\bfp{s}{r}\cP \to \cP, \quad (b, p) \mapsto b\cdot p,
    \]
    is continuous, let $U\subseteq \cP$ be an open set, i.e., $Q\inv (U)$ is open in~$\cK$. As the left $\cB$-action on $\cK$ is continuous (see Proposition~\ref{prop:cK:actions}), we know that
    \[
       V
       :=
       \{
           (b,\xi) \in \cB\bfp{s}{r}\cK  \,\mid\,
           b\cdot \xi\in Q\inv (U)
       \}
    \]
    is open in $\cB\bfp{s}{r}\cK$. By Remark~\ref{rmk:id-ast-Q-open}, the set $(\mathrm{id}\ast Q)(V)$ is thus open in $\cB\bfp{s}{r}\cP$. Since
    \begin{align*}
       (\mathrm{id}\ast Q)(V)
       &=
       \left\{
           (b,[\xi]) \in \cB\bfp{s}{r}\cP  \,\mid\,
           b\cdot \xi\in Q\inv (U)
       \right\}
       \\
       &=
       \left\{
           (b,[\xi]) \in \cB\bfp{s}{r}\cP  \,\mid\,
           b\cdot [\xi]=[b\cdot \xi]\in U
       \right\}
       \\
       &=
       \left\{
           (b,p) \in \cB\bfp{s}{r}\cP  \,\mid\,
           b\cdot p\in U
       \right\}
       =
       f\inv (U),
    \end{align*}
    it follows that $f$ is continuous.
    It remains to check the numbered conditions of an action in Definition \ref{def:USCBb-action}.
    
    \smallskip
    
       For \ref{item:FA:fibre}, we compute:
       \begin{align*}
           q_{\cP}(b\cdot [\xi])
           &=
           q_{\cP}([b\cdot\xi])
           =
           [q_{\cK}(b\cdot\xi)]_{\ipscriptstyle\cH}
           \\
           &=
           [p_{\cB}(b)\cdot q_{\cK}(\xi)]_{\ipscriptstyle\cH},
           \quad \text{ by the same property for } \cK.
       \end{align*}
       By definition of the left $\cG$-action on $\Z$, we have for $g\in \cG$ and compatible $[x,y]_{\ipscriptstyle\cH}$ in $\Z$ that $[g\cdot x, y]_{\ipscriptstyle\cH} = g\cdot [x,y]_{\ipscriptstyle\cH}$. Both combined yield:
       \begin{align*}
           q_{\cP}(b\cdot [\xi])
           =
           p_{\cB}(b)\cdot [q_{\cK}(\xi)]_{\ipscriptstyle\cH}
           =
           p_{\cB}(b)\cdot q_{\cP}([\xi]),
           \quad\text{ as claimed.}
       \end{align*}

       Property \ref{item:FA:assoc}, i.e., that $b'\cdot (b\cdot [\xi])= (b'b)\cdot [\xi]$ for all compatible $b',b\in \cB$ and $[\xi]\in\cP$, follows from the same property of~$\cK$.
       
       \smallskip
       
       For \ref{item:FA:norm}, we compute
       \begin{alignat*}{2}
           \norm{b\cdot [\xi]}
           &=
           \norm{[b\cdot \xi]}
           =
           \norm{b\cdot \xi}
           \quad &&\text{by Lemma~\ref{lem:Pz=MxNy}}
           \\
           &\leq
           \norm{b}\,\norm{\xi}
           &&\text{by \ref{item:FA:norm} for~$\cK$}
           \\
           &=
           \norm{b}\,\norm{[\xi]}&&\text{by Lemma~\ref{lem:Pz=MxNy}}.
       \end{alignat*}
       This proves our claim.\qedhere
\end{proof}

Clearly, the left- and right-actions on $\cP$ commute, i.e., 
Condition~\ref{item:FE:actions} in Definition~\ref{def:FBequivalence} is satisfied.
In the following, we will use superscripts to provide more clarity.
\begin{proposition}
\label{prop:cP:ip}
    There exist sesquilinear, continuous maps $\linner[\cP]{\cB}{\cdot}{\cdot}$ on $\cP\bfp{s}{s}\cP$ and $\rinner[\cP]{\cD}{\cdot}{\cdot}$ on $\cP\bfp{r}{r}\cP$ defined for $\bigl([\xi], [\xi']\bigr)$ in the appropriate set by
    \begin{align*}
       \linner[\cP]{\cB}{[\xi]}{[\xi']}
       :=
       \lInner[\cK]{\cB}{\xi}{\xi'}
       \quad\text{resp.}\quad
       \rinner[\cP]{\cD}{[\xi]}{[\xi']}
       :=\ &
       \rInner[\cK]{\cD}{\xi}{\xi'}.
    \end{align*}
    These maps further satisfy the conditions in \ref{item:FE:ip} of Definition~\ref{def:FBequivalence}.
\end{proposition}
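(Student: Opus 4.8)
The plan is to transport the inner products constructed on $\cK=\cM\otimes_{\cC\z}\cN$ in Proposition~\ref{prop:cK:ip} down to the quotient $\cP$; the entire content is to check that this transport is legitimate and preserves the required structure. I will discuss only $\linner[\cP]{\cB}{\cdot}{\cdot}$, as $\rinner[\cP]{\cD}{\cdot}{\cdot}$ is symmetric.

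First I would settle well-definedness. Since $s_{\cP}([\xi])=s_{\cK}(\xi)$ for every representative, a pair $([\xi],[\xi'])\in\cP\bfp{s}{s}\cP$ lifts to $(\xi,\xi')\in\cK\bfp{s}{s}\cK$, so the right-hand side $\lInner[\cK]{\cB}{\xi}{\xi'}$ is defined. The crucial point is independence of representatives, which by Lemma~\ref{lem:sim-on-MastN} reduces to the $\Psi$-invariance $\lInner[\cK]{\cB}{\Psi_{h}(\xi)}{\Psi_{h'}(\xi')}=\lInner[\cK]{\cB}{\xi}{\xi'}$ for all admissible $h,h'\in\cH$. As $\lInner[\cK]{\cB}{\cdot}{\cdot}$ is continuous and the elements $(m\cdot c)\otimes n$ with $c\in\cC_{h}$ span a dense subspace of each fibre (Remark~\ref{rmk:MW-6.2-for-words}), it suffices to check this on such elementary tensors. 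First I would expand $\linner{\cC}{c'\cdot n'}{c\cdot n}=c'\linner{\cC}{n'}{n}c^{*}$ using \ref{item:FE:ip:adjoint} and \ref{item:FE:ip:C*linear} for $\cN$, while on the un-$\Psi$'d side I would move $c$ across the $\cB$-valued inner product via \ref{item:FHE:ip:adjointables} for $\cM$; both expressions collapse to $\linner{\cB}{m}{m'\cdot(c'\linner{\cC}{n'}{n}c^{*})}$, giving the invariance.

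Next, continuity, sesquilinearity, and the easy algebraic conditions. For continuity I would note that $Q\bfp{s}{s}Q\colon\cK\bfp{s}{s}\cK\to\cP\bfp{s}{s}\cP$ is a continuous open surjection, hence a quotient map (using that $Q$ is open by Proposition~\ref{prop:Q-open} and that fibre products of open maps are open by Lemma~\ref{lem:bfp of open is open}); since $\linner[\cP]{\cB}{\cdot}{\cdot}\circ(Q\bfp{s}{s}Q)=\lInner[\cK]{\cB}{\cdot}{\cdot}$ is continuous, so is $\linner[\cP]{\cB}{\cdot}{\cdot}$. Sesquilinearity follows from that of $\lInner[\cK]{\cB}{\cdot}{\cdot}$ once, for a fixed target fibre $\cP_{z}$, all representatives are chosen inside one fibre $\cM_{x}\otimes_{\cC\z}\cN_{y}$ via the linear isomorphism $Q|$ of Lemma~\ref{lem:Pz=MxNy}. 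Condition~\ref{item:FE:ip:adjoint} is immediate from \ref{item:cK:ip-adj}, and \ref{item:FE:ip:C*linear} follows from \ref{item:cK:ip-C*linear} together with $b\cdot[\xi]=[b\cdot\xi]$ (Lemma~\ref{lem:cP:actions}). For the fibre condition \ref{item:FE:ip:fibre} I would combine \ref{item:cK:ip-fibre}, which gives $p_{\cB}(\lInner[\cK]{\cB}{\xi}{\xi'})=\Leoq[Z]{\cG}{q_{\cK}(\xi)}{q_{\cK}(\xi')}$, with the identity $\leoq[\Z]{\cG}{[z]_{\ipscriptstyle\cH}}{[z']_{\ipscriptstyle\cH}}=\Leoq[Z]{\cG}{z}{z'}$ from Proposition~\ref{prop:from-preequiv-to-equivalence}, recalling that $\Z=Z/\Rel$ by Corollary~\ref{cor:bfp-of-gpd}.

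The main obstacle is the compatibility condition~\ref{item:FE:ip:compatibility}, which is genuinely stronger than anything available on $\cK$: the bundle $\cK$ satisfies only the weaker Condition~\ref{item:FHE:2e}, and in fact the natural representatives of the two sides live in different, merely $\FBRel$-related, fibres of $\cK$, so the balancing is indispensable. Here I would use that $\cM$ and $\cN$ are now actual equivalences. Writing $\linner[\cP]{\cB}{[\xi_{1}]}{[\xi_{2}]}\cdot[\xi_{3}]=[\,\lInner[\cK]{\cB}{\xi_{1}}{\xi_{2}}\cdot\xi_{3}\,]$ and $[\xi_{1}]\cdot\rinner[\cP]{\cD}{[\xi_{2}]}{[\xi_{3}]}=[\,\xi_{1}\cdot\rInner[\cK]{\cD}{\xi_{2}}{\xi_{3}}\,]$ and reducing to elementary tensors $\xi_{i}=m_{i}\otimes n_{i}$, I would first apply the compatibility and adjoint relations of $\cM$ to rewrite $\lInner[\cK]{\cB}{\xi_{1}}{\xi_{2}}\cdot m_{3}=m_{1}\cdot(c_{1}c_{2})$ with $c_{1}:=\linner{\cC}{n_{1}}{n_{2}}$ and $c_{2}:=\rinner{\cC}{m_{2}}{m_{3}}$, so the left-hand class is $[(m_{1}\cdot c_{1}c_{2})\otimes n_{3}]$; then the balancing relation of Remark~\ref{rmk:balancing-in-cP} slides $c_{1}c_{2}$ across the tensor to give $[m_{1}\otimes((c_{1}c_{2})\cdot n_{3})]$; finally the compatibility and adjoint relations of $\cN$ show $n_{1}\cdot\rInner[\cK]{\cD}{\xi_{2}}{\xi_{3}}=(c_{1}c_{2})\cdot n_{3}$, which matches the right-hand class. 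The balancing step is exactly where passing to $\cP$ is essential, and the only delicate bookkeeping is checking that $c_{1}c_{2}$ is composable and lands in the correct $\cC$-fibre.
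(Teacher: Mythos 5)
Your proposal is correct and follows essentially the same route as the paper's proof: well-definedness by the identical elementary-tensor computation (moving $c,c'$ across the inner products, then extending by linearity, density, and continuity of $\lInner[\cK]{\cB}{\cdot}{\cdot}$ and $\Psi$), continuity via openness of $Q$, the remaining axioms imported from Proposition~\ref{prop:cK:ip} and Lemma~\ref{lem:cP:actions}, and Condition~\ref{item:FE:ip:compatibility} by the same elementary-tensor calculation whose two sides are reconciled exactly by the balancing relation of Remark~\ref{rmk:balancing-in-cP}. The only cosmetic deviation is that for Condition~\ref{item:FE:ip:fibre} you combine Proposition~\ref{prop:cK:ip}\ref{item:cK:ip-fibre} with Proposition~\ref{prop:from-preequiv-to-equivalence} and Corollary~\ref{cor:bfp-of-gpd}, where the paper instead cites the explicit computation of Remark~\ref{rmk:why-cP-is-not-faulty}; these are equivalent (and your order of the factors $\linner{\cC}{n_{1}}{n_{2}}\rinner{\cC}{m_{2}}{m_{3}}$ is in fact the composable one, correcting a transposition in the paper's intermediate expression).
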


\begin{proof}
    As always, we will only deal with the left (pre-)inner product $\linner[\cP]{\cB}{\cdot}{\cdot}$.
    Let us first check that it is well defined for elementary tensors: suppose $\xi=(m\cdot c)\otimes n$ and $\xi'=(m'\cdot c')\otimes n'$ with $c\in\cC_{h}, c'\in\cC_{h'}$, so that $\xi$ is equivalent to $m \otimes (c\cdot n)$ and $\xi'$ to $m'\otimes (c'\cdot n')$. We have
    \begin{alignat*}{2}
       \lInner[\cK]{\cB}{\xi}{\xi'}
       &=
       \lInner[\cK]{\cB}{(m\cdot c)\otimes n}{(m'\cdot c')\otimes n'}
        \\       &
       =
       \linner{\cB}{m\cdot c}{(m'\cdot c')\cdot \linner{\cC}{n'}{n} }
       \\
       &=
       \linner{\cB}{m}{
       m'\cdot \bigl(c' \linner{\cC}{n'}{n}c^*\bigr) }
       \qquad&&
       \text{by Corollary~\ref{cor:adjointable}}.
    \end{alignat*}
    Note that
    \(
       c' \linner{\cC}{n'}{n}c^*
       =
       \linner{\cC}{c' \cdot n'}{c\cdot n},
    \) 
    so that all in all:
     \begin{align*}
       \lInner[\cK]{\cB}{\xi}{\xi'}
       &=
       \linner{\cB}{m}{m'\cdot \linner{\cC}{c' \cdot n'}{c\cdot n} }
       =
       \lInner[\cK]{\cB}{ m\otimes (c\cdot n)}{m' \otimes (c'\cdot n')}
       \\
       &=
       \lInner[\cK]{\cB}{\Psi_{h}(\xi)}{\Psi_{h'}(\xi')}
       .
    \end{align*}
    By linearity, we conclude that
    the same equation holds for all $\xi \in \cM_{x} \odot \cN_{y}$ and $\xi'\in \cM_{x'} \odot \cN_{y'}$ for which $s_{\Y}(y)=s_{\Y}(y')$ and for all compatible $h,h'$. By continuity of the inner product on $\cK$ and continuity of $\Psi$ (see Proposition~\ref{prop:cK:ip} resp.\ Theorem~\ref{thm:Psi-cts}), uniqueness of limits implies that the equality holds for all $(\xi,\xi')\in\cK\bfp{s}{s}\cK$ and compatible $h,h'.$ Consequently, the (pre-)inner product on $\cP$ is well defined.
    
    \smallskip
    
    To see that $f:=\linner[\cP]{\cB}{\cdot}{\cdot}$ is continuous, let $U\subseteq \cB$ be open. By continuity of $\tilde{f}:=\lInner[\cK]{\cB}{\cdot}{\cdot}$, we know that  $\tilde{f}\inv (U)$ is open in $\cK\bfp{s}{s}\cK.$ As the quotient map $Q$ is open by Proposition~\ref{prop:Q-open}, $(Q\bfp{s}{s}Q)(\tilde{f}\inv (U))$ is open in $\cP\bfp{s}{s}\cP.$ As
    \begin{align*}
       (Q\bfp{s}{s}Q)(\tilde{f}\inv (U))
       &
       =
       \left\{
           ([\xi],[\xi']) \,\mid\,
           \linner[\cP]{\cB}{[\xi]}{[\xi']}=
           \lInner[\cK]{\cB}{\xi}{\xi'}
           \in U
       \right\}
       =
       f\inv(U),
    \end{align*}
    $f$ is continuous.

    For the remaining conditions of Definition~\ref{def:FBequivalence}, note that sesquilinearity follows from the sesquilinearity of the inner product on $\cK$; Condition~\ref{item:FE:ip:fibre} follows from our computations in Remark~\ref{rmk:why-cP-is-not-faulty}; Conditions~\ref{item:FE:ip:adjoint} and \ref{item:FE:ip:C*linear} follow from Proposition~\ref{prop:cK:ip}~\ref{item:cK:ip-adj} and \ref{item:cK:ip-C*linear}.
    
    Lastly, for Condition~\ref{item:FE:ip:compatibility}, it suffices to prove $\lInner[\cP]{\cB}{[\xi_{1}]}{[\xi_{2}]}\cdot [\xi_{3}]  = [\xi_{1}]\cdot \rInner[\cP]{\cD}{[\xi_{2}]}{[\xi_{3}]}$   for elementary tensors $\xi_{i}=m_{i} \otimes n_{i}$. We compute:
       \begin{align*}
           \lInner[\cK]{\cB}{ \xi_{1}}{\xi_{2}} \cdot \xi_{3}
           &
           =
           \linner{\cB}{m_{1}}{m_{2}\cdot \linner{\cC}{n_{2}}{n_{1}} } \cdot \xi_{3}
           =
           \left(\linner{\cB}{m_{1}}{m_{2}\cdot \linner{\cC}{n_{2}}{n_{1}} } \cdot m_{3}\right)\otimes n_{3}
           \\
           &=
           \left( m_{1}\cdot \rinner{\cC}{m_{2}\cdot \linner{\cC}{n_{2}}{n_{1}} }{m_{3}}\right) \otimes n_{3}
           =\left( m_{1}\cdot c\right) \otimes n_{3}
           ,
       \end{align*}
       where
       \begin{align*}
           c
          :=\ &
          \rinner{\cC}{m_{2}\cdot \linner{\cC}{n_{2}}{n_{1}} }{m_{3}}
           =
           \rinner{\cC}{m_{2} }{m_{3}} \linner{\cC}{n_{1}}{n_{2}}
           =
           \linner{\cC}{n_{1}}{\rinner{\cC}{m_{3}}{m_{2}}\cdot n_{2}}.
       \end{align*}
       
       On the other hand,
       \begin{align*}
           \xi_{1}\cdot \rInner[\cK]{\cD}{\xi_{2}}{\xi_{3}}
           &=
           \xi_{1}\cdot \rinner{\cD}{\rinner{\cC}{m_{3}}{m_{2}}\cdot n_{2}}{n_{3}}
           =
           m_{1}\otimes \left(n_{1}\cdot \rinner{\cD}{\rinner{\cC}{m_{3}}{m_{2}}\cdot n_{2}}{n_{3}}\right)
           \\
           &=
           m_{1}\otimes \left(\linner{\cC}{n_{1}}{\rinner{\cC}{m_{3}}{m_{2}}\cdot n_{2}}\cdot n_{3}\right)
           =
           m_{1}\otimes \left(c\cdot n_{3}\right).
       \end{align*}
       Since $[(m_{1}\cdot c)\otimes n_{3}]=[m_{1}\otimes(c\cdot n_{3})]$ in $\cP$ (see Remark~\ref{rmk:balancing-in-cP}), we have proved the claim.
\end{proof}

We can now prove our second main theorem:
\pagebreak[3]\begin{theorem}\label{thm:cP-is-equiv}
Assume we are given 
\begin{itemize}
	\item three saturated Fell bundles $\cB, \cC, \cD$ over \LCH\ \etale\ groupoids
		$\cG,\cH,\mathcal{K}$, respectively;
	\item a $(\cG,\cH)$-equivalence $\X$ and an $(\cH,\mathcal{K})$-equivalence $\Y$;
	\item a $(\cB,\cC)$-equivalence $\cM$ over $\X$
	and a $(\cC,\cD)$-equivalence $\cN$	over $\Y$.
\end{itemize}
    Then the \USCBb\ $\cP=(q_{\cP}\colon P\to \X\bfp{}{\cH}\Y)$, defined as the quotient of $\cM\otimes_{\cC\z}\cN$ in Definition~\ref{def:cP}, is a $(\cB,\cD)$-Fell bundle equivalence when equipped with the actions defined in Lemma~\ref{lem:cP:actions} and the inner products defined in Proposition~\ref{prop:cP:ip}. 
\end{theorem}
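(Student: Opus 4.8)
The plan is to verify the three conditions of Definition~\ref{def:FBequivalence} in turn, with the roles of $\cB,\cC$ in that definition now played by $\cB,\cD$, and with the underlying $(\cG,\cH)$-equivalence replaced by $\Z=\X\ast_\cH\Y$. First I would record that the ambient data are of the required type: $\Z$ is a genuine $(\cG,\mathcal{K})$-equivalence by Corollary~\ref{cor:bfp-of-gpd}, and $\cP$ is a \USCBb\ over $\Z$ by Corollary~\ref{cor:cP:USCBb}. In particular its momentum maps satisfy $r_\Z([x,y]_{\ipscriptstyle\cH})=r_\X(x)$ and $s_\Z([x,y]_{\ipscriptstyle\cH})=s_\Y(y)$.

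Conditions~\ref{item:FE:actions} and~\ref{item:FE:ip} then require no further work beyond citing the preceding results. The commuting left $\cB$- and right $\cD$-actions are supplied by Lemma~\ref{lem:cP:actions} together with the remark that they commute, giving~\ref{item:FE:actions}. The sesquilinear, continuous inner products $\linner[\cP]{\cB}{\cdot}{\cdot}$ and $\rinner[\cP]{\cD}{\cdot}{\cdot}$, together with the subconditions~\ref{item:FE:ip:fibre}--\ref{item:FE:ip:compatibility}, are furnished verbatim by Proposition~\ref{prop:cP:ip}.

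The only remaining point, and the step carrying the actual content, is~\ref{item:FE:SMEs}: each fibre $\cP_z$ must be a $B(r_\Z(z))-D(s_\Z(z))$-\ib. Fix $z\in\Z$ and a representative $(x,y)\in\X\bfp{s}{r}\Y$ with $z=[x,y]_{\ipscriptstyle\cH}$, so that $r_\Z(z)=r_\X(x)$ and $s_\Z(z)=s_\Y(y)$. By Theorem~\ref{thm:cK-is-word} the fibre $\cK_{(x,y)}=\cM_x\otimes_{\cC\z}\cN_y$ is already a $B(r_\X(x))-D(s_\Y(y))$-\ib. The plan is to transport this structure along the linear homeomorphism $Q|\colon\cK_{(x,y)}\to\cP_z$ provided by Lemma~\ref{lem:Pz=MxNy}. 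Concretely, I would check that $Q|$ intertwines all the relevant structure: it respects the module actions because $b\cdot[\xi]=[b\cdot\xi]$ and $[\xi]\cdot d=[\xi\cdot d]$ by Lemma~\ref{lem:cP:actions}, and it respects the inner products because $\linner[\cP]{\cB}{[\xi]}{[\xi']}=\lInner[\cK]{\cB}{\xi}{\xi'}$ and $\rinner[\cP]{\cD}{[\xi]}{[\xi']}=\rInner[\cK]{\cD}{\xi}{\xi'}$ by definition (Proposition~\ref{prop:cP:ip}). For $\xi,\xi'$ lying in the single fibre $\cK_{(x,y)}$ these agree with the inner products of Equation~\eqref{eq:ip-on-cKz} defining the \ib\ $\cK_{(x,y)}$, and by~\ref{item:def-preequiv:units} applied to Proposition~\ref{prop:cK:ip}~\ref{item:cK:ip-fibre} they land in the unit fibres $\cB_{r_\X(x)}$ resp.\ $\cD_{s_\Y(y)}$, so they are the $\textrm{C}^*$-algebra-valued inner products of a genuine \ib. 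Hence $Q|$ is an isomorphism of \ib s and $\cP_z$ inherits that structure.

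With all three conditions of Definition~\ref{def:FBequivalence} in hand, $\cP$ is a $(\cB,\cD)$-Fell bundle equivalence. I do not expect a genuine obstacle at this stage: every substantive ingredient — the USC bundle structure (Corollary~\ref{cor:cP:USCBb}), the commuting actions (Lemma~\ref{lem:cP:actions}), the inner products with their algebraic identities (Proposition~\ref{prop:cP:ip}), and the fibrewise identification with $\cK_{(x,y)}$ (Lemma~\ref{lem:Pz=MxNy})— has already been isolated, so the theorem amounts to assembling these pieces and confirming that $Q|$ is an isomorphism of imprimitivity bimodules.
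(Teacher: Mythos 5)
Your proposal is correct and follows essentially the same route as the paper's own proof: both assemble Corollary~\ref{cor:cP:USCBb}, Lemma~\ref{lem:cP:actions}, and Proposition~\ref{prop:cP:ip} for Conditions~\ref{item:FE:actions} and~\ref{item:FE:ip}, and both settle Condition~\ref{item:FE:SMEs} by identifying each fibre $\cP_z$ with $\cM_x\otimes_{\cC\z}\cN_y$ via Lemma~\ref{lem:Pz=MxNy} and invoking the imprimitivity-bimodule structure established in (the proof of) Theorem~\ref{thm:cK-is-word}. Your explicit check that $Q|$ intertwines the actions and inner products simply spells out what the paper's phrase ``isomorphic as bimodule'' leaves implicit.
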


    The following diagram gives an overview of our construction, where dotted arrows are the maps we have constructed in the current section.
\begin{figure}[h]
    \centering
    \def\vs{3}
    \begin{tikzpicture}[scale=1, every node/.style={scale=1}]
    
    \node at (0,0) {$\cB$};
    \node at (2.25,1) {$\cM\otimes_{\cC\z}\cN$};
    \node at (3.75,-1) {$\cP$};
    \node at (6,0) {$\cD$};
    \draw  [-{Stealth[length=2mm, width=1mm]}] plot [smooth] coordinates {(0.25, 0) (1.3, 0.8)};
    \draw  [dotted, -{Stealth[length=2mm, width=1mm]}] plot [smooth] coordinates {(0.25, 0) (3.4, -0.8)};
    \draw  [-{Stealth[length=2mm, width=1mm]}] plot [smooth] coordinates {(5.75, 0) (3.2, 0.8)};
    \draw  [dotted, -{Stealth[length=2mm, width=1mm]}] plot [smooth] coordinates {(5.75, 0) (4.1, -0.8)};
    
    \node at (0,-\vs) {$\mathcal{G}$};
    \node at (2.25,1-\vs) {$X \bfp{s}{r} Y$};
    \node at (3.75,-1-\vs) {$X\bfp{}{\cH}Y$};
    \node at (6,-\vs) {$\mathcal{K}$};
    
    \draw  [-{Stealth[length=2mm, width=1mm]}] plot [smooth] coordinates {(0.25, -\vs) (1.3, 0.8-\vs)};
    \draw  [-{Stealth[length=2mm, width=1mm]}] plot [smooth] coordinates {(0.25, -\vs) (3.1, -0.8-\vs)};
    \draw  [-{Stealth[length=2mm, width=1mm]}] plot [smooth] coordinates {(5.75, -\vs) (3.2, 0.8-\vs)};
    \draw  [-{Stealth[length=2mm, width=1mm]}] plot [smooth] coordinates {(5.75, -\vs) (4.4, -0.8-\vs)};
    
    \draw  [-{Stealth[length=2mm, width=1mm]}] plot [smooth] coordinates {(2.25, 0.75) (2.25, 1.25-\vs)};
    \draw  [-{Stealth[length=2mm, width=1mm]}] plot [smooth] coordinates {(0, -0.25) (0, 0.25-\vs)};
    \draw  [-{Stealth[length=2mm, width=1mm]}] plot [smooth] coordinates {(6, -0.25) (6, 0.25-\vs)};
    \draw  [dotted, -{Stealth[length=2mm, width=1mm]}] plot [smooth] coordinates {(3.75, -1.25) (3.75, -0.75-\vs)};
    
    \draw  [dotted, -{Stealth[length=2mm, width=1mm]}] plot [smooth] coordinates {(2.25, 0.75) (3.75, -0.75)};
    \draw  [-{Stealth[length=2mm, width=1mm]}] plot [smooth] coordinates {(2.25, 0.75-\vs) (3.75, -0.75-\vs)};
    \end{tikzpicture}
\end{figure}

\begin{proof}
    In this proof, all Items refer to Definition~\ref{def:FBequivalence}.
    We have seen in Corollary~\ref{cor:cP:USCBb} that~$\cP$ is a \USCBb\ over the $(\cG,\mathcal{K})$-equivalence~$\Z=\X\bfp{}{\cH}\Y$. By Lemma~\ref{lem:cP:actions}, $\cP$ carries commuting $\cB$- and $\cD$-actions, so Item~\ref{item:FE:actions} is satisfied. By Proposition~\ref{prop:cP:ip}, the inner products on $\cP$ satisfy exactly the conditions in  Item~\ref{item:FE:ip}.
    
    For Item~\ref{item:FE:SMEs}, we recall that each fibre $\cP_{[x,y]}$ is isomorphic as bimodule to $\cM_{x}\otimes_{\cC\z}\cN_{y}$ by Lemma~\ref{lem:Pz=MxNy}. As argued in the proof of Theorem~\ref{thm:cK-is-word}, this is a $B(r_{\X}(x))-D(s_{\Y}(y))$-\ib,
    as needed for $\cP$ for Item~\ref{item:FE:SMEs}. This concludes our proof.
\end{proof}

\begin{corollary}
    Fell bundle equivalence is transitive and hence an equivalence relation.
\end{corollary}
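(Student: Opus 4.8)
The plan is to assemble the three defining properties of an equivalence relation, observing that only transitivity requires the machinery developed in this paper. First I would recall that reflexivity and symmetry are already established in \cite[Examples 6.6 and 6.7]{MW2008}: any saturated Fell bundle $\cB$ over $\cG$ is a $\cB\sme\cB$-equivalence via the trivial $(\cG,\cG)$-equivalence, and any $\cB\sme\cC$-equivalence can be turned into a $\cC\sme\cB$-equivalence by interchanging the roles of the left and right structures.

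For transitivity, I would unwind the definitions. Saying that $\cB$ is equivalent to $\cC$ and that $\cC$ is equivalent to $\cD$ means precisely that there exist a $(\cG,\cH)$-equivalence $\X$ carrying a $\cB\sme\cC$-equivalence $\cM$, and an $(\cH,\mathcal{K})$-equivalence $\Y$ carrying a $\cC\sme\cD$-equivalence $\cN$. These are exactly the hypotheses of Theorem~\ref{thm:cP-is-equiv}, whose conclusion furnishes the $(\cG,\mathcal{K})$-equivalence $\Z=\X\bfp{}{\cH}\Y$ together with a $\cB\sme\cD$-Fell bundle equivalence $\cP$. Hence $\cB$ is equivalent to $\cD$, which is exactly transitivity.

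The entire difficulty of this statement has already been absorbed into the constructions preceding it: building the tensor product \hequiv\ $\cM\otimes_{\cC\z}\cN$ (Theorem~\ref{thm:cK-is-word}), establishing the balancing isomorphism $\Psi$ (Theorem~\ref{thm:Psi-cts}), and passing to the quotient bundle $\cP$ (Theorem~\ref{thm:cP-is-equiv}). I expect no genuine obstacle at the level of the corollary itself; once Theorem~\ref{thm:cP-is-equiv} is in hand the transitivity step is immediate, and combining it with the known reflexivity and symmetry shows that Fell bundle equivalence is an equivalence relation.
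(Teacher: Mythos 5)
Your proposal is correct and matches the paper's own (implicit) argument exactly: the corollary follows immediately from Theorem~\ref{thm:cP-is-equiv}, with reflexivity and symmetry imported from \cite[Examples 6.6 and 6.7]{MW2008} as the paper notes in its introduction. There is nothing to add; the corollary is indeed a direct consequence of the preceding theorem.
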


\section{Acknowledgements}
The authors would like to thank Paul Muhly for patiently answering all of their queries regarding~\cite{MW2008}. The first-named author was supported by a RITA Investigator grant \mbox{(IV017)}. The second-named author was partially supported by Prof. Dilian Yang from University of Windsor. 

\clearpage
\appendix

\section{Some lemmas on topology and USC Banach bundles}


\begin{lemma}\label{lemma:proj-open-from-bfp}
    Suppose $ X ,  Y ,  Z $ are topological spaces, $p\colon  X \to  Z $ is continuous, and $q\colon  Y \to  Z $ is open. Then the map $\mathrm{pr}_{ X }\colon  X \bfp{p}{q} Y  \to  X , (x,y)\mapsto x$, is an open map.
\end{lemma}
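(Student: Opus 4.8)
The plan is to verify openness directly on a basis, since to show a map is open it suffices to show that it sends each member of a basis of the domain to an open set. A basis for the subspace topology on $X \bfp{p}{q} Y$ is given by the sets $U \bfp{p}{q} V := (U \times V)\cap(X\bfp{p}{q}Y)$ for $U\subseteq X$ and $V\subseteq Y$ open, so it suffices to prove that $\mathrm{pr}_{X}(U\bfp{p}{q}V)$ is open in $X$ for all such $U,V$.

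The key step is the following computation of the image. First I would unravel the definition:
\[
    \mathrm{pr}_{X}(U\bfp{p}{q}V)
    =
    \{x\in U \,\mid\, \exists\, y\in V \text{ with } p(x)=q(y)\}
    =
    \{x\in U \,\mid\, p(x)\in q(V)\}
    =
    U \cap p\inv\bigl(q(V)\bigr).
\]

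With this identity in hand, the result is immediate: since $q$ is open, $q(V)$ is open in $Z$; since $p$ is continuous, its preimage $p\inv(q(V))$ is open in $X$; and therefore the intersection $U\cap p\inv(q(V))$ is open in $X$, as required. I do not expect any genuine obstacle here — the only subtlety is to be careful that the fibre product carries the subspace topology (so that the sets $U\bfp{p}{q}V$ really do form a basis) and that the existential quantifier over $y\in V$ collapses exactly to the membership condition $p(x)\in q(V)$, which is precisely where the hypotheses ``$q$ open'' and ``$p$ continuous'' enter. This lemma is exactly what is invoked in the proof of Lemma~\ref{lem:preequiv:taking-the-quotient} to conclude that $\mathrm{pr}_{2}\colon X\bfp{s}{s}X\to X$ is open.
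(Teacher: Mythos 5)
Your proof is correct and follows essentially the same route as the paper: the paper also reduces to basic open sets $V_{1}\bfp{}{}V_{2}$ and shows, pointwise via neighborhoods, that their image under $\mathrm{pr}_{X}$ is $V_{1}\cap p\inv\bigl(q(V_{2})\bigr)$, which is open since $q$ is open and $p$ is continuous. Your version simply states this image identity directly rather than arguing point by point.
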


\begin{proof}
    We have to show that, for any open $U\subseteq  X \bfp{p}{q} Y $ and $(x,y)\in U$, there exists an open neighborhood $V$ of $x$ in $ X $ such that $V\subseteq \mathrm{pr}_{ X }(U).$
    It suffices to consider a basic open set $U$, so assume $U=V_{1}\bfp{}{} V_{2}$ where $V_{1}$ is an open neighborhood of $x$ and $V_{2}$ of $y$.
    Let $V:= V_{1} \cap p\inv (q(W_{2}))$. This set is open, 
    since $p$ is continuous and $q$ is open. It is further a neighborhood of $\mathrm{pr}_{ X }(x,y)=x$, because $(x,y)\in V_{1}\times V_{2}$ and $p(x)=q(y)$. 
    Now take an arbitrary $a\in V$, so that $a\in V_{1}$ and there exists $b\in V_{2}$ such that $p(a)=q(b)$. In particular, $(a,b)\in V_{1}\bfp{}{}V_{2}\subseteq U$ and $\mathrm{pr}_{ X }(a,b)= a$, proving that $a\in \mathrm{pr}_{ X }(U)$.
\end{proof}


\begin{lemma}\label{lem:bfp of open is open}
    Suppose we are given bundles $\mathscr{A}, \mathscr{A}', \mathscr{B}, \mathscr{B}'$ over the same topological space $Z$ and maps $f, g$ as follows:
    \begin{equation*}
    \begin{tikzcd}[ampersand replacement=\&]
         {A}
        \ar[dd, "f"']
        \ar[rd, "q_{ \mathscr{A}}"]
        \& \&
         {B}
        \ar[dd, "g"]
        \ar[ld, "q_{ \mathscr{B}}"']
        \\
        \& Z \&
        \\
         {A}'
        \ar[ru, "q_{ \mathscr{A}'}"']
        \&\&
        \ar[lu, "q_{ \mathscr{B}'}"]
        B'
    \end{tikzcd}
    \end{equation*}
    If $f, g$ are open bundle maps, then the map 
    \[
    f\ast g\colon \mathscr{A}\bfp{q}{q}\mathscr{B}\to \mathscr{A}'\bfp{q}{q}\mathscr{B}'
    ,\quad (a,b)\mapsto (f(a),g(b)),
    \]is also an open bundle map.
\end{lemma}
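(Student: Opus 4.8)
The plan is to exploit the description of the fibre-product topology through basic open ``rectangles'' and to show that $f\ast g$ carries each such rectangle onto a rectangle of the same type in the target. First I would record that $f\ast g$ is well defined and is itself a bundle map: since $f$ and $g$ are bundle maps covering the identity on $Z$, we have $q_{\mathscr{A}'}\circ f = q_{\mathscr{A}}$ and $q_{\mathscr{B}'}\circ g = q_{\mathscr{B}}$, so for any $(a,b)\in \mathscr{A}\bfp{q}{q}\mathscr{B}$ one computes $q_{\mathscr{A}'}(f(a)) = q_{\mathscr{A}}(a) = q_{\mathscr{B}}(b) = q_{\mathscr{B}'}(g(b))$; hence $(f(a),g(b))$ indeed lies in $\mathscr{A}'\bfp{q}{q}\mathscr{B}'$, and the same identities show that $f\ast g$ covers the identity on $Z$.

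For openness, I would use that the fibre product carries the subspace topology inherited from the product $A\times B$. Consequently every open subset of $\mathscr{A}\bfp{q}{q}\mathscr{B}$ is a union of basic open sets of the form $U_1\bfp{q}{q} U_2$ with $U_1\subseteq A$ and $U_2\subseteq B$ open, and since taking images commutes with unions, it suffices to prove that $(f\ast g)(U_1\bfp{q}{q} U_2)$ is open for each such rectangle. The heart of the argument is the set-theoretic identity
\[
    (f\ast g)(U_1\bfp{q}{q} U_2) = f(U_1)\bfp{q}{q} g(U_2).
\]
Granting this, the right-hand side is open: the sets $f(U_1)$ and $g(U_2)$ are open because $f$ and $g$ are open maps by hypothesis, and the rectangle $f(U_1)\bfp{q}{q} g(U_2) = (f(U_1)\times g(U_2))\cap(\mathscr{A}'\bfp{q}{q}\mathscr{B}')$ is therefore open in the subspace topology of the target fibre product.

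The main obstacle --- really the only nontrivial point --- is the inclusion $\supseteq$ in the displayed identity. Given $(a',b')\in f(U_1)\bfp{q}{q} g(U_2)$, I can choose $a\in U_1$ with $f(a)=a'$ and $b\in U_2$ with $g(b)=b'$, but a priori the pair $(a,b)$ need not satisfy the matching condition $q_{\mathscr{A}}(a)=q_{\mathscr{B}}(b)$ required to lie in $\mathscr{A}\bfp{q}{q}\mathscr{B}$. Here the bundle-map property is exactly what rescues the argument: $q_{\mathscr{A}}(a) = q_{\mathscr{A}'}(a') = q_{\mathscr{B}'}(b') = q_{\mathscr{B}}(b)$, where the two outer equalities use that $f$ and $g$ cover the identity and the middle equality is the defining relation for membership in $f(U_1)\bfp{q}{q} g(U_2)$. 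Thus $(a,b)\in U_1\bfp{q}{q} U_2$ and $(f\ast g)(a,b)=(a',b')$, establishing $\supseteq$; the reverse inclusion $\subseteq$ is immediate from the definition of $f\ast g$ and the well-definedness already checked. This completes the plan.
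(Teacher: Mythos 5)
Your proof is correct and follows essentially the same route as the paper's: reduce to basic open rectangles $U_1\bfp{q}{q}U_2$, establish the set-theoretic identity $(f\ast g)(U_1\bfp{q}{q}U_2)=f(U_1)\bfp{q}{q}g(U_2)$, and conclude from openness of $f$ and $g$ that the image is a (basic) open set. The only difference is one of detail: you spell out the nontrivial inclusion $\supseteq$, where the bundle-map property repairs the matching condition $q_{\mathscr{A}}(a)=q_{\mathscr{B}}(b)$, whereas the paper compresses exactly this step into the phrase ``since $f$ and $g$ are bundle maps.''
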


\begin{proof}
    The map is a bundle map since
    \[(q\circ [f\ast g])(a,b)= q(f(a),g(b)) \overset{(\dagger)}{=} \bigl(q(f(a)),q(g(b))\bigr)\overset{(\ddagger)}{=}(q(a),q(b))\overset{(\dagger)}{=} q(a,b),
    \]
    where we dropped the subscripts on the bundle maps and where both equations labeled $(\dagger)$ follow from the definition of the bundle map on a fibre product and $(\ddagger)$ follows since since $f$ and $g$ are bundle maps
    . Next fix open sets $V\subseteq  {A}$ and $U\subseteq  {B}$, and consider $V\ast U := (V\times U)\cap ( \mathscr{A}\bfp{q}{q} \mathscr{B}).$
    We have $(f\ast g) (V\ast U)= f(V)\ast g(U)$, since $f$ and $g$ are bundle maps
    . Since $f$ and $g$ are open, $f(V)\ast g(U)$ is an open set in $\mathscr{A}'\bfp{q}{q}\mathscr{B}'$. This proves that $f\ast g$ maps basic open sets to (basic) open sets and is hence open.
\end{proof}

The following lemmas 
give an \usc\ version of \cite[II.13.16 and 17]{FellDoranVol1} and should be compared to \cite[Proposition 2.4]{BE2012}. 
We first prove a strengthened condition on the convergence in an \USCBb. 
\begin{lemma}[{\cite[Proposition 2.4]{BE2012}}]\label{lm.conv.equivalence.conditions}
Let $\cM=(q_\cM\colon M\to X)$ be an \USCBb\ over some Hausdorff space $X$. Let $\Gamma \subseteq \Gamma(X;\cM) $ be a vector space of continuous sections such that $\{f(x): f\in\Gamma \}$ is dense in~$M_x$ for each~$x\in X$.
Suppose $(m_{i})_{i}$ is a net in $M$ and $m\in M$. Then the following are equivalent:
\begin{enumerate}[label=\textup{(\arabic*)}]
    \item\label{item:convergence1} $m_{i}\to m$ in $M$.
    \item\label{item:convergence3.gamma} We have $q_\cM(m_{i})\to q_\cM(m)$ and for every $\epsilon>0$, there exists $g\in\Gamma$ such that $\norm{m-g(q_\cM(m))}<\epsilon$ and  $\norm{m_{i}-g(q_\cM(m_{i}))} < \epsilon$ for $i$ large enough. 
    \item\label{item:convergence2} We have $q_\cM(m_{i})\to q_\cM(m)$ and for all 
    $f\in\Gamma $, \[\limsup \|m_{i}-f(q_\cM(m_{i}))\| \leq \|m-f(q_\cM(m))\|.\]
\end{enumerate}
\end{lemma}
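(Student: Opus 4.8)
The plan is to prove the cyclic chain $\ref{item:convergence1}\Rightarrow\ref{item:convergence2}\Rightarrow\ref{item:convergence3.gamma}\Rightarrow\ref{item:convergence1}$. The implications into and among the $\Gamma$-conditions are purely formal consequences of the bundle axioms, while the return to topological convergence is where the real work lies. Throughout I abbreviate $x:=q_\cM(m)$ and use that every $\gamma\in\Gamma$ is continuous, so that $X\to\mathbb{R}_{\geq0},\ x'\mapsto\norm{\gamma(x')}$, is \usc\ by~\ref{cond.B-USC}, and that $\Gamma$ is a vector space.

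For $\ref{item:convergence1}\Rightarrow\ref{item:convergence2}$: if $m_{i}\to m$, then $q_\cM(m_{i})\to x$ by continuity of $q_\cM$, and for any $f\in\Gamma$ the pair $(m_{i},f(q_\cM(m_{i})))\to(m,f(x))$ in $M\bfp{q}{q}M$; continuity of subtraction (axioms~\ref{cond.B-plus} and~\ref{cond.B-times}) gives $m_{i}-f(q_\cM(m_{i}))\to m-f(x)$, and upper semicontinuity of the norm yields $\limsup_{i}\norm{m_{i}-f(q_\cM(m_{i}))}\leq\norm{m-f(x)}$. For $\ref{item:convergence2}\Rightarrow\ref{item:convergence3.gamma}$: given $\epsilon>0$, density of $\{f(x):f\in\Gamma\}$ in $M_{x}$ furnishes $g\in\Gamma$ with $\norm{m-g(x)}<\epsilon$; applying~\ref{item:convergence2} to $f=g$ gives $\limsup_{i}\norm{m_{i}-g(q_\cM(m_{i}))}\leq\norm{m-g(x)}<\epsilon$, so $\norm{m_{i}-g(q_\cM(m_{i}))}<\epsilon$ for $i$ large, as required (base-point convergence being part of~\ref{item:convergence2}).

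The crux is $\ref{item:convergence3.gamma}\Rightarrow\ref{item:convergence1}$, where I must leave $\Gamma$, since its sections need not pass through $m$. By Remark~\ref{rmk:enough-sections} there is a continuous section $\tau$ with $\tau(x)=m$. I would first show $\norm{m_{i}-\tau(q_\cM(m_{i}))}\to0$: fixing $\epsilon>0$ and choosing $g\in\Gamma$ as in~\ref{item:convergence3.gamma}, the estimate
\[
    \norm{m_{i}-\tau(q_\cM(m_{i}))}\leq\norm{m_{i}-g(q_\cM(m_{i}))}+\norm{(g-\tau)(q_\cM(m_{i}))}
\]
bounds the first summand by $\epsilon$ for large $i$, while upper semicontinuity of $\norm{(g-\tau)(\cdot)}$ (here $g-\tau$ is again a continuous section) along $q_\cM(m_{i})\to x$ gives $\limsup_{i}\norm{(g-\tau)(q_\cM(m_{i}))}\leq\norm{g(x)-m}<\epsilon$; hence $\limsup_{i}\norm{m_{i}-\tau(q_\cM(m_{i}))}\leq2\epsilon$, and letting $\epsilon\to0$ gives the claim. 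Setting $n_{i}:=m_{i}-\tau(q_\cM(m_{i}))$, we have $q_\cM(n_{i})=q_\cM(m_{i})\to x$ and $\norm{n_{i}}\to0$, so $n_{i}\to 0_{x}$ by axiom~\ref{cond.B-nets}; since $\tau(q_\cM(m_{i}))\to\tau(x)=m$, continuity of addition~\ref{cond.B-plus} yields $m_{i}=n_{i}+\tau(q_\cM(m_{i}))\to 0_{x}+m=m$.

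The main obstacle, then, is precisely this last step: the $\Gamma$-conditions only control differences against $\Gamma$-sections, whereas genuine convergence in $M$ is detected (via axiom~\ref{cond.B-nets}) by a section through the limit point $m$ itself. Bridging this gap is exactly what forces the combined use of ``enough sections'' and the triangle-inequality together with the upper-semicontinuity estimate above; everything else is bookkeeping with the bundle axioms.
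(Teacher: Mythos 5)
Your cyclic decomposition \ref{item:convergence1}$\implies$\ref{item:convergence2}$\implies$\ref{item:convergence3.gamma}$\implies$\ref{item:convergence1} is exactly the paper's, and your arguments for the first two implications coincide with the paper's proof and are correct. The divergence is in \ref{item:convergence3.gamma}$\implies$\ref{item:convergence1}: the paper disposes of this step by citing \cite[Proposition C.20]{Wil2007}, whereas you argue directly. Your estimates there are internally sound --- the fibrewise triangle inequality, the upper semi-continuity of $x'\mapsto\norm{(g-\tau)(x')}$ for the continuous section $g-\tau$, the application of \ref{cond.B-nets} to $n_i$, and the final use of \ref{cond.B-plus} --- but the whole argument is anchored on the opening move: ``by Remark~\ref{rmk:enough-sections} there is a continuous section $\tau$ with $\tau(x)=m$.''

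That move is not justified under the hypotheses of the lemma, and this is a genuine gap. The lemma assumes only that $X$ is Hausdorff and that $\Gamma$ has dense values in each fibre; $\Gamma$ need not contain any section passing through $m$, and the enough-sections assertion of Remark~\ref{rmk:enough-sections} rests on selection theorems (\cite[Appendix A]{MW2008}, \cite{Hofmann1977}, \cite{LAZAR2018448}) that require a locally compact (or paracompact) base --- which is precisely why that remark is about the bundles occurring in the body of the paper, all of which live over \LCH\ spaces. So as written, your proof establishes the lemma only for bundles possessing enough sections, not in the stated generality, and it invokes a deep selection theorem where none is needed. The gap is repairable without leaving $\Gamma$: choose $f_n\in\Gamma$ with $\norm{m-f_n(x)}<1/n$; then $m-f_n(x)\to 0_x$ by \ref{cond.B-nets}, so $f_n(x)\to m$ in $M$ by continuity of addition. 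Given a neighbourhood $N$ of $m$, continuity of addition at $(0_x,m)$ yields neighbourhoods $V$ of $0_x$ and $U$ of $m$ such that $a+a'\in N$ whenever $(a,a')\in (V\times U)\cap(M\bfp{q}{q}M)$; a directed-set contradiction argument from \ref{cond.B-nets} shows that $V$ contains every element of sufficiently small norm lying over a sufficiently small neighbourhood of $x$; finally, fixing $n$ large enough that $1/n$ is below that bound and that $f_n(x)\in U$, one gets for large $i$ that $m_i-f_n(q_\cM(m_i))\in V$ and $f_n(q_\cM(m_i))\in U$, whence $m_i\in N$. This is exactly the content of \cite[Proposition C.20]{Wil2007}, which is why the paper can simply cite it for a bundle over a merely Hausdorff base; your shortcut, by contrast, silently strengthens the hypotheses.
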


\begin{proof}
Let $x_{i}:= q_{\cM}(m_{i})$ and $x:= q_{\cM}(m).$
For \ref{item:convergence1}$\implies$\ref{item:convergence2}, continuity of $q_\cM$ implies $x_{i} \to x $. Since any $f\in\Gamma$ is continuous, we thus have $f(x_{i} )\to f(x )$. By Condition~\ref{cond.B-plus} and \ref{cond.B-times}, it follows that $m_{i}-f(x_{i} ) \to m-f(x)$. By Condition~\ref{cond.B-nets}, we conclude 
\[\limsup \|m_{i}-f(x_{i} )\| \leq \|m-f(x)\|.\]

For \ref{item:convergence2}$\implies$\ref{item:convergence3.gamma}, note that for every $\epsilon>0$, there exists $g\in \Gamma$ such that $\norm{m-g(x )}<\epsilon$, since $\{f(x ): f\in\Gamma \}$ is dense in $M_{x }$. By \ref{item:convergence2}, 
\[\limsup\norm{m_{i}-g(x_{i} )} \leq \norm{m-g(x )}
.\]
Therefore, there exists $i_{0}$ such that for all $i\geq i_{0}$, $\norm{m_{i}-g(x_{i} )}\leq \norm{m-g(x )}<\epsilon$. 

Finally, \ref{item:convergence3.gamma}$\implies$\ref{item:convergence1} follows from \cite[Proposition C.20]{Wil2007}. 
\end{proof}

\begin{remark} For continuous Banach bundles, the $\limsup$ in Condition~\ref{item:convergence2} can be replaced by $\lim$ (see \cite[II 13.12]{FellDoranVol1}). While \cite[Appendix C]{Wil2007} focuses on \usc\ $\textrm{C}^*$-bundles, several of its proofs, including the proof of \cite[Proposition C.20]{Wil2007}, do not use the $\textrm{C}^*$-identity and thus work for general \USCBb. 

If we pick a continuous section $f\in\Gamma(X;\cM)$ such that $f(q_\cM(m))=m$,
then $m_{i}\to m$ if and only if $q_\cM(m_{i})\to q_\cM(m)$ and by \ref{item:convergence2}, 
\[0\leq \limsup \|m_{i}-f(q_\cM(m_{i}))\| \leq \|m-f(q_\cM(m_{i}))\|=0.\]
In this case, the $\limsup$ can likewise be replaced by $\lim$ (see \cite[Lemma A.3.]{MW2008}).
\end{remark}

\begin{lemma}[{cf.\ {\cite[Lemma C.18]{Wil2007}}}]
\label{lm.convergence.at.zero} Let $\cM=(q_\cM\colon M\to X)$ be a \USCBb\ over some Hausdorff space $X$. Then $m_{i}\to 0_x$ if and only if $q_\cM(m_{i})\to x$ and $\|m_{i}\|\to 0$. 
\end{lemma}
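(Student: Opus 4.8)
The plan is to treat the two implications separately, observing that each follows directly from one of the defining axioms of a \USCBb\ in Definition~\ref{def.USCBdl}, so that the lemma is really just a repackaging of those axioms.

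For the backward implication, there is essentially nothing to do. The hypotheses that $q_\cM(m_i)\to x$ and $\norm{m_i}\to 0$ are precisely the hypotheses of Condition~\ref{cond.B-nets}, whose conclusion is exactly that $m_i\to 0_x$ in $M$. So I would simply invoke \ref{cond.B-nets}.

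For the forward implication, I would assume $m_i\to 0_x$ in $M$ and verify the two conclusions in turn. First, since $q_\cM$ is continuous (being the continuous surjection from the definition of the bundle), the convergence $m_i\to 0_x$ gives $q_\cM(m_i)\to q_\cM(0_x)=x$. Second, by Condition~\ref{cond.B-USC} the norm map $m\mapsto\norm{m}$ is \usc, which applied to the convergent net $m_i\to 0_x$ yields $\limsup\norm{m_i}\leq\norm{0_x}=0$. As $\norm{m_i}\geq 0$ for all $i$, this $\limsup$ bound forces $\norm{m_i}\to 0$.

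Since both directions reduce immediately to the axioms, there is no genuine obstacle here; the only point requiring a little care is the correct use of upper semicontinuity in the forward direction, namely that \usc-ness of the norm gives a $\limsup$ bounded by the value at the limit point, which, combined with nonnegativity of the norm, upgrades to an honest limit $\norm{m_i}\to 0$.
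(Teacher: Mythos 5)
Your proof is correct and follows essentially the same route as the paper: the backward direction is exactly Condition~\ref{cond.B-nets}, and the forward direction combines continuity of $q_\cM$ with upper semi-continuity of the norm to get $\limsup\norm{m_i}\leq 0$, hence $\norm{m_i}\to 0$ by nonnegativity. There is nothing to add.
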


\begin{proof}
	It follows from the upper semi-continuity of $m\mapsto \|m\|$ and the continuity of $q_{\cM}$ that $m_{i}\to 0_x$ implies $q_\cM(m_{i})\to x$ and $\limsup \|m_{i}\|\leq 0$; in particular, $\|m_{i}\|\to 0$. 
The converse follows from Condition~\ref{cond.B-nets}.
\end{proof}

\begin{remark}
	For continuous Banach bundles, Lemma~\ref{lm.convergence.at.zero} is a direct consequence of continuity of the norm. As our proof shows, however,
	upper semi-continuity is in fact sufficient. 
\end{remark}

The above results imply that a USC analogue of \cite[II.13.16 and II.13.17]{FellDoranVol1} holds: their proofs go through verbatim, except 
where continuity of the norm 
needs to be replaced by $\limsup$-approximation of the norm. To be more precise:

\begin{proposition}[cf.\ {\cite[II.13.16]{FellDoranVol1}}]\label{prop.13.16}
Let $\cM=(q_\cM\colon M\to X)$ and $\cM'=(q_{\cM'}\colon M'\to X')$ be two {\USCBb s} 
and let $\omega\colon X\to X'$ be a homeomorphism. 
Let $\Gamma $ be a vector space of continuous sections of $\cM$ such that the $\mathbb{C}$-linear span of
 $\{f(x): f\in\Gamma \}$ is dense in each~$M_x$.
Let $\Phi\colon M\to M'$ be a map which satisfies the following:
\begin{enumerate}[label=\textup{(\alph*)}]
    \item\label{item:ptws linear} For each $x\in X$, $\Phi(M_x)\subseteq M'_{\omega(x)}$ and $\Phi|_{M_x}$ is linear.
    \item\label{item:constant} There exists a constant $K>0$ such that $\|\Phi(m)\|\leq K\|m\|$ for all $m\in M$.
    \item\label{item:Phi circ f} For each $f\in\Gamma $,
    $\Phi\circ f \circ\omega\inv$ is a continuous section of $\cM'$.
\end{enumerate}
Then $\Phi$ is continuous. 
\end{proposition}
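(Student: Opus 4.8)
The plan is to prove continuity of $\Phi$ via the net characterization of continuity: it suffices to show that whenever a net $(m_i)_i$ converges to $m$ in $M$, the image net $(\Phi(m_i))_i$ converges to $\Phi(m)$ in $M'$. The essential tool is the net-convergence criterion of Lemma~\ref{lm.conv.equivalence.conditions}, which detects convergence in a \USCBb\ through approximation by continuous sections together with convergence of base points. Throughout I write $x_i:=q_\cM(m_i)$ and $x:=q_\cM(m)$.

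First I would check the base points: since $m_i\to m$, continuity of $q_\cM$ gives $x_i\to x$, and since $\omega$ is a homeomorphism, $\omega(x_i)\to\omega(x)$ in $X'$. By Assumption~\ref{item:ptws linear} we have $q_{\cM'}(\Phi(m_i))=\omega(x_i)$ and $q_{\cM'}(\Phi(m))=\omega(x)$, so the base points of the image net converge correctly. It then remains to produce, for each $\epsilon>0$, a single continuous section $g'$ of $\cM'$ realizing the approximation in condition~\ref{item:convergence3.gamma} of Lemma~\ref{lm.conv.equivalence.conditions}. To this end I would use that the $\mathbb C$-linear span of $\{f(x):f\in\Gamma\}$ is dense in $M_x$, together with the fact that $\Gamma$ is a vector space, to pick $f\in\Gamma$ with $\norm{m-f(x)}<\tfrac{\epsilon}{2K}$ (recall $K>0$), and set $g':=\Phi\circ f\circ\omega\inv$, which is continuous by Assumption~\ref{item:Phi circ f}. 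Evaluating, $g'(\omega(x))=\Phi(f(x))$ and $g'(\omega(x_i))=\Phi(f(x_i))$, so fibrewise linearity~\ref{item:ptws linear} and the bound~\ref{item:constant} give $\norm{\Phi(m)-g'(\omega(x))}=\norm{\Phi(m-f(x))}\leq K\norm{m-f(x)}<\tfrac\epsilon2$. For the net, $m_i-f(x_i)\to m-f(x)$ in $M$ by Conditions~\ref{cond.B-plus}, \ref{cond.B-times} and continuity of $f$, so upper semicontinuity of the norm~\ref{cond.B-USC} yields $\limsup_i\norm{m_i-f(x_i)}\leq\norm{m-f(x)}<\tfrac{\epsilon}{2K}$, whence $\norm{\Phi(m_i)-g'(\omega(x_i))}\leq K\norm{m_i-f(x_i)}<\tfrac\epsilon2$ for $i$ large. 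Feeding these into Lemma~\ref{lm.conv.equivalence.conditions}, \ref{item:convergence3.gamma}$\implies$\ref{item:convergence1}, gives $\Phi(m_i)\to\Phi(m)$, as desired.

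The step I expect to require the most care is the choice of approximating sections in the \emph{target} bundle. The naive idea of applying the convergence criterion to the family $\{\Phi\circ f\circ\omega\inv:f\in\Gamma\}$ fails: since $\Phi$ is not assumed to have dense range, this family need not have dense fibres, which is precisely what Lemma~\ref{lm.conv.equivalence.conditions} demands of its section space. The remedy is that the forward implication \ref{item:convergence3.gamma}$\implies$\ref{item:convergence1} (resting on \cite[Proposition C.20]{Wil2007}) only needs \emph{some} approximating continuous section for each $\epsilon$, not one drawn from a prescribed dense family — density is used only for the converse. Thus the single section $g'=\Phi\circ f\circ\omega\inv$ suffices, with the constant $K$ from~\ref{item:constant} absorbed by approximating $m$ within $\tfrac{\epsilon}{2K}$. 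The other mild subtlety, flagged in the surrounding remark, is that upper semicontinuity only gives a $\limsup$ estimate rather than an equality, which is why the approximation is arranged with strict slack $\tfrac\epsilon2$.
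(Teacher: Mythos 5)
Your proof is correct and follows essentially the same route as the paper's: approximate $m$ by a section $f\in\Gamma$ within $\epsilon/(2K)$ (the paper uses $\epsilon/K$), push it forward to the continuous section $\Phi\circ f\circ\omega\inv$ of $\cM'$, transfer both estimates via the fibrewise linearity~\ref{item:ptws linear} and the bound~\ref{item:constant}, and conclude from \cite[Proposition C.20]{Wil2007}. Your closing observation --- that the implication \ref{item:convergence3.gamma}$\implies$\ref{item:convergence1} needs only one approximating continuous section per $\epsilon$ rather than a family with dense fibres --- is precisely why the paper invokes Proposition C.20 directly at that point instead of Lemma~\ref{lm.conv.equivalence.conditions} itself.
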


\begin{proof}

Suppose we have a convergent net in $\cM$, say $m_{i}\to m$. 
	Let $x_{i}=q_{\cM}(m_i)$, $x=q_{\cM}(m),$ and $x'_{i}=\omega(q_{\cM}(m_i))$, $x'=\omega(q_{\cM}(m))$; by continuity of $q_{\cM}$ and $\omega$, we have $x_{i}\to x$ in $X$ and $x'_{i}\to x'$ in~$X'.$ By \ref{item:ptws linear}, we have $\Phi(m_{i})=x'_{i}$ and $\Phi(m)=x'$.

Now fix $\epsilon>0$. By density of $\Gamma$ in $M_{x}$, we can pick some $g_1,\ldots, g_{n}\in\Gamma$ such that we have $\|m-g(x)\|<\frac{\epsilon}{K}$ for $g:=\sum_{j=1}^{n}g_{j}$ and $K$ the constant in \ref{item:constant}. 
Since $g$ is continuous, Lemma~\ref{lm.conv.equivalence.conditions},  \ref{item:convergence1}$\implies$\ref{item:convergence2}, asserts that
    there exists $i_0$ such that for all $i\geq i_0,$
    $$\|m_{i}-g(x_{i})\|\leq \|m-g(x)\|<\frac{\epsilon}{K}.$$
 Using Assumptions~\ref{item:ptws linear} and~\ref{item:constant}, we see that not only
$$\|\Phi(m)-\Phi\circ g(x)\|
\leq K\|m - g(x)\|
<\epsilon,$$
but also for
    $i\geq i_0$,
$$
 \|\Phi(m_{i})-\Phi\circ g(x_{i}))\|
\leq K \|m_{i} -  g(x_{i})\|
<\epsilon.$$
If we let 
$f:=\Phi\circ g\circ\omega\inv$, then the above inequalities can be rephrased to
\[
\|\Phi(m)- f(x')\|
<\epsilon
\quad\text{and}\quad
\|\Phi(m_{i})- f(x'_{i})\|
<\epsilon
\]
for all $i\geq i_{0}$. 
By Assumption~\ref{item:Phi circ f}, each $\Phi\circ g_{j}\circ \omega\inv$ is a continuous section of $\cM'$, and hence so is $f=\sum_{j=1}^{n} \Phi\circ g_{j}\circ \omega\inv$. Since $\epsilon>0$ was arbitrary, it follows from \cite[Proposition C.20]{Wil2007} that $\Phi(m_{i})\to \Phi(m)$, proving the continuity of~$\Phi$. 
\end{proof}

\begin{proposition}[cf.\ {\cite[II.13.17]{FellDoranVol1}}]\label{prop.13.17}
Let $\cM=(q_\cM\colon M\to X)$ and $\cM'=(q_{\cM'}\colon M'\to X')$ be two {\USCBb s} 
and let $\omega\colon X\to X'$ be a homeomorphism. 
Let $\Phi\colon M\to M'$ be a map such that:
\begin{enumerate}[label=\textup{(\arabic*)}]
    \item For each $x\in X$, $\Phi(M_x)\subseteq M'_{\omega(x)}$ and $\Phi|_{M_x}$ is linear.
    \item $\Phi$ is continuous.
    \item There exists a constant $k>0$ such that $\|\Phi(m)\| \geq  k\|m\|$ for all $m\in M$.
\end{enumerate}
Then $\Phi\inv \colon \Phi(M)\to M$ is continuous.
\end{proposition}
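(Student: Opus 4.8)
The plan is to mimic the classical argument for \cite[II.13.17]{FellDoranVol1}, replacing every appeal to continuity of the norm by the $\limsup$-criterion for convergence furnished by Lemma~\ref{lm.conv.equivalence.conditions}. First I would record that $\Phi$ is injective, so that $\Phi\inv$ really is a map on $\Phi(M)$: if $\Phi(m_1)=\Phi(m_2)$, then applying $q_{\cM'}$ and using that $\omega$ is a homeomorphism shows $q_\cM(m_1)=q_\cM(m_2)$, so $m_1,m_2$ lie in a common fibre; fibrewise linearity gives $\Phi(m_1-m_2)=0$, and the lower bound in Assumption~(3) forces $m_1=m_2$.

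For continuity I would argue with nets. Suppose $m'_i=\Phi(m_i)\to m'=\Phi(m)$ in $M'$, and set $x_i:=q_\cM(m_i)$, $x:=q_\cM(m)$. Continuity of $q_{\cM'}$ gives $q_{\cM'}(m'_i)\to q_{\cM'}(m')$, i.e.\ $\omega(x_i)\to\omega(x)$, whence $x_i\to x$ because $\omega\inv$ is continuous. By Remark~\ref{rmk:enough-sections} I may fix a continuous section $f$ of $\cM$ with $f(x)=m$. The crucial observation is that $f':=\Phi\circ f\circ\omega\inv$ is a continuous section of $\cM'$: it is continuous as a composite of continuous maps (this is exactly where Assumption~(2), continuity of $\Phi$, enters), and it is a section because the fibre condition of Assumption~(1) sends $M_{\omega\inv(x')}$ into $M'_{x'}$.

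The key estimate then reads, for each $i$,
\[
    k\,\norm{m_i-f(x_i)}
    \le
    \norm{\Phi\bigl(m_i-f(x_i)\bigr)}
    =
    \norm{m'_i-f'(\omega(x_i))},
\]
using fibrewise linearity of $\Phi$, Assumption~(3), and $\Phi(f(x_i))=f'(\omega(x_i))$. Since $m'_i\to m'$ and $f'$ is a continuous section of $\cM'$, Lemma~\ref{lm.conv.equivalence.conditions}, \ref{item:convergence1}$\implies$\ref{item:convergence2}, gives $\limsup_i\norm{m'_i-f'(\omega(x_i))}\le\norm{m'-f'(\omega(x))}$; but $f'(\omega(x))=\Phi(f(x))=\Phi(m)=m'$, so the right-hand side is $0$. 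Hence $\norm{m_i-f(x_i)}\to0$, and this together with $x_i\to x$ verifies condition~\ref{item:convergence3.gamma} of Lemma~\ref{lm.conv.equivalence.conditions} with the choice $g:=f$ (since $\norm{m-f(x)}=0$), so that $m_i\to m$ by the implication \ref{item:convergence3.gamma}$\implies$\ref{item:convergence1}. This shows $\Phi\inv$ is continuous.

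The main new point, compared with the companion result \cite[II.13.16]{FellDoranVol1} (our Proposition~\ref{prop.13.16}), is that here we are given no upper bound $\norm{\Phi(m)}\le K\norm{m}$, only the lower bound of Assumption~(3). The device that sidesteps this is precisely to run the $\limsup$-argument against the particular section $f$ passing through $m$: the term $\norm{m'-f'(\omega(x))}$, which in the presence of only a lower bound could not otherwise be controlled, is then identically zero, and the lower bound alone suffices to push $\norm{m_i-f(x_i)}$ to $0$.
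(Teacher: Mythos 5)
Your proof is correct and is essentially the argument the paper invokes: the paper defers to Fell--Doran \cite[II.13.17]{FellDoranVol1}, whose proof — fix a continuous section through the limit $m$, use fibrewise linearity and the lower bound $\norm{\Phi(m)}\geq k\norm{m}$, and replace the one appeal to continuity of the norm by the USC convergence criteria — is exactly what you have written out in detail. The only cosmetic difference is that you route both the ``$\limsup$'' estimate and the final convergence through Lemma~\ref{lm.conv.equivalence.conditions}, whereas the paper cites Lemma~\ref{lm.convergence.at.zero} for the same purpose; these are interchangeable here.
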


Note that $\Phi\inv$ exists because $\Phi$ is injective as a fibrewise linear map that is bounded below in norm.

\begin{proof} 
The proof of {\cite[II.13.17]{FellDoranVol1}} can be adapted almost verbatim: They invoke continuity only once to argue that $m_{i}\to 0_x$ if and only if $q_\cM(m_{i})\to x$ and $\|m_{i}\|\to 0$. But we have seen in 
Lemma~\ref{lm.convergence.at.zero} that that condition is also true in \USCBb s. Moreover, adapting from $X=X'$ to the more general case involving $\omega$ is straight forward as done in Proposition~\ref{prop.13.16}. 
We will omit the details. 
\end{proof}


\end{document}